\documentclass[a4paper, 11pt]{article}

\usepackage[utf8]{inputenc}
\usepackage[T1]{fontenc}
\usepackage{lmodern}     
\usepackage[english]{babel}
\usepackage{cite, hyperref}

\usepackage{amsfonts, amsmath}
\usepackage{amssymb, amsthm}
\usepackage{bbm, mathrsfs}
\usepackage{calc}
\usepackage[shortlabels]{enumitem}

\usepackage[a4paper, top=2.5cm, bottom=2.5cm, left=3cm, right=3cm]{geometry}
\allowdisplaybreaks

\numberwithin{equation}{section}
\theoremstyle{plain}

\newtheorem{Lemma}{Lemma}[section]
\newtheorem{Proposition}[Lemma]{Proposition}
\newtheorem{Theorem}[Lemma]{Theorem}
\newtheorem{Corollary}[Lemma]{Corollary}

\theoremstyle{definition}
\newtheorem{Definition}[Lemma]{Definition}
\newtheorem{Example}[Lemma]{Example}
\newtheorem{Remark}[Lemma]{Remark}

\def\B{{\rm B}}
\def\E{\mathbb{E}}
\def\R{{\rm R}}
\def\Re{\mathbb{R}}
\def\N{\mathbb{N}}
\def\P{\mathbb{P}}
\DeclareMathOperator*{\esssup}{ess\,sup}
\begin{document}
\title{\textbf{Stochastic Volterra equations with random functional coefficients in Banach spaces}}
\author{Alexander Kalinin\footnote{Department of Mathematics, LMU Munich, Germany. E-mail: {\tt kalinin@math.lmu.de}
}}
\maketitle

\begin{abstract}
We derive unique Banach-valued solutions to stochastic Volterra equations with random coefficients that may depend on pure chance and involve singular kernels. In particular, for controlled and distribution-dependent coefficients these solutions become strong, as a measurability analysis of the Wasserstein metric confirms. The presented novel approach is based on the proof that a stochastic Volterra integral admits a progressively measurable modification in a weak sense and on sharp moment estimates for non-negative product measurable processes.
\end{abstract}

\noindent
{\bf MSC2020 classification:} 60H20, 60H30, 60G17, 45D05.
\\
{\bf Keywords:} stochastic Volterra equation, controlled McKean-Vlasov equation, strong solution, moment estimate, modification, stochastic integral, Wasserstein metric, kernel.

\section{Introduction}

In contrast to diffusion processes that solve stochastic differential equations (SDEs) driven by Brownian motions, Volterra processes arising as solutions to stochastic Volterra integral equations may fail to be semimartingales and do not need to possess continuous paths. This behaviour comes from the additional dependence of the coefficients on the time variable. Thus, many proven methods for SDEs that rely on It{\^o}'s formula and the inequalities of Gronwall and Burkholder-Davis-Gundy no longer work once kernels with singularities are involved. 

But the probabilistic properties of solutions to stochastic Volterra equations extend the range of applications of SDEs. For instance, initiated by~\cite{GatJaiRos18}, one-dimensional Volterra processes are used in~\cite{EleRos18, EleRos19, BonPulSco24} for rough volatility modelling in mathematical finance. Moreover, various stochastic control problems of the last years, as those in \cite{Wan22, NunGio24, AbiNeu25}, incorporate stochastic Volterra equations, and~\cite{AccBacCar19} shows that McKean-Vlasov SDEs can also be handled in stochastic optimization. Motivated by such applications, the aim of this work is to develop a solution theory in Banach spaces that allows for controlled and distribution-dependent coefficients and singular kernels.

In what follows, let $I$ be a non-degenerate interval in $\Re_{+}$ with $0\in I$ and $E$ be a separable Banach space with complete norm $|\cdot|$ such that $E\neq\{0\}$. Further, let $(\Omega,\mathcal{F},\P)$ be a  probability space and $\mathbb{F} = (\mathcal{F}_{t})_{t\in I}$ be a filtration of $\mathcal{F}$ such that the usual conditions hold. That is, $\mathbb{F}$ is right-continuous and any subset of a null event lies in $\mathcal{F}_{0}$.

We consider the separable Hilbert space $\ell^{2}$ of all sequences $a =(a_{i})_{i\in\N}$ of real numbers that are square-summable, endowed with the inner product given by $\langle a,b\rangle_{\ell^{2}} := \sum_{i=1}^{\infty} a_{i}b_{i}$, and we recall from the Riesz-Fischer Theorem that any separable Hilbert space of infinite dimension is isometrically isomorphic to $\ell^{2}$.

Based on~\cite{Ond04, NeeVerWei07}, there is a norm that turns the linear space $\mathcal{L}_{2}(\ell^{2},E)$ of all $E$-valued linear continuous maps on $\ell^{2}$ that are radonifying into a separable Banach space. Under the hypothesis that $E$ is $2$-smooth, the stochastic integral
\begin{equation*}
\int_{0}^{\cdot}U_{s}\,\mathrm{d}W_{s}
\end{equation*}
of an $\mathcal{L}_{2}(\ell^{2},E)$-valued $\mathbb{F}$-progressively measurable process $U$ with locally square-integrable paths with respect to a sequence $W = (W^{(i)})_{i\in\N}$ of independent standard $\mathbb{F}$-Brownian motions can be constructed as continuous $\mathbb{F}$-local martingale.

We let $\xi:I\times\Omega\rightarrow E$ be an $\mathbb{F}$-progressively measurable process and $\mathcal{D}$ be a non-empty set of $E$-valued random vectors such that if $Y\in\mathcal{D}$ and $Z$ is an $E$-valued random vector with $Y = Z$ a.s., then $Z\in\mathcal{D}$. Moreover, let
\begin{equation*}
\B:I\times I\times\Omega\times\mathcal{D}\rightarrow E\quad\text{and}\quad\Sigma:I\times I\times\Omega\times\mathcal{D}\rightarrow\mathcal{L}_{2}(\ell^{2},E)
\end{equation*}
be two maps that are admissible in the sense of Definition~\ref{de:admissible maps}. Within this probabilistic framework, we consider the following \emph{stochastic Volterra integral equation with random functional coefficients} coupled with a value condition:
\begin{equation}\label{eq:stochastic Volterra equation}
X_{t} = \xi_{t} + \int_{0}^{t}\B_{t,s}(X_{s})\,\mathrm{d}s + \int_{0}^{t}\Sigma_{t,s}(X_{s})\,\mathrm{d}W_{s}\quad\text{a.s.}
\end{equation}
for $t\in I$. As Banach-valued functionals, $\B_{t,s}(\cdot)(\omega)$ and $\Sigma_{t,s}(\cdot)(\omega)$ depend on the random vector $X_{s}$ of any solution $X$, and their domain $\mathcal{D}$ could be the linear space $\mathcal{L}^{p}(\Omega,E)$ of all $E$-valued $p$-fold integrable random vectors, where $s,t\in I$ with $s\leq t$, $\omega\in\Omega$ and $p\geq 1$.

Hence,~\eqref{eq:stochastic Volterra equation} represents a \emph{novel type of stochastic Volterra equation} for which we will derive unique solutions in Theorems~\ref{th:existence of unique solutions 1} and~\ref{th:existence of unique solutions 2}. In particular, for \emph{controlled and distribution-dependent coefficients}~\eqref{eq:controlled and distribution-dependent coefficients} we obtain strong solutions, as Example~\ref{ex:controlled and distribution-dependent stochastic Volterra equations} shows. In Example~\ref{ex:random coefficients} we will also consider the \emph{probabilistic representation}~\eqref{eq:random coefficients}, which allows for random coefficients of affine type.

This analysis of~\eqref{eq:stochastic Volterra equation} is based on novel contributions to the existing literature on kernels, stochastic processes, stochastic integrals and metrics that are of independent interest and which can be described as follows:
\begin{enumerate}[label=(\arabic*), leftmargin=\widthof{(6)} + \labelsep, topsep = 3 pt, itemsep = 0 pt]
\item To handle singular kernels, as those in Example~\ref{ex:types of kernels}, \emph{two kinds of integral estimates for the iterated kernels} are given in Propositions~\ref{pr:integral estimate for iterated kernels of first type} and~\ref{pr:integral estimate for iterated kernels of second type}. In particular, the first kind extends several estimates in the proof of Theorem~1 in~\cite{Gri80}.

\item The notion of a \emph{weak modification of a process} is introduced in Definition~\ref{de:weak modification} and \emph{sufficient conditions for two weakly modified processes to be indistinguishable} are given in Proposition~\ref{pr:weakly modified processes}.

\item We deduce \emph{progressively measurable weak modifications of stochastic Volterra integrals} in Proposition~\ref{pr:progressively measurable stochastic Volterra integrals} by handling \emph{the convergence in probability of sequences of stochastic Volterra integrals} in Proposition~\ref{pr:approximation of stochastic Volterra integrals}.

\item To allow for the controlled and distribution-dependent coefficients~\eqref{eq:controlled and distribution-dependent coefficients}, the \emph{Borel measurability in the $p$th Wasserstein space of the law map} of a product measurable $p$-fold integrable process is established in Proposition~\ref{pr:Borel measurability of the law map} for $p\geq 1$.

\item This measurability follows from the \emph{pointwise approximation of processes} with values in separable metrisable spaces, as described in Corollary~\ref{co:approximation of product measurable processes}, and the \emph{representations of the Wasserstein distance} between a probability measure and a convex combination of Dirac measures in Proposition~\ref{pr:representations of the Wasserstein metric}.

\item The \emph{sharp moment inequalities} for processes in Proposition~\ref{pr:resolvent sequence inequality for processes} and Corollary~\ref{co:resolvent inequality for processes} are derived from Minkowski's integral inequality and the resolvent inequalities in~\cite{Kal24}. As a result, we obtain the \emph{integral estimates} in Corollaries~\ref{co:integral resolvent sequence inequality for processes} and~\ref{co:integral resolvent inequality for processes}.

\item If $I$ is not compact, then the completeness of the solution spaces, introduced at~\eqref{eq:seminorm for processes} and~\eqref{eq:seminorm for processes 2}, is inferred from the \emph{metrical decomposition principle} in Corollary~\ref{co:metrical decomposition}, based on the \emph{functional construction of a metric} in Proposition~\ref{pr:functional metric}.
\end{enumerate}

In Euclidean spaces, stochastic Volterra equations were initially investigated by Berger and Mizel~\cite{BerMiz80, Bermiz80-2}, in particular, see~\cite[Theorems~3.A,~3.B and~3.C]{BerMiz80}. The extension to path-dependent coefficients under Lipschitz conditions was studied in~\cite{Pro85,Kal21}. Namely, Protter~\cite[Theorem~4.3]{Pro85} has deduced unique solutions to one-dimensional stochastic Volterra equations driven by semimartingales, and the existence of unique strong solutions to multidimensional stochastic Volterra equations with Brownian motions as drivers has been shown in~\cite[Lemma~1.1]{Kal21}. In this context, the support of the law of a solution in the Hölder norm has been characterised in~\cite[Theorem~1.2]{Kal21}. 

Based on the Skorohod integral, which extends the It{\^o} integral, stochastic Volterra equations with anticipating coefficients satisfying Lipschitz conditions have been solved by Pardoux and Protter~\cite[Theorem~5.1]{ParPro90} and Al{\`os} and Nualart~\cite[Theorems~3.4 and 3.5]{AloNua97}. Cochran, Lee and Potthoff~\cite[Theorem~1.2]{CocLeePot95} derive unique solutions in a distributional sense, in the case of vanishing drift coefficients, linear diffusion coefficients and singular kernels. For unique solutions when the coefficients satisfy Osgood conditions and the singular kernels are of certain type, see the article~\cite[Theorems~1.1 and~3.1]{Wan08} by Wang. Moreover, Zhang~\cite[Theorems~3.1 and~3.7]{Zha10} provides unique Banach-valued solutions for stochastic Volterra equations with singular kernels.

Without any Lipschitz conditions, affine Volterra processes were derived in a weak sense by Abi Jaber, Larsson and Pulido~\cite[Theorems~3.4 and~3.6]{AbiLarPul19}. In a more general setting, the existence and stability of weak solutions to stochastic Volterra equations with singular convolution kernels and semimartingales as drivers have been proven by Abi Jaber, Cuchiero, Larsson and Pulido~\cite[Theorems~1.2 and~1.6]{AbiCucLar21}. For further stability results and unique weak solutions when the singular convolution kernels are out of scope of~\cite{AbiCucLar21}, see~\cite[Theorems~2.8 and~2.13]{Abi21}. In one dimension, weak solutions to stochastic Volterra equations driven by Brownian motions, with singular kernels that do not need to be of convolution type, have been derived by Prömel and Scheffels~\cite[Theorem~3.3]{ProSch23}. 

For unique strong solutions to stochastic Volterra equations with L{\'e}vy processes as drivers, see the paper~\cite[Theorem~1]{DaiXia20} by Dai and Xiao. Given distribution-dependent drift and Hölder continuous diffusion coefficients and regular kernels, unique strong solutions have been deduced by Jie, Luo and Zhang~\cite[Theorem~1.3]{JieLuoZha24} in one dimension. This extends the Yamada-Watanabe approach in~\cite{ProSch23-2} to the case of law-dependent drifts. Moreover, see~\cite[Theorem~3.1]{KalMeyPro24} for unique strong solutions to SDEs with law-dependent drift and locally Hölder continuous diffusion coefficients. Finally, stochastic Volterra equations with singular kernels, where both coefficients are distribution-dependent and satisfy Lipschitz conditions, have recently been studied by Prömel and Scheffels~\cite[Theorem~2.3]{ProSch25}.

This paper was intended to complement these works by allowing for random coefficients that depend in a probabilistic way on the solution, as stated in~\eqref{eq:stochastic Volterra equation}. To this end, a novel approach is required, which includes the following exemplary case: $\mathcal{D} = \mathcal{L}^{1}(\Omega,E)$,
\begin{equation}\label{eq:exemplary coefficients}
\begin{split}
\B_{t,s}(X_{s}) &= f(t - s)\big(\kappa_{s} + f_{1}(X_{s},\alpha_{s}) + \E\big[f_{2}(X_{s},\alpha_{s})\big]\big),\\
\quad \Sigma_{t,s}(X_{s}) &= g(t - s)\big(\eta_{s} + g_{1}(X_{s},\alpha_{s}) + \E\big[g_{2}(X_{s},\alpha_{s})\big]\big)
\end{split}
\end{equation}
for all $s,t\in I$ with $s < t$ and $X_{s}\in\mathcal{D}$, where $f,g:I\rightarrow\Re$ are measurable, $A$ is a separable Banach space, $\kappa$, $\eta$ and $\alpha$ are $\mathbb{F}$-progressively measurable bounded processes with values in $E$, $\mathcal{L}_{2}(\ell^{2},E)$ and $A$, respectively, and
\begin{equation*}
f_{i}:E\times A\rightarrow E,\quad g_{i}:E\times A\rightarrow\mathcal{L}_{2}(\ell^{2},E)
\end{equation*}
are Lipschitz continuous for $i\in\{1,2\}$. Further, $f$ and $g^{2}$ are required to be locally integrable. Under these conditions, which can be weakened, the following four assertions hold:
\begin{enumerate}[label=(\roman*), leftmargin=\widthof{(iv)} + \labelsep, itemsep = 0 pt]
\item If the $p$th moment function $I\rightarrow [0,\infty]$, $t\mapsto \E[|\xi_{t}|^{p}]$ is locally essentially bounded for $p\geq 2$, then Theorem~\ref{th:existence of unique solutions 1} yields a unique solution $X^{\xi}$ to~\eqref{eq:stochastic Volterra equation} in the sense of Definition~\ref{de:notion of a solution} such that $\E[|X^{\xi}|^{p}]$ is also locally essentially bounded.

\item If $\E[|\xi|^{p}]$ is just locally $p$-fold integrable for $p\geq 2$, then Theorem~\ref{th:existence of unique solutions 2} applies and we obtain a unique solution to~\eqref{eq:stochastic Volterra equation} whose $p$th moment function is locally integrable.

\item Let $\xi = \xi_{0}$ a.s.~and $\E[|\xi_{0}|^{p}] < \infty$ for all $p\geq 2$ and suppose that there is $\hat{\beta}\in ]0,\frac{1}{2}]$ such that for any $T\in I$ there is $c \geq 0$ satisfying
\begin{equation}\label{eq:specific condition}
\begin{split}
&\int_{0}^{t-s}|f(u)|\,\mathrm{d}u + \bigg(\int_{0}^{t-s}g(u)^{2}\,\mathrm{d}u\bigg)^{\frac{1}{2}}\\
& + \int_{0}^{s}|f(t - s + u) - f(u)|\,\mathrm{d}u +  \bigg(\int_{0}^{s}\big(g(t-s + u) - g(u)\big)^{2}\,\mathrm{d}u\bigg)^{\frac{1}{2}} \leq c(t-s)^{\hat{\beta}}
\end{split}
\end{equation}
for all $s,t\in [0,T]$ with $s\leq t$. Then Theorem~\ref{th:existence of unique solutions 1} gives a unique regular solution $\hat{X}^{\xi_{0}}$ to~\eqref{eq:stochastic Volterra equation} for which $\E[|\hat{X}^{\xi_{0}}|^{p}]$ is locally bounded for all $p\geq 2$ and whose paths are locally $\beta$-Hölder continuous for any $\beta\in ]0,\hat{\beta}[$.

\item In particular, if $f(u) = u^{\beta -1}$ and $g(u) = u^{\gamma - 1/2}$ for all $u\in I\setminus\{0\}$ with $\beta\in]0,1]$ and $\gamma\in ]0,\frac{1}{2}]$, then the condition~\eqref{eq:specific condition} is valid for $\hat{\beta} = \beta\wedge\gamma$, by the inequality~\eqref{eq:basic inequalities}.
\end{enumerate}

We note that, as in the cases of controlled and distribution-dependent coefficients~\eqref{eq:controlled and distribution-dependent coefficients} and random coefficients of type~\eqref{eq:random coefficients}, the stochastic Volterra equation~\eqref{eq:stochastic Volterra equation} with the exemplary coefficients~\eqref{eq:exemplary coefficients} has not been solved so far.\smallskip

This work is structured as follows. In Section~\ref{se:2} the probabilistic framework, which contains analytic aspects, is introduced and the preliminary and main results are stated. First, Section~\ref{se:2.1} recalls the iterated kernels and resolvents of non-negative kernels and Section~\ref{se:2.2} is concerned with the properties of stochastic Volterra integrals with values in Banach spaces. In Section~\ref{se:2.3} we study admissible coefficients and give a definition of a solution to~\eqref{eq:stochastic Volterra equation} that extends the classical solution concept for an SDE. In Sections~\ref{se:2.4} and~\ref{se:2.5} we derive unique solutions to~\eqref{eq:stochastic Volterra equation} with locally essentially bounded and locally integrable moment functions, respectively.

Section~\ref{se:3} provides general results on the theory of stochastic processes. Namely, in Section~\ref{se:3.1} weakly modified processes are introduced and analysed, and in Section~\ref{se:3.2} we approximate processes pointwise and investigate the measurability of their distribution maps. Sharp moment and integral inequalities for processes are deduced in Section~\ref{se:3.3} and applied to stochastic Volterra processes in Section~\ref{se:3.4}.

Section~\ref{se:4} contains supplementary analytic results on kernels and metrics. While in Section~\ref{se:4.1} two types of integral estimates for iterated kernels are deduced, in Section~\ref{se:4.2} a metrical decomposition principle is established. Eventually, the preliminary and main results of Section~\ref{se:2} are proven in Section~\ref{se:5}.

\section{Preliminaries and main results}\label{se:2}

We write $\mathcal{L}(\ell^{2},E)$ for the Banach space of all $E$-valued linear continuous maps on $\ell^{2}$, endowed with the operator norm that is also denoted by $|\cdot|$. By convention, for any monotone function $f:\Re_{+}\rightarrow\Re$ we set $f(\infty) := \lim_{x\uparrow\infty} f(x)$, and for $d,m\in\N$ the transpose of a matrix $A\in\Re^{m\times d}$ is denoted by
$A^{\top}$.

\subsection{Iterated kernels and resolvents}\label{se:2.1}

By a \emph{non-negative kernel} on $I$, which in this section is just a non-degenerate interval in $\Re$, we shall mean an $[0,\infty]$-valued measurable function $k$ on the triangular set of all $(t,s)\in I\times I$ with $s\leq t$.

As introduced in~\cite{Kal24}, the iterated kernels of $k$ relative to a $\sigma$-finite Borel measure $\mu$ on $I$ are represented by the sequence $(\R_{k,\mu,n})_{n\in\N}$ of non-negative kernels on $I$ recursively defined via
\begin{equation}\label{eq:resolvent sequence}
\R_{k,\mu,1}(t,s) := k(t,s)\quad\text{and}\quad\R_{k,\mu,n+1}(t,s):=\int_{[s,t]}k(t,\tilde{s})\R_{k,\mu,n}(\tilde{s},s)\,\mu(\mathrm{d}\tilde{s}),
\end{equation}
based on Fubini's theorem. Then $\R_{k,\mu}:=\sum_{n=1}^{\infty}\R_{k,\mu,n}$ is the \emph{resolvent} of $k$ with respect to $\mu$, which yields for each $s\in I$ a solution to the linear Volterra integral equation $\R_{k,\mu}(t,s) = k(t,s) + \int_{[s,t]}k(t,\tilde{s})\R_{k,\mu}(\tilde{s},s)\,\mu(\mathrm{d}\tilde{s})$ for $t\in I$ with $t\geq s$.

In the case that $\mu$ is the Lebesgue measure on $I$, we shall simply write $\R_{k,n}$ and $\R_{k}$ for $\R_{k,\mu,n}$ and $\R_{k,\mu}$, respectively, where $n\in\N$. The iterated kernels and the resolvent are indispensable for the sharp moment and integral estimates in Section~\ref{se:3.3}.

Given $q\geq 1$, we will use the following tractable types of kernels for the regularity conditions on the coefficients of the stochastic Volterra equation~\eqref{eq:stochastic Volterra equation}.

\begin{Definition}\label{de:convex cones of non-negative kernels}
Let $\mathcal{K}_{\infty}^{q}$ denote the set of all non-negative kernels $k$ on $I$ satisfying $\esssup_{t\in [0,T]}\int_{0}^{t}k(t,s)^{q}\,\mathrm{d}s < \infty$ for all $T\in I$ and $\mathcal{K}^{q}$ be the set of all $k\in\mathcal{K}_{\infty}^{q}$ such that
\begin{equation*}
\lim_{\delta\downarrow 0}\esssup_{\substack{r,t\in [0,T]:\\ r \leq t \leq r + \delta}} \int_{r}^{t}k(t,s)^{q}\,\mathrm{d}s = 0\quad\text{for each $T\in I$.}
\end{equation*}
Furthermore, let $\hat{\mathcal{K}}^{q}$ stand for the set of all non-negative kernels $k$ on $I$ that satisfy $\esssup_{s\in [0,T]}\int_{s}^{T}k(t,s)^{q}\,\mathrm{d}t < \infty$ and
\begin{equation*}
\lim_{\delta\downarrow 0}\esssup_{\substack{r,t\in [0,T]:\\
r \leq t \leq r + \delta}} \int_{r}^{t}k(s,r)^{q}\,\mathrm{d}s = 0\quad\text{for any $T\in I$.}
\end{equation*}
\end{Definition}

\begin{Remark}
We have $\mathcal{K}_{\infty}^{q + \varepsilon} \subset\mathcal{K}^{q}$ for any $\varepsilon > 0$. Similarly, if $k$ is a non-negative kernel on $I$ such that $\esssup_{s\in [0,T]}\int_{s}^{T}k(t,s)^{q + \varepsilon}\,\mathrm{d}t < \infty$ for all $T\in I$, then $k\in\hat{\mathcal{K}}^{q}$.
\end{Remark}

These three convex cones possess two relevant properties. First, for a non-negative kernel $k$ on $I$ we have $k\in\mathcal{K}_{\infty}^{q}$ whenever $k\leq l$ for some $l\in\mathcal{K}_{\infty}^{q}$. Secondly, if $c:I\rightarrow [0,\infty]$ is measurable and locally essentially bounded, then the kernels
\begin{equation*}
\{(t,s)\in I\times I\,|\,s\leq t\}\rightarrow [0,\infty],\quad (t,s)\mapsto c(t)k(t,s)
\end{equation*}
and $\{(t,s)\in I\times I\,|\,s\leq t\}\rightarrow [0,\infty]$, $(t,s)\mapsto k(t,s)c(s)$ lie in $\mathcal{K}_{\infty}^{q}$ as soon as $k$ does. The same assertions hold for $\mathcal{K}^{q}$ and $\hat{\mathcal{K}}^{q}$. Let us next consider three representations of $k$.

\begin{Example}\label{ex:types of kernels}
(i) \emph{(Kernels with separate variables)} Let $k_{0},k_{1}:I\rightarrow [0,\infty]$ be measurable and $k(t,s) = k_{0}(t)k_{1}(s)$ for all $s,t\in I$ with $s <t$. If $k_{0}$ is locally essentially bounded, then
\begin{equation*}
k\in\mathcal{K}_{\infty}^{q}
\end{equation*}
once $k_{1}$ is locally $q$-fold integrable, and $k\in\mathcal{K}^{q}$ if $k_{1}$ is locally $\tilde{q}$-fold integrable for some $\tilde{q} > q$. By changing the roles of $k_{0}$ and $k_{1}$, we obtain sufficient conditions for $k\in\hat{\mathcal{K}}^{q}$.\smallskip

\noindent
(ii) \emph{(Convolution kernels)} Suppose that $f:I\rightarrow [0,\infty]$ is a measurable function such that $k(t,s) = f(t-s)$ for any $s,t\in I$ with $s < t$. Then
\begin{equation*}
k\in\mathcal{K}^{q}\cap\hat{\mathcal{K}}^{q}\quad\Leftrightarrow\quad \text{$f$ is locally $q$-fold integrable.}
\end{equation*}
If instead $\int_{0}^{T}f(u)^{q}\,\mathrm{d}u = \infty$ for some $T\in I$, then $k\in\mathcal{K}_{\infty}^{q}$ is impossible and we readily see that $\esssup_{s\in [0,T]}\int_{s}^{T}k(t,s)^{q}\,\mathrm{d}t = \infty$.\smallskip

\noindent
(iii) {\it (Transformed fractional kernels)} Let $\alpha > 0$, $\beta\in [0,\alpha[$ and $\varphi:I\rightarrow\Re$ be strictly increasing and locally absolutely continuous such that
\begin{equation*}
k(t,s) = \dot{\varphi}(s)\big(\varphi(t) - \varphi(s)\big)^{\alpha - 1}\big(\varphi(s) - \varphi(0)\big)^{-\beta}
\end{equation*}
for all $s,t\in I$ with $0 < s < t$, where $\dot{\varphi}$ is a positive weak derivative of $\varphi$. If $\beta \geq 1$, then $k\notin\mathcal{K}_{\infty}^{1}$, and $k\in\mathcal{K}^{1}$, otherwise. For the choice $\varphi(t) = t^{\gamma}$ for all $t\in I$ with $\gamma > 0$ we have
\begin{equation}\label{eq:condition on the exponents}
k\in\mathcal{K}^{q}\cap\hat{\mathcal{K}}^{q}\quad\Leftrightarrow\quad \alpha > 1 - \frac{1}{q}\quad\text{and}\quad \min\{\alpha,1\}\gamma > 1 - \frac{1}{q} + \beta\gamma.
\end{equation}
If, however, one of the two inequalities in~\eqref{eq:condition on the exponents} fails, then $k\notin\mathcal{K}_{\infty}^{q}$ and it follows that $\esssup_{s\in [0,T]}\int_{s}^{T}k(t,s)^{q}\,\mathrm{d}t$ for some $T\in I$.
\end{Example}

\subsection{Stochastic Volterra integrals in Banach spaces}\label{se:2.2}

By means of the progressive $\sigma$-field $\mathcal{A}$ on $I\times\Omega$, which consists of all sets $A$ in $I\times\Omega$ for which $\mathbbm{1}_{A}$ is $\mathbb{F}$-progressively measurable, the \emph{progressive measurability of Bochner integral processes of Volterra type} follows from Corollary~\ref{co:approximation of product measurable processes}.

\begin{Proposition}\label{pr:progressively measurable integral processes}
Let $U:I\times I\times\Omega\rightarrow E$ be $\mathcal{B}(I)\otimes\mathcal{A}$-measurable and $\mu$ be a Borel measure on $I$ such that $\mu([0,t])$ and $\int_{[0,t]}|U_{t,s}|\,\mu(\mathrm{d}s)$ are finite for all $t\in I$. Then
\begin{equation*}
X:I\times\Omega\rightarrow E,\quad X_{t}(\omega) := \int_{[0,t]}U_{t,s}(\omega)\,\mu(\mathrm{d}s)
\end{equation*}
is an $\mathbb{F}$-progressively measurable process.
\end{Proposition}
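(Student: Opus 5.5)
Fix $T\in I$; we have to show that the restriction of $X$ to $[0,T]\times\Omega$ is $\mathcal{B}([0,T])\otimes\mathcal{F}_{T}$-measurable. The plan is a monotone class argument on the integrand $U$, followed by a limit via dominated convergence. The hypotheses $\mu([0,t])<\infty$ and $\int_{[0,t]}|U_{t,s}|\,\mu(\mathrm{d}s)<\infty$ are exactly what make every integral below finite.

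First take $U$ real-valued, non-negative and bounded, of product form $U_{t,s}(\omega)=\mathbbm{1}_{B}(t)\mathbbm{1}_{A}(s,\omega)$ with $B\in\mathcal{B}(I)$ and $A\in\mathcal{A}$. Then
\[
X_{t}=\mathbbm{1}_{B}(t)\int_{[0,t]}\mathbbm{1}_{A}(s,\cdot)\,\mu(\mathrm{d}s).
\]
For $t\le T$ the map $(t,\omega)\mapsto\int_{[0,T]}\mathbbm{1}_{[0,t]}(s)\mathbbm{1}_{A}(s,\omega)\,\mu(\mathrm{d}s)$ is $\mathcal{B}([0,T])\otimes\mathcal{F}_{T}$-measurable by Fubini's theorem. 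This uses three facts:
\begin{itemize}
\item $\mu$ restricted to $[0,T]$ is finite;
\item $\mathbbm{1}_{[0,t]}(s)$ is jointly Borel in $(t,s)$;
\item $A\cap([0,T]\times\Omega)\in\mathcal{B}([0,T])\otimes\mathcal{F}_{T}$.
\end{itemize}
The sets of the form $B\times A$ form a $\pi$-system generating $\mathcal{B}(I)\otimes\mathcal{A}$. Moreover, the class of bounded measurable $U$ for which $X$ is progressive is a vector space closed under bounded monotone limits, by monotone convergence inside the integral. The monotone class theorem therefore covers all bounded real $\mathcal{B}(I)\otimes\mathcal{A}$-measurable $U$. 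Truncating with $U\wedge n$ and applying monotone convergence extends the result to non-negative $U$, with values in $[0,\infty]$.

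For $E$-valued $U$ I would use Corollary~\ref{co:approximation of product measurable processes}, which approximates pointwise a measurable map into the separable metrisable space $E$. It gives simple $\mathcal{B}(I)\otimes\mathcal{A}$-measurable maps $U^{n}=\sum_{i}x_{i}\mathbbm{1}_{C_{i}^{n}}$ with $U^{n}\to U$ pointwise and $|U^{n}|\le 2|U|$; the usual construction by nearest points of a dense sequence, with $x_{0}=0$, yields this bound. Each $X^{n}_{t}=\sum_{i}x_{i}\int_{[0,t]}\mathbbm{1}_{C^{n}_{i}}(t,s,\cdot)\,\mu(\mathrm{d}s)$ is progressive by the real case. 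The Bochner integrals converge for every $(t,\omega)$ by dominated convergence, because $\int_{[0,t]}|U_{t,s}(\omega)|\,\mu(\mathrm{d}s)<\infty$. Since a pointwise limit of measurable $E$-valued maps is measurable, $X$ is progressive.

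The main obstacle is the dominating bound $|U^{n}|\le 2|U|$. Without it, the real-valued pieces $\int\mathbbm{1}_{C_{i}^{n}}\,\mathrm{d}\mu$ are measurable but the limit may fail to exist. I would therefore check carefully that Corollary~\ref{co:approximation of product measurable processes} provides this domination, or build it in by modifying the approximants to $U^{n}\mathbbm{1}_{\{|U^{n}|\le 2|U|\}}$. The finiteness of the integral is needed only at the final limit step.
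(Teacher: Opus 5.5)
Your proof is correct and follows essentially the paper's route: a simple-function approximation via Corollary~\ref{co:approximation of product measurable processes} with dominated convergence, plus a Dynkin/monotone-class argument over the $\pi$-system $\mathcal{B}(I)\times\mathcal{A}$; the only local difference is that for product sets you use Fubini on $[0,T]\times\Omega$, where the paper argues via adaptedness and path regularity of $t\mapsto\int_{[0,t]}\mathbbm{1}_{A}(s,\cdot)\,\mu(\mathrm{d}s)$. The domination you flag as the main obstacle is supplied directly by part~(ii) of that corollary with $x_{0}=0$, which gives $|U^{(n)}_{t,s}|\le|U_{t,s}|$, so no modification of the approximants is needed.
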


Let us recall the stochastic integral in Banach spaces. For an orthonormal basis $(b_{i})_{i\in\N}$ of $\ell^{2}$ and a sequence $(Y_{i})_{i\in\N}$ of independent and standard normally distributed random variables, a map $L\in\mathcal{L}(\ell^{2},E)$ is called radonifying if the series $\sum_{i=1}^{\infty}L(b_{i})Y_{i}$ converges in second moment. 

As shown in~\cite{Ond04}, this property is independent of the choice of the basis $(b_{i})_{i\in\N}$ and the sequence $(Y_{i})_{i\in\N}$ of random variables, and if it holds, then the distribution of $\sum_{i=1}^{\infty}Y_{i}L(b_{i})$ is the Gaussian measure $\mathcal{N}(0,L\circ L^{*}$) on $E$.

Here, the adjoint of $L$ is the unique $\ell^{2}$-valued linear continuous operator $L^{*}$ on the topological dual space $E^{*}$ of $E$ that satisfies $(x^{*}\circ L)(a) = \langle a, L^{*}(x^{*})\rangle_{\ell^{2}}$ for all $a\in\ell^{2}$ and $x^{*}\in E^{*}$. Note that the linear space $\mathcal{L}_{2}(\ell^{2},E)$ of all radonifying $L\in\mathcal{L}(\ell^{2},E)$, equipped with the norm
\begin{equation*}
|L|_{2}:= \E\bigg[\bigg|\sum_{i=1}^{\infty}Y_{i}L(b_{i})\bigg|^{2}\bigg]^{\frac{1}{2}},
\end{equation*}
turns into a separable Banach space and $|L| \leq |L|_{2}$ for all $L\in\mathcal{L}_{2}(\ell^{2},E)$. See Theorem~2.2, Definition~2.3, the consecutive discussion and Proposition~2.5 in~\cite{Ond04} for details.

\begin{Example}[Hilbert-Schmidt norm]
For $d,m\in\N$ let $E = \Re^{m}$ and $A\in\Re^{m\times d}$. If $L:\ell^{2}\rightarrow\Re^{m}$ is given by $L(a) := A(a_{1},\dots,a_{d})^{\top}$, then $L\in\mathcal{L}_{2}(\ell^{2},\Re^{m})$ and
\begin{equation*}
|L|_{2}^{2} = \E\big[|AY|^{2}\big] = \mathrm{tr}(A^{\top}A) = |A|^{2}
\end{equation*}
for the random vector $Y := (Y_{1},\dots,Y_{d})^{\top}$ and the Hilbert-Schmidt norm $|\cdot|$ on $\Re^{m\times d}$.
\end{Example}

Next, let $E$ be $2$-smooth in the sense of~\cite{Pis75}. By recalling this concept in Section~\ref{se:5.1}, it follows from Lemma~\ref{le:smoothness of a normed space} that this property holds if and only if there are a norm $\|\cdot\|$ that is equivalent to $|\cdot|$ and a real constant $\hat{c}\geq 2$ such that
\begin{equation}\label{eq:2-smoothness condition}
\|x + y\|^{2} + \|x - y\|^{2} \leq 2\|x\|^{2} + \hat{c}\|y\|^{2}\quad\text{for all $x,y\in E$.}
\end{equation}
In this case, $E$ is uniformly smooth and reflexive. If equality holds in~\eqref{eq:2-smoothness condition} for $\hat{c} = 2$, then we recover the parallelogram identity and $\|\cdot\|$ is induced by an inner product. In general, an equivalent norm to $|\cdot|$ that turns $E$ into a Hilbert space does not need to exist.

\begin{Example}
For a $\sigma$-finite measure space $(S,\mathcal{B},\mu)$ and $p\geq 1$, we recall the linear space $\mathcal{L}^{p}(S,\mathcal{B},\mu)$ of all real-valued $\mathcal{B}$-measurable $p$-fold $\mu$-integrable functions on $S$, endowed with the complete seminorm
\begin{equation*}
\mathcal{L}^{p}(S,\mathcal{B},\mu)\rightarrow\Re_{+},\quad f\mapsto\bigg(\int_{S}|f|^{p}\,\mathrm{d}\mu\bigg)^{\frac{1}{p}},
\end{equation*}
and let $\mathcal{N}^{p}$ be the linear space of all $f\in \mathcal{L}^{p}(S,\mathcal{B},\mu)$ with $f = 0$ $\mu$-a.e. Then the Banach space $L^{p}(S,\mathcal{B},\mu)$, which equals the quotient space $\mathcal{L}^{p}(S,\mathcal{B},\mu)/\mathcal{N}^{p}$, is $2$-smooth if $p\geq 2$. For justifications of this fact see~\cite[Proposition~2.1]{Pin94} and~\cite[Lemma~2.4]{DueGeeVerWel10}.
\end{Example}

Following~\cite{Ond04}, for any $\mathcal{L}_{2}(\ell^{2},E)$-valued $\mathbb{F}$-progressively measurable process $U$ satisfying $\int_{0}^{\cdot}|U_{s}|_{2}^{2}\,\mathrm{d}s < \infty$ a.s., the stochastic integral $I\times\Omega\rightarrow E$, $(t,\omega)\mapsto \int_{0}^{t}U_{s}\,\mathrm{d}W_{s}(\omega)$ is a continuous $\mathbb{F}$-local martingale such that the following three conditions hold:
\begin{enumerate}[(i)]
\item $\int_{0}^{\cdot}\alpha U_{s} + V_{s}\,\mathrm{d}W_{s} = \alpha\int_{0}^{\cdot}U_{s}\,\mathrm{d}W_{s} + \int_{0}^{\cdot}V_{s}\,\mathrm{d}W_{s}$ a.s.~for all $\alpha\in\Re$ and any $\mathcal{L}_{2}(\ell^{2},E)$-valued $\mathbb{F}$-progressively measurable process $V$ such that $\int_{0}^{\cdot}|V_{s}|_{2}^{2}\,\mathrm{d}s < \infty$ a.s.

\item $\int_{0}^{\cdot\wedge\tau} U_{s}\,\mathrm{d}W_{s} = \int_{0}^{\cdot}\mathbbm{1}_{[0,\tau]}(s)U_{s}\,\mathrm{d}W_{s}$ a.s.~for every $\mathbb{F}$-stopping time $\tau$.

\item For each $\mathcal{L}_{2}(\ell^{2},E)$-valued process $V$ as in~(i) and any $K,L\in E^{*}$ the covariation of $K(\int_{0}^{\cdot}U_{s}\,\mathrm{d}W_{s})$ and $L(\int_{0}^{\cdot}V_{s}\,\mathrm{d}W_{s})$ is indistinguishable from $\int_{0}^{\cdot}\langle U_{s}^{*}(K),V_{s}^{*}(L)\rangle_{\ell^{2}}\,\mathrm{d}s$.
\end{enumerate}

Moreover, the following version of the Burkholder-Davis-Gundy inequality holds: For any $p\geq 2$ there is a real constant $w_{p} > 0$ such that
\begin{equation}\label{eq:stochastic integral estimate}
\E\bigg[\sup_{t\in [0,T]}\bigg|\int_{0}^{t}\,U_{s}\,\mathrm{d}W_{s}\bigg|^{p}\bigg]^{\frac{1}{p}} \leq w_{p}\E\bigg[\bigg(\int_{0}^{T}|U_{s}|_{2}^{2}\,\mathrm{d}s\bigg)^{\frac{p}{2}}\bigg]^{\frac{1}{p}}
\end{equation}
for any $\mathcal{L}_{2}(\ell^{2},E)$-valued $\mathbb{F}$-progressively measurable process $U$ with $\int_{0}^{\cdot}|U_{s}|_{2}^{2}\,\mathrm{d}s < \infty$ a.s. and each $T\in I$. From these properties we infer the \emph{convergence in probability of a sequence of stochastic Volterra integrals to another stochastic Volterra integral}.

\begin{Proposition}\label{pr:approximation of stochastic Volterra integrals}
Let $(U^{(n)})_{n\in\N}$ be a sequence of $\mathcal{L}_{2}(\ell^{2},E)$-valued $\mathcal{B}(I)\otimes\mathcal{A}$-measurable maps on $I\times I\times\Omega$ such that
\begin{equation}\label{eq:condition for the approximation of stochastic Volterra integrals}
\int_{0}^{t}|U_{t,s}^{(k)}|_{2}^{2} + |U_{t,s}|_{2}^{2}\,\mathrm{d}s < \infty\quad\text{a.s.}\quad\text{and}\quad\lim_{n\uparrow\infty} \P\bigg(\int_{0}^{t}|U_{t,s}^{(n)} - U_{t,s}|_{2}^{2}\,\mathrm{d}s \geq \varepsilon\bigg) = 0
\end{equation}
for all $k\in\N$, $t\in I$ and $\varepsilon > 0$ and some $\mathcal{B}(I)\otimes\mathcal{A}$-measurable map $U:I\times I\times\Omega\rightarrow\mathcal{L}_{2}(\ell^{2},E)$. Then
\begin{equation}\label{eq:approximation of stochastic Volterra integrals}
\lim_{n\uparrow\infty}\P\bigg(\sup_{t\in [0,T]}\bigg|\int_{0}^{t}U_{T,s}^{(n)}\,\mathrm{d}W_{s} - \int_{0}^{t}U_{T,s}\,\mathrm{d}W_{s}\bigg|\geq\varepsilon\bigg) = 0 
\end{equation}
for any $T\in I$ and $\varepsilon > 0$.
\end{Proposition}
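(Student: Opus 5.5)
Fix $T\in I$ and $\varepsilon>0$. The idea is to reduce the statement to an ordinary stochastic integral with a fixed integrand and then apply the standard localisation argument built on the Burkholder-Davis-Gundy inequality~\eqref{eq:stochastic integral estimate}.

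\textbf{Reduction to fixed integrands.} For fixed $T$, set $V^{(n)}_{s}:=U^{(n)}_{T,s}\mathbbm{1}_{[0,T]}(s)$ and $V_{s}:=U_{T,s}\mathbbm{1}_{[0,T]}(s)$ for $s\in I$. Since $U^{(n)}$ and $U$ are $\mathcal{B}(I)\otimes\mathcal{A}$-measurable, their sections at the fixed first coordinate $T$ are $\mathcal{A}$-measurable. Hence $V^{(n)}$ and $V$ are $\mathcal{L}_{2}(\ell^{2},E)$-valued $\mathbb{F}$-progressively measurable processes. By~\eqref{eq:condition for the approximation of stochastic Volterra integrals} with $t=T$, we have $\int_{0}^{\cdot}|V^{(n)}_{s}|_{2}^{2}+|V_{s}|_{2}^{2}\,\mathrm{d}s<\infty$ a.s., so all the stochastic integrals are defined as continuous local martingales. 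For $t\in[0,T]$ the integrals $\int_{0}^{t}U^{(n)}_{T,s}\,\mathrm{d}W_{s}$ coincide with $\int_{0}^{t}V^{(n)}_{s}\,\mathrm{d}W_{s}$, and likewise for $U$. By linearity~(i), the difference in~\eqref{eq:approximation of stochastic Volterra integrals} is a.s.\ equal to $M^{(n)}_{t}:=\int_{0}^{t}D^{(n)}_{s}\,\mathrm{d}W_{s}$ with $D^{(n)}:=V^{(n)}-V$. Since both sides are continuous, this equality holds uniformly in $t\in[0,T]$ up to a null set. It therefore suffices to show $\sup_{t\le T}|M^{(n)}_{t}|\to0$ in probability, knowing only that $A_{n}:=\int_{0}^{T}|D^{(n)}_{s}|_{2}^{2}\,\mathrm{d}s\to0$ in probability.

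\textbf{Localisation (Lenglart-type argument).} For $\delta>0$, define the stopping time $\tau_{n}:=\inf\{t\in I:\int_{0}^{t\wedge T}|D^{(n)}_{s}|_{2}^{2}\,\mathrm{d}s\ge\delta\}$. It is an $\mathbb{F}$-stopping time because the integral process is continuous and adapted, and the filtration satisfies the usual conditions. By~(ii), $M^{(n)}_{\cdot\wedge\tau_{n}}=\int_{0}^{\cdot}\mathbbm{1}_{[0,\tau_{n}]}D^{(n)}_{s}\,\mathrm{d}W_{s}$ a.s. Applying~\eqref{eq:stochastic integral estimate} with $p=2$ gives
\begin{equation*}
\E\Big[\sup_{t\in[0,T]}|M^{(n)}_{t\wedge\tau_{n}}|^{2}\Big]\le w_{2}^{2}\,\E\Big[\int_{0}^{T\wedge\tau_{n}}|D^{(n)}_{s}|_{2}^{2}\,\mathrm{d}s\Big]\le w_{2}^{2}\,\E[A_{n}\wedge\delta].
\end{equation*}
Here the bound $A_{n}\wedge\delta$ uses continuity of the integral in $t$. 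On $\{\tau_{n}>T\}\supseteq\{A_{n}<\delta\}$ the stopped and unstopped suprema agree. Chebyshev's inequality then yields
\begin{equation*}
\P\Big(\sup_{t\le T}|M^{(n)}_{t}|\ge\varepsilon\Big)\le\frac{w_{2}^{2}}{\varepsilon^{2}}\E[A_{n}\wedge\delta]+\P(A_{n}\ge\delta).
\end{equation*}
Since $A_{n}\to0$ in probability and $A_{n}\wedge\delta$ is bounded, dominated convergence in probability gives $\E[A_{n}\wedge\delta]\to0$, and also $\P(A_{n}\ge\delta)\to0$. Letting $n\uparrow\infty$ proves~\eqref{eq:approximation of stochastic Volterra integrals}. (One could even take $\delta=\varepsilon^{3}$ and drop the dominated convergence step.)

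\textbf{Main obstacle.} The only delicate point is the first step: checking that the sections $s\mapsto U_{T,s}$ are progressively measurable and that the Volterra integral at the frozen time $T$ is genuinely an ordinary stochastic integral to which (i), (ii) and~\eqref{eq:stochastic integral estimate} apply. This rests on the fact that sections of product-measurable maps are measurable. Once that is settled, the remaining argument is the standard stopping-time estimate.
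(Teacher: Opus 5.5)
Your proof is correct and is essentially the paper's argument: you reduce to $U=0$ by linearity, localise at the first time $\int_{0}^{\cdot}|U^{(n)}_{T,s}-U_{T,s}|_{2}^{2}\,\mathrm{d}s$ reaches a level $\delta$, bound the stopped supremum via~\eqref{eq:stochastic integral estimate} (Chebyshev with $p=2$ where the paper uses Markov and H\"older), and conclude from $\E[A_{n}\wedge\delta]\to0$ and $\P(A_{n}\geq\delta)\to0$. Only your parenthetical shortcut needs a fix: $\delta=\varepsilon^{3}$ gives merely $\limsup_{n}\P(\sup_{t\leq T}|M^{(n)}_{t}|\geq\varepsilon)\leq w_{2}^{2}\varepsilon$, so to avoid dominated convergence you should use $\E[A_{n}\wedge\delta]\leq\delta$, let $n\uparrow\infty$, and then let $\delta\downarrow0$ with $\varepsilon$ fixed.
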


\begin{Remark}
If in fact $|U_{t,s}^{(k)}|_{2} \leq V_{t,s}$ and $\lim_{n\uparrow\infty} U_{t,s}^{(n)} = U_{t,s}$ for all $k\in\N$ and $s,t\in I$ with $s\leq t$ and some $\mathcal{B}(I)\otimes\mathcal{A}$-measurable function $V:I\times I\times\Omega\rightarrow [0,\infty]$ satisfying
\begin{equation*}
\int_{0}^{t}V_{t,s}^{2}\,\mathrm{d}s < \infty\quad\text{a.s.}\quad\text{for any $t\in I$,}
\end{equation*}
then $\lim_{n\uparrow\infty} \int_{0}^{t}|U_{t,s}^{(n)} - U_{t,s}|_{2}^{2}\,\mathrm{d}s = 0$ and $\int_{0}^{t}|U_{t,s}|_{2}^{2}\,\mathrm{d}s$ $\leq \int_{0}^{t}V_{t,s}^{2}\,\mathrm{d}s$ a.s.~for all $t\in I$, by dominated convergence. In such a case, condition~\eqref{eq:condition for the approximation of stochastic Volterra integrals} is redundant.
\end{Remark}

Based on Proposition~\ref{pr:approximation of stochastic Volterra integrals}, we can prove that a \emph{stochastic Volterra integral admits a progressively measurable weak modification} in the sense of Definition~\ref{de:weak modification}.

\begin{Proposition}\label{pr:progressively measurable stochastic Volterra integrals}
Let $U:I\times I\times\Omega\rightarrow\mathcal{L}_{2}(\ell^{2},E)$ be a $\mathcal{B}(I)\otimes\mathcal{A}$-measurable map such that $\int_{0}^{t}|U_{t,s}|_{2}^{2}\,\mathrm{d}s< \infty$ a.s.~for any $t\in I$. Then there is an $E$-valued $\mathbb{F}$-progressively measurable process $X$ such that
\begin{equation}\label{eq:weak modification of a stochastic Volterra integral}
X_{t} = \int_{0}^{t}U_{t,s}\,\mathrm{d}W_{s}\quad\text{a.s.}\quad\text{for a.e.~$t\in I$.}
\end{equation}
\end{Proposition}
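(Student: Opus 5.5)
Following the strategy announced before Proposition~\ref{pr:progressively measurable stochastic Volterra integrals}, the plan is to approximate $U$ by simple maps in the outer variable $t$, take the corresponding stochastic integrals, which are honest progressively measurable processes, and then pass to a limit with Proposition~\ref{pr:approximation of stochastic Volterra integrals}. The price is that the limit will be identified only for a.e.~$t$.

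\textbf{Step 1 (localisation).} First reduce to compact time sets. Choose $T_{m}\in I$ increasing to $\sup I$ and construct the process on each $[0,T_{m}]$. Then glue the pieces by setting $X_{t} := X^{(m)}_{t}$ for $t\in ]T_{m-1},T_{m}]$. Each piece is progressively measurable, so the glued process is progressively measurable.

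\textbf{Step 2 (discretisation in $t$).} Fix $T\in I$ and a dyadic-type partition $\pi_{n}$ of $[0,T]$. Since the partition points lie in $[0,T]$, pair the intervals as $]t_{j-1},t_{j}]$ with the left endpoint $t_{j-1}$ and define
\[
X^{(n)}_{t} := \int_{0}^{t}U_{t_{j-1},s}\,\mathrm{d}W_{s}\quad\text{for } t\in ]t_{j-1},t_{j}].
\]
Here $\int_{0}^{t}U_{r,s}\,\mathrm{d}W_{s}$ denotes the continuous version of the stochastic integral of $s\mapsto \mathbbm{1}_{[0,r]}(s)U_{r,s}$, with $r$ fixed. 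This integrand is progressive, and its integrability holds a.s.~by hypothesis. On each interval $X^{(n)}$ is a continuous adapted process, so $X^{(n)}$ is $\mathbb{F}$-progressively measurable.

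\textbf{Step 3 (convergence for a.e.~$t$).} For a.e.~$t$ we want $X^{(n)}_{t}\to\int_{0}^{t}U_{t,s}\,\mathrm{d}W_{s}$ in probability. The difference is $\int_{0}^{t}(U_{r_{n}(t),s}-U_{t,s})\,\mathrm{d}W_{s}$ with $r_{n}(t)\uparrow t$. Outer shifts of the kernel do not converge pointwise in general, so the grid must be chosen more carefully. The standard device I would use is a randomised grid: consider shifted grids $\{h+j2^{-n}\}$ and use that $\int\!\!\int \phi(|U_{t,s}-U_{t-\delta,s}|_{2}\wedge 1)\,\mathrm{d}s\,\mathrm{d}t\to0$ as $\delta\to0$. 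This is continuity of translation in $L^{1}$, applied after truncating $U$, e.g.~replacing $U$ by $U\mathbbm{1}_{\{|U|_{2}\le N\}}$ and integrating in $\omega$. It yields a shift $h$ and a subsequence along which
\[
\int_{0}^{t}|U_{r_{n}(t),s}-U_{t,s}|_{2}^{2}\,\mathrm{d}s\to0
\]
in probability for a.e.~$t$. Proposition~\ref{pr:approximation of stochastic Volterra integrals}, applied with the maps $U^{(n)}_{t,s}:=U_{r_{n}(t),s}$, then gives the convergence of the stochastic integrals in probability. The truncation needs a separate step: since $\int_{0}^{t}|U_{t,s}|_{2}^{2}\,\mathrm{d}s<\infty$ a.s., the truncated integrals converge in probability as $N\to\infty$, by dominated convergence and the Remark. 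A diagonal argument then combines the two limits.

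\textbf{Step 4 (progressive limit).} Now define $X := \limsup_{n} X^{(n)}$ along the subsequence in a measurable way. Concretely, set $X_{t}$ equal to $\lim_{k}X^{(n_{k})}_{t}$ on the progressive set where this limit exists, and $0$ elsewhere. This requires passing to a further subsequence so that the convergence is a.s.~for a.e.~$t$. That follows from convergence in $\mathrm{d}t\otimes\P$-measure, via Fubini applied to $\E[|X^{(n)}_{t}-Y_{t}|\wedge1]$. Then $X$ is $\mathbb{F}$-progressively measurable and satisfies~\eqref{eq:weak modification of a stochastic Volterra integral}.

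The main obstacle is Step 3: producing approximations whose kernels converge in $L^{2}(\mathrm{d}s)$ in probability for a.e.~$t$, when $U$ has no regularity in its first variable. I would handle it by the translation-continuity and random-shift argument, combined with truncation to make all integrals finite.
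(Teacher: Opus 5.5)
Your proposal is correct in outline, but its core step differs from the paper's proof.

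The paper first proves a reduction. If progressive processes $X^{(n)}$ satisfy $X^{(n)}_t\to\int_0^tU_{t,s}\,\mathrm{d}W_s$ in probability for a.e.~$t$, then dominated convergence applied to $\sup_{m\ge n}\P(|X^{(n)}_t-X^{(m)}_t|\ge\varepsilon)$ makes them Cauchy in $\mathrm{d}t\otimes\P$-measure on the progressive $\sigma$-field. Hence a progressive limit exists, and it is the desired weak modification. The paper then proceeds in three tiers:
\begin{enumerate}[(i)]
\item When $U$ is right-continuous in probability in its outer variable, it uses right-endpoint sums $U_{t_{j+1},s}$ for $t\in\,]t_j,t_{j+1}]$. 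These are admissible, since $t\mapsto\int_0^tU_{r,s}\,\mathrm{d}W_s$ is adapted for every fixed $r$.
\item For bounded $U$ it mollifies in the outer variable, $a_n^{-1}\int_{(t-a_n)^+}^tU_{\tilde s,s}\,\mathrm{d}\tilde s$. This is continuous in $t$ and returns to $U_{t,s}$ for a.e.~$(t,s)$ by Lebesgue's differentiation theorem.
\item The general case follows by truncation.
\end{enumerate}

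You replace the mollification tier by Doob's shift-averaging device: left endpoints on randomly shifted dyadic grids, with $L^1$-continuity of translations of the truncated kernel selecting a good shift and subsequence. This gives explicit piecewise-constant approximants in a single step. The paper's moving average instead depends on $U$ only through its values for a.e.~outer time, by way of Lebesgue points, and needs no random grid. On the other hand, your shift depends on the truncation level $N$, and that is what forces your diagonal argument.

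One point needs tightening. In the diagonal argument and in Step~4 you apply Fubini to $\E[|X^{(n)}_t-Y_t|\wedge1]$ with $Y_t=\int_0^tU_{t,s}\,\mathrm{d}W_s$. That is, you treat convergence to $Y$ in $\mathrm{d}t\otimes\P$-measure as meaningful. But $Y_t$ is defined only up to a null set for each $t$, and it is not known to be jointly measurable in $(t,\omega)$; that is precisely what is being proved.

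Argue as the paper does, using only differences $X^{(n)}-X^{(m)}$ of progressive processes. These are Cauchy in $\mathrm{d}t\otimes\P$-measure, so a fast subsequence converges $\mathrm{d}t\otimes\P$-a.e.\ to a progressive $X$. By Fubini, $X^{(n_k)}_t\to X_t$ a.s.\ for a.e.~$t$, and uniqueness of limits in probability then gives $X_t=Y_t$ a.s.\ for a.e.~$t$. For $N\to\infty$, first produce progressive $Z^N$ with $Z^N_t=\int_0^tU^N_{t,s}\,\mathrm{d}W_s$ a.s.\ for a.e.~$t$, and then run the same argument on $(Z^N)_{N\in\N}$.
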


\subsection{Admissible coefficients and notions of solutions}\label{se:2.3}

For a separable Banach space $\tilde{E}$ with complete norm denoted by $|\cdot|$, we consider maps with a certain measurability property and which do not depend on the particular choice of a weak modification described as follows.

\begin{Definition}\label{de:admissible maps}
A map $F:I\times I\times\Omega\times\mathcal{D}\rightarrow \tilde{E}$, $(t,s,\omega,Y)\mapsto F_{t,s}(Y)(\omega)$ will be called \emph{admissible} if for each $E$-valued $\mathbb{F}$-progressively measurable process $X$ satisfying $X_{s}\in\mathcal{D}$ for all $s\in I$ the map
\begin{equation}\label{eq:measurable map}
I\times I\times\Omega\rightarrow\tilde{E},\quad (t,s,\omega)\mapsto F_{t,s}(X_{s})(\omega)
\end{equation}
is $\mathcal{B}(I)\otimes\mathcal{A}$-measurable and for any $t\in I$ and every $\mathbb{F}$-progressively measurable weak modification $\tilde{X}$ of $X$ in the sense of Definition~\ref{de:weak modification}, we have
\begin{equation}\label{eq:weak modification}
F_{t,s}(X_{s}) = F_{t,s}(\tilde{X}_{s})\quad\text{a.s.}\quad\text{for a.e.~$s\in I$.}
\end{equation}
\end{Definition}

\begin{Remark}\label{re:admissible maps}
The $\mathcal{B}(I)\otimes\mathcal{A}$-measurability of the map~\eqref{eq:measurable map} entails that the process $I\times\Omega\rightarrow\tilde{E}$, $(s,\omega)\mapsto F_{t,s}(X_{s})(\omega)$ is $\mathbb{F}$-progressively measurable and~\eqref{eq:weak modification} ensures that
\begin{equation*}
\int_{I}|F_{t,s}(X_{s}) - F_{t,s}(\tilde{X}_{s})|^{q}\,\mathrm{d}s = 0\quad\text{a.s.}
\end{equation*}
for all $t\in I$ and $q > 0$, as an application of Fubini's theorem shows.
\end{Remark}

Maps that admit the subsequent integral representation, which involves the law of any $Y\in\mathcal{D}$, are admissible.

\begin{Proposition}\label{pr:admissible maps 1}
Let $D\in\mathcal{B}(E)$ be such that each $Y\in\mathcal{D}$ takes all its values in $D$. If $G:I\times I\times\Omega\times D\times D\rightarrow\tilde{E}$ is product measurable when $I\times\Omega$ is endowed with $\mathcal{A}$ and 
\begin{equation*}
\text{$G_{t,s}(Y(\omega),\cdot)(\omega)$ is $\mathcal{L}(Y)$-integrable}
\end{equation*}
for all $s,t\in I$, $\omega\in\Omega$ and $Y\in\mathcal{D}$, then the map $F:I\times I\times\Omega\times\mathcal{D}\rightarrow\tilde{E}$ defined by $F_{t,s}(Y)(\omega) := \int_{D}G_{t,s}(Y(\omega),y)(\omega)\,\mathcal{L}(Y)(\mathrm{d}y)$ is admissible.
\end{Proposition}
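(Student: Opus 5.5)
The plan is to rewrite $F$ as an integral over $\Omega$ instead of over $D$, so that measurability reduces to Fubini's theorem and the weak-modification invariance reduces to equality of laws. For any $E$-valued $\mathbb{F}$-progressively measurable process $X$ with $X_{s}\in\mathcal{D}$ for all $s\in I$, the change-of-variables formula gives
\begin{equation*}
F_{t,s}(X_{s})(\omega) = \int_{\Omega}G_{t,s}\big(X_{s}(\omega),X_{s}(\omega')\big)(\omega)\,\P(\mathrm{d}\omega')
\end{equation*}
for all $s,t\in I$ and $\omega\in\Omega$. The integrand is Bochner integrable in $\omega'$ by the assumed $\mathcal{L}(X_{s})$-integrability. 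Both admissibility properties in Definition~\ref{de:admissible maps} will be checked on this representation.

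\emph{Measurability.} Consider the map
\begin{equation*}
H:I\times I\times\Omega\times\Omega\rightarrow\tilde{E},\quad (t,s,\omega,\omega')\mapsto G_{t,s}\big(X_{s}(\omega),X_{s}(\omega')\big)(\omega).
\end{equation*}
I would first show that $H$ is $\mathcal{B}(I)\otimes\mathcal{A}\otimes\mathcal{F}$-measurable. It is the composition of $G$, which is product measurable with $I\times\Omega$ carrying $\mathcal{A}$, with the map
\begin{equation*}
(t,s,\omega,\omega')\mapsto\big(t,(s,\omega),X_{s}(\omega),X_{s}(\omega')\big).
\end{equation*}
Each component of this map is measurable for the following reasons:
\begin{itemize}
\item $(s,\omega)\mapsto X_{s}(\omega)$ is $\mathcal{A}$-measurable, since $X$ is progressively measurable.
\item $(s,\omega')\mapsto X_{s}(\omega')$ is $\mathcal{B}(I)\otimes\mathcal{F}$-measurable.
\item For the coordinate $s$, which is shared by the $\mathcal{A}$-part and the $\omega'$-part, one uses that $\mathcal{A}\otimes\mathcal{F}$ contains the pullbacks of both $\mathcal{A}$ and $\mathcal{B}(I)\otimes\mathcal{F}$ under the appropriate projections.
\end{itemize}
Measurability of $H$ then follows.

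Next I would show that $(t,s,\omega)\mapsto\int_{\Omega}H(t,s,\omega,\omega')\,\P(\mathrm{d}\omega')$ is $\mathcal{B}(I)\otimes\mathcal{A}$-measurable. This parameter-dependent Bochner integral is the main technical step. Because $\tilde{E}$ is separable, Pettis' theorem reduces this to scalar measurability of $x^{*}$ composed with the integral, for $x^{*}$ in a countable norming subset of $\tilde{E}^{*}$. For the truncations $H\mathbbm{1}_{\{|H|\leq n\}}$, which are bounded, $x^{*}\circ H\mathbbm{1}_{\{|H|\leq n\}}$ is bounded and product measurable. Splitting it into positive and negative parts, Fubini--Tonelli shows that its $\P(\mathrm{d}\omega')$-integral is $\mathcal{B}(I)\otimes\mathcal{A}$-measurable. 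Letting $n\uparrow\infty$ and using dominated convergence, which applies since $\int_{\Omega}|H(t,s,\omega,\omega')|\,\P(\mathrm{d}\omega')<\infty$ for every fixed $(t,s,\omega)$, the truncated integrals converge pointwise to the full one. Hence the limit is measurable. If convenient, the pointwise approximation of Corollary~\ref{co:approximation of product measurable processes} could replace the truncation argument.

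\emph{Invariance under weak modifications.} Let $\tilde{X}$ be a progressively measurable weak modification of $X$. By Definition~\ref{de:weak modification}, I expect this to mean that $X_{s}=\tilde{X}_{s}$ a.s. for a.e. $s\in I$. Fix such an $s$. Then $\mathcal{L}(X_{s})=\mathcal{L}(\tilde{X}_{s})$, and on the full-measure event $\{X_{s}=\tilde{X}_{s}\}$ we have
\begin{equation*}
F_{t,s}(X_{s})(\omega)=\int_{D}G_{t,s}\big(X_{s}(\omega),y\big)(\omega)\,\mathcal{L}(X_{s})(\mathrm{d}y)=F_{t,s}(\tilde{X}_{s})(\omega)
\end{equation*}
for every $t\in I$. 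This is exactly the required identity \eqref{eq:weak modification}. If the definition of a weak modification turns out to be phrased differently, for instance through finite-dimensional laws or a Fubini-type null set in $(s,\omega)$, I would first extract from it that $X_{s}=\tilde{X}_{s}$ a.s. for a.e. $s$, and then apply the same argument.
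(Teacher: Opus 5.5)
Your proof is correct. The change-of-variables representation and the invariance step are essentially the paper's, but your measurability argument takes a different route.

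\textbf{How the paper argues.} It stays on the coefficient side. It runs Dynkin's lemma over sets $C\in\mathcal{B}(I)\otimes\mathcal{A}\otimes\mathcal{B}(D)\otimes\mathcal{B}(D)$. For rectangles $B_1\times A\times B_2\times B_3$ the $\omega'$-integral factorises as $\mathbbm{1}_{B_1}(t)\mathbbm{1}_{A}(s,\omega)\mathbbm{1}_{B_2}(X_s(\omega))\P(X_s\in B_3)$. So only two facts are needed: $s\mapsto\P(X_s\in B_3)$ is Borel, and $\mathcal{B}(I)\otimes\mathcal{F}_0\subset\mathcal{A}$. It then reaches general $G$ through the norm-dominated finitely-valued approximations of Corollary~\ref{co:approximation of product measurable processes} and dominated convergence, so it never scalarises.

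\textbf{How you argue, and what it buys.} You lift to $I\times I\times\Omega\times\Omega$, prove that $H$ is $\mathcal{B}(I)\otimes\mathcal{A}\otimes\mathcal{F}$-measurable, and then use the standard measurability part of Fubini--Tonelli. Borel measurability in the separable space $\tilde{E}$ is reduced to countably many functionals. This isolates a clean intermediate statement and uses only off-the-shelf tools.

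\textbf{One point to make explicit.} You should say why the pullback of $\mathcal{B}(I)\otimes\mathcal{F}$ under $(t,s,\omega,\omega')\mapsto(s,\omega')$ lies in $\mathcal{B}(I)\otimes\mathcal{A}\otimes\mathcal{F}$. The reason is that $B\times\Omega\in\mathcal{A}$ for every $B\in\mathcal{B}(I)$. This is the same fact that underlies the paper's inclusion $\mathcal{B}(I)\otimes\mathcal{F}_0\subset\mathcal{A}$.

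\textbf{Smaller remarks.}
\begin{enumerate}
\item The truncation is superfluous. Tonelli applied to $(x^*\circ H)^{\pm}$ already gives finite measurable integrals, by the integrability hypothesis.
\item What you call Pettis' theorem is really two facts: $\mathcal{B}(\tilde{E})=\sigma(x_n^*:n\in\N)$ for a countable norming sequence in a separable space, and $x^*\big(\int_\Omega H\,\mathrm{d}\P\big)=\int_\Omega x^*\circ H\,\mathrm{d}\P$ for Bochner integrals.
\item In the invariance step no hedging is needed. Definition~\ref{de:weak modification} says precisely that $X_s=\tilde{X}_s$ a.s.\ for a.e.\ $s\in I$.
\item For those $s$ you should record that $\tilde{X}_s\in\mathcal{D}$, so that $F_{t,s}(\tilde{X}_s)$ is defined at all. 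This follows from the standing assumption that $\mathcal{D}$ is closed under a.s.\ equality.
\end{enumerate}
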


\begin{Example}\label{ex:admissible maps 1}
Let $A$ be a separable metrisable space, $\alpha$ be an $A$-valued $\mathbb{F}$-progressively measurable process and
\begin{equation*}
g:I\times I\times D\times A\times D\rightarrow\tilde{E}
\end{equation*}
be a Borel measurable map such that $g(t,s,x,a,\cdot)$ is bounded for any $s,t\in I$, $x\in D$ and $a\in A$. Then from the measure transformation formula we infer that the map
\begin{equation*}
I\times I\times\Omega\times\mathcal{D}\rightarrow\tilde{E},\quad (t,s,\omega,Y)\mapsto \E\big[g(t,s,x,a,Y)\big]_{|(x,a) = (Y,\alpha_{s})(\omega)}
\end{equation*}
is admissible by choosing $G_{t,s}(x,y) = g(t,s,x,\alpha_{s},y)$ for all $s,t\in I$ and $x,y\in D$ in Proposition~\ref{pr:admissible maps 1}.
\end{Example}

Definition~\ref{de:admissible maps} also allows for maps that depend on the joint distribution of a control process and any $Y\in\mathcal{D}$. Here and subsequently, for any separable metrisable space $S$ let $\mathcal{P}_{0}(S)$ denote the convex space of all Borel probability measures on $S$.

\begin{Proposition}\label{pr:admissible maps 2}
Let $D\in\mathcal{B}(E)$ be such that any $Y\in\mathcal{D}$ is $D$-valued. Further, $A$ let be a separable metrisable space, $\alpha$ be an $A$-valued $\mathbb{F}$-progressively measurable process and $\mathcal{P}$ be a separable metrisable space in $\mathcal{P}_{0}(E\times A)$ such that
\begin{equation*}
\mathcal{L}(Y,\alpha_{s})\in\mathcal{P}\quad\text{for all $Y\in\mathcal{D}$ and $s\in I$.}
\end{equation*}
If $G:I\times I\times\Omega\times D\times\mathcal{P}\rightarrow\tilde{E}$ is product measurable when $I\times\Omega$ is equipped with $\mathcal{A}$, then the map $F:I\times I\times\Omega\times\mathcal{D}\rightarrow\tilde{E}$ defined by
\begin{equation*}
F_{t,s}(Y)(\omega) := G_{t,s}\big(Y(\omega),\mathcal{L}(Y,\alpha_{s})\big)(\omega)
\end{equation*}
is admissible under the condition that for each $D$-valued product measurable process $X$ satisfying $\mathcal{L}(X_{s},\alpha_{s})\in\mathcal{P}$ for any $s\in I$ the map
\begin{equation}\label{eq:Borel measurable distribution map}
I\rightarrow\mathcal{P},\quad s\mapsto\mathcal{L}(X_{s},\alpha_{s})
\end{equation}
is Borel measurable.
\end{Proposition}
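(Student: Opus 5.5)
We verify the two defining requirements of Definition~\ref{de:admissible maps} in turn. Throughout, fix an $E$-valued $\mathbb{F}$-progressively measurable process $X$ with $X_{s}\in\mathcal{D}$ for all $s\in I$; each $X_{s}$ is then $D$-valued.

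\emph{Measurability.} Write the map~\eqref{eq:measurable map} as a composition,
\begin{equation*}
(t,s,\omega)\mapsto \big(t,s,\omega,X_{s}(\omega),\mathcal{L}(X_{s},\alpha_{s})\big)\mapsto G_{t,s}\big(X_{s}(\omega),\mathcal{L}(X_{s},\alpha_{s})\big)(\omega).
\end{equation*}
The second map is measurable on $\mathcal{B}(I)\otimes\mathcal{A}\otimes\mathcal{B}(D)\otimes\mathcal{B}(\mathcal{P})$ by hypothesis. It therefore suffices that each component of the first map is $\mathcal{B}(I)\otimes\mathcal{A}$-measurable.
\begin{enumerate}[(i)]
\item The component $(t,s,\omega)\mapsto X_{s}(\omega)$ is measurable because $X$ is $\mathcal{A}$-measurable.
\item For $(t,s,\omega)\mapsto\mathcal{L}(X_{s},\alpha_{s})$, note that $X$ is progressive, hence product measurable, and $\mathcal{L}(X_{s},\alpha_{s})\in\mathcal{P}$ because $X_{s}\in\mathcal{D}$. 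The Borel measurability of~\eqref{eq:Borel measurable distribution map} then applies, and the map depends only on $s$, so it is measurable in $(t,s,\omega)$.
\end{enumerate}
Since $\mathcal{P}$ and $D$ are separable metrisable, the Borel $\sigma$-field of $D\times\mathcal{P}$ coincides with $\mathcal{B}(D)\otimes\mathcal{B}(\mathcal{P})$. Hence the map into the product is jointly measurable, and so is the composition.

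\emph{Independence of weak modifications.} Let $\tilde{X}$ be a progressively measurable weak modification of $X$. By Definition~\ref{de:weak modification}, which we expect to mean $X_{s}=\tilde{X}_{s}$ a.s.\ for a.e.\ $s\in I$, there is a Lebesgue null set $N\subset I$ such that $X_{s}=\tilde{X}_{s}$ a.s.\ for every $s\notin N$. Fix $t\in I$ and $s\notin N$.
\begin{enumerate}[(i)]
\item Since $\tilde{X}_{s}=X_{s}$ a.s., the closedness assumption on $\mathcal{D}$ gives $\tilde{X}_{s}\in\mathcal{D}$, so $F_{t,s}(\tilde{X}_{s})$ is well defined.
\item The pairs $(X_{s},\alpha_{s})$ and $(\tilde{X}_{s},\alpha_{s})$ agree a.s., so $\mathcal{L}(X_{s},\alpha_{s})=\mathcal{L}(\tilde{X}_{s},\alpha_{s})$.
\item Therefore $F_{t,s}(X_{s})(\omega)=F_{t,s}(\tilde{X}_{s})(\omega)$ for every $\omega$ with $X_{s}(\omega)=\tilde{X}_{s}(\omega)$, that is, almost surely.
\end{enumerate}
This yields~\eqref{eq:weak modification}.

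\emph{Main obstacle.} The delicate point is the measurability step. We must make sure the Borel measurability of~\eqref{eq:Borel measurable distribution map} is invoked for a process to which it legitimately applies. We must also confirm that joint measurability of the pair $(X_{s}(\omega),\mathcal{L}(X_{s},\alpha_{s}))$ into $D\times\mathcal{P}$ holds. This needs separability of both factors, so that the product Borel $\sigma$-field equals the Borel $\sigma$-field of the product.

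If $\tilde{X}$ in Definition~\ref{de:weak modification} is not required to take values in $\mathcal{D}$ everywhere, the second step proceeds as above for $s\notin N$. It relies only on the a.s.-invariance of $\mathcal{D}$.
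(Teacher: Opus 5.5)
Your proposal is correct and follows the paper's proof: the paper also writes the map as the composition of $G$ with $H_{X}(t,s,\omega) := (t,s,\omega,X_{s}(\omega),\mathcal{L}(X_{s},\alpha_{s}))$, proves measurability of $H_{X}$ by checking preimages of measurable rectangles (equivalent to your componentwise argument), and then uses the a.s.-invariance of $\mathcal{D}$ and $\mathcal{L}(X_{s},\alpha_{s}) = \mathcal{L}(\tilde{X}_{s},\alpha_{s})$ for a.e.~$s$. One minor point: separability of $D$ and $\mathcal{P}$ is not needed in the measurability step, because $G$ is assumed measurable with respect to the product $\sigma$-field $\mathcal{B}(I)\otimes\mathcal{A}\otimes\mathcal{B}(D)\otimes\mathcal{B}(\mathcal{P})$, and componentwise measurability of $H_{X}$ already gives measurability into that $\sigma$-field.
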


\begin{Example}\label{ex:admissible maps 2}
(i) If the Borel measurability of the map~\eqref{eq:Borel measurable distribution map} is always ensured, then for every Borel measurable map $g:I\times I\times D\times A\times\mathcal{P}\rightarrow\tilde{E}$ we see that the map
\begin{equation*}
I\times I\times\Omega\times\mathcal{D}\rightarrow\tilde{E},\quad (t,s,\omega,Y)\mapsto  g\big(t,s,Y(\omega),\alpha_{s}(\omega),\mathcal{L}(Y,\alpha_{s})\big) 
\end{equation*}
is admissible by taking $G_{t,s}(x,\mu) = g(t,s,x,\alpha_{s},\mu)$ for any $s,t\in I$, $x\in D$ and $\mu\in\mathcal{P}$ in Proposition~\ref{pr:admissible maps 2}.\smallskip

\noindent
(ii) By Proposition~\ref{pr:Borel measurability of the law map}, the map~\eqref{eq:Borel measurable distribution map} is Borel measurable if $A$ is a non-empty, convex and closed subspace of a separable Banach space and $\mathcal{P}$ is the $p$th Wasserstein space $\mathcal{P}_{p}(E\times A)$ for $p\geq 1$, which is generally recalled at~\eqref{eq:Wasserstein metric}.
\end{Example}

So, the coefficients $\B$ and $\Sigma$ of~\eqref{eq:stochastic Volterra equation} are ought to be admissible. Then for each $E$-valued $\mathbb{F}$-progressively measurable process $X$ satisfying $X_{s}\in\mathcal{D}$ for a.e.~$s\in I$, the set
\begin{equation}\label{eq:null event}
\bigg\{\int_{0}^{t}|\B_{t,s}(X_{s})| + |\Sigma_{t,s}(X_{s})|_{2}^{2}\,\mathrm{d}s = \infty\bigg\}
\end{equation}
lies in $\mathcal{F}_{t}$ for any $t\in I$, by Proposition~\ref{pr:progressively measurable integral processes} and Corollary~\ref{co:approximation of product measurable processes}. Further, Fubini's theorem ensures that the set of all $t\in I$ for which the event~\eqref{eq:null event} is null is Borel.

\begin{Definition}\label{de:notion of a solution}
A \emph{solution} to~\eqref{eq:stochastic Volterra equation} is an $E$-valued $\mathbb{F}$-progressively measurable process $X$ such that $X_{s}\in\mathcal{D}$ for a.e.~$s\in I$ and the set of all $t\in I$ for which
\begin{equation}\label{eq:notion of a solution}
\begin{split}
\int_{0}^{t}&|\B_{t,s}(X_{s})| + |\Sigma_{t,s}(X_{s})|_{2}^{2}\,\mathrm{d}s < \infty\quad\text{and}\\
X_{t} &= \xi_{t} + \int_{0}^{t}\B_{t,s}(X_{s})\,\mathrm{d}s + \int_{0}^{t}\Sigma_{t,s}(X_{s})\,\mathrm{d}W_{s}\quad\text{a.s.}
\end{split}
\end{equation}
fails has Lebesgue measure zero. Moreover, if in addition $X_{t}\in\mathcal{D}$ and~\eqref{eq:notion of a solution} holds for every $t\in I$, then $X$ is called \emph{regular}.
\end{Definition}

\begin{Remark}\label{re:notion of a solution}
(i) Since $\B$ and $\Sigma$ are admissible, every $\mathbb{F}$-progressively measurable weak modification of a solution to~\eqref{eq:stochastic Volterra equation} is another solution, due to Remark~\ref{re:admissible maps}.\smallskip

\noindent (ii) Let $\B$ and $\Sigma$ be independent of the first variable $t\in I$ and $\xi$ be right-continuous. Then any right-continuous solution $X$ to~\eqref{eq:stochastic Volterra equation}  satisfies~\eqref{eq:notion of a solution} for any $t\in I$ with $t < \sup I$, by Proposition~\ref{pr:weakly modified processes}. If in fact
\begin{equation*}
\xi = \xi_{0},
\end{equation*}
then~\eqref{eq:stochastic Volterra equation} reduces to an \emph{SDE with random coefficients} coupled with an initial condition. In this case, any continuous regular solution in the sense of Definition~\ref{de:notion of a solution} can be viewed as solution to the SDE in the original meaning.\smallskip
\end{Remark}

We conclude this section with a relevant example of the coefficients $\B$ and $\Sigma$. To this end, let $A$ be a separable metrisable space, $\alpha$ be an $A$-valued $\mathbb{F}$-progressively measurable process and $\mathcal{P}$ be a separable metrisable space in $\mathcal{P}_{0}(E\times A)$.

We denote the first and second marginal distribution of any $\mu\in\mathcal{P}_{0}(E\times A)$ by $\mu_{1}$ and $\mu_{2}$, respectively, and suppose that there are two separable metrisable spaces $\mathcal{P}_{1}$ and $\mathcal{P}_{2}$ in $\mathcal{P}_{0}(E)$ and $\mathcal{P}_{0}(A)$, respectively, such that $\mu\in\mathcal{P}$ $\Leftrightarrow$ $\mu_{1}\in\mathcal{P}_{1}$ and $\mu_{2}\in\mathcal{P}_{2}$.

\begin{Example}
By utilising the Prokhorov metric, we see that for $\mathcal{P} = \mathcal{P}_{0}(E\times A)$ we may take $\mathcal{P}_{1} = \mathcal{P}_{0}(E)$ and $\mathcal{P}_{2} = \mathcal{P}_{0}(A)$. If instead the $p$th Wasserstein metric is used for $p\geq 1$, then for $\mathcal{P} = \mathcal{P}_{p}(E\times A)$ the choices $\mathcal{P}_{1} = \mathcal{P}_{p}(E)$ and $\mathcal{P}_{2} = \mathcal{P}_{p}(A)$ are feasible.
\end{Example}

Finally, let $b$ and $\sigma$ be Borel measurable maps on $I\times I\times E\times A\times\mathcal{P}$ with respective values in $E$ and $\mathcal{L}_{2}(\ell^{2},E)$ and $\mathcal{D}$ be the set of all $E$-valued random vectors $Y$ satisfying $\mathcal{L}(Y)\in\mathcal{P}_{1}$ such that
\begin{equation}\label{eq:controlled and distribution-dependent coefficients}
\B_{t,s}(X_{s}) = b\big(t,s,X_{s},\alpha_{s},\mathcal{L}(X_{s},\alpha_{s})\big),\quad \Sigma_{t,s}(X_{s}) = \sigma\big(t,s,X_{s},\alpha_{s},\mathcal{L}(X_{s},\alpha_{s})\big)
\end{equation}
for all $s,t\in I$ and $X_{s}\in\mathcal{D}$. Here, the map~\eqref{eq:Borel measurable distribution map} is required to be Borel measurable for each $E$-valued product measurable process $X$ satisfying $\mathcal{L}(X_{s})\in\mathcal{P}_{1}$ for all $s\in I$.

Then $\B$ and $\Sigma$ are admissible, by Proposition~\ref{pr:admissible maps 2} and Example~\ref{ex:admissible maps 2}, and~\eqref{eq:stochastic Volterra equation} reduces to a \emph{stochastic Volterra integral equation with distribution-dependent coefficients} that are \emph{controlled by $\alpha$}.

As the space $\mathcal{P}$ appears in the domains of $b$ and $\sigma$, we recall from Definition~\ref{de:notion of a solution} that a solution to~\eqref{eq:stochastic Volterra equation} is an $E$-valued $\mathbb{F}$-progressively measurable process $X$ such that $\mathcal{L}(X_{s})\in\mathcal{P}_{1}$ for a.e.~$s\in I$ and~\eqref{eq:notion of a solution} holds for a.e.~$t\in I$.

Further, we can consider \emph{strong solutions}. Namely, let $(\mathcal{E}_{t}^{\xi,\alpha})_{t\in I}$ be the natural filtration of $\xi$, $\alpha$ and the sequence $(W^{(i)})_{i\in\N}$ of standard $\mathbb{F}$-Brownian motions. That means,
\begin{equation}\label{eq:natural filtration}
\mathcal{E}_{t}^{\xi,\alpha} = \sigma(\xi_{s}:s\in [0,t])\vee\sigma(\alpha_{s}:s\in [0,t])\vee\sigma\big(W_{s}^{(i)}:s\in [0,t],\,i\in\N\big)
\end{equation}
for all $t\in I$. Then a solution $X$ to~\eqref{eq:stochastic Volterra equation} is called \emph{strong} if it is progressively measurable with respect to the right-continuous filtration of the augmented filtration of $(\mathcal{E}_{t}^{\xi,\alpha})_{t\in I}$.

\subsection{Solutions with locally essentially bounded moment functions}\label{se:2.4}

We seek to derive a unique solution $X$ to~\eqref{eq:stochastic Volterra equation} as limit of a Picard iteration such that the measurable $p$th moment function $I\rightarrow [0,\infty]$, $t\mapsto \E[|X_{t}|^{p}]$ is locally essentially bounded for $p\geq 2$.

For this purpose, we consider the linear space $\mathcal{L}_{loc}^{\infty,p}(I\times\Omega,E)$ of all $\mathbb{F}$-progressively measurable processes $X:I\times\Omega\rightarrow E$ such that $\esssup_{t\in [0,T]}\E[|X_{t}|^{p}] < \infty$ for any $T\in I$, endowed with the topology of convergence with respect to the seminorm
\begin{equation}\label{eq:seminorm for processes}
\mathcal{L}_{loc}^{\infty,p}(I\times\Omega,E)\rightarrow\Re_{+},\quad X\mapsto \esssup_{t\in [0,T]}\E\big[|X_{t}|^{p}\big]^{\frac{1}{p}}
\end{equation}
for each $T\in I$. So, a sequence $(X^{(n)})_{n\in\N}$ converges to a process $X$ in this space if and only if it converges in $p$th moment to $X$, locally essentially uniformly in $t\in I$. That means,
\begin{equation}\label{eq:convergence in pth moment, locally essentially uniformly in time}
\lim_{n\uparrow\infty} \esssup_{t\in [0,T]}\E\big[|X_{t}^{(n)} - X_{t}|^{p}\big] = 0\quad\text{for all $T\in I$.}
\end{equation}
As verified in Lemma~\ref{le:completely metrisable space}, this space is completely pseudometrisable. Thus, if $\mathcal{N}_{p}$ is the linear space of all $X\in\mathcal{L}_{loc}^{\infty,p}(I\times\Omega,E)$ such that $X_{t} = 0$ a.s.~for a.e.~$t\in I$, then the quotient space $\mathcal{L}_{loc}^{\infty,p}(I\times\Omega,E)/\mathcal{N}_{p}$ is completely metrisable.

Based on the tractable types of kernels in Definition~\ref{de:convex cones of non-negative kernels}, let us introduce an \emph{affine growth condition} on $\B$ and $\Sigma$:
\begin{enumerate}[label=(C.\arabic*), ref=C.\arabic*, leftmargin=\widthof{(C.1)} + \labelsep]
\item\label{co:1} There are $k_{1}\in\mathcal{K}_{\infty}^{1}$, $k_{2}\in\mathcal{K}_{\infty}^{2}$, $l_{1}\in\mathcal{K}^{1}$ and $l_{2}\in\mathcal{K}^{2}$ such that
\begin{equation}\label{eq:affine growth condition}
\E\big[|\B_{t,s}(Y)|^{p}\big]^{\frac{1}{p}}\mathbbm{1}_{\{1\}}(i) + \E\big[|\Sigma_{t,s}(Y)|_{2}^{p}\big]^{\frac{1}{p}}\mathbbm{1}_{\{2\}}(i) \leq k_{i}(t,s) + l_{i}(t,s)\E\big[|Y|^{p}\big]^{\frac{1}{p}}
\end{equation}
for all $i\in\{1,2\}$, $s,t\in I$ with $s < t$ and $Y\in\mathcal{D}$.
\end{enumerate}

Under~\eqref{co:1} and by using the constant $w_{p}$ in the moment estimate~\eqref{eq:stochastic integral estimate}, we define a measurable locally essentially bounded function $k_{0}:I\rightarrow [0,\infty]$ by
\begin{equation}\label{eq:special integral function}
k_{0}(t) := \int_{0}^{t}k_{1}(t,s)\,\mathrm{d}s + w_{p}\bigg(\int_{0}^{t}k_{2}(t,s)^{2}\,\mathrm{d}s\bigg)^{\frac{1}{2}}.
\end{equation}
As $\mathcal{K}^{2}$ is a convex cone that satisfies the properties stated after Definition~\ref{de:convex cones of non-negative kernels}, we may introduce a non-negative kernel $l\in\mathcal{K}^{2}$ by
\begin{equation}\label{eq:special transformed kernel}
l(t,s) := 2\max\bigg\{\min\bigg\{l_{1}(t,s)\int_{0}^{t}l_{1}(t,\tilde{s})\,\mathrm{d}\tilde{s},\int_{0}^{t}l_{1}(t,\tilde{s})^{2}\,\mathrm{d}\tilde{s}\bigg\}^{\frac{1}{2}},w_{p}l_{2}(t,s)\bigg\}.
\end{equation}
Then Proposition~\ref{pr:integral estimate for iterated kernels of first type}, Remark~\ref{re:integral estimate for iterated kernels of first type} and Fubini's theorem show that $l$ induces the measurable locally essentially bounded function series $\mathrm{I}_{l}:I\rightarrow [0,\infty]$ given by
\begin{equation}\label{eq:function series}
\mathrm{I}_{l}(t) := \sum_{n=1}^{\infty}\bigg(\int_{0}^{t}\R_{l^{2},n}(t,s)\,\mathrm{d}s\bigg)^{\frac{1}{2}}.
\end{equation}
By means of these coefficients we obtain \emph{$L^{p}$-growth estimates} for solutions and Picard iterations to~\eqref{eq:stochastic Volterra equation}.

\begin{Proposition}\label{pr:growth estimates for solutions and Picard iterations 1}
Let~\eqref{co:1} hold and $\xi,X^{(0)}\in\mathcal{L}_{loc}^{\infty,p}(I\times\Omega,E)$. If $X$ is a solution to~\eqref{eq:stochastic Volterra equation} in $\mathcal{L}_{loc}^{\infty,p}(I\times\Omega,E)$, then
\begin{equation}\label{eq:growth estimate for solutions 1}
\begin{split}
\E\big[|X_{t} - \xi_{t}|^{p}\big]^{\frac{1}{p}} &\leq k_{0}(t) + \sum_{n=1}^{\infty}\bigg(\int_{0}^{t}\R_{l^{2},n}(t,s)k_{0}(s)^{2}\,\mathrm{d}s\bigg)^{\frac{1}{2}}\\
&\quad + \sum_{n=1}^{\infty}\bigg(\int_{0}^{t}\R_{l^{2},n}(t,s)\E\big[|\xi_{s}|^{p}\big]^{\frac{2}{p}}\,\mathrm{d}s\bigg)^{\frac{1}{2}}
\end{split}
\end{equation}
for a.e.~$t\in I$. Further, if $\mathcal{L}^{p}(\Omega,E)\subset\mathcal{D}$, then the sequence $(X^{(n)})_{n\in\N}$ in $\mathcal{L}_{loc}^{\infty,p}(I\times\Omega,E)$, recursively given by requiring that the set $I_{n+1}$ of all $t\in I$ satisfying
\begin{equation}\label{eq:Picard iteration}
\begin{split}
&\E\big[|X_{s}^{(n)}|^{p}\big] < \infty\quad\text{for a.e.~$s\in [0,t]$}\quad\text{and}\\
\int_{0}^{t}&\big|\B_{t,s}\big(X_{s}^{(n)}\big)\big| + \big|\Sigma_{t,s}\big(X_{s}^{(n)}\big)\big|_{2}^{2}\,\mathrm{d}s < \infty\quad\text{and}\\
X_{t}^{(n+1)} &= \xi_{t} + \int_{0}^{t}\B_{t,s}\big(X_{s}^{(n)}\big)\,\mathrm{d}s + \int_{0}^{t}\Sigma_{t,s}\big(X_{s}^{(n)}\big)\,\mathrm{d}W_{s}\quad\text{a.s.}
\end{split}
\end{equation}
is Borel and has full measure for any $n\in\N_{0}$, is well-defined and satisfies
\begin{equation}\label{eq:growth estimate for Picard iterations 1}
\begin{split}
&\E\big[|X_{t}^{(n)} - \xi_{t}|^{p}\big]^{\frac{1}{p}} \leq k_{0}(t) + \sum_{i=1}^{n-1}\bigg(\int_{0}^{t}\R_{l^{2},i}(t,s)k_{0}(s)^{2}\,\mathrm{d}s\bigg)^{\frac{1}{2}}\\
& + \bigg(\int_{0}^{t}\R_{l^{2},n}(t,s)\E\big[|X_{s}^{(0)} - \xi_{s}|^{p}\big]^{\frac{2}{p}}\,\mathrm{d}s\bigg)^{\frac{1}{2}} + \sum_{i=1}^{n}\bigg(\int_{0}^{t}\R_{l^{2},i}(t,s)\E\big[|\xi_{s}|^{p}\big]^{\frac{2}{p}}\,\mathrm{d}s\bigg)^{\frac{1}{2}}
\end{split}
\end{equation}
for all $n\in\N$ and $t\in I_{\infty}$, where the Borel set $I_{\infty} :=\bigcap_{n\in\N} I_{n}$ has full measure.
\end{Proposition}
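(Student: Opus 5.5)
The proof rests on one \emph{pointwise moment inequality}, to be iterated with the resolvent machinery of Section~3.3. Fix an $E$-valued $\mathbb{F}$-progressively measurable process $Y$ with $Y_{s}\in\mathcal{D}$ for a.e.~$s$, and set $\varphi(s) := \E[|Y_{s}|^{p}]^{1/p}$. For a.e.~$t$ for which the right-hand side of the Volterra equation (with $Y$ in place of $X$) is defined, Minkowski's integral inequality, the Burkholder-Davis-Gundy estimate~\eqref{eq:stochastic integral estimate} applied with the frozen time variable $T=t$, and condition~\eqref{co:1} give
\begin{equation*}
\E\bigg[\bigg|\int_{0}^{t}\B_{t,s}(Y_{s})\,\mathrm{d}s + \int_{0}^{t}\Sigma_{t,s}(Y_{s})\,\mathrm{d}W_{s}\bigg|^{p}\bigg]^{\frac{1}{p}} \leq k_{0}(t) + \int_{0}^{t}l_{1}(t,s)\varphi(s)\,\mathrm{d}s + w_{p}\bigg(\int_{0}^{t}l_{2}(t,s)^{2}\varphi(s)^{2}\,\mathrm{d}s\bigg)^{\frac{1}{2}}.
\end{equation*}
For the diffusion part I would first move the $p/2$-norm inside the $\mathrm{d}s$-integral, again by Minkowski. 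For the drift part, Cauchy-Schwarz gives either $(\int_{0}^{t} l_{1}\,\mathrm{d}\tilde s\int_{0}^{t} l_{1}\varphi^{2}\,\mathrm{d}s)^{1/2}$ or $(\int_{0}^{t} l_{1}^{2}\,\mathrm{d}\tilde s\int_{0}^{t}\varphi^{2}\,\mathrm{d}s)^{1/2}$, and we may take the better of the two. Then $a+b\leq (2a^{2}+2b^{2})^{1/2}$ and the definition~\eqref{eq:special transformed kernel} of $l$ combine both terms into
\begin{equation*}
\bigg(\int_{0}^{t}l(t,s)^{2}\varphi(s)^{2}\,\mathrm{d}s\bigg)^{\frac{1}{2}}.
\end{equation*}
One caveat: the minimum inside $l$ is not taken pointwise in $s$ but between two quantities, one depending on $s$ and one not. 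So I will need to check that the factor $2$ absorbs this, using $\max\{x,y\}^{2}\geq (x^{2}+y^{2})/2$.

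For the solution estimate~\eqref{eq:growth estimate for solutions 1}, apply this with $Y=X$ and split $\varphi\leq \psi+\E[|\xi|^{p}]^{1/p}$ with $\psi(t):=\E[|X_{t}-\xi_{t}|^{p}]^{1/p}$. By Minkowski in $L^{2}(l^{2}(t,\cdot)\,\mathrm{d}s)$ this yields
\begin{equation*}
\psi(t)\leq k_{0}(t) + \Big(\int_{0}^{t} l^{2}\psi^{2}\,\mathrm{d}s\Big)^{1/2}+\Big(\int_{0}^{t} l^{2}\,\E[|\xi_{s}|^{p}]^{2/p}\,\mathrm{d}s\Big)^{1/2}
\end{equation*}
for a.e.~$t$. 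This is exactly the hypothesis of the resolvent inequality in Corollary~\ref{co:resolvent inequality for processes}, or of its sequence version, Proposition~\ref{pr:resolvent sequence inequality for processes}. Because $l\in\mathcal{K}^{2}$ and $\psi$ is locally essentially bounded, the remainder term $(\int_{0}^{t} \R_{l^{2},n}\psi^{2}\,\mathrm{d}s)^{1/2}$ vanishes as $n\uparrow\infty$; here the integral estimate for iterated kernels, Proposition~\ref{pr:integral estimate for iterated kernels of first type}, gives summability, as already used for~\eqref{eq:function series}. This gives~\eqref{eq:growth estimate for solutions 1}.

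For the Picard scheme I would argue by induction on $n\in\N_{0}$. Suppose $X^{(n)}\in\mathcal{L}_{loc}^{\infty,p}$. The set of $s$ with $X^{(n)}_{s}\in\mathcal{L}^{p}\subset\mathcal{D}$ has full measure, so the coefficients are defined for a.e.~$s$. The moment bound above, together with local essential boundedness of $k_{0}$ and of $s\mapsto \int_{0}^{s} l^{2}\,\mathrm{d}\tilde s$, shows that the integrability condition in~\eqref{eq:Picard iteration} holds a.s.\ for a.e.~$t$, since its expectation is finite. Progressive measurability is handled as follows. Proposition~\ref{pr:progressively measurable integral processes} covers the drift integral, and Proposition~\ref{pr:progressively measurable stochastic Volterra integrals} supplies a progressively measurable weak modification of the stochastic Volterra integral. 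Summing these with $\xi$ defines $X^{(n+1)}$. The set $I_{n+1}$ is Borel by Fubini, as remarked before Definition~\ref{de:notion of a solution}, and it has full measure. Well-definedness up to the choice of modification follows from admissibility (Remark~\ref{re:admissible maps}). The one-step inequality then gives the recursion $\psi_{n+1}\leq k_{0}+(\int_{0}^{t} l^{2}\psi_{n}^{2}\,\mathrm{d}s)^{1/2}+(\int_{0}^{t} l^{2}\,\E[|\xi|^{p}]^{2/p}\,\mathrm{d}s)^{1/2}$ on $I_{n+1}$. Unrolling it $n$ times with Proposition~\ref{pr:resolvent sequence inequality for processes} produces~\eqref{eq:growth estimate for Picard iterations 1} for $t\in I_{\infty}$, which also shows $X^{(n+1)}\in\mathcal{L}^{\infty,p}_{loc}$.

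I expect the main obstacle to be bookkeeping rather than any single estimate. The difficult part is handling the ``a.e.~$t$'' sets and the frozen-$t$ use of BDG, since~\eqref{eq:stochastic integral estimate} is stated for a non-Volterra integrand. It must be applied to $s\mapsto \Sigma_{t,s}(X^{(n)}_{s})$ for each fixed $t$ in $I_{n+1}$, and then the resulting inequality has to be combined with the kernel iteration, which integrates over $s$ only along those full-measure sets.
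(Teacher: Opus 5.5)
Your argument is the paper's proof. Your one-step bound is Proposition~\ref{pr:moment estimates for stochastic Volterra processes}(i), which the paper obtains from Proposition~\ref{pr:resolvent sequence inequality for processes} with $N=2$, $\beta_{1}=1$, $\beta_{2}=2$. The frozen-$t$ use of~\eqref{eq:stochastic integral estimate} is unproblematic, since $s\mapsto\Sigma_{t,s}(X_{s})$ is progressive by Remark~\ref{re:admissible maps}. The two estimates then follow from Corollary~\ref{co:resolvent inequality for processes}, resp.\ Proposition~\ref{pr:resolvent sequence inequality for processes}, with $v=k_{0}+(\int_{0}^{\cdot}l(\cdot,s)^{2}\E[|\xi_{s}|^{p}]^{2/p}\,\mathrm{d}s)^{1/2}$, plus Minkowski and the index shift~\eqref{eq:special property of the resolvent sequence}; your ``unrolling'' leaves that shift implicit.

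However, the caveat you flag is a genuine gap, and the factor $2$ cannot absorb it. That factor is already spent on $a+b\leq 2\max\{a,b\}$ when the drift and diffusion terms are merged. Write $\hat{l}_{1}(t,s)^{2}:=\min\{l_{1}(t,s)\int_{0}^{t}l_{1}(t,\tilde{s})\,\mathrm{d}\tilde{s},\int_{0}^{t}l_{1}(t,\tilde{s})^{2}\,\mathrm{d}\tilde{s}\}$, so that $l=2\max\{\hat{l}_{1},w_{p}l_{2}\}$. Then the inequality you need,
\[
\int_{0}^{t}l_{1}(t,s)\varphi(s)\,\mathrm{d}s + w_{p}\bigg(\int_{0}^{t}l_{2}(t,s)^{2}\varphi(s)^{2}\,\mathrm{d}s\bigg)^{\frac{1}{2}} \leq \bigg(\int_{0}^{t}l(t,s)^{2}\varphi(s)^{2}\,\mathrm{d}s\bigg)^{\frac{1}{2}},
\]
is false in general. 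Take $t=1$, $\varphi\equiv 1$, and let $l_{1}(1,\cdot)$ equal $1$ on $[0,\frac{1}{2}]$ and $100$ on $]\frac{1}{2},1]$.
\begin{itemize}
\item Then $\int_{0}^{1}l_{1}(1,s)\,\mathrm{d}s=50.5$ and $\int_{0}^{1}l_{1}(1,s)^{2}\,\mathrm{d}s=5000.5$.
\item So $\hat{l}_{1}(1,\cdot)^{2}$ equals $50.5$ on the first half and $5000.5$ on the second, giving $\int_{0}^{1}\hat{l}_{1}(1,s)^{2}\,\mathrm{d}s=2525.5<50.5^{2}$.
\item Choosing $w_{p}l_{2}:=\hat{l}_{1}$, the left-hand side is $50.5+\sqrt{2525.5}\approx 100.75$, while the right-hand side is $2\sqrt{2525.5}\approx 100.51$.
\end{itemize}
Deterministic $\B_{t,s}=l_{1}(t,s)e$, $\Sigma_{t,s}=l_{2}(t,s)L$ with $|e|=|L|_{2}=1$ and $X\equiv e$ satisfy~\eqref{eq:abstract affine growth condition} with equality and $k_{1}=k_{2}=0$. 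So the example also falsifies the second inequality of Proposition~\ref{pr:moment estimates for stochastic Volterra processes}(i). The paper therefore has exactly your hole. It sits in the ``Jensen and H\"older'' step of Proposition~\ref{pr:resolvent sequence inequality for processes}, which uses the pointwise minimum in $\hat{l}_{j}$ without any constant. With $l$ exactly as in~\eqref{eq:special transformed kernel}, neither argument proves~\eqref{eq:growth estimate for solutions 1} or~\eqref{eq:growth estimate for Picard iterations 1}.

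The repair is cheap.
\begin{itemize}
\item Split $[0,t]$ into $S:=\{s\,|\,l_{1}(t,s)\int_{0}^{t}l_{1}(t,\tilde{s})\,\mathrm{d}\tilde{s}\leq\int_{0}^{t}l_{1}(t,\tilde{s})^{2}\,\mathrm{d}\tilde{s}\}$ and $S^{c}$.
\item On $S$ use weighted Cauchy--Schwarz, $(\int_{S}l_{1}\varphi\,\mathrm{d}s)^{2}\leq\int_{0}^{t}l_{1}\,\mathrm{d}\tilde{s}\int_{S}l_{1}\varphi^{2}\,\mathrm{d}s$; on $S^{c}$ use plain Cauchy--Schwarz.
\item Combining with $(a+b)^{2}\leq 2a^{2}+2b^{2}$ gives $\int_{0}^{t}l_{1}(t,s)\varphi(s)\,\mathrm{d}s\leq\sqrt{2}\,(\int_{0}^{t}\hat{l}_{1}(t,s)^{2}\varphi(s)^{2}\,\mathrm{d}s)^{1/2}$.
\end{itemize}
Hence your one-step bound holds with $l$ replaced by $2\max\{\sqrt{2}\hat{l}_{1},w_{p}l_{2}\}$. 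Alternatively, use $2\max\{\hat{l}_{1}',w_{p}l_{2}\}$ with the single branch $\hat{l}_{1}'(t,s)^{2}:=l_{1}(t,s)\int_{0}^{t}l_{1}(t,\tilde{s})\,\mathrm{d}\tilde{s}$, for which weighted Cauchy--Schwarz loses nothing. Either kernel still lies in $\mathcal{K}^{2}$. So the rest of your argument goes through unchanged, including the vanishing of the remainder via Proposition~\ref{pr:integral estimate for iterated kernels of first type} and the induction for the Picard scheme. It yields both estimates for the modified kernel.
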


\begin{Remark}
By the definition of $\mathrm{I}_{l}$, the terms on the right-hand side in~\eqref{eq:growth estimate for solutions 1} cannot exceed $k_{0}(t) +  \mathrm{I}_{l}(t)(\esssup_{s\in [0,t]}k_{0}(s) + \esssup_{s\in [0,t]} \E[|\xi_{s}|^{p}]^{1/p})$ for any $t\in I$.
\end{Remark}

Next, let us consider a \emph{Lipschitz condition} on $\B$ and $\Sigma$:
\begin{enumerate}[label=(C.\arabic*), ref=C.\arabic*, leftmargin=\widthof{(C.2)} + \labelsep]
\setcounter{enumi}{1}
\item\label{co:2} There are $\lambda_{1}\in\mathcal{K}^{1}$ and $\lambda_{2}\in\mathcal{K}^{2}$ such that
\begin{equation}\label{eq:Lipschitz condition}
\begin{split}
\E\big[|\B_{t,s}(Y) &- \B_{t,s}(\tilde{Y})|^{p}\big]^{\frac{1}{p}}\mathbbm{1}_{\{1\}}(i)\\
&\quad + \E\big[|\Sigma_{t,s}(Y) - \Sigma_{t,s}(\tilde{Y})|_{2}^{p}\big]^{\frac{1}{p}}\mathbbm{1}_{\{2\}}(i) \leq \lambda_{i}(t,s)\E\big[|Y - \tilde{Y}|^{p}\big]^{\frac{1}{p}}
\end{split}
\end{equation}
for any $i\in\{1,2\}$, $s,t\in I$ with $s < t$ and $Y,\tilde{Y}\in\mathcal{D}$.
\end{enumerate}

\begin{Remark}
From~\eqref{co:2} the affine growth condition~\eqref{co:1} follows as soon as the two non-negative kernels $k_{1}$ and $k_{2}$ on $I$ given by
\begin{equation*}
k_{1}(t,s) := \E\big[|\B_{t,s}(\hat{Y})|^{p}\big]^{\frac{1}{p}}\quad\text{and}\quad k_{2}(t,s) := \E\big[|\Sigma_{t,s}(\hat{Y})|_{2}^{p}\big]^{\frac{1}{p}}
\end{equation*}
lie in $\mathcal{K}_{\infty}^{1}$ and $\mathcal{K}_{\infty}^{2}$, respectively, where $\hat{Y}\in\mathcal{D}$ satisfies $\E[|\hat{Y}|^{p}] < \infty$.
\end{Remark}

Under~\eqref{co:2}, we define $\lambda\in\mathcal{K}^{2}$ and $\mathrm{I}_{\lambda}:I\rightarrow [0,\infty]$ by~\eqref{eq:special transformed kernel} and~\eqref{eq:function series} when $l$, $l_{1}$ and $l_{2}$ are replaced by $\lambda$, $\lambda_{1}$ and $\lambda_{2}$, respectively. Further, we take an $E$-valued $\mathbb{F}$-progressively measurable process $\tilde{\xi}$ to state an \emph{$L^{p}$-comparison estimate}.

\begin{Corollary}\label{pr:comparison of solutions 1}
Suppose that~\eqref{co:2} is valid. Then any two solutions $X$ and $\tilde{X}$ to~\eqref{eq:stochastic Volterra equation} in $\mathcal{L}_{loc}^{\infty,p}(I\times\Omega,E)$ with respective value conditions $\xi$ and $\tilde{\xi}$ satisfy
\begin{equation}\label{eq:comparison estimate for solutions 1}
\E\big[|X_{t} - \tilde{X}_{t}|^{p}\big]^{\frac{1}{p}} \leq \E\big[|\xi_{t} - \tilde{\xi}_{t}|^{p}\big]^{\frac{1}{p}} + \sum_{n=1}^{\infty}\bigg(\int_{0}^{t}\R_{\lambda^{2},n}(t,s)\E\big[|\xi_{s} - \tilde{\xi}_{s}|^{p}\big]^{\frac{2}{p}}\,\mathrm{d}s\bigg)^{\frac{1}{2}}
\end{equation}
for a.e.~$t\in I$. In particular,~\eqref{eq:stochastic Volterra equation} admits, up to a weak modification, at most a unique solution in $\mathcal{L}_{loc}^{\infty,p}(I\times\Omega,E)$.
\end{Corollary}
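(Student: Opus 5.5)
The approach is to estimate the difference $D_{t} := X_{t} - \tilde{X}_{t}$ directly from the equation and to close the resulting integral inequality with the resolvent machinery of Section~\ref{se:3.3}, exactly as in the growth estimate of Proposition~\ref{pr:growth estimates for solutions and Picard iterations 1}.

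\textbf{Step 1: set-up.} Since both $X$ and $\tilde{X}$ are solutions, there is a Borel set of full measure of times $t\in I$ at which \eqref{eq:notion of a solution} holds for both processes. At such $t$ we subtract the two equations and use the linearity~(i) of the stochastic integral. This gives a.s.
\begin{equation*}
D_{t} = \xi_{t} - \tilde{\xi}_{t} + \int_{0}^{t}\big(\B_{t,s}(X_{s}) - \B_{t,s}(\tilde{X}_{s})\big)\,\mathrm{d}s + \int_{0}^{t}\big(\Sigma_{t,s}(X_{s}) - \Sigma_{t,s}(\tilde{X}_{s})\big)\,\mathrm{d}W_{s}.
\end{equation*}

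\textbf{Step 2: moment bound for each term.} We take $L^{p}$-norms and use Minkowski's inequality.
\begin{itemize}
\item For the Bochner integral, Minkowski's integral inequality together with~\eqref{co:2} bounds its norm by $\int_{0}^{t}\lambda_{1}(t,s)\varphi(s)\,\mathrm{d}s$, where $\varphi(s) := \E[|D_{s}|^{p}]^{1/p}$.
\item For the stochastic integral, we apply~\eqref{eq:stochastic integral estimate} at the fixed horizon $t$ with integrand $s\mapsto \Sigma_{t,s}(X_{s}) - \Sigma_{t,s}(\tilde{X}_{s})$. This integrand is progressively measurable by admissibility (Remark~\ref{re:admissible maps}). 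Minkowski's inequality in $L^{p/2}$, valid since $p\geq 2$, then yields the bound $w_{p}(\int_{0}^{t}\lambda_{2}(t,s)^{2}\varphi(s)^{2}\,\mathrm{d}s)^{1/2}$.
\end{itemize}
To put the drift term in the same square-root form, we use Cauchy--Schwarz in two ways:
\begin{equation*}
\int_{0}^{t}\lambda_{1}\varphi \leq \Big(\int_{0}^{t}\lambda_{1}(t,\tilde{s})\,\mathrm{d}\tilde{s}\int_{0}^{t}\lambda_{1}(t,s)\varphi(s)^{2}\,\mathrm{d}s\Big)^{1/2}
\quad\text{and}\quad
\int_{0}^{t}\lambda_{1}\varphi \leq \Big(\int_{0}^{t}\lambda_{1}(t,\tilde{s})^{2}\,\mathrm{d}\tilde{s}\int_{0}^{t}\varphi(s)^{2}\,\mathrm{d}s\Big)^{1/2}.
\end{equation*}
Combining these two bounds and $(a+b)\leq 2\max\{a,b\}$ is what produces the kernel $\lambda$ of~\eqref{eq:special transformed kernel}. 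The result is
\begin{equation*}
\varphi(t) \leq \E\big[|\xi_{t}-\tilde{\xi}_{t}|^{p}\big]^{1/p} + \bigg(\int_{0}^{t}\lambda(t,s)^{2}\varphi(s)^{2}\,\mathrm{d}s\bigg)^{1/2}\quad\text{for a.e.~$t\in I$.}
\end{equation*}
The "min" in the definition of $\lambda$ needs care: it must be taken pointwise in $s$ when splitting the drift, so one chooses the better of the two Cauchy--Schwarz bounds. This step should be routine but must be stated carefully, as in the proof of Proposition~\ref{pr:growth estimates for solutions and Picard iterations 1}.

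\textbf{Step 3: resolvent inequality (main obstacle).} We then apply the resolvent-type inequality of Corollary~\ref{co:resolvent inequality for processes} (or Proposition~\ref{pr:resolvent sequence inequality for processes}) to the function $\varphi$, which is locally essentially bounded since $X,\tilde{X}\in\mathcal{L}_{loc}^{\infty,p}(I\times\Omega,E)$. Iterating the inequality $n$ times, using Minkowski each time, gives the first $n$ terms of the series in~\eqref{eq:comparison estimate for solutions 1}. The remainder is $(\int_{0}^{t}\R_{\lambda^{2},n+1}(t,s)\varphi(s)^{2}\,\mathrm{d}s)^{1/2}$, which is bounded by $\esssup_{[0,t]}\varphi$ times a summand of $\mathrm{I}_{\lambda}(t)$. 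This remainder tends to $0$ because $\mathrm{I}_{\lambda}$ is finite, and that finiteness is where $\lambda\in\mathcal{K}^{2}$ and Proposition~\ref{pr:integral estimate for iterated kernels of first type} enter.

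The delicate point is that the inequality only holds for a.e.~$t$. One therefore has to check that each iteration is legitimate: substituting an a.e.~inequality into an integral against $\R_{\lambda^{2},n}(t,\cdot)$ is fine, but only for a.e.~$t$, handled by Fubini. This is exactly what the cited corollary should encapsulate.

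\textbf{Step 4: uniqueness.} For $\xi=\tilde{\xi}$ the right-hand side vanishes, so $X_{t}=\tilde{X}_{t}$ a.s.~for a.e.~$t$. That is, $X$ and $\tilde{X}$ are weak modifications of one another, which is the uniqueness claim.
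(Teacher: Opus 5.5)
Your route is the paper's. The paper obtains the one-step bound $\E[|X_t-\tilde X_t|^p]^{1/p}\le\E[|\xi_t-\tilde\xi_t|^p]^{1/p}+(\int_0^t\lambda(t,s)^2\E[|X_s-\tilde X_s|^p]^{2/p}\,\mathrm{d}s)^{1/2}$ from Proposition~\ref{pr:moment estimates for stochastic Volterra processes}, viewing $X-\tilde X$ as a Volterra process with coefficients $\xi-\tilde\xi$, $\B(X)-\B(\tilde X)$, $\Sigma(X)-\Sigma(\tilde X)$, and closes it with Corollary~\ref{co:resolvent inequality for processes}. Your Steps 1, 3 and 4 are fine. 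The a.e.\ issue in Step 3 is harmless: an inequality valid for a.e.\ $s$ may be inserted under $\int_0^t\R_{\lambda^2,n}(t,s)\cdots\mathrm{d}s$ for every $t$.

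The gap is in Step 2, at exactly the minimum you call routine. Put $a(t):=\int_0^t\lambda_1(t,\tilde s)\,\mathrm{d}\tilde s$, $b(t):=\int_0^t\lambda_1(t,\tilde s)^2\,\mathrm{d}\tilde s$ and $\hat\lambda_1(t,s)^2:=\min\{\lambda_1(t,s)a(t),b(t)\}$, the drift part of~\eqref{eq:special transformed kernel}. Your two Cauchy--Schwarz bounds give $(\int_0^t\lambda_1\varphi)^2\le\min\{\int_0^t a\lambda_1\varphi^2,\int_0^t b\varphi^2\}$. This is a minimum of integrals, which is \emph{larger} than $\int_0^t\hat\lambda_1^2\varphi^2$. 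Choosing the better bound therefore does not produce the kernel with the pointwise minimum, and the inequality $\int_0^t\lambda_1\varphi\le(\int_0^t\hat\lambda_1^2\varphi^2)^{1/2}$ is false in general. For $t=1$, $\lambda_1(1,\cdot)=\mathbbm{1}_{[0,1/2]}+2\cdot\mathbbm{1}_{]1/2,1]}$ and $\varphi\equiv1$, the left side is $3/2$ and the right side is $(3/4+5/4)^{1/2}=\sqrt2$.

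Splitting $[0,t]$ according to which alternative is smaller, and applying Cauchy--Schwarz on each piece, costs a factor $\sqrt2$. The factor $2$ in $\lambda$ does not absorb this once the diffusion term is present. With $w_p\lambda_2(1,\cdot):=\hat\lambda_1(1,\cdot)$, your drift and diffusion bounds add up to $3/2+\sqrt2$, whereas $(\int_0^1\lambda(1,s)^2\varphi(s)^2\,\mathrm{d}s)^{1/2}=2\sqrt2$. So the displayed one-step inequality of Step 2 cannot be obtained this way for $\lambda$ as defined.

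In fairness, the paper's proof has the same defect, so you have not missed an idea the paper supplies. Its one-step bound comes from Proposition~\ref{pr:moment estimates for stochastic Volterra processes}, that is, from the ``Jensen and H\"older'' step in the proof of Proposition~\ref{pr:resolvent sequence inequality for processes} with $N=2$, $\beta_1=1$, $\beta_2=2$, where the same pointwise minimum appears in $\hat l_1$.

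The repair is simple. Either drop the minimum and use only the Jensen alternative $\hat\lambda_1(t,s)^2=\lambda_1(t,s)a(t)$, or keep the pointwise minimum and enlarge the constant $2$ in front of the maximum (for instance to $\sqrt6$). The first option works because $a$ is locally essentially bounded for $\lambda_1\in\mathcal{K}^1$, whereas $b$ may be infinite, and it still gives $\lambda\in\mathcal{K}^2$. With any such valid kernel your Steps 3 and 4 go through verbatim. The uniqueness assertion therefore stands, and only the explicit kernel in~\eqref{eq:comparison estimate for solutions 1} changes.
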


\begin{Remark}
The series in~\eqref{eq:comparison estimate for solutions 1} is bounded by $\mathrm{I}_{\lambda}(t)\esssup_{s\in [0,t]}\E[|\xi_{s} - \tilde{\xi}_{s}|^{p}]^{1/p}$ for any $t\in I$.
\end{Remark}

Under the regularity condition on $\xi$, $\B$ and $\Sigma$ below, it follows from Lemma~\ref{le:regularity of the stochastic Volterra integral operator} that every solution to~\eqref{eq:stochastic Volterra equation} in $\mathcal{L}_{loc}^{\infty,p}(I\times\Omega,E)$ admits a continuous weak modification.
\begin{enumerate}[label=(C.\arabic*), ref=C.\arabic*, leftmargin=\widthof{(C.3)} + \labelsep]
\setcounter{enumi}{2}
\item\label{co:3} Additionally to~\eqref{co:1} there are measurable functions $f_{1},f_{2}:I\times I\times I\rightarrow [0,\infty]$ such that
\begin{equation}\label{eq:regularity condition}
\begin{split}
\E\big[|\B_{t,r}(Y) &- \B_{s,r}(Y)|^{p}\big]^{\frac{1}{p}}\mathbbm{1}_{\{1\}}(i)\\
&\quad + \E\big[|\Sigma_{t,r}(Y) - \Sigma_{s,r}(Y)|_{2}^{p}\big]^{\frac{1}{p}}\mathbbm{1}_{\{2\}}(i) \leq f_{i}(t,s,r)\big(1 + \E\big[|Y|^{p}\big]^{\frac{1}{p}}\big)
\end{split}
\end{equation}
for any $i\in\{1,2\}$, $r,s,t\in I$ with $r < s < t$ and $Y\in\mathcal{D}$. There are sequences $(t_{i})_{i\in\N}$, $(\beta_{i})_{i\in\N}$ and $(\hat{c}_{i})_{i\in\N}$ in $I$, $]\frac{1}{p},1]$ and $\Re_{+}$, respectively, such that
\begin{equation}\label{eq:co.4}
\begin{split}
\E\big[|\xi_{s} - \xi_{t}|^{p}\big]^{\frac{1}{p}} & + \int_{s}^{t}(k_{1}\vee l_{1})(t,\tilde{s})\,\mathrm{d}\tilde{s} + \bigg(\int_{s}^{t}(k_{2}\vee l_{2})(t,\tilde{s})^{2}\,\mathrm{d}\tilde{s}\bigg)^{\frac{1}{2}}\\
&\quad + \int_{0}^{s}f_{1}(t,s,r)\,\mathrm{d}r + \bigg(\int_{0}^{s}f_{2}(t,s,r)^{2}\,\mathrm{d}r\bigg)^{\frac{1}{2}} \leq \hat{c}_{i}(t - s)^{\beta_{i}}
\end{split}
\end{equation}
for all $i\in\N$ and $s,t\in [0,t_{i}]$ with $s\leq t$. Further, $t_{1} > 0$, $(t_{i})_{i\in\N}$ is strictly increasing and $\lim_{i\uparrow\infty} t_{i} = \sup I$.
\end{enumerate}

Finally, under~\eqref{co:2}, we infer from Proposition~\ref{pr:integral estimate for iterated kernels of first type} and Remark~\ref{re:integral estimate for iterated kernels of first type} that the measurable locally essentially bounded function $\mathrm{I}_{\lambda}$, given by~\eqref{eq:function series} for $l = \lambda$, satisfies
\begin{equation}\label{eq:error series estimate}
\esssup_{t\in [0,T]} \mathrm{I}_{\lambda}(t) \leq \sum_{n=1}^{\infty}\esssup_{t\in [0,T]}\bigg(\int_{0}^{t}\R_{\lambda^{2},n}(t,s)\,\mathrm{d}s\bigg)^{\frac{1}{2}} < \infty\quad\text{for any $T\in I$.}
\end{equation}
Hence, an \emph{existence result} with an \emph{error estimate} for the appearing Picard sequence and an \emph{analysis of the paths} can be stated.

For the coefficients~\eqref{eq:controlled and distribution-dependent coefficients}, the derived solutions become \emph{strong} if we choose $\mathbb{F}$ to be the right-continuous filtration of the augmented filtration of $(\mathcal{E}_{t}^{\xi,\alpha})_{t\in I}$, given by~\eqref{eq:natural filtration}.

\begin{Theorem}\label{th:existence of unique solutions 1}
Let~\eqref{co:1} and~\eqref{co:2} be valid, $\mathcal{L}^{p}(\Omega,E)\subset\mathcal{D}$ and $\xi, X^{(0)}\in\mathcal{L}_{loc}^{\infty,p}(I\times\Omega,E)$. Then, up to a weak modification, there is a unique solution
\begin{equation*}
\text{$X^{\xi}$ to~\eqref{eq:stochastic Volterra equation} in $\mathcal{L}_{loc}^{\infty,p}(I\times\Omega,E)$}
\end{equation*}
and the sequence $(X^{(n)})_{n\in\N}$ in $\mathcal{L}_{loc}^{\infty,p}(I\times\Omega,E)$, recursively given by~\eqref{eq:Picard iteration} for any $n\in\N_{0}$, converges in $p$th moment to $X^{\xi}$, locally essentially uniformly in $t\in I$. In fact,
\begin{equation}\label{eq:error moment estimate 1}
\E\big[|X_{t}^{(n)} - X_{t}^{\xi}|^{p}\big]^{\frac{1}{p}} \leq \sum_{i=n}^{\infty} \bigg(\int_{0}^{t}\R_{\lambda^{2},i}(t,s)\Delta(s)^{2}\,\mathrm{d}s\bigg)^{\frac{1}{2}}
\end{equation}
for all $n\in\N$ for a.e.~$t\in I$, where the measurable locally essentially bounded function $\Delta:I\rightarrow [0,\infty]$ is defined by $\Delta(t) := \E[|X_{t}^{(0)} - X_{t}^{(1)}|^{p}]^{1/p}$ and
\begin{equation}\label{eq:convergence of the error coefficients 1}
\lim_{n\uparrow\infty}\sum_{i=n}^{\infty}\esssup_{t\in [0,T]}\bigg(\int_{0}^{t}\R_{\lambda^{2},i}(t,s)\,\mathrm{d}s\bigg)^{\frac{1}{2}} = 0\quad\text{for all $T\in I$.}
\end{equation}
Let also~\eqref{co:3} hold. Then $X^{\xi}$ and $X^{(n)}$, where $n\in\N$, admit $\mathbb{F}$-adapted locally Hölder continuous weak modifications $\hat{X}^{\xi}$ and $\hat{X}^{(n)}$, respectively, satisfying
\begin{equation*}
\sup_{t\in [0,T]}\E\big[|\hat{X}_{t}^{\xi}|^{p}\big] < \infty\quad\text{and}\quad \sup_{n\in\N}\sup_{t\in [0,T]}\E\big[|\hat{X}_{t}^{(n)}|^{p}\big] < \infty
\end{equation*}
for any $T\in I$. Moreover, the paths of $\hat{X}^{\xi}$ and $\hat{X}^{(n)}$ are $\beta$-Hölder continuous on $[0,t_{i}]$ for any $i\in\N$, $\beta\in ]0,\beta_{i} - \frac{1}{p}[$ and $n\in\N$, and $\hat{X}^{\xi}$ is a regular solution to~\eqref{eq:stochastic Volterra equation}.
\end{Theorem}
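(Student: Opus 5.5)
The plan is a Picard iteration carried out in the completely pseudometrisable space $\mathcal{L}_{loc}^{\infty,p}(I\times\Omega,E)$ (Lemma~\ref{le:completely metrisable space}). The iteration is well defined by Proposition~\ref{pr:growth estimates for solutions and Picard iterations 1}, with $\mathcal{L}^{p}(\Omega,E)\subset\mathcal{D}$. That proposition also gives, through the growth estimate~\eqref{eq:growth estimate for Picard iterations 1} and the local essential boundedness of $\mathrm{I}_{l}$, that every $X^{(n)}$ lies in $\mathcal{L}_{loc}^{\infty,p}(I\times\Omega,E)$. Progressive measurability of each iterate comes from Proposition~\ref{pr:progressively measurable integral processes} for the Bochner part and Proposition~\ref{pr:progressively measurable stochastic Volterra integrals} for the stochastic Volterra part.

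\textbf{Step 1: one-step contraction.} Fix $t\in I_{\infty}$ and write
\begin{equation*}
X_{t}^{(n+1)}-X_{t}^{(n)}=\int_{0}^{t}\big(\B_{t,s}(X^{(n)}_{s})-\B_{t,s}(X^{(n-1)}_{s})\big)\,\mathrm{d}s+\int_{0}^{t}\big(\Sigma_{t,s}(X^{(n)}_{s})-\Sigma_{t,s}(X^{(n-1)}_{s})\big)\,\mathrm{d}W_{s}.
\end{equation*}
For the drift term I apply Minkowski's integral inequality; for the stochastic term I use the BDG bound~\eqref{eq:stochastic integral estimate} followed by Minkowski in $L^{p/2}$. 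Together with~\eqref{co:2} this gives
\begin{equation*}
\delta_{n+1}(t)\le \int_{0}^{t}\lambda_{1}(t,s)\delta_{n}(s)\,\mathrm{d}s + w_{p}\Big(\int_{0}^{t}\lambda_{2}(t,s)^{2}\delta_{n}(s)^{2}\,\mathrm{d}s\Big)^{1/2},\qquad \delta_{n}(t):=\E\big[|X^{(n)}_{t}-X^{(n-1)}_{t}|^{p}\big]^{1/p}.
\end{equation*}
I then bound the first integral by Cauchy--Schwarz in two ways: either by $(\int\lambda_{1})^{1/2}(\int\lambda_{1}\delta_{n}^{2})^{1/2}$ or by $(\int\lambda_{1}^{2})^{1/2}(\int\delta_{n}^{2})^{1/2}$. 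Taking the minimum of the two is exactly what the kernel $\lambda$ of~\eqref{eq:special transformed kernel} encodes, so $\delta_{n+1}(t)\le (\int_{0}^{t}\lambda(t,s)^{2}\delta_{n}(s)^{2}\,\mathrm{d}s)^{1/2}$. Iterating and using the definition~\eqref{eq:resolvent sequence} of the iterated kernels yields $\delta_{n+1}(t)\le(\int_{0}^{t}\R_{\lambda^{2},n}(t,s)\Delta(s)^{2}\,\mathrm{d}s)^{1/2}$. This is the same computation as in Proposition~\ref{pr:growth estimates for solutions and Picard iterations 1}, or equivalently the sharp inequalities of Section~\ref{se:3.3}.

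\textbf{Step 2: convergence.} Summing the bound from Step 1 by the triangle inequality controls $\E[|X^{(m)}_{t}-X^{(n)}_{t}|^{p}]^{1/p}$ by tails of the series. The condition~\eqref{eq:convergence of the error coefficients 1} follows from~\eqref{eq:error series estimate}, because the tail of a convergent series of essential suprema tends to zero. Hence the sequence is Cauchy locally essentially uniformly, and it has a limit $X^{\xi}$ in $\mathcal{L}_{loc}^{\infty,p}$ satisfying~\eqref{eq:error moment estimate 1}. To see that $X^{\xi}$ is a solution, I pass to the limit in~\eqref{eq:Picard iteration}. By~\eqref{co:2} the drift terms converge in $L^{p}$. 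For the stochastic terms, BDG gives $L^{p}$ convergence of the integrands; alternatively, convergence in probability via Proposition~\ref{pr:approximation of stochastic Volterra integrals}, passing to a subsequence if necessary. Both conclusions hold for a.e.~$t$. Uniqueness up to weak modification is given by Corollary~\ref{pr:comparison of solutions 1}.

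\textbf{Step 3: Hölder regularity (main obstacle).} Under~\eqref{co:3} I estimate $\E[|Y_{t}-Y_{s}|^{p}]^{1/p}\le c(t-s)^{\beta_{i}}$ for $Y=\xi+\int\B+\int\Sigma\,\mathrm{d}W$ built from $X^{(n)}$ or $X^{\xi}$. I split $\int_{0}^{t}-\int_{0}^{s}$ into two pieces. The piece over $[s,t]$ is handled by $k_{i}\vee l_{i}$ together with the uniform moment bound. The kernel-difference piece over $[0,s]$ is handled by $f_{i}$, using the uniform bounds on $\E[|X^{(n)}|^{p}]$ from~\eqref{eq:growth estimate for Picard iterations 1}. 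Kolmogorov--Chentsov, presumably packaged as Lemma~\ref{le:regularity of the stochastic Volterra integral operator}, then yields continuous modifications that are $\beta$-Hölder for $\beta<\beta_{i}-1/p$. The delicate point is that the defining identity holds only a.e., so the modification must be built from the right-hand side, which is defined for every $t$. I would define $\hat{X}^{\xi}_{t}$ as the continuous version of $\xi_{t}+\int_{0}^{t}\B_{t,s}(X^{\xi}_{s})\,\mathrm{d}s+\int_{0}^{t}\Sigma_{t,s}(X^{\xi}_{s})\,\mathrm{d}W_{s}$. Admissibility (Remark~\ref{re:admissible maps}) then shows that replacing $X^{\xi}$ by $\hat{X}^{\xi}$ inside the coefficients does not change the integrals, so the equation holds for every $t$, which gives regularity. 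Finally, $\sup_{t}$ moment bounds follow because the estimates now hold pointwise in $t$ rather than only almost everywhere.
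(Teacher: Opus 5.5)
Your route is the paper's route. You establish the iterates in $\mathcal{L}_{loc}^{\infty,p}(I\times\Omega,E)$, derive the one-step bound \eqref{eq:auxiliary Picard moment estimate} and iterate it through $\R_{\lambda^{2},i}$. You then get completeness from Lemma~\ref{le:completely metrisable space} and \eqref{eq:error series estimate}, pass to the limit by $L^{p}$-continuity of the Volterra operator, take uniqueness from Corollary~\ref{pr:comparison of solutions 1}, and handle the H\"older part via Lemma~\ref{le:regularity of the stochastic Volterra integral operator} plus admissibility, as in Remark~\ref{re:notion of a solution}.

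The step that fails as you justify it is the passage from your two Cauchy--Schwarz bounds to $\delta_{n+1}(t)\le(\int_{0}^{t}\lambda(t,s)^{2}\delta_{n}(s)^{2}\,\mathrm{d}s)^{1/2}$. Put $a(s):=\lambda_{1}(t,s)\int_{0}^{t}\lambda_{1}(t,\tilde{s})\,\mathrm{d}\tilde{s}$, $b:=\int_{0}^{t}\lambda_{1}(t,\tilde{s})^{2}\,\mathrm{d}\tilde{s}$ and $A:=\int_{0}^{t}\lambda_{1}(t,s)\delta_{n}(s)\,\mathrm{d}s$. Your two bounds give $A^{2}\le\min\{\int_{0}^{t}a\delta_{n}^{2}\,\mathrm{d}s,\int_{0}^{t}b\delta_{n}^{2}\,\mathrm{d}s\}$. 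But $\lambda$ contains the pointwise minimum, and $\int_{0}^{t}\min\{a,b\}\delta_{n}^{2}\,\mathrm{d}s\le\min\{\int_{0}^{t}a\delta_{n}^{2}\,\mathrm{d}s,\int_{0}^{t}b\delta_{n}^{2}\,\mathrm{d}s\}$ goes the wrong way. Concretely, take $t=1.01$, let $\lambda_{1}(t,\cdot)$ equal $1$ on $[0,1]$ and $10$ on $]1,1.01]$, and let $\delta_{n}\equiv1$. Then $A^{2}=1.21$, while $\int_{0}^{t}\min\{a,b\}\,\mathrm{d}s=1.12$. If you also choose $w_{p}\lambda_{2}(t,\cdot)=\min\{a,b\}^{1/2}$, then $A+w_{p}(\int_{0}^{t}\lambda_{2}^{2}\delta_{n}^{2}\,\mathrm{d}s)^{1/2}\approx2.158$. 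This exceeds $(\int_{0}^{t}\lambda^{2}\delta_{n}^{2}\,\mathrm{d}s)^{1/2}\approx2.117$, so the factor $2$ in \eqref{eq:special transformed kernel} does not absorb the loss.

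The paper obtains \eqref{eq:auxiliary Picard moment estimate} by citing Proposition~\ref{pr:moment estimates for stochastic Volterra processes}. That proposition rests on the same pointwise-minimum step, namely the appeal to Jensen and H\"older in Proposition~\ref{pr:resolvent sequence inequality for processes}. So by quoting those results you stand where the paper stands, but the self-contained derivation you give does not prove the inequality. A weighted Cauchy--Schwarz inequality repairs it. Since $1/\min\{a,b\}\le1/a+1/b$, you get
\[
A^{2}\le\int_{0}^{t}\frac{\lambda_{1}(t,s)^{2}}{\min\{a(s),b\}}\,\mathrm{d}s\int_{0}^{t}\min\{a,b\}\delta_{n}^{2}\,\mathrm{d}s\le2\int_{0}^{t}\min\{a,b\}\delta_{n}^{2}\,\mathrm{d}s .
\]
The one-step bound therefore holds once the factor $2$ in \eqref{eq:special transformed kernel} is replaced by $1+\sqrt{2}$; alternatively, drop the minimum and keep just one of the two kernels. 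This changes $\lambda$ only by a constant factor, so $\lambda\in\mathcal{K}^{2}$ still holds. Then \eqref{eq:error series estimate}, \eqref{eq:convergence of the error coefficients 1}, existence, uniqueness, convergence and the H\"older part all go through. The only casualty is \eqref{eq:error moment estimate 1}, which you obtain for the enlarged kernel rather than literally for the $\lambda$ of \eqref{eq:special transformed kernel}.
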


\begin{Remark}\label{re:existence of unique solutions 1}
If $X^{(0)} = \xi$, then $\Delta(t) \leq k_{0}(t) + (\int_{0}^{t}l(t,s)^{2}\E[|\xi_{s}|^{p}]^{2/p}\,\mathrm{d}s)^{1/2}$ for a.e.~$t\in I$, by Proposition~\ref{pr:growth estimates for solutions and Picard iterations 1}. For $X^{(0)} = X^{\xi}$ we have $\Delta = 0$ a.e.~and $X^{(n)}$ becomes a weak modification of $X^{\xi}$ for all $n\in\N$.
\end{Remark}

For a relevant application, let us consider the \emph{controlled and distribution-dependent coefficients}~\eqref{eq:controlled and distribution-dependent coefficients} when the $p$th Wasserstein space, which is generally recalled at~\eqref{eq:Wasserstein metric}, is used.

\begin{Example}\label{ex:controlled and distribution-dependent stochastic Volterra equations}
Let $\mathcal{D} = \mathcal{L}^{p}(\Omega,E)$, $A$ be a non-empty, convex and closed subspace of a separable Banach space with complete norm $|\cdot|$ and $\alpha$ be an $A$-valued $\mathbb{F}$-progressively measurable process. Further, for some Borel measurable maps
\begin{equation*}
b:I\times I\times E\times A\times\mathcal{P}_{p}(E\times A)\rightarrow E,\quad \sigma:I\times I\times E\times A\times\mathcal{P}_{p}(E\times A)\rightarrow\mathcal{L}_{2}(\ell^{2},E)
\end{equation*}
let the representation~\eqref{eq:controlled and distribution-dependent coefficients} hold. Then $\B$ and $\Sigma$ are admissible, by Propositions~\ref{pr:admissible maps 2} and~\ref{pr:Borel measurability of the law map}, and~\eqref{co:1}-\eqref{co:3} follow from the subsequent respective conditions:
\begin{enumerate}[(1)]
\item There are $k_{1}\in\mathcal{K}_{\infty}^{1}$, $k_{2}\in\mathcal{K}_{\infty}^{2}$, $l_{1}\in\mathcal{K}^{1}$, $l_{2}\in\mathcal{K}^{2}$ and non-negative kernels $m_{1},m_{2}$ on $I$ such that 
\begin{align*}
&|b(t,s,x,a,\mu)|\mathbbm{1}_{\{1\}}(i) + |\sigma(t,s,x,a,\mu)|_{2}\mathbbm{1}_{\{2\}}(i)\\
&\leq k_{i}(t,s) + l_{i}(t,s)\big(|x| + \mathcal{W}_{p}(\mu_{1},\delta_{0})\big) + m_{i}(t,s)\big(|a| + \mathcal{W}_{p}(\mu_{2},\delta_{0})\big)
\end{align*}
for any $i\in\{1,2\}$, $s,t\in I$ with $s < t$, $x\in E$, $a\in A$ and $\mu\in\mathcal{P}_{p}(E\times A)$. In addition, $\esssup_{t\in [0,T]}\int_{0}^{t}m_{1}(t,s)\E[|\alpha_{s}|^{p}]^{1/p} + m_{2}(t,s)^{2}\E[|\alpha_{s}|^{p}]^{2/p}\,\mathrm{ds} < \infty$ for all $T\in I$.

\item There are $\lambda_{1}\in\mathcal{K}^{1}$, $\lambda_{2}\in\mathcal{K}^{2}$ and non-negative kernels $m_{3}, m_{4}$ on $I$ such that
\begin{align*}
&|b(t,s,x,a,\mu) - b(t,s,\tilde{x},a,\tilde{\mu})|\mathbbm{1}_{\{1\}}(i) + |\sigma(t,s,x,a,\mu) - \sigma(t,s,\tilde{x},a,\tilde{\mu})|_{2}\mathbbm{1}_{\{2\}}(i)\\
& \leq \big(\lambda_{i}(t,s) + m_{i + 2}(t,s)\mathcal{W}_{p}(\mu_{2},\delta_{0})\big)\big(|x - \tilde{x}| + \mathcal{W}_{p}(\mu,\tilde{\mu})\big) + m_{i + 2}(t,s)|a|\mathcal{W}_{p}(\mu,\tilde{\mu})
\end{align*}
for any $i\in\{1,2\}$, $s,t\in I$, $x,\tilde{x}\in E$, $a\in A$ and $\mu,\tilde{\mu}\in\mathcal{P}_{p}(E\times A)$ with $s < t$ and $\mu_{2} = \tilde{\mu}_{2}$. Further, the non-negative kernel $m_{i + 2,\alpha}$ on $I$ given by 
\begin{equation*}
\text{$m_{i + 2,\alpha}(t,s) := m_{i + 2}(t,s)\E\big[|\alpha_{s}|^{p}\big]^{\frac{1}{p}}$ lies in $\mathcal{K}^{i}$ for $i\in\{1,2\}$.}
\end{equation*}

\item Additionally to~(1) there are measurable functions $f_{1}, f_{2}, g_{1}, g_{2}:I\times I\times I\rightarrow [0,\infty]$ satisfying
\begin{align*}
&|b(t,r,x,a,\mu) - b(s,r,x,a,\mu)|\mathbbm{1}_{\{1\}}(i) + |\sigma(t,r,x,a,\mu) - \sigma(s,r,x,a,\mu)|_{2}\mathbbm{1}_{\{2\}}(i)\\
 &\leq f_{i}(t,s,r)\big(1 + |x| + \mathcal{W}_{p}(\mu_{1},\delta_{0})\big) + g_{i}(t,s,r)\big(|a| + \mathcal{W}_{p}(\mu_{2},\delta_{0})\big)
\end{align*}
for all $i\in\{1,2\}$, $r,s,t\in I$ with $r < s < t$, $x\in E$, $a\in A$ and $\mu\in\mathcal{P}_{p}(E\times A)$. There are sequences $(t_{i})_{i\in\N}$, $(\beta_{i})_{i\in\N}$ and $(\hat{c}_{i})_{i\in\N}$ in $I$, $]\frac{1}{p},1]$ and $\Re_{+}$, respectively, satisfying~\eqref{eq:co.4} and
\begin{equation*}
\begin{split}
& \int_{s}^{t}m_{1}(t,\tilde{s})\E\big[|\alpha_{\tilde{s}}|^{p}\big]^{\frac{1}{p}}\,\mathrm{d}\tilde{s} + \bigg(\int_{s}^{t}m_{2}(t,\tilde{s})^{2}\E\big[|\alpha_{\tilde{s}}|^{p}\big]^{\frac{2}{p}}\,\mathrm{d}\tilde{s}\bigg)^{\frac{1}{2}}\\
&\quad + \int_{0}^{s}g_{1}(t,s,r)\E\big[|\alpha_{r}|^{p}\big]^{\frac{1}{p}}\,\mathrm{d}r + \bigg(\int_{0}^{s}g_{2}(t,s,r)^{2}\E\big[|\alpha_{r}|^{p}\big]^{\frac{2}{p}}\,\mathrm{d}r\bigg)^{\frac{1}{2}} \leq \hat{c}_{i}(t - s)^{\beta_{i}}
\end{split}
\end{equation*}
for all $i\in\N$ and $s,t\in [0,t_{i}]$ with $s\leq t$. Moreover, $t_{1} > 0$, $(t_{i})_{i\in\N}$ is strictly increasing and $\lim_{i\uparrow\infty} t_{i} = \sup I$.
\end{enumerate}
So, if~(1) and~(2) hold and $\E[|\xi|^{p}]$ is locally essentially bounded, then Theorem~\ref{th:existence of unique solutions 1} yields a strong solution to~\eqref{eq:stochastic Volterra equation} in $\mathcal{L}_{loc}^{\infty,p}(I\times\Omega,E)$ that is unique, up to a weak modification. Further, there is a locally Hölder continuous regular solution $\hat{X}^{\xi}$ such that $\E[|\hat{X}^{\xi}|^{p}]$ is locally bounded once condition~(3) is also satisfied.
\end{Example}

\subsection{Solutions with locally integrable moment functions}\label{se:2.5}

The aim of this section is to deduce a unique solution $X$ to~\eqref{eq:stochastic Volterra equation} as limit of a Picard sequence such that the $p$th moment function $I\rightarrow [0,\infty]$, $t\mapsto \E[|X_{t}|^{p}]$ is locally integrable for $p\geq 2$.

To this end, let $\mathcal{L}_{loc}^{p}(I\times\Omega,E)$ denote the linear space of all $\mathbb{F}$-progressively measurable processes $X:I\times\Omega\rightarrow E$ satisfying $\int_{0}^{T}\E[|X_{t}|^{p}]\,\mathrm{d}t < \infty$ for all $T\in I$, equipped with the topology of convergence with respect to the seminorm
\begin{equation}\label{eq:seminorm for processes 2}
\mathcal{L}_{loc}^{p}(I\times\Omega,E)\rightarrow\Re_{+},\quad X\mapsto\bigg(\int_{0}^{T}\E\big[|X_{t}|^{p}\big]\,\mathrm{d}t\bigg)^{\frac{1}{p}}
\end{equation}
for each $T\in I$. That is, a sequence $(X^{(n)})_{n\in\N}$ converges to a process $X$ in this space if and only if $\lim_{n\uparrow\infty}\int_{0}^{T}\E[|X_{t}^{(n)} - X_{t}|^{p}]\,\mathrm{d}t = 0$ for any $T\in I$, and $\mathcal{L}_{loc}^{p}(I\times\Omega,E)$ is completely pseudometrisable, as we recall in the beginning of Section~\ref{se:5.4}.

Similar to~\eqref{co:1}, let us consider another \emph{affine growth condition} on $\B$ and $\Sigma$:
\begin{enumerate}[label=(C.\arabic*), ref=C.\arabic*, leftmargin=\widthof{(C.4)} + \labelsep]
\setcounter{enumi}{3}
\item\label{co:4} There are non-negative kernels $k_{1}, k_{2}$ on $I$ and $l_{1}\in\hat{\mathcal{K}}^{1}$, $l_{2}\in\hat{\mathcal{K}}^{2}$ satisfying the estimate~\eqref{eq:affine growth condition} and
\begin{equation}\label{eq:integrability condition}
\int_{0}^{T}\bigg(\int_{0}^{t}k_{1}(t,s)\,\mathrm{d}s\bigg)^{p} + \bigg(\int_{0}^{t}k_{2}(t,s)^{2}\,\mathrm{d}s\bigg)^{\frac{p}{2}}\,\mathrm{d}t < \infty
\end{equation}
for all $T\in I$. Moreover, $l_{1}\in\mathcal{K}^{1}$ and $l_{2}\in\mathcal{K}^{2}$ if $p > 2$.
\end{enumerate}

Under~\eqref{co:4}, we define a measurable locally $p$-fold integrable function $k_{0}:I\rightarrow [0,\infty]$ and $l\in\hat{\mathcal{K}}^{2}$ by~\eqref{eq:special integral function} and~\eqref{eq:special transformed kernel}, and we have $l\in\mathcal{K}^{2}$ if $p > 2$, bearing in mind that the statements after Definition~\ref{de:convex cones of non-negative kernels} hold.

Further, for each $n\in\N$ we introduce a non-negative kernel $l_{n,p}$ on $I$ by~\eqref{eq:transformed iterated kernel} and the successive formula when $\mu$ is the Lebesgue measure on $I$ and $\beta = 2$. That is,
\begin{equation}\label{eq:special transformed iterated kernel}
l_{n,p}(t,s) :=  \int_{s}^{t}\bigg(\int_{0}^{\tilde{s}}\R_{l^{2},n}(\tilde{s},r)\,\mathrm{d}r\bigg)^{\frac{p}{2} - 1}\R_{l^{2},n}(\tilde{s},s)\,\mathrm{d}\tilde{s},
\end{equation}
if $p > 2$, and $l_{n,p}(t,s) :=  \int_{s}^{t}\R_{l^{2},n}(\tilde{s},s)\,\mathrm{d}\tilde{s}$, if $p = 2$. Then Propositions~\ref{pr:integral estimate for iterated kernels of first type} and~\ref{pr:integral estimate for iterated kernels of second type} and Remarks~\ref{re:integral estimate for iterated kernels of first type} and~\ref{re:integral estimate for iterated kernels of second type} entail that the increasing function series $c_{l,p}:I\rightarrow\Re_{+}$ defined by
\begin{equation}\label{eq:special transformed function series}
c_{l,p}(t) := \sum_{n=1}^{\infty}\esssup_{s\in [0,t]}l_{n,p}(t,s)^{\frac{1}{p}}
\end{equation}
is indeed finite. This yields the following type of \emph{$L^{p}$-growth estimates} for solutions and Picard sequences to~\eqref{eq:stochastic Volterra equation}.

\begin{Proposition}\label{pr:growth estimates for solutions and Picard iterations 2}
Let~\eqref{co:4} hold and $\xi,X^{(0)}\in\mathcal{L}_{loc}^{p}(I\times\Omega,E)$. Then any solution $X$ to~\eqref{eq:stochastic Volterra equation} in $\mathcal{L}_{loc}^{p}(I\times\Omega,E)$ is subject to
\begin{equation}\label{eq:growth estimate for solutions 2}
\begin{split}
\bigg(\int_{0}^{t}\E\big[|X_{s} - \xi_{s}|^{p}\big]^{\frac{1}{p}}\,\mathrm{d}s\bigg)^{\frac{1}{p}} &\leq \bigg(\int_{0}^{t}k_{0}(s)^{p}\,\mathrm{d}s\bigg)^{\frac{1}{p}} + \sum_{n=1}^{\infty}\bigg(\int_{0}^{t}l_{n,p}(t,s)k_{0}(s)^{p}\,\mathrm{d}s\bigg)^{\frac{1}{p}}\\
&\quad + \sum_{n=1}^{\infty}\bigg(\int_{0}^{t}l_{n,p}(t,s)\E\big[|\xi_{s}|^{p}\big]\,\mathrm{d}s\bigg)^{\frac{1}{p}}
\end{split}
\end{equation}
for any $t\in I$. Further, if $\mathcal{L}^{p}(\Omega,E)\subset\mathcal{D}$, then the sequence $(X^{(n)})_{n\in\N}$ in $\mathcal{L}_{loc}^{p}(I\times\Omega,E)$, recursively given by~\eqref{eq:Picard iteration} for all $n\in\N_{0}$, is well-defined and satisfies
\begin{equation}\label{eq:growth estimate for Picard iterations 2}
\begin{split}
\bigg(\int_{0}^{t}&\E\big[|X_{s}^{(n)} - \xi_{s}|^{p}\big]\,\mathrm{d}s\bigg)^{\frac{1}{p}} \leq \bigg(\int_{0}^{t}k_{0}(s)^{p}\,\mathrm{d}s\bigg)^{\frac{1}{p}} + \sum_{i=1}^{n-1}\bigg(\int_{0}^{t}l_{i,p}(t,s)k_{0}(s)^{p}\,\mathrm{d}s\bigg)^{\frac{1}{p}}\\
&\quad + \bigg(\int_{0}^{t}l_{n,p}(t,s)\E\big[|X_{s}^{(0)} - \xi_{s}|^{p}\big]\,\mathrm{d}s\bigg)^{\frac{1}{p}} + \sum_{i=1}^{n}\bigg(\int_{0}^{t}l_{i,p}(t,s)\E\big[|\xi_{s}|^{p}\big]\,\mathrm{d}s\bigg)^{\frac{1}{p}}
\end{split}
\end{equation}
for all $n\in\N$ and $t\in I$. 
\end{Proposition}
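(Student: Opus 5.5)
We follow the pattern of Proposition~\ref{pr:growth estimates for solutions and Picard iterations 1}, replacing the pointwise essential supremum norm by the integrated norm. Let $Y$ be a process in $\mathcal{L}_{loc}^{p}(I\times\Omega,E)$ with $Y_{s}\in\mathcal{D}$ for a.e.~$s$, and set $Z_{t} := \int_{0}^{t}\B_{t,s}(Y_{s})\,\mathrm{d}s + \int_{0}^{t}\Sigma_{t,s}(Y_{s})\,\mathrm{d}W_{s}$. The first step is a pointwise moment bound for $Z$. By Minkowski's integral inequality, the Burkholder-Davis-Gundy estimate~\eqref{eq:stochastic integral estimate}, Minkowski's inequality in $L^{p/2}$ and~\eqref{eq:affine growth condition}, we get
\begin{equation*}
\E\big[|Z_{t}|^{p}\big]^{\frac{1}{p}} \leq k_{0}(t) + \int_{0}^{t}l_{1}(t,s)\E\big[|Y_{s}|^{p}\big]^{\frac{1}{p}}\,\mathrm{d}s + w_{p}\bigg(\int_{0}^{t}l_{2}(t,s)^{2}\E\big[|Y_{s}|^{p}\big]^{\frac{2}{p}}\,\mathrm{d}s\bigg)^{\frac{1}{2}}.
\end{equation*}
Applying Cauchy-Schwarz to the $l_{1}$-term in both ways allowed by the $\min$ in~\eqref{eq:special transformed kernel}, the right-hand side is at most $k_{0}(t) + (\int_{0}^{t}l(t,s)^{2}\E[|Y_{s}|^{p}]^{2/p}\,\mathrm{d}s)^{1/2}$. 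This is the same one-step estimate used in the locally essentially bounded case, and it holds for a.e.~$t$, namely wherever the stochastic integral is defined.

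Write $\varphi(t) := \E[|X_{t}-\xi_{t}|^{p}]^{1/p}$. For a solution $X$ we have $\E[|X_{s}|^{p}]^{1/p}\leq\varphi(s)+\E[|\xi_{s}|^{p}]^{1/p}$, which yields the Volterra-type inequality
\begin{equation*}
\varphi(t)\leq k_{0}(t)+\bigg(\int_{0}^{t}l(t,s)^{2}\varphi(s)^{2}\,\mathrm{d}s\bigg)^{\frac{1}{2}}+\bigg(\int_{0}^{t}l(t,s)^{2}\E\big[|\xi_{s}|^{p}\big]^{\frac{2}{p}}\,\mathrm{d}s\bigg)^{\frac{1}{2}}
\end{equation*}
for a.e.~$t$. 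The next step is to take the $L^{p}([0,t])$-norm of both sides and iterate. For this we use the integral resolvent sequence inequality of Corollary~\ref{co:integral resolvent sequence inequality for processes}, presumably in its deterministic specialisation. It converts an inequality of the form $\varphi\leq g+(\int l^{2}\varphi^{2})^{1/2}$ into
\begin{equation*}
\bigg(\int_{0}^{t}\varphi^{p}\bigg)^{\frac{1}{p}}\leq\bigg(\int_{0}^{t}g^{p}\bigg)^{\frac{1}{p}}+\sum_{n}\bigg(\int_{0}^{t}l_{n,p}(t,s)g(s)^{p}\,\mathrm{d}s\bigg)^{\frac{1}{p}}.
\end{equation*}
The kernel $l_{n,p}$ arises from Hölder's inequality with exponents $p/2$ and $p/(p-2)$ applied to $\int_{0}^{\tilde s}\R_{l^{2},n}(\tilde s,r)g(r)^{2}\,\mathrm{d}r$, followed by Fubini in $\tilde s$. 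This explains the factor $(\int_{0}^{\tilde s}\R_{l^{2},n})^{p/2-1}$ in~\eqref{eq:special transformed iterated kernel}. Splitting $g$ into its $k_{0}$-part and its $\xi$-part via Minkowski gives~\eqref{eq:growth estimate for solutions 2}.

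For the Picard sequence we argue by induction on $n$, using the same one-step bound with $Y = X^{(n)}$. We first need $X^{(n)}$ to be well defined. Since $\int_{0}^{T}\E[|X^{(n)}_{s}|^{p}]\,\mathrm{d}s<\infty$, we have $X^{(n)}_{s}\in\mathcal{L}^{p}(\Omega,E)\subset\mathcal{D}$ for a.e.~$s$. Condition~\eqref{co:4} together with~\eqref{eq:integrability condition}, and with $l\in\hat{\mathcal{K}}^{2}$ handling the integrated term, makes the integrals finite for a.e.~$t$. Propositions~\ref{pr:progressively measurable integral processes} and~\ref{pr:progressively measurable stochastic Volterra integrals} then give a progressively measurable weak modification defining $X^{(n+1)}$. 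Unrolling the recursion $n$ times produces exactly the finite sums with $l_{i,p}$ and the remainder term containing $X^{(0)}-\xi$ in~\eqref{eq:growth estimate for Picard iterations 2}; the bound is stated for all $t$ because the left-hand side is an integral.

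The main obstacle is controlling the nested $L^{p}$ iteration when $p>2$. The composition of the Hölder step with the resolvent iteration must reproduce $l_{n,p}$ exactly, and the resulting quantities must be finite; here Propositions~\ref{pr:integral estimate for iterated kernels of first type} and~\ref{pr:integral estimate for iterated kernels of second type} are invoked, using $l\in\mathcal{K}^{2}$ when $p>2$. The case $p=2$ is a plain Fubini computation.
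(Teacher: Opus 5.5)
The gap is in your last step, ``splitting $g$ into its $k_{0}$-part and its $\xi$-part via Minkowski gives~\eqref{eq:growth estimate for solutions 2}''. It matters exactly in the case $p>2$ that you single out. You feed the whole forcing term $g=k_{0}+w^{1/2}$, with $w(s):=\int_{0}^{s}l(s,r)^{2}\E[|\xi_{r}|^{p}]^{2/p}\,\mathrm{d}r$, into the integral resolvent inequality and split only afterwards. The $\xi$-contribution you obtain is therefore $(\int_{0}^{t}w(s)^{p/2}\,\mathrm{d}s)^{1/p}+\sum_{n\geq 1}(\int_{0}^{t}l_{n,p}(t,s)w(s)^{p/2}\,\mathrm{d}s)^{1/p}$. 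The first term is dominated by the $n=1$ term of~\eqref{eq:growth estimate for solutions 2}, thanks to~\eqref{eq:an application of the Jensen inequality}. For the remaining terms you would need the index shift $\int_{0}^{t}l_{n,p}(t,s)w(s)^{p/2}\,\mathrm{d}s\leq\int_{0}^{t}l_{n+1,p}(t,s)\E[|\xi_{s}|^{p}]\,\mathrm{d}s$. For $p=2$ this is an identity (Fubini and~\eqref{eq:property of the iterated kernels}). For $p>2$ it is false, because the weight $(\int_{0}^{\tilde{s}}\R_{l^{2},n}(\tilde{s},r)\,\mathrm{d}r)^{p/2-1}$ in~\eqref{eq:special transformed iterated kernel} does not compose with the Jensen step already spent on $w^{p/2}$. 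For instance, with $l\equiv 1$, $\E[|\xi_{s}|^{p}]\equiv 1$ and $p=4$, the left-hand side equals $\frac{2(n+1)}{n+2}$ times the right-hand side (for $n=1$: $t^{5}/15$ versus $t^{5}/20$). So the bound your route actually delivers is strictly larger than the claimed one. If your ``unrolling'' for~\eqref{eq:growth estimate for Picard iterations 2} is done at the integrated level, i.e.\ via Corollary~\ref{co:integral resolvent sequence inequality for processes} with $v=g$, it has the same defect.

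The missing idea is to split off $\xi$ \emph{before} passing to the $L^{p}([0,t])$-norm, at the level of square integrals, where the index shift is exact. Proceed as follows, as in the proof of Proposition~\ref{pr:growth estimates for solutions and Picard iterations 1}:
\begin{enumerate}[(1)]
\item Apply Proposition~\ref{pr:resolvent sequence inequality for processes} with $N=1$, $\beta_{1}=2$ to the one-step inequality with $v=g$. Do this once for $(|X^{(n)}-\xi|)_{n\in\N_{0}}$ and once for the constant sequence $|X-\xi|$.
\item Split $v$ by Minkowski in $L^{2}(\R_{l^{2},i}(s,r)\,\mathrm{d}r)$ and use~\eqref{eq:special property of the resolvent sequence}. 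This gives, for a.e.~$s$, a pointwise bound of the form~\eqref{eq:growth estimate for Picard iterations 1}. Its $\xi$-part is $\sum_{i=1}^{n}(\int_{0}^{s}\R_{l^{2},i}(s,r)\E[|\xi_{r}|^{p}]^{2/p}\,\mathrm{d}r)^{1/2}$, and its remainder involves $\R_{l^{2},n}$ and $X^{(0)}-\xi$, respectively $X-\xi$.
\item Only then take the $L^{p}([0,t])$-norm, apply Minkowski to the sum and~\eqref{eq:an application of the Jensen inequality} termwise. This yields~\eqref{eq:growth estimate for Picard iterations 2} verbatim.
\item For the solution, the integrated remainder is at most $\esssup_{s\in[0,t]}l_{n,p}(t,s)^{1/p}(\int_{0}^{t}\E[|X_{s}-\xi_{s}|^{p}]\,\mathrm{d}s)^{1/p}$. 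This tends to zero since $c_{l,p}(t)<\infty$, and letting $n\uparrow\infty$ gives~\eqref{eq:growth estimate for solutions 2}.
\end{enumerate}
You should state the vanishing of the remainder explicitly; saying that ``the resulting quantities must be finite'' is not enough. The paper's one-line proof cites the same tools, and only in this order do they produce exactly the stated right-hand sides.

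A smaller point concerns your justification of the one-step bound. Cauchy--Schwarz ``in both ways'' bounds $\int_{0}^{t}l_{1}(t,s)y(s)\,\mathrm{d}s$ by the smaller of two integrals. It does not give the integral of the pointwise minimum in~\eqref{eq:special transformed kernel}, and that integral can be too small. Take $t=1$, $l_{1}(1,\cdot)=10$ on $[0,0.1]$ and $=1$ on $]0.1,1]$, and $y\equiv 1$. Then $(\int_{0}^{1}l_{1}(1,s)\,\mathrm{d}s)^{2}=3.61$, whereas $\int_{0}^{1}\min\{l_{1}(1,s)\int_{0}^{1}l_{1}(1,\tilde{s})\,\mathrm{d}\tilde{s},\int_{0}^{1}l_{1}(1,\tilde{s})^{2}\,\mathrm{d}\tilde{s}\}\,\mathrm{d}s=2.8$. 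The same step appears in the proof of Proposition~\ref{pr:resolvent sequence inequality for processes}, so it is not specific to your proposal, but it does not follow from Cauchy--Schwarz as you claim. Using one branch of the minimum repairs it.
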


\begin{Remark}
The expressions appearing on the right-hand side in~\eqref{eq:growth estimate for solutions 2} are bounded by $(1 + c_{l,p}(t))(\int_{0}^{t}k_{0}(s)^{p}\,\mathrm{d}s)^{1/p} + c_{l,p}(t)(\int_{0}^{t}\E[|\xi_{s}|^{p}]\,\mathrm{d}s)^{1/p}$ for every $t\in I$.
\end{Remark}

Let us introduce the subsequent \emph{Lipschitz condition} on $\B$ and $\Sigma$:
\begin{enumerate}[label=(C.\arabic*), ref=C.\arabic*, leftmargin=\widthof{(C.5)} + \labelsep]
\setcounter{enumi}{4}
\item\label{co:5} There are $\lambda_{1}\in\hat{\mathcal{K}}^{1}$ and $\lambda_{2}\in\hat{\mathcal{K}}^{2}$ such that the inequality~\eqref{eq:Lipschitz condition} is valid. Further, $\lambda_{1}\in\mathcal{K}^{1}$ and $\lambda_{2}\in\mathcal{K}^{2}$ if $p > 2$.
\end{enumerate}

Under~\eqref{co:5}, we define $\lambda\in\hat{\mathcal{K}}^{2}$, a non-negative kernel $\lambda_{n,p}$ on $I$, where $n\in\N$, and $c_{\lambda,p}:I\rightarrow\Re_{+}$ by~\eqref{eq:special transformed kernel},~\eqref{eq:special transformed iterated kernel} and~\eqref{eq:special transformed function series} when $l_{1} = \lambda_{1}$, $l_{2} = \lambda_{2}$ and $l = \lambda$. Then for any $E$-valued $\mathbb{F}$-progressively measurable process $\tilde{\xi}$ we obtain an \emph{$L^{p}$-comparison estimate}.

\begin{Proposition}\label{pr:comparison of solutions 2}
Assume that~\eqref{co:5} is valid. Then any two solutions $X$ and $\tilde{X}$ to~\eqref{eq:stochastic Volterra equation} in $\mathcal{L}_{loc}^{p}(I\times\Omega,E)$ with respective value conditions $\xi$ and $\tilde{\xi}$ satisfy
\begin{equation}\label{eq:comparison estimate for solutions 2}
\begin{split}
\bigg(\int_{0}^{t}\E\big[|X_{s} - \tilde{X}_{s}|^{p}\big]\,\mathrm{d}s\bigg)^{\frac{1}{p}} &\leq \bigg(\int_{0}^{t}\E\big[|\xi_{s} - \tilde{\xi}_{s}|^{p}\big]\,\mathrm{d}s\bigg)^{\frac{1}{p}}\\
&\quad + \sum_{n=1}^{\infty}\bigg(\int_{0}^{t}\lambda_{n,p}(t,s)\E\big[|\xi_{s} - \tilde{\xi}_{s}|^{p}\big]\,\mathrm{d}s\bigg)^{\frac{1}{p}}
\end{split}
\end{equation}
for every $t\in I$. In particular,~\eqref{eq:stochastic Volterra equation} admits, up to a weak modification, at most a unique solution in $\mathcal{L}_{loc}^{p}(I\times\Omega,E)$.
\end{Proposition}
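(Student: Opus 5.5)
The plan is to follow the route of the $\mathcal{L}_{loc}^{\infty,p}$ comparison estimate in Corollary~\ref{pr:comparison of solutions 1}, replacing the pointwise-in-time resolvent estimates by their integrated versions (Corollaries~\ref{co:integral resolvent sequence inequality for processes} and~\ref{co:integral resolvent inequality for processes}). Let $X$ and $\tilde{X}$ be solutions in $\mathcal{L}_{loc}^{p}(I\times\Omega,E)$ with value conditions $\xi$ and $\tilde{\xi}$. We may assume the right-hand side of~\eqref{eq:comparison estimate for solutions 2} is finite. Set $Z := X - \tilde{X}$ and $\zeta := \xi - \tilde{\xi}$.

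\textbf{Step 1: a one-step inequality for $Z$.} For a.e.~$t\in I$ we have a.s.
\begin{equation*}
Z_{t} = \zeta_{t} + \int_{0}^{t}\big(\B_{t,s}(X_{s}) - \B_{t,s}(\tilde{X}_{s})\big)\,\mathrm{d}s + \int_{0}^{t}\big(\Sigma_{t,s}(X_{s}) - \Sigma_{t,s}(\tilde{X}_{s})\big)\,\mathrm{d}W_{s}.
\end{equation*}
Minkowski's integral inequality together with~\eqref{co:5} bounds the $p$th moment of the drift part by $\int_{0}^{t}\lambda_{1}(t,s)\E[|Z_{s}|^{p}]^{1/p}\,\mathrm{d}s$. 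The Burkholder--Davis--Gundy estimate~\eqref{eq:stochastic integral estimate}, applied at the fixed time $t$ to the progressively measurable integrand $s\mapsto\Sigma_{t,s}(\cdot)$, followed by Minkowski in $L^{p/2}$, bounds the diffusion part by $w_{p}(\int_{0}^{t}\lambda_{2}(t,s)^{2}\E[|Z_{s}|^{p}]^{2/p}\,\mathrm{d}s)^{1/2}$. The drift term is then turned into an $L^{2}$-type term by Cauchy--Schwarz in the two ways encoded in the $\min$ of~\eqref{eq:special transformed kernel}. With $\varphi(s) := \E[|Z_{s}|^{p}]^{1/p}$ this yields
\begin{equation*}
\varphi(t) \leq \E\big[|\zeta_{t}|^{p}\big]^{\frac{1}{p}} + \bigg(\int_{0}^{t}\lambda(t,s)^{2}\varphi(s)^{2}\,\mathrm{d}s\bigg)^{\frac{1}{2}}\quad\text{for a.e.~$t\in I$,}
\end{equation*}
the same inequality that underlies Corollary~\ref{pr:comparison of solutions 1}. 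One caveat: the factor $2$ and the $\max$ in $\lambda$ are meant to absorb the sum of the two square-root terms, so I would check that this sum is indeed dominated by one term with kernel $\lambda^{2}$.

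\textbf{Step 2: iteration and the key difficulty.} Iterating pointwise would require $\varphi$ to be locally essentially bounded, which fails in $\mathcal{L}^{p}_{loc}$. Instead I would apply the integral resolvent sequence inequality for processes, Corollary~\ref{co:integral resolvent sequence inequality for processes}, with $\beta = 2$. That result is designed exactly to turn a pointwise inequality of the above form into the integrated bound
\begin{equation*}
\bigg(\int_{0}^{t}\varphi(s)^{p}\,\mathrm{d}s\bigg)^{\frac{1}{p}} \leq \bigg(\int_{0}^{t}\E\big[|\zeta_{s}|^{p}\big]\,\mathrm{d}s\bigg)^{\frac{1}{p}} + \sum_{n=1}^{N}\bigg(\int_{0}^{t}\lambda_{n,p}(t,s)\E\big[|\zeta_{s}|^{p}\big]\,\mathrm{d}s\bigg)^{\frac{1}{p}} + \rho_{N}(t),
\end{equation*}
where $\rho_{N}(t) = (\int_{0}^{t}\lambda_{N+1,p}(t,s)\varphi(s)^{p}\,\mathrm{d}s)^{1/p}$. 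The kernels $\lambda_{n,p}$ arise from Hölder's inequality with exponents $p/2$ and $p/(p-2)$ applied to $(\int_{0}^{\tilde s}\R_{\lambda^{2},n}(\tilde{s},r)\varphi(r)^{2}\,\mathrm{d}r)^{p/2}$, followed by Fubini; this produces exactly~\eqref{eq:special transformed iterated kernel}.

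The main obstacle is showing $\rho_{N}(t)\to 0$. For this I would use $\rho_{N}(t)\leq \esssup_{s\in[0,t]}\lambda_{N+1,p}(t,s)^{1/p}(\int_{0}^{t}\varphi^{p})^{1/p}$. Finiteness of $c_{\lambda,p}(t)$, guaranteed via Propositions~\ref{pr:integral estimate for iterated kernels of first type} and~\ref{pr:integral estimate for iterated kernels of second type}, forces the summands to tend to zero. The condition $\lambda_{i}\in\mathcal{K}^{i}$ for $p>2$ and $\lambda_{i}\in\hat{\mathcal{K}}^{i}$ enter precisely there. Letting $N\uparrow\infty$ then gives~\eqref{eq:comparison estimate for solutions 2} for every $t$, because both sides are integrals and hence do not depend on a null set of times.

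\textbf{Step 3: uniqueness.} If $\xi = \tilde{\xi}$, the right-hand side vanishes, so $\E[|X_{s}-\tilde{X}_{s}|^{p}] = 0$ for a.e.~$s$, that is, $X_{s} = \tilde{X}_{s}$ a.s.~for a.e.~$s$. Hence $\tilde{X}$ is a weak modification of $X$ in the sense of Definition~\ref{de:weak modification}.
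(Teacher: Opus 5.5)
Your route is the paper's. The paper also derives the pointwise bound \eqref{eq:auxiliary comparison moment estimate} for $X-\tilde X$ from Proposition~\ref{pr:moment estimates for stochastic Volterra processes}. It then applies Corollary~\ref{co:integral resolvent inequality for processes}, whose proof is your Step~2: the constant sequence in Corollary~\ref{co:integral resolvent sequence inequality for processes}, with the remainder killed by $c_{\lambda,p}(t)<\infty$. Your remark that the integrated bound holds for every $t\in I$ fills in a detail the paper leaves implicit. However, two ingredients that you take for granted, exactly as the paper does, do not hold as stated.

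\textbf{The pointwise minimum in Step 1.} On your caveat: the factor $2$ and the $\max$ are harmless, since $a+b\le 2\max\{a,b\}$. The problem is the pointwise $\min$ in \eqref{eq:special transformed kernel}. Set $A(t):=\int_0^t\lambda_1(t,s)\,\mathrm{d}s$ and $B(t):=\int_0^t\lambda_1(t,s)^2\,\mathrm{d}s$. Cauchy--Schwarz gives $\int_0^t\lambda_1(t,s)\varphi(s)\,\mathrm{d}s\le(\int_0^t k(t,s)\varphi(s)^2\,\mathrm{d}s)^{1/2}$ for $k=\lambda_1A$ and for $k=B$ separately, but not for $k=\min\{\lambda_1A,B\}$.
\begin{itemize}
\item Take $t=1$, $\lambda_1(1,\cdot)=2$ on $[0,\frac12]$, $\lambda_1(1,\cdot)=1$ on $]\frac12,1]$ and $\varphi=\lambda_1(1,\cdot)$. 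Then $(\int_0^1\lambda_1\varphi\,\mathrm{d}s)^2=\frac{25}{4}>\frac{23}{4}=\int_0^1\min\{\lambda_1A,B\}\varphi^2\,\mathrm{d}s$.
\item Add $w_p\lambda_2(1,\cdot)=\min\{\lambda_1A,B\}^{1/2}$ and the deterministic choices $\B_{1,s}=\lambda_1(1,s)\varphi(s)$, $|\Sigma_{1,s}|_2=\lambda_2(1,s)\varphi(s)$, $X=\varphi$. This violates the second estimate in Proposition~\ref{pr:moment estimates for stochastic Volterra processes}(i), because $\frac52+\frac{\sqrt{23}}{2}>\sqrt{23}$.
\item Cauchy--Schwarz with the weight $\lambda_1AB/(\lambda_1A+B)\le\min\{\lambda_1A,B\}$ shows that the pointwise $\min$ is correct up to a factor $\sqrt2$.
\end{itemize}
So Step~1 holds with $\sqrt2\lambda$ in place of $\lambda$, or with only the branch $\lambda_1A$ kept. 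The cost is that the kernels $\lambda_{n,p}$ in \eqref{eq:comparison estimate for solutions 2} change.

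\textbf{Finiteness of $c_{\lambda,2}$ when $p=2$.} For $p=2$, the finiteness of $c_{\lambda,2}$, on which your remainder argument rests, does not follow from \eqref{co:5}. The paper justifies $\lambda\in\hat{\mathcal K}^2$ by the multiplication property stated after Definition~\ref{de:convex cones of non-negative kernels}. That argument needs $A$ to be locally essentially bounded, which holds if $\lambda_1\in\mathcal K^1$ (assumed only for $p>2$) but not under $\lambda_1\in\hat{\mathcal K}^1$ alone.
\begin{itemize}
\item On $I=[0,1]$ the kernel $\lambda_1(t,s):=|t-\frac12|^{-1/2}$ lies in $\hat{\mathcal K}^1$.
\item With $\lambda_2=0$, both branches of the $\min$ equal $t|t-\frac12|^{-1}$.
\item Hence $\lambda_{1,2}(t,s)=\int_s^t\lambda(\tilde s,s)^2\,\mathrm{d}\tilde s=\infty$ for all $s<\frac12<t$, and $c_{\lambda,2}(t)=\infty$ for $t>\frac12$.
\end{itemize}
For $p>2$ one does get $\lambda\in\mathcal K^2\cap\hat{\mathcal K}^2$. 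Then Propositions~\ref{pr:integral estimate for iterated kernels of first type} and~\ref{pr:integral estimate for iterated kernels of second type} apply, and your Step~2 is sound. For $p=2$ an extra hypothesis is needed, for instance local essential boundedness of $t\mapsto\int_0^t\lambda_1(t,s)\,\mathrm{d}s$.
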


\begin{Remark}
The series in~\eqref{eq:comparison estimate for solutions 2} is bounded by $c_{\lambda,p}(t)(\int_{0}^{t}\E[|\xi_{s} - \tilde{\xi}_{s}|^{p}]\,\mathrm{d}s)^{1/p}$ for any $t\in I$, according to the definition of $c_{\lambda,p}$.
\end{Remark}

Now we are in a position to state another \emph{existence result} with an \emph{error estimate} for the Picard iteration, and for the coefficients~\eqref{eq:controlled and distribution-dependent coefficients} the solutions can be chosen to be \emph{strong}.

\begin{Theorem}\label{th:existence of unique solutions 2}
Let~\eqref{co:4} and~\eqref{co:5} be hold, $\mathcal{L}^{p}(\Omega,E)\subset\mathcal{D}$ and $\xi, X^{(0)}\in\mathcal{L}_{loc}^{p}(I\times\Omega,E)$. Then, up to a weak modification,~\eqref{eq:stochastic Volterra equation} admits a unique solution
\begin{equation*}
\text{$X^{\xi}$ in $\mathcal{L}_{loc}^{p}(I\times\Omega,E)$}
\end{equation*}
and the Picard sequence $(X^{(n)})_{n\in\N}$ in $\mathcal{L}_{loc}^{p}(I\times\Omega,E)$, recursively given by~\eqref{eq:Picard iteration} for all $n\in\N_{0}$, converges to $X^{\xi}$. More precisely,
\begin{equation}\label{eq:error moment estimate 2}
\bigg(\int_{0}^{t}\E\big[|X_{s}^{(n)} - X_{s}^{\xi}|^{p}\big]\,\mathrm{d}s\bigg)^{\frac{1}{p}} \leq \sum_{i=n}^{\infty}\bigg(\int_{0}^{t}\lambda_{i,p}(t,s)\Delta(s)^{p}\,\mathrm{d}s\bigg)^{\frac{1}{p}}
\end{equation}
for all $n\in\N$ and $t\in I$ with the measurable locally $p$-fold integrable function $\Delta:I\rightarrow [0,\infty]$ given by $\Delta(t) := \E[|X_{s}^{(0)} - X_{s}^{(1)}|^{p}]^{1/p}$, and
\begin{equation}\label{eq:convergence of the error coefficients 2}
\lim_{n\uparrow\infty}\sum_{i=n}^{\infty}\esssup_{s\in [0,t]}\lambda_{i,p}(t,s)^{\frac{1}{p}} = 0\quad\text{for each $t\in I$.}
\end{equation}
\end{Theorem}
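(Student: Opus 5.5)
The plan is the Picard scheme in the complete pseudometrisable space $\mathcal{L}_{loc}^{p}(I\times\Omega,E)$, mirroring the argument for Theorem~\ref{th:existence of unique solutions 1}.

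First, by Proposition~\ref{pr:growth estimates for solutions and Picard iterations 2}, the Picard sequence $(X^{(n)})_{n\in\N}$ given by~\eqref{eq:Picard iteration} is well-defined in $\mathcal{L}_{loc}^{p}(I\times\Omega,E)$. The well-definedness step uses Proposition~\ref{pr:progressively measurable stochastic Volterra integrals} to choose progressively measurable weak modifications of the stochastic Volterra integrals, and Proposition~\ref{pr:progressively measurable integral processes} for the Bochner parts. In particular $\Delta$ is locally $p$-fold integrable.

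Next comes the key one-step contraction estimate. For two progressively measurable processes $Y,\tilde Y$ in $\mathcal{L}_{loc}^{p}$ with images $\Phi(Y),\Phi(\tilde Y)$ under the Picard map, Minkowski's inequality, the BDG estimate~\eqref{eq:stochastic integral estimate} and \eqref{co:5} give
\[\E[|\Phi(Y)_t-\Phi(\tilde Y)_t|^p]^{1/p}\le\int_0^t\lambda_1(t,s)\delta(s)\,\mathrm{d}s+w_p\Big(\int_0^t\lambda_2(t,s)^2\delta(s)^2\,\mathrm{d}s\Big)^{1/2},\]
where $\delta(s):=\E[|Y_s-\tilde Y_s|^p]^{1/p}$. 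By Cauchy--Schwarz and the definition~\eqref{eq:special transformed kernel}, the right-hand side is bounded by $(\int_0^t\lambda(t,s)^2\delta(s)^2\,\mathrm{d}s)^{1/2}$. Iterating $n$ times yields $(\int_0^t\R_{\lambda^2,n}(t,s)\Delta(s)^2\,\mathrm{d}s)^{1/2}$ for the difference $X^{(n)}-X^{(n+1)}$.

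The integrated estimates rest on the sharp integral inequalities of Corollaries~\ref{co:integral resolvent sequence inequality for processes} and~\ref{co:integral resolvent inequality for processes}. Raising to the $p$th power and integrating over $[0,t]$, we use Jensen/Hölder with weight $\int_0^{\tilde s}\R_{\lambda^2,n}(\tilde s,r)\,\mathrm{d}r$ when $p>2$, followed by Fubini. This gives
\[\Big(\int_0^t\E[|X^{(n)}_s-X^{(n+1)}_s|^p]\,\mathrm{d}s\Big)^{1/p}\le\Big(\int_0^t\lambda_{n,p}(t,s)\Delta(s)^p\,\mathrm{d}s\Big)^{1/p},\]
which is exactly how $\lambda_{n,p}$ in~\eqref{eq:special transformed iterated kernel} arises. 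Summing these bounds with the triangle inequality in $L^p([0,t]\times\Omega)$ shows the sequence is Cauchy, provided~\eqref{eq:convergence of the error coefficients 2} holds.

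That convergence is the tail of the finite series $c_{\lambda,p}(t)$, which is finite by Propositions~\ref{pr:integral estimate for iterated kernels of first type} and~\ref{pr:integral estimate for iterated kernels of second type} together with the remarks after them. The relevant hypotheses are $\lambda\in\hat{\mathcal K}^2$, and $\lambda\in\mathcal K^2$ when $p>2$. Since $\int_0^t\lambda_{i,p}(t,s)\Delta(s)^p\,\mathrm{d}s\le \esssup_s\lambda_{i,p}(t,s)\int_0^t\Delta^p$, the tails vanish.

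Completeness of $\mathcal{L}_{loc}^{p}$ then yields a limit $X^\xi$. When $I$ is not compact, this step invokes the metrical decomposition of Corollary~\ref{co:metrical decomposition}. The limit can be taken progressively measurable, and \eqref{eq:error moment estimate 2} follows by letting $m\to\infty$ in the telescoped bound for $X^{(n)}-X^{(m)}$.

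It remains to show that $X^\xi$ is a solution. Apply the one-step estimate to $Y=X^{(n)}$, $\tilde Y=X^\xi$, integrated in time. This requires $\lambda\in\hat{\mathcal K}^2$, since $\int_0^T\int_0^t\lambda(t,s)^2\delta(s)^2\,\mathrm{d}s\,\mathrm{d}t\le\esssup_s\int_s^T\lambda(t,s)^2\,\mathrm{d}t\int_0^T\delta^2$, with Hölder handling $p>2$. The estimate shows that $\Phi(X^{(n)})\to\Phi(X^\xi)$ in $\mathcal{L}_{loc}^{p}$. Along a subsequence, the Picard identity $X^{(n+1)}=\Phi(X^{(n)})$ therefore passes to the limit a.s.\ for a.e.\ $t$. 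The finiteness of the integrals in~\eqref{eq:notion of a solution} for a.e.\ $t$ follows from \eqref{co:4} and Fubini. Uniqueness up to weak modification is exactly Proposition~\ref{pr:comparison of solutions 2} with $\tilde\xi=\xi$.

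I expect the main obstacle to be the $p>2$ integrated iteration step. There, converting the pointwise iterated $L^2$-kernel bound into an $L^p$-in-time bound with kernel $\lambda_{n,p}$ requires the weighted Hölder argument. I would rely on the resolvent-type integral inequalities of Section~\ref{se:3.3} for this rather than redo them.
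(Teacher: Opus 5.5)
Your plan follows the paper's proof essentially step for step: well-definedness of the Picard sequence, the one-step bound with kernel $\lambda$, its conversion into the kernels $\lambda_{n,p}$ by weighted Jensen and Fubini as in Corollary~\ref{co:integral resolvent sequence inequality for processes}, the Cauchy property from finiteness of $c_{\lambda,p}$, completeness, continuity of $\Psi$, and uniqueness from Proposition~\ref{pr:comparison of solutions 2}. The gap is a step you take for granted, which the paper also asserts right after~\eqref{co:4} and~\eqref{co:5}: that $\lambda\in\hat{\mathcal{K}}^{2}$, and likewise $l\in\hat{\mathcal{K}}^{2}$.

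For $p>2$ this is true. There $\lambda_{1}\in\mathcal{K}^{1}$ makes $c(t):=\int_{0}^{t}\lambda_{1}(t,\tilde{s})\,\mathrm{d}\tilde{s}$ locally essentially bounded. So the drift entry of~\eqref{eq:special transformed kernel} is at most $(c(t)\lambda_{1}(t,s))^{1/2}$, which lies in $\hat{\mathcal{K}}^{2}$ because $\lambda_{1}\in\hat{\mathcal{K}}^{1}$.

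For $p=2$ only $\lambda_{1}\in\hat{\mathcal{K}}^{1}$ is assumed, and this is not enough. On $I=[0,1]$ let $h(t):=|t-\frac{1}{2}|^{-3/4}$ for $t\neq\frac{1}{2}$, $h(\frac{1}{2}):=0$, and take $\lambda_{1}(t,s):=h(t)$, $\lambda_{2}:=0$. Then $\lambda_{1}\in\hat{\mathcal{K}}^{1}$, but $\lambda(t,s)^{2}=4th(t)^{2}$. Hence $\lambda_{1,2}(t,s)=\int_{s}^{t}\lambda(\tilde{s},s)^{2}\,\mathrm{d}\tilde{s}=\infty$ whenever $s<\frac{1}{2}<t$, so $c_{\lambda,2}=\infty$ on $]\frac{1}{2},1]$. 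Both your Cauchy argument and your continuity estimate for $\Psi$, which uses $\esssup_{s}\int_{s}^{T}\lambda(t,s)^{2}\,\mathrm{d}t<\infty$, break down.

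This is not an artifact of the method. Take $E=\Re$, $\Sigma=0$, $\B_{t,s}(Y):=h(t)Y$, $\mathcal{D}:=\mathcal{L}^{2}(\Omega,\Re)$ and $\xi=X^{(0)}=1$. Then~\eqref{co:4} and~\eqref{co:5} hold for $p=2$ with $k_{1}=k_{2}=l_{2}=\lambda_{2}=0$ and $l_{1}=\lambda_{1}$. Yet $X_{t}^{(1)}=1+th(t)$ is not square-integrable on $[0,1]$. Moreover, a solution $X$ in $\mathcal{L}_{loc}^{2}(I\times\Omega,\Re)$ would give $m:=\E[X]$ with $m=1+hM$ a.e., where $M:=\int_{0}^{\cdot}m(s)\,\mathrm{d}s$. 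Thus $M'=1+hM$ and $M(0)=0$, so $M(t)=\int_{0}^{t}\exp(\int_{s}^{t}h(r)\,\mathrm{d}r)\,\mathrm{d}s\geq t$. Then $m(t)\geq 1+th(t)$, contradicting $m\in L^{2}$. So for $p=2$ the statement needs an extra hypothesis, for instance $l_{1},\lambda_{1}\in\mathcal{K}_{\infty}^{1}$ (or directly $l,\lambda\in\hat{\mathcal{K}}^{2}$), and under it your argument goes through.

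A smaller point, also shared with the paper, concerns your claim that Cauchy--Schwarz bounds the one-step right-hand side by $(\int_{0}^{t}\lambda^{2}\delta^{2}\,\mathrm{d}s)^{1/2}$. Cauchy--Schwarz gives each of the two bounds inside the minimum in~\eqref{eq:special transformed kernel} separately. Using their pointwise minimum requires splitting $[0,t]$ and costs a factor $\sqrt{2}$, so the constant $2$ there is too small. For example, at $t=2$ let $\lambda_{1}(2,\cdot)=\delta=2$ on $[0,1[$ and $\lambda_{1}(2,\cdot)=\delta=1$ on $[1,2]$, and let $w_{p}\lambda_{2}(2,\cdot)$ equal the first entry of the maximum. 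Then $\int_{0}^{2}\lambda_{1}\delta\,\mathrm{d}s+w_{p}(\int_{0}^{2}\lambda_{2}^{2}\delta^{2}\,\mathrm{d}s)^{1/2}=5+\sqrt{23}$, while $(\int_{0}^{2}\lambda(2,s)^{2}\delta(s)^{2}\,\mathrm{d}s)^{1/2}=2\sqrt{23}$. Replacing $2$ by $1+\sqrt{2}$ repairs this without affecting anything else.
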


\begin{Remark}
For $X^{(0)} = \xi$ Proposition~\ref{pr:growth estimates for solutions and Picard iterations 2} gives $(\int_{0}^{t}\Delta(s)^{p}\,\mathrm{d}s)^{1/p} \leq (\int_{0}^{t}k_{0}(s)^{p}\,\mathrm{d}s)^{1/p}$ $ +\, (\int_{0}^{t}l_{1,p}(t,s)\E[|\xi_{s}|^{p}]\,\mathrm{d}s)^{1/p}$ for all $t\in I$. If instead $X^{(0)} = X^{\xi}$, then $\Delta = 0$ a.e.
\end{Remark}

Based on the progressive $\sigma$-field $\mathcal{A}$ on $I\times\Omega$, let us conclude with \emph{random coefficients that could be of affine type}.

\begin{Example}\label{ex:random coefficients}
We recall the Banach space $\mathcal{L}(E,\tilde{E})$ of all linear continuous maps on $E$ with values in $\tilde{E}$, endowed with the operator norm, whenever $\tilde{E}$ is a Banach space. Thus, let $\mathcal{D} = \mathcal{L}^{1}(\Omega,E)$ and
\begin{equation*}
f_{1},f_{2},g_{1},g_{2}:E\rightarrow E
\end{equation*}
be Lipschitz continuous with Lipschitz constant one and vanish at the origin. Further, for $i\in\{1,2\}$ let $\kappa,\beta^{(i)},\eta$ and $\sigma^{(i)}$ be $\mathcal{B}(I)\otimes\mathcal{A}$-measurable maps on $I\times I\times\Omega$ with respective values in $E$, $\mathcal{L}(E,E)$, $\mathcal{L}_{2}(\ell^{2},E)$ and $\mathcal{L}(E,\mathcal{L}_{2}(\ell^{2},E))$ such that
\begin{equation}\label{eq:random coefficients}
\begin{split}
\B_{t,s}(X_{s}) &= \kappa_{t,s} + \beta_{t,s}^{(1)}\big(f_{1}(X_{s})\big) + \beta_{t,s}^{(2)}\big(f_{2}\big(\E\big[X_{s}\big]\big)\big),\\
\Sigma_{t,s}(X_{s}) &= \eta_{t,s} + \sigma_{t,s}^{(1)}\big(g_{1}(X_{s})\big)+ \sigma_{t,s}^{(2)}\big(g_{2}\big(\E\big[X_{s}\big]\big)\big)
\end{split}
\end{equation}
for all $s,t\in I$ and $X_{s}\in\mathcal{D}$. Then Proposition~\ref{pr:admissible maps 1} ensures that $\B$ and $\Sigma$ are admissible, and by using $|\cdot|$ and $|\cdot|_{2}$ for the respective operator norms on $\mathcal{L}(E,E)$ and $\mathcal{L}(E,\mathcal{L}_{2}(\ell^{2},E))$, we see that the following two assertions hold:
\begin{enumerate}[(1)]
\item Define two non-negative kernels $k_{1}$ and $k_{2}$ on $I$ by $k_{1}(t,s) := \E[|\kappa_{t,s}|^{p}]^{1/p}$ and $k_{2}(t,s) := \E[|\eta_{t,s}|_{2}^{p}]^{1/p}$ and let $l_{1}$ and $l_{2}$ be non-negative kernels on $I$ such that
\begin{equation*}
|\beta_{t,s}^{(1)}| + \E\big[|\beta_{t,s}^{(2)}|^{p}\big]^{\frac{1}{p}} \leq l_{1}(t,s),\quad |\sigma_{t,s}^{(1)}|_{2} + \E\big[|\sigma_{t,s}^{(2)}|_{2}^{p}\big]^{\frac{1}{p}} \leq l_{2}(t,s)
\end{equation*}
for any $s,t\in I$ with $s < t$. Then the affine growth estimate~\eqref{eq:affine growth condition} is valid and the Lipschitz condition~\eqref{eq:Lipschitz condition} holds for $\lambda_{1} = l_{1}$ and $\lambda_{2} = l_{2}$.

\item Define two functions $f_{1},f_{2}:I\times I\times I\rightarrow [0,\infty]$ by $f_{1}(t,s,r) := \E[|\kappa_{t,r} - \kappa_{s,r}|^{p}]^{1/p}$ and $f_{2}(t,s,r) := \E[|\eta_{t,r} - \eta_{s,r}|_{2}^{p}]^{1/p}$ and assume that
\begin{align*}
|\beta_{t,r}^{(1)} - \beta_{s,r}^{(1)}| + \E\big[|\beta_{t,r}^{(2)} - \beta_{s,r}^{(2)}|^{p}\big]^{\frac{1}{p}} \leq g_{1}(t,s,r),\\
|\sigma_{t,r}^{(1)} - \sigma_{s,r}^{(1)}|_{2} + \E\big[|\sigma_{t,r}^{(2)} - \sigma_{s,r}^{(2)}|_{2}^{p}\big]^{\frac{1}{p}} \leq g_{2}(t,s,r)
\end{align*}
for all $r,s,t\in I$ with $r < s < t$ and some $[0,\infty]$-valued measurable functions $g_{1},g_{2}$ on $I\times I\times I$. Then the terms on the left-hand side in~\eqref{eq:regularity condition} do not exceed
\begin{equation*}
\begin{split}
f_{i}(t,s,r) + g_{i}(t,s,r)\E\big[|Y|^{p}\big]^{\frac{1}{p}}
\end{split}
\end{equation*}
for any $i\in\{1,2\}$, $r,s,t\in I$ with $r < s < t$ and $Y\in\mathcal{L}^{1}(\Omega,E)$.
\end{enumerate}
So, let the inequality~\eqref{eq:integrability condition} hold, $l_{1}\in\hat{\mathcal{K}}^{1}$ and $l_{2}\in\hat{\mathcal{K}}^{2}$, and $l_{1}\in\mathcal{K}^{1}$ and $l_{2}\in\mathcal{K}^{2}$ if $p > 2$. If in addition $\E[|\xi|^{p}]$ is locally $p$-fold integrable, then Theorem~\ref{th:existence of unique solutions 2} yields a solution to~\eqref{eq:stochastic Volterra equation} in $\mathcal{L}_{loc}^{p}(I\times\Omega,E)$ that is unique, up to a weak modification.
\end{Example}

\section{General results on stochastic processes}\label{se:3}

In what follows, $(\Omega,\mathcal{F},\P)$ is an arbitrary probability space that does not need to be complete, except in Section~\ref{se:3.4} when the filtration $\mathbb{F}$ is used.

\subsection{Weakly modified processes}\label{se:3.1}

Here, $I$ and $E$ are merely topological spaces and $\mu$ is a Borel measure on $I$. Then by an $E$-valued process we shall mean a map $X:I\times\Omega\rightarrow E$, $(t,\omega)\mapsto X_{t}(\omega)$ such that $X_{t}$ is Borel measurable for any $t\in I$.

\begin{Definition}\label{de:weak modification}
An $E$-valued process $\tilde{X}$ is said to be a \emph{weak modification} of another $E$-valued process $X$ with respect to $\mu$ if $X_{t} = \tilde{X}_{t}$ a.s.~for $\mu$-a.e.~$t\in I$.
\end{Definition}

When $I$ is metrisable and $E$ is a Hausdorff space, we seek to give sufficient conditions, under which any two weakly modified $E$-valued processes are indistinguishable.

To this end, we recall that if $\mu$ is inner regular, then the support of $\mu$, denoted by $\mathrm{supp}\,\mu$, can be defined to be the smallest closed set in $I$ of full $\mu$-measure. In fact, for a metric inducing the topology of $I$ let $B_{\delta}(t)$ denote the open ball around any point $t\in I$ with radius $\delta > 0$. Then the following representation holds.

\begin{Lemma}\label{le:support of a measure}
If the Borel measure $\mu$ on the metrisable space $I$ is inner regular, then $\mathrm{supp}\,\mu = \{t\in I\,|\,\forall\delta > 0: \mu(B_{\delta}(t)) > 0\}$.
\end{Lemma}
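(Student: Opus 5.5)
We prove the two inclusions separately. Write $S := \{t\in I\,|\,\forall\delta > 0: \mu(B_{\delta}(t)) > 0\}$. We take the convention recalled before the lemma: for inner regular $\mu$, $\mathrm{supp}\,\mu$ is the smallest closed set of full $\mu$-measure. Its existence and the relevant full-measure property will come out of the argument itself.

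\emph{Step 1: the complement $I\setminus S$ is open and $\mu$-null.} Let $t\notin S$. Then there is $\delta > 0$ with $\mu(B_{\delta}(t)) = 0$. For every $s\in B_{\delta/2}(t)$ the triangle inequality gives $B_{\delta/2}(s)\subset B_{\delta}(t)$, so $s\notin S$. Hence $I\setminus S$ is open and $S$ is closed.

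To show $\mu(I\setminus S) = 0$ we use inner regularity. This is the one step where that hypothesis matters, because $I$ need not be separable, so we cannot take a countable union of null balls. It suffices to show $\mu(K) = 0$ for every compact $K\subset I\setminus S$. Every point of $K$ has an open ball of measure zero around it. By compactness, finitely many of these balls cover $K$, so $\mu(K) = 0$. Inner regularity, with approximation by compact subsets of the open set $I\setminus S$, then yields $\mu(I\setminus S) = 0$. If inner regularity is meant with respect to closed sets, the argument adapts: on a metrisable space one typically still reduces to compact sets, or we use the form of inner regularity the paper intends. This is the point we expect to need care.

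\emph{Step 2: $S$ is contained in every closed set of full measure.} Let $C$ be closed with $\mu(I\setminus C) = 0$, and suppose $t\in S\setminus C$. Since $I\setminus C$ is open, there is $\delta>0$ with $B_{\delta}(t)\subset I\setminus C$. Then $\mu(B_{\delta}(t)) = 0$, which contradicts $t\in S$. So $S\subset C$.

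Together, Steps 1 and 2 show that $S$ is a closed set of full measure contained in every closed set of full measure. Hence $S$ is the smallest such set, and it equals $\mathrm{supp}\,\mu$. This also confirms that the support is well defined.
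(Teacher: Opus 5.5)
Your proof is correct and follows essentially the same route as the paper: the complement of $S$ is open by the triangle inequality, compact subsets of it are null by a finite cover with null balls so that inner regularity gives full measure of $S$, and $S$ lies in every closed full-measure set because a point outside such a set has a null ball around it. You can drop the aside about inner regularity with respect to closed sets: here inner regularity means approximation by compact sets, exactly as the paper uses it, and approximation by closed sets alone would not make the finite-subcover argument go through.
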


\begin{proof}
First, $\{t\in I\,|\,\exists\delta > 0:\mu(B_{\delta}(t)) = 0\}$ is open, since for any $t\in I$ and $\delta > 0$ with $\mu(B_{\delta}(t)) = 0$ the triangle inequality entails that $\mu(B_{\delta/2}(s)) = 0$ for all $s\in B_{\delta/2}(t)$.

Secondly, to show that $\{t\in I\,|\,\forall\delta > 0: \mu(B_{\delta}(t)) > 0\}$ has full measure, it suffices to check that $\mu(K) = 0$ for any compact set $K$ in $\{t\in I\,|\,\exists\delta > 0:\mu(B_{\delta}(t)) = 0\}$. However, as $K$ is compact, there are $n\in\N$, $t_{1},\dots,t_{n}\in K$ and $\delta_{1},\dots,\delta_{n} > 0$ such that $\mu(B_{\delta_{i}}(t_{i})) = 0$ for all $i\in\{1,\dots,n\}$ and $K\subset \bigcup_{i=1}^{n}B_{\delta_{i}}(t_{i})$, which yields that $\mu(K) = 0$.

Finally, if $C$ is a closed set in $I$ of full $\mu$-measure, then for any $t\in C^{c}$ there is $\delta > 0$ such that $B_{\delta}(t)\subset C^{c}$, which implies that $\mu(B_{\delta}(t)) = 0$. So, $\{t\in I\,|\,\forall\delta > 0: \mu(B_{\delta}(t)) > 0\}$ is a subset of $C$.
\end{proof}

Next, we introduce a sequential continuity notion relative to a function $\varphi:I\times I\rightarrow\Re$ that is continuous in the first variable, that is, $\varphi(\cdot,t)$ is continuous for any $t\in I$.

\begin{Definition}\label{de:sequential continuity with respect to a function}
A map $x:I\rightarrow E$ is said to be \emph{sequentially continuous} with respect to $\varphi$ if any sequence $(t_{n})_{n\in\N}$ in $I$ that converges to some $t\in I$ satisfies $\lim_{n\uparrow\infty} x(t_{n}) = x(t)$ as soon as $\varphi(t_{n},t) > 0$ for all $n\in\N$.
\end{Definition}

The set $I_{\varphi} := \{t\in I\,|\,\forall \delta > 0\, \exists s\in B_{\delta}(t): \varphi(s,t) > 0\}$ consists of all points to which this continuity concept applies. In particular, if $\varphi$ takes only positive values, then we recover the ordinary sequential continuity of a map and $I_{\varphi} = I$.

\begin{Example}[Multivariate right-continuity]
For $d\in\N$ let $I$ be a subspace of $\Re^{d}$ and $\varphi(s,t) = \min\{s_{1} - t_{1},\dots,s_{d} - t_{d}\}$ for any $s,t\in I$, which entails that
\begin{equation*}
\varphi(s,t) > 0\quad\Leftrightarrow\quad s_{i} > t_{i}\quad\text{for all $i\in\{1,\dots,d\}$.}
\end{equation*}
Then Definition~\ref{de:sequential continuity with respect to a function} states the right-continuity of a map in a multivariate sense and $I^{\circ}\subset I_{\varphi}$. Further, if there are intervals $I_{1},\dots,I_{d}$ in $\Re$ such that $I = I_{1}\times\cdots\times I_{d}$, then
\begin{equation*}
I_{\varphi} = \{s_{1}\in I_{1}\,|\,s_{1} < \sup I_{1}\}\times\cdots\times\{s_{d}\in I_{d}\,|\,s_{d} < \sup I_{d}\}.
\end{equation*}
\end{Example}

The continuity condition on $\varphi$ ensures that any element in $I_{\varphi}$ can be approximated by a suitable sequence in any dense set.

\begin{Lemma}\label{le:auxiliary density lemma}
Let $J$ be a dense set in the metrisable space $I$. Then each $t\in I_{\varphi}$ is the limit of a sequence $(t_{n})_{n\in\N}$ in $J$ satisfying $\varphi(t_{n},t) > 0$ for any $n\in\N$.
\end{Lemma}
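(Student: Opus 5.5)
Fix $t\in I_{\varphi}$ and a metric $d$ inducing the topology of $I$, with open balls $B_{\delta}(\cdot)$ as before. The plan is to construct, for each $n\in\N$, a point $t_{n}\in J$ with $d(t_{n},t) < 1/n$ and $\varphi(t_{n},t) > 0$; then $t_{n}\to t$ and the sequence has the required property.

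\textbf{Step 1.} Fix $n\in\N$. By the definition of $I_{\varphi}$ applied with $\delta = 1/(2n)$, there is $s_{n}\in B_{1/(2n)}(t)$ such that $\varphi(s_{n},t) > 0$.

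\textbf{Step 2.} The function $\varphi(\cdot,t)$ is continuous, so $\{s\in I\,|\,\varphi(s,t) > 0\}$ is open and contains $s_{n}$. Hence there is $\varepsilon_{n}\in\,]0,1/(2n)]$ with $\varphi(s,t) > 0$ for all $s\in B_{\varepsilon_{n}}(s_{n})$.

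\textbf{Step 3.} Since $J$ is dense and $B_{\varepsilon_{n}}(s_{n})$ is a non-empty open set, there is some $t_{n}\in J\cap B_{\varepsilon_{n}}(s_{n})$. Then $\varphi(t_{n},t) > 0$. By the triangle inequality,
\begin{equation*}
d(t_{n},t) \leq d(t_{n},s_{n}) + d(s_{n},t) < \frac{1}{2n} + \frac{1}{2n} = \frac{1}{n}.
\end{equation*}

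Letting $n$ run over $\N$ produces the sequence $(t_{n})_{n\in\N}$ in $J$ with $\lim_{n\uparrow\infty}t_{n} = t$ and $\varphi(t_{n},t) > 0$ for every $n$. There is no real obstacle here. The one point needing care is that continuity of $\varphi$ is used only in its first variable, with the second variable held fixed at $t$, and this is exactly what is assumed.
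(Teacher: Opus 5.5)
Your proposal is correct and follows essentially the same argument as the paper: choose $s_n$ near $t$ with $\varphi(s_n,t)>0$, use continuity of $\varphi(\cdot,t)$ to get a ball around $s_n$ on which $\varphi(\cdot,t)>0$, and pick $t_n\in J$ in that ball by density. The only difference is cosmetic: the paper uses radii $1/n$ and obtains $t_n\in B_{2/n}(t)$, while your radii $1/(2n)$ give $d(t_n,t)<1/n$.
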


\begin{proof}
For each $n\in\N$ the definition of $I_{\varphi}$ gives $s_{n}\in B_{1/n}(t)$ with $\varphi(s_{n},t) > 0$. By the continuity of $\varphi(\cdot,t)$, there is $\delta_{n}\in ]0,1/n]$ such that $\varphi(s,t) > 0$ for all $s\in B_{\delta_{n}}(s_{n})$. So, we may take $t_{n}\in J$ with $t_{n}\in B_{\delta_{n}}(s_{n})$ and obtain that $t_{n}\in B_{2/n}(t)$ and $\varphi(t_{n},t) > 0$.
\end{proof}

Sequential continuity with respect to $\varphi$ and sequential continuity on $I_{\varphi}^{c}$ outside a countable set relate weak modifications with indistinguishable processes.

\begin{Proposition}\label{pr:weakly modified processes}
Let the metrisable space $I$ be separable, $E$ be a Hausdorff space and $\mu$ be inner regular and have full support, that means,
\begin{equation*}
\mathrm{supp}\,\mu = I.
\end{equation*}
Then any two $E$-valued processes $X$ and $\tilde{X}$ that are weak modifications of each other relative to $\mu$ are indistinguishable if the following two conditions holds:
\begin{enumerate}[(i)]
\item The paths of $X$ and $\tilde{X}$ are sequentially continuous with respect to $\varphi$.

\item There is a countable set $J_{\varphi}$ in $I_{\varphi}^{c}$ such that $X_{t} = \tilde{X}_{t}$ a.s.~for all $t\in J_{\varphi}$ and the paths of $X$ and $\tilde{X}$ are sequentially continuous on $I_{\varphi}^{c}\setminus J_{\varphi}$.
\end{enumerate}
\end{Proposition}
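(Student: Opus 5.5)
The plan is to build a countable dense set $J$ in $I$ on which $X$ and $\tilde{X}$ agree almost surely. Then I will pass to limits along sequences from $J$, using the two continuity hypotheses.

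\emph{Step 1: the set where the processes agree a.s.} Let $N := \{t\in I\,|\,\P(X_{t}\neq\tilde{X}_{t}) > 0\}$. This is not asserted to be measurable; I only use that it lies inside a $\mu$-null set. So take a Borel set $M\supset N$ with $\mu(M) = 0$ (possible by the definition of weak modification: the set where the equality fails is contained in a $\mu$-null set). Then $I\setminus M$ has full measure. Since $\mathrm{supp}\,\mu = I$, Lemma~\ref{le:support of a measure} gives $\mu(B_{\delta}(t)) > 0$ for all $t\in I$ and $\delta > 0$. Hence every ball meets $I\setminus M$, so $I\setminus M$ is dense in $I$.

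\emph{Step 2: a countable dense subset.} A subset of a separable metrisable space is separable, so pick a countable set $J_{0}\subset I\setminus M$ that is dense in $I\setminus M$, and therefore in $I$. Put $J := J_{0}\cup J_{\varphi}$. This is countable and $X_{t} = \tilde{X}_{t}$ a.s. for every $t\in J$. Let $\Omega_{0}$ be the intersection of the events $\{X_{t} = \tilde{X}_{t}\}$ over $t\in J$, modified if necessary so that it is a measurable set of full measure. Then $\P(\Omega_{0}) = 1$, or at least the complement is contained in a null set, which is all indistinguishability needs.

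\emph{Step 3: identify the paths on $\Omega_{0}$.} Fix $\omega\in\Omega_{0}$ and $t\in I$.
\begin{itemize}
\item If $t\in I_{\varphi}$, Lemma~\ref{le:auxiliary density lemma} applied to the dense set $J_{0}$ yields $t_{n}\to t$ with $t_{n}\in J_{0}$ and $\varphi(t_{n},t) > 0$. By condition~(i), $X_{t_{n}}(\omega)\to X_{t}(\omega)$ and $\tilde{X}_{t_{n}}(\omega)\to\tilde{X}_{t}(\omega)$. The two sequences coincide, and limits are unique in the Hausdorff space $E$, so $X_{t}(\omega) = \tilde{X}_{t}(\omega)$.
\item If $t\in J_{\varphi}$, equality holds by the choice of $\Omega_{0}$.
\item If $t\in I_{\varphi}^{c}\setminus J_{\varphi}$, density of $J_{0}$ in the metrisable $I$ gives a sequence in $J_{0}$ converging to $t$. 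Ordinary sequential continuity at $t$ from condition~(ii), together with Hausdorffness, gives equality again.
\end{itemize}
Thus the paths agree everywhere on $\Omega_{0}$, so $X$ and $\tilde{X}$ are indistinguishable.

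The main, though mild, obstacle is Step~1: extracting a \emph{countable dense} set of agreement times from a merely $\mu$-a.e. statement. This is exactly where full support (via Lemma~\ref{le:support of a measure}) and separability are used. One should also check that "sequentially continuous on $I_{\varphi}^{c}\setminus J_{\varphi}$" means continuity at those points with respect to sequences in all of $I$, since the approximating sequence lies in $J_{0}$ and not in $I_{\varphi}^{c}$. I will read the hypothesis that way.
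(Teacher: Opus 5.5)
Your proposal is correct and follows essentially the same route as the paper. Full support gives a dense Borel set of agreement times, from which you extract a countable dense set $J \supseteq J_{\varphi}$. Equality at each $t\in I$ is then recovered via Lemma~\ref{le:auxiliary density lemma} on $I_{\varphi}$ and via ordinary sequential continuity on $I_{\varphi}^{c}\setminus J_{\varphi}$, using uniqueness of limits in the Hausdorff space $E$. Your handling of the possibly non-measurable events $\{X_{t}\neq\tilde{X}_{t}\}$ through enclosing null sets matches the paper's use of the null sets $N_{t}$, and your reading of the continuity hypothesis in (ii) is the one the paper uses.
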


\begin{proof}
Since $\mathrm{supp}\,\mu = I$, any Borel set in $I$ with full $\mu$-measure is dense in $I$. Hence, it suffices to prove that any countable dense set $J$ in $I$ such that $X_{t} = \tilde{X}_{t}$ a.s.~for all $t\in J$ and $J_{\varphi}\subset J$ satisfies
\begin{equation}\label{eq:countable intersection}
\bigcap_{t\in I}\{X_{t} = \tilde{X}_{t}\} = \bigcap_{t\in J}\{X_{t} = \tilde{X}_{t}\}.
\end{equation}
Once this is shown, for each $t\in J$ we take a null set $N_{t}\in\mathcal{F}$ such that $X_{t} = \tilde{X}_{t}$ on $N_{t}^{c}$ and obtain $N^{c}\subset\bigcap_{t\in I}\{X_{t} = \tilde{X}_{t}\}$ for the null set $N:=\bigcup_{t\in J}N_{t}$.

To prove~\eqref{eq:countable intersection}, let $\omega\in\Omega$ satisfy $X_{s}(\omega) = \tilde{X}_{s}(\omega)$ for all $s\in J$ and fix $t\in I$. In the case that $t\in I_{\varphi}$ Lemma~\ref{le:auxiliary density lemma} yields a sequence $(t_{n})_{n\in\N}$ in $J$ converging to $t$ such that $\varphi(t_{n},t) > 0$ for all $n\in\N$. As $E$ is a Hausdorff space,
\begin{equation}\label{eq:sequential limit}
X_{t}(\omega) = \lim_{n\uparrow\infty} X_{t_{n}}(\omega) = \lim_{n\uparrow\infty} \tilde{X}_{t_{n}}(\omega) = \tilde{X}_{t}(\omega).
\end{equation}
If $t\in J_{\varphi}$, then $X_{t}(\omega) = \tilde{X}_{t}(\omega)$, since $J_{\varphi}\subset J$. Otherwise $t\in I_{\varphi}^{c}\setminus J_{\varphi}$ and for any sequence $(t_{n})_{n\in\N}$ in $J$ converging to $t$ the sequential continuity of $X(\omega)$ and $\tilde{X}(\omega)$ implies~\eqref{eq:sequential limit}. 
\end{proof}

\begin{Remark}
If in fact $X$ and $\tilde{X}$ have sequentially continuous paths, then they are indistinguishable, as in this case conditions~(i) and~(ii) hold, regardless of the choice of $\varphi$.
\end{Remark}

\begin{Example}
Let $I$ be a subspace of $\Re$ and $\varphi(s,t) = s - t$ for all $s,t\in I$. Then $X$ and $\tilde{X}$ are indistinguishable, by Proposition~\ref{pr:weakly modified processes}, if the subsequent two conditions hold:
\begin{enumerate}[(i)]
\item $X$ and $\tilde{X}$ have right-continuous paths.

\item If $T := \sup I$ is a maximum of $I$, then $X_{T} = \tilde{X}_{T}$ a.s.~or the paths of $X$ and $\tilde{X}$ are left-continuous at $T$.
\end{enumerate}
\end{Example}

\subsection{Approximations of processes and distribution maps}\label{se:3.2}

Let $I$ be just a non-empty set endowed with a $\sigma$-field $\mathcal{I}$, $E$ be a separable metrisable space, $d$ be a metric inducing the topology of $E$ and $p\geq 1$. Under convexity conditions on $E$ and $d$, we can \emph{approximate the identity map} on $E$.

\begin{Proposition}\label{pr:approximation of the identity map}
Let $E$ be a star convex set in a linear space with $x_{0}\in E$ as center such that
\begin{equation}\label{eq:convexity condition on the metric}
d((1-\lambda)x_{0} + \lambda x,y) \leq (1-\lambda)d(x_{0},y) + \lambda d(x,y)
\end{equation}
for all $\lambda\in [0,1]$ and $x,y\in E$. Then there is a sequence $(\varphi_{n})_{n\in\N}$ of $E$-valued Borel measurable maps on $E$, each taking finitely many values, such that
\begin{equation}\label{eq:properties of the approximating sequence}
d(\varphi_{k}(x),x_{0}) \leq d(x,x_{0})\quad\text{and}\quad d(\varphi_{k+1}(x),x) \leq d(\varphi_{k}(x),x)
\end{equation}
and $\lim_{n\uparrow\infty}\varphi_{n}(x) = x$ for all $k\in\N$ and $x\in E$.
\end{Proposition}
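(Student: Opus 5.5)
The plan is to build $\varphi_{n}$ from a countable dense set by nearest-point selection, and then pull each chosen value towards the center $x_{0}$ just enough to keep both monotonicity properties. Since $E$ is separable metrisable, pick a dense sequence $(y_{i})_{i\in\N}$ in $E$ and set $y_{0} := x_{0}$.

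\emph{Step 1 (nearest-point maps).} For $n\in\N$ define $\psi_{n}(x) := y_{j}$, where $j\in\{0,\dots,n\}$ is the smallest index minimising $d(x,y_{i})$ over $i\in\{0,\dots,n\}$. Each $\psi_{n}$ takes finitely many values. It is Borel, because the set $\{\psi_{n} = y_{j}\}$ is a finite intersection of sets $\{d(\cdot,y_{j}) < d(\cdot,y_{i})\}$ and $\{d(\cdot,y_{j})\le d(\cdot,y_{i})\}$. The minimum is taken over a growing set, so
\begin{equation*}
d(\psi_{n+1}(x),x)\le d(\psi_{n}(x),x)\quad\text{and}\quad d(\psi_{n}(x),x)\le d(x_{0},x).
\end{equation*}
Density gives $d(\psi_{n}(x),x)\to0$. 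However, $\psi_{n}$ need not satisfy $d(\psi_{n}(x),x_{0})\le d(x,x_{0})$.

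\emph{Step 2 (radial correction).} Use star convexity and define
\begin{equation*}
\varphi_{n}(x):=(1-\lambda_{n}(x))x_{0}+\lambda_{n}(x)\psi_{n}(x),
\end{equation*}
where $\lambda_{n}(x)\in[0,1]$ is to be chosen. One natural choice is $\lambda_{n}(x):=\min\{1,\,d(x,x_{0})/d(\psi_{n}(x),x_{0})\}$, with $\lambda_{n}(x):=1$ when $\psi_{n}(x)=x_{0}$. The difficulty is that \eqref{eq:convexity condition on the metric} only gives upper bounds on distances to fixed points $y$. From it we get
\begin{equation*}
d(\varphi_{n}(x),x)\le(1-\lambda)d(x_{0},x)+\lambda d(\psi_{n}(x),x)\le d(x_{0},x),
\end{equation*}
which is useful, but we do not get $d(\varphi_{n}(x),x_{0})\le \lambda d(\psi_{n}(x),x_{0})$. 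Applying \eqref{eq:convexity condition on the metric} with $y=x_{0}$ does give $d(\varphi_{n}(x),x_{0})\le\lambda d(\psi_{n}(x),x_{0})$, since $d(x_{0},x_{0})=0$, so the first property in \eqref{eq:properties of the approximating sequence} holds for this $\lambda_{n}$.

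\emph{Step 3 (convergence).} If $x\neq x_{0}$, then $d(\psi_{n}(x),x_{0})\to d(x,x_{0})$, so $\lambda_{n}(x)\to1$. By the bound in Step 2, $d(\varphi_{n}(x),x)\le(1-\lambda_{n})d(x_{0},x)+\lambda_{n}d(\psi_{n}(x),x)\to0$. If $x=x_{0}$, then $\lambda_{n}(x)=0$ or $\psi_{n}(x)=x_{0}$, so $\varphi_{n}(x)=x_{0}$. The remaining problem is that $\varphi_{n}$ no longer takes finitely many values, because $\lambda_{n}$ varies continuously.

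\emph{Step 4 (discretisation and monotonicity, the main obstacle).} To restore finitely many values, replace $\lambda_{n}(x)$ by its dyadic lower rounding $\lfloor 2^{n}\lambda_{n}(x)\rfloor 2^{-n}$. This keeps measurability, keeps the first property (the bound only decreases), and still yields convergence. The hardest part is the monotonicity $d(\varphi_{k+1}(x),x)\le d(\varphi_{k}(x),x)$, which the corrected maps need not satisfy. I would enforce it by a recursive selection. Let $C_{n}$ be the finite set of all candidate values $(1-q)x_{0}+qy_{i}$ with $i\le n$ and $q$ dyadic of level $n$. Let $\varphi_{n}(x)$ be a measurable (first-index) minimiser of $d(z,x)$ over those $z\in C_{1}\cup\dots\cup C_{n}$ that satisfy $d(z,x_{0})\le d(x,x_{0})$. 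This set always contains $x_{0}$ and grows with $n$, so the second property is automatic, the first holds by construction, and measurability follows as in Step 1. Convergence then follows because the rounded corrected points from Steps 2–3 lie in the admissible set and their distances to $x$ tend to zero.
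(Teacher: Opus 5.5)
Your Step 4 construction is exactly the paper's argument: a first-index nearest-point selection over an increasing finite family of radially shrunk dense points $(1-q)x_{0}+qy_{i}$, restricted to those $z$ with $d(z,x_{0})\le d(x,x_{0})$, with dyadic instead of rational $q$. Your convergence step, rounding down the factor $\min\{1,d(x,x_{0})/d(\psi_{n}(x),x_{0})\}$, is the same radial-shrinking argument as the paper's $\lambda\uparrow d(x,x_{0})/d(z,x_{0})$, so the proof is correct; just delete the Step 2 sentence claiming the convexity condition does not give $d(\varphi_{n}(x),x_{0})\le\lambda d(\psi_{n}(x),x_{0})$, since you rightly derive exactly that by taking $y=x_{0}$.
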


\begin{proof}
For a countable dense set $D_{0}$ in $E$ let $D:=\{(1 - q)x_{0} + qz\,|\,q\in [0,1]\cap\mathbb{Q},\,z\in D_{0}\}$. Then for any $x\in E$ and $\varepsilon > 0$ there is $y\in D$ such that
\begin{equation}\label{eq:required metric inequalities}
d(y,x_{0}) \leq d(x,x_{0})\quad\text{and}\quad d(y,x) < \varepsilon.
\end{equation}
Indeed, let $z\in D_{0}$ satisfy $d(z,x) < \frac{\varepsilon}{2}$. If $d(z,x_{0}) \leq d(x,x_{0})$, then we set $y := z$. Otherwise, $d(y_{\lambda},x_{0}) \leq \lambda d(z,x_{0})$ and $d(y_{\lambda},x) \leq (1-\lambda)d(z,x_{0}) + \frac{\varepsilon}{2}$ for $\lambda\in [0,1]$ and $y_{\lambda} := (1 - \lambda)x_{0} + \lambda z$. As
\begin{equation*}
\lim_{\lambda\uparrow \lambda_{0}} (1 - \lambda)d(z,x_{0}) = d(z,x_{0}) - d(x,x_{0}) < \frac{\varepsilon}{2}\quad\text{for}\quad \lambda_{0} := \frac{d(x,x_{0})}{d(z,x_{0})},
\end{equation*}
there is $q\in [0,1]\cap\mathbb{Q}$ such that $y := y_{q}$ satisfies~\eqref{eq:required metric inequalities}. Further, as $D$ is countable, we can take an increasing sequence $(D_{n})_{n\in\N}$ of non-empty finite sets in $D$ such that $x_{0}\in D_{1}$ and $\bigcup_{n\in\N} D_{n} = D$.

For each $n\in\N$ there are $N_{n}\in\N$ and pairwise distinct $x_{1,n},\dots,x_{N_{n},n}\in D_{n}$ such that $D_{n} = \{x_{1,n},\dots,x_{N_{n},n}\}$. So, for any $x\in E$ the set $D_{n}(x) :=\{y\in D_{n}\,|\, d(y,x_{0})\leq d(x,x_{0})\}$ contains $x_{0}$ and there is a unique $i_{n}(x)\in\{1,\dots,N_{n}\}$ such that
\begin{equation*}
x_{i_{n}(x),n}\in D_{n}(x)\quad\text{and}\quad d(x_{i_{n}(x),n},x) = \min_{y\in D_{n}(x)} d(y,x)
\end{equation*}
and if $N_{n}\geq 2$ and $i_{n}(x)\geq 2$, then there is no $i\in\{1,\dots,i_{n}(x) - 1\}$ satisfying $d(x_{i,n},x_{0})$ $\leq d(x,x_{0})$ and $d(x_{i,n},x) \leq d(y,x)$ for all $y\in D_{n}(x)$. The resulting map $\varphi_{n}:E\rightarrow D_{n}$ given by $\varphi_{n}(x) := x_{i_{n}(x),n}$ is Borel measurable.

This follows from the fact that for any set $C$ in $D_{n}$ with $x_{0}\in C$ the intersection $C_{j,n}$ of $\{x\in E\,|\,\varphi_{n}(x) = x_{j,n}\}$ and $\{x\in E\,|\, D_{n}(x) = C\}$ is Borel for any $j\in\{1,\dots,N_{n}\}$. This in turn is a consequence of the representation
\begin{equation*}
\bigcup_{i=1}^{j}C_{i,n} = \bigcup_{i=1}^{j}\big\{x\in E\,|\,D_{n}(x) = C,\,x_{i,n}\in C,\, d(x_{i,n},x) = \min_{y\in C} d(y,x)\big\},
\end{equation*}
since $\{x\in E\,|\, D_{n}(x) = C\}$ agrees with the set of all $x\in E$  with $\max_{y\in C}d(y,x_{0})$ $\leq d(x,x_{0})$ and $\min_{y\in D_{n}\setminus C} d(y,x_{0}) > d(x,x_{0})$, whenever $C\neq D_{n}$, and $\{x\in E\,|\,D_{n}(x) = D_{n}\}$ is the set of all $x\in E$ with $\max_{y\in D_{n}} d(y,x_{0}) \leq d(x,x_{0})$.

Finally, as $D_{k}\subset D_{k+1}$ for any $k\in\N$, the properties~\eqref{eq:properties of the approximating sequence} follow from the definition of $(\varphi_{n})_{n\in\N}$ and we conclude that $\lim_{n\uparrow\infty}d(\varphi_{n}(x),x)$ $= \inf_{y\in D:\, d(y,x_{0})\leq d(x,x_{0})} d(y,x) = 0$ for all $x\in E$.
\end{proof}

\begin{Remark}
If there is a norm $|\cdot|$ on the underlying linear space such that $d(x,y)$ $= |x - y|$ for all $x,y\in E$, then the convexity condition~\eqref{eq:convexity condition on the metric} is satisfied.
\end{Remark}

\begin{Corollary}\label{co:approximation of product measurable processes}
Let the assumptions of Proposition~\ref{pr:approximation of the identity map} hold and $X:I\times\Omega\rightarrow E$ be product measurable. Then the sequence $(X^{(n)})_{n\in\N}$ of $E$-valued product measurable processes on $I\times\Omega$ given by $X_{t}^{(n)} := \varphi_{n}(X_{t})$ for all $n\in\N$ satisfies the following two properties:
\begin{enumerate}[(i)]
\item For each $n\in\N$ there are $N_{n}\in\N$ and pairwise distinct $x_{1,n},\dots,x_{N_{n},n}\in E$ such that
\begin{equation*}
X_{t}^{(n)} = \sum_{i=1}^{N_{n}} x_{i,n}\mathbbm{1}_{B_{i,n}}(X_{t})\quad\text{for all $t\in I$}
\end{equation*}
with the Borel sets $B_{i,n} :=\{x\in E\,|\,\varphi_{n}(x) = x_{i,n}\}$, where $i\in\{1,\dots,N_{n}\}$, that form a disjoint decomposition of $E$.

\item $d(X_{t}^{(k)},x_{0}) \leq d(X_{t},x_{0})$ and $d(X_{t}^{(k + 1)},X_{t}) \leq d(X_{t}^{(k)},X_{t})$ and $(X_{t}^{(n)})_{n\in\N}$ converges pointwise to $X_{t}$ for any $k\in\N$ and $t\in I$.
\end{enumerate}
\end{Corollary}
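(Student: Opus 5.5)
This corollary is essentially a direct composition of Proposition~\ref{pr:approximation of the identity map} with the given product measurable map $X$. I will take the sequence $(\varphi_{n})_{n\in\N}$ of $E$-valued Borel measurable maps provided by that proposition, each taking finitely many values, and set $X^{(n)}:=\varphi_{n}\circ X$. Since $X$ is measurable with respect to $\mathcal{I}\otimes\mathcal{F}$ and $\varphi_{n}$ is Borel measurable, $X^{(n)}$ is product measurable. It is therefore in particular an $E$-valued process in the sense that each $X_{t}^{(n)}$ is $\mathcal{F}$-measurable, because sections of product measurable maps are measurable.

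For (i), I fix $n\in\N$ and let $x_{1,n},\dots,x_{N_{n},n}$ be the pairwise distinct values of $\varphi_{n}$, that is, the elements of the finite range $\varphi_{n}(E)$. In the construction of the proposition this range lies in $D_{n}$. To avoid empty sets one may restrict to the values actually attained; the statement does not require nonemptiness in any case. The sets $B_{i,n}=\varphi_{n}^{-1}(\{x_{i,n}\})$ are Borel because singletons are closed in the metrisable space $E$ and $\varphi_{n}$ is Borel measurable. They are pairwise disjoint since the values are distinct, and their union is $E$ since every point is mapped to one of these values. Hence $\varphi_{n}(x)=\sum_{i}x_{i,n}\mathbbm{1}_{B_{i,n}}(x)$ for all $x\in E$, where the sum is to be read as a selection of the one nonzero term, which makes sense in the linear space containing $E$. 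Substituting $x=X_{t}(\omega)$ gives the claimed representation for all $t\in I$.

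For (ii), I apply the inequalities \eqref{eq:properties of the approximating sequence} and the pointwise convergence $\varphi_{n}(x)\to x$ at the point $x=X_{t}(\omega)$, for each $t\in I$ and $\omega\in\Omega$. This yields $d(X_{t}^{(k)},x_{0})\leq d(X_{t},x_{0})$, the monotonicity $d(X_{t}^{(k+1)},X_{t})\leq d(X_{t}^{(k)},X_{t})$, and the pointwise convergence $X_{t}^{(n)}\to X_{t}$.

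There is no genuine obstacle here. The only points needing care are that the composition preserves product measurability and that the indicator representation is interpreted within the linear space containing $E$.
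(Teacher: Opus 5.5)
Your proposal is correct and follows essentially the paper's own argument: take the $\varphi_{n}$ from Proposition~\ref{pr:approximation of the identity map}, read off (i) from the finite range $\varphi_{n}(E)=\{x_{1,n},\dots,x_{N_{n},n}\}$, and obtain (ii) by evaluating \eqref{eq:properties of the approximating sequence} at $x=X_{t}(\omega)$. The only difference is that you spell out the measurability of $\varphi_{n}\circ X$ and of the sets $B_{i,n}$, which the paper leaves implicit.
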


\begin{proof}
By Proposition~\ref{pr:approximation of the identity map}, for each $n\in\N$ there are $N_{n}\in\N$ and pairwise distinct $x_{1,n},\dots,x_{N_{n},n}\in E$ such that $\varphi_{n}(E) = \{x_{1,n},\dots,x_{N_{n},n}\}$ and $X_{t}^{(n)} = \sum_{i=1}^{N_{n}}x_{i,n}\mathbbm{1}_{\{X_{t}^{(n)} = x_{i,n}\}}$ for all $t\in I$, which yields~(i). Statement~(ii) follows directly from the properties~\eqref{eq:properties of the approximating sequence} of the sequence $(\varphi_{n})_{n\in\N}$.
\end{proof}

We turn to the convex space $\mathcal{P}_{p}(E)$ of all $\mu\in\mathcal{P}_{0}(E)$ that satisfy $\int_{E}d(x,x_{0})^{p}\mu(\mathrm{d}x) < \infty$ for some $x_{0}\in E$, endowed with the \emph{$p$th Wasserstein metric} given by
\begin{equation}\label{eq:Wasserstein metric}
\mathcal{W}_{p}(\mu,\nu) := \inf_{\theta\in\mathcal{P}(\mu,\nu)}\bigg(\int_{E\times E}d(x,y)^{p}\mathrm{d}\theta(x,y)\bigg)^{\frac{1}{p}},
\end{equation}
where $\mathcal{P}(\mu,\nu)$ denotes the convex space of all probability measures in $\mathcal{P}_{0}(E\times E)$ whose first and second marginal distributions are $\mu$ and $\nu$, respectively, for all $\mu,\nu\in\mathcal{P}_{p}(E)$.

\begin{Lemma}\label{le:compactness of a set of probability measures}
Let $d$ be complete. Then for any $n\in\N$, $\alpha\in ]0,1]^{n}$ with $\sum_{i=1}^{n}\alpha_{i} = 1$ and $\nu\in\mathcal{P}_{p}(E)$, the set of all $(\nu_{1},\dots,\nu_{n})\in\prod_{i=1}^{n}\mathcal{P}_{p}(E)$ satisfying $\sum_{i=1}^{n}\alpha_{i}\nu_{i} = \nu$ is compact.
\end{Lemma}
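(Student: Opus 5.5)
We denote the set in question by $K := \{(\nu_{1},\dots,\nu_{n})\in\prod_{i=1}^{n}\mathcal{P}_{p}(E)\,|\,\sum_{i=1}^{n}\alpha_{i}\nu_{i} = \nu\}$. The plan is to show that $K$ is relatively compact and closed in the product of the Wasserstein spaces, which together give compactness. We use the standard characterisation of relative compactness in $(\mathcal{P}_{p}(E),\mathcal{W}_{p})$ for complete separable $d$: a set is relatively compact if and only if it is tight and has uniformly integrable $p$th moments. Uniform integrability here means $\lim_{R\uparrow\infty}\sup_{\mu}\int_{\{d(\cdot,x_{0})\geq R\}}d(x,x_{0})^{p}\,\mu(\mathrm{d}x) = 0$. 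We also use that $\mathcal{W}_{p}$-convergence is equivalent to weak convergence together with convergence of $p$th moments.

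\emph{Relative compactness.} The key observation is the domination $\nu_{i}\leq\alpha_{i}^{-1}\nu$ for each $i$, which follows from $\alpha_{i}\nu_{i}\leq\nu$ and $\alpha_{i}>0$.
\begin{itemize}
\item[\textbullet] \emph{Tightness.} Since $E$ is Polish, $\nu$ is tight. For $\varepsilon>0$ we choose a compact $C$ with $\nu(C^{c})\leq\varepsilon\min_{i}\alpha_{i}$. Then $\nu_{i}(C^{c})\leq\varepsilon$ uniformly over $K$.
\item[\textbullet] \emph{Uniform integrability.} In the same way, $\int_{\{d(\cdot,x_{0})\geq R\}}d(x,x_{0})^{p}\,\nu_{i}(\mathrm{d}x)\leq\alpha_{i}^{-1}\int_{\{d(\cdot,x_{0})\geq R\}}d(x,x_{0})^{p}\,\nu(\mathrm{d}x)$. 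The right-hand side tends to $0$ as $R\uparrow\infty$ because $\nu\in\mathcal{P}_{p}(E)$.
\end{itemize}
Hence each projection of $K$ is relatively compact in $\mathcal{P}_{p}(E)$. By Tychonoff for finite products, $K$ is relatively compact in $\prod_{i=1}^{n}\mathcal{P}_{p}(E)$.

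\emph{Closedness.} Let $(\nu_{1}^{(k)},\dots,\nu_{n}^{(k)})\in K$ converge to $(\nu_{1},\dots,\nu_{n})$ in the product metric. Wasserstein convergence implies weak convergence, so for every bounded continuous $f$ we get $\int f\,\mathrm{d}\nu = \sum_{i}\alpha_{i}\int f\,\mathrm{d}\nu_{i}^{(k)}\to\sum_{i}\alpha_{i}\int f\,\mathrm{d}\nu_{i}$. Bounded continuous functions determine Borel probability measures on a metrisable space, so $\sum_{i}\alpha_{i}\nu_{i}=\nu$ and the limit lies in $K$.

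\emph{Main obstacle.} The principal step is invoking the compactness criterion for $\mathcal{W}_{p}$. It relies on completeness of $d$, which gives tightness of $\nu$ via Ulam's theorem and Prokhorov's theorem in the Polish space $E$. It also relies on the fact that a weakly convergent sequence with uniformly integrable $p$th moments converges in $\mathcal{W}_{p}$. If a self-contained argument is needed, we would pass via Prokhorov to a weakly convergent subsequence of each coordinate. We would then deduce $\mathcal{W}_{p}$-convergence from the uniform integrability bound above, using truncation of $d(\cdot,x_{0})^{p}$ at level $R$. The domination $\nu_{i}\leq\alpha_{i}^{-1}\nu$ carries everything; this is exactly where $\alpha_{i}>0$ is used.
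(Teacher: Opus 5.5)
Your proposal is correct and follows essentially the same route as the paper. The domination $\alpha_{i}\nu_{i}\leq\nu$ gives tightness and uniformly integrable $p$th moments, hence relative compactness in $\mathcal{P}_{p}(E)$ via Prokhorov's theorem and the characterisation of $\mathcal{W}_{p}$-convergence (Villani, Theorem~7.12); closedness of the constraint set then completes the argument. The paper obtains closedness from the continuity of $(\nu_{1},\dots,\nu_{n})\mapsto\sum_{i=1}^{n}\alpha_{i}\nu_{i}$ instead of your bounded continuous test functions, which is an equivalent step.
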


\begin{proof}
As $E$ is a Polish space, we may infer the assertion from the characterisation of convergence in $\mathcal{P}_{p}(E)$ stated in~\cite[Theorem~7.12]{Vil03} and Prokhorov's theorem. Namely, each sequence $(\nu_{1}^{(k)},\dots,\nu_{n}^{(k)})_{k\in\N}$ in the set $\mathcal{K}_{\nu}^{n,\alpha}$ appearing in the claim satisfies $\alpha_{i}\nu_{i}^{(k)} \leq \nu$ for any $k\in\N$ and $i\in\{1,\dots,n\}$.

This implies that $(\nu_{i}^{(k)})_{k\in\N}$ is tight for all $i\in\{1,\dots,n\}$, since any finite Borel measure on $E$ is inner regular, as shown in~\cite[Lemma 26.2]{Bau01}. By Prokhorov's theorem, there is a strictly increasing sequence $(k_{l})_{l\in\N}$ in $\N$ such that $(\nu_{i}^{(k_{l})})_{l\in\N}$ converges weakly to some $\nu_{i}\in\mathcal{P}_{0}(E)$ for each fixed $i\in\{1,\dots,n\}$.

As $\alpha_{i}\int_{B}d(x,x_{0})^{p}\nu_{i}^{(k)}(\mathrm{d}x) \leq \int_{B}d(x,x_{0})^{p}\,\nu(\mathrm{d}x)$ for all $k\in\N$ and $B\in\mathcal{B}(E)$ and given $x_{0}\in E$, we have $\nu_{i}\in\mathcal{P}_{p}(E)$ and $\lim_{r\uparrow\infty}\sup_{k\in\N}\int_{E\setminus B_{r^{1/p}}(x_{0})} d(x,x_{0})^{p}\nu_{i}^{(k)}(\mathrm{d}x) = 0$ with the open ball $B_{\delta}(x_{0})$ around $x_{0}$ with any radius $\delta > 0$, by dominated convergence.

Hence, $(\nu_{i}^{(k_{l})})_{l\in\N}$ converges to $\nu_{i}$ in $\mathcal{P}_{p}(E)$, by Theorem~7.12 in~\cite{Vil03}. Eventually, $\sum_{i=1}^{n}\alpha_{i}\nu_{i} = \nu$ holds, because $\mathcal{K}_{\nu}^{n,\alpha}$ is closed as preimage of $\{\nu\}$ under the continuous map $\prod_{i=1}^{n}\mathcal{P}_{p}(E)\rightarrow\mathcal{P}_{p}(E)$, $(\nu_{1},\dots,\nu_{n})\mapsto\sum_{i=1}^{n}\alpha_{i}\nu_{i}$. So, $\mathcal{K}_{\nu}^{n,\alpha}$ is sequentially compact.
\end{proof}

Based on Lemma~\ref{le:compactness of a set of probability measures}, we derive \emph{various representations of the Wasserstein distance between a probability measure in $\mathcal{P}_{p}(E)$ and a convex combination of Dirac measures}. Here, let $B(E,\Re_{+}^{n})$ denote the set of all $\Re_{+}^{n}$-valued Borel measurable maps on $E$.

\begin{Proposition}\label{pr:representations of the Wasserstein metric}
Let $d$ be complete and $\mu\in\mathcal{P}_{p}(E)$ be of the form $\mu = \sum_{i=1}^{n}\alpha_{i}\delta_{x_{i}}$ for some $n\in\N$, pairwise distinct $x_{1},\dots,x_{n}\in E$ and $\alpha\in ]0,1]^{n}$ with $\sum_{i=1}^{n}\alpha_{i} = 1$. Then
\begin{align*}
\mathcal{W}_{p}(\mu,\nu)^{p} &= \min_{\substack{\nu_{1},\dots,\nu_{n}\in\mathcal{P}_{p}(E):\\ \sum_{i=1}^{n}\alpha_{i}\nu_{i} = \nu}} \sum_{i=1}^{n}\alpha_{i}\int_{E}d(x,x_{i})^{p}\,\nu_{i}(\mathrm{d}x)\\
&= \min_{\substack{f\in B(E,\Re_{+}^{n}):\\ \sum_{i=1}^{n}\alpha_{i}f_{i} = 1, \int_{E}f_{1}\,\mathrm{d}\nu = \cdots = \int_{E}f_{n}\mathrm{d}\nu = 1}} \int_{E}\sum_{i=1}^{n}\alpha_{i}d(x,x_{i})^{p}f_{i}(x)\,\nu(\mathrm{d}x)
\end{align*}
for any $\nu\in\mathcal{P}_{p}(E)$. If in addition there are $m\in\N$, pairwise distinct $y_{1},\dots,y_{m}\in E$ and $\beta\in ]0,1]^{m}$ such that $\sum_{j=1}^{m}\beta_{j} = 1$ and $\nu = \sum_{j=1}^{m}\beta_{j}\delta_{y_{j}}$, then
\begin{equation*}
\mathcal{W}_{p}(\mu,\nu)^{p} = \min_{\substack{A\in\Re_{+}^{n\times m}:\\ A^{\top}\alpha =\mathbf{1}_{m},\, A\beta = \mathbf{1}_{n}}}\sum_{i=1}^{n}\sum_{j=1}^{m} \alpha_{i}A_{i,j}\beta_{j}d(x_{i},y_{j}),
\end{equation*}
where $\mathbf{1}_{i}\in\Re^{i}$ is given by $\mathbf{1}_{i}:= (1,\dots,1)^{\top}$ for $i\in\{m,n\}$.
\end{Proposition}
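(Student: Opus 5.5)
The plan is to identify couplings of $\mu = \sum_{i=1}^{n}\alpha_{i}\delta_{x_{i}}$ and $\nu$ with $n$-tuples of probability measures, and then pass to densities and, in the discrete case, to matrices. Let $\theta\in\mathcal{P}(\mu,\nu)$. Because the first marginal is the discrete measure $\mu$, $\theta$ is concentrated on $\{x_{1},\dots,x_{n}\}\times E$. We therefore set $\nu_{i}(B) := \alpha_{i}^{-1}\theta(\{x_{i}\}\times B)$ for $B\in\mathcal{B}(E)$. This defines probability measures with $\theta = \sum_{i=1}^{n}\alpha_{i}\,\delta_{x_{i}}\otimes\nu_{i}$, and the second-marginal condition becomes $\sum_{i=1}^{n}\alpha_{i}\nu_{i} = \nu$. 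Conversely, every such tuple $(\nu_{1},\dots,\nu_{n})$ defines a coupling in this way. Since $\alpha_{i}\nu_{i}\leq\nu$, each $\nu_{i}$ automatically lies in $\mathcal{P}_{p}(E)$. For the corresponding coupling the transport cost is
\begin{equation*}
\int_{E\times E}d(x,y)^{p}\,\mathrm{d}\theta(x,y) = \sum_{i=1}^{n}\alpha_{i}\int_{E}d(x,x_{i})^{p}\,\nu_{i}(\mathrm{d}x).
\end{equation*}
Hence $\mathcal{W}_{p}(\mu,\nu)^{p}$ equals the infimum of the right-hand side over the set $\mathcal{K}_{\nu}^{n,\alpha}$ from Lemma~\ref{le:compactness of a set of probability measures}.

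To see that this infimum is a minimum, I would use that $\mathcal{K}_{\nu}^{n,\alpha}$ is compact by Lemma~\ref{le:compactness of a set of probability measures}. It then suffices that the functional
\begin{equation*}
(\nu_{1},\dots,\nu_{n})\mapsto\sum_{i=1}^{n}\alpha_{i}\int_{E}d(x,x_{i})^{p}\,\nu_{i}(\mathrm{d}x)
\end{equation*}
is continuous, or at least lower semicontinuous, on $\prod_{i=1}^{n}\mathcal{P}_{p}(E)$. This follows from~\cite[Theorem~7.12]{Vil03}: convergence in $\mathcal{P}_{p}(E)$ implies convergence of integrals of continuous functions with growth at most $C(1 + d(x,x_{0})^{p})$, and $d(\cdot,x_{i})^{p}$ is such a function. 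This compactness and continuity step is the only non-formal part, and the lemma has already done the real work.

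For the second representation, I would apply the Radon--Nikodym theorem, using $\alpha_{i}\nu_{i}\leq\nu$, to write $\nu_{i} = f_{i}\,\nu$ with Borel measurable $f_{i}\geq 0$ and $\int_{E}f_{i}\,\mathrm{d}\nu = 1$. The identity $\sum_{i=1}^{n}\alpha_{i}\nu_{i} = \nu$ gives $\sum_{i=1}^{n}\alpha_{i}f_{i} = 1$ $\nu$-a.e. Redefining all $f_{i}$ as $1$ on the exceptional $\nu$-null set makes this hold everywhere without changing any $\nu_{i}$. Conversely, every admissible $f\in B(E,\Re_{+}^{n})$ yields a tuple $(f_{1}\nu,\dots,f_{n}\nu)\in\mathcal{K}_{\nu}^{n,\alpha}$. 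The costs match, since $\int_{E}d(x,x_{i})^{p}\,\nu_{i}(\mathrm{d}x) = \int_{E}d(x,x_{i})^{p}f_{i}(x)\,\nu(\mathrm{d}x)$. Thus the two minimisation problems have the same value, and a minimiser of the first produces a minimiser of the second.

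Finally, let $\nu = \sum_{j=1}^{m}\beta_{j}\delta_{y_{j}}$. Then only the values $A_{i,j} := f_{i}(y_{j})$ matter. The constraint $\int_{E}f_{i}\,\mathrm{d}\nu = 1$ reads $A\beta = \mathbf{1}_{n}$, and $\sum_{i=1}^{n}\alpha_{i}f_{i}(y_{j}) = 1$ reads $A^{\top}\alpha = \mathbf{1}_{m}$. The cost becomes $\sum_{i=1}^{n}\sum_{j=1}^{m}\alpha_{i}A_{i,j}\beta_{j}\,d(x_{i},y_{j})^{p}$. Conversely, any such $A$ extends to an admissible $f$ by setting $f\equiv\mathbf{1}_{n}$ off $\{y_{1},\dots,y_{m}\}$. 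The minimum is attained, either because the feasible set of matrices is compact or because it is inherited from the previous step. I read the cost in the displayed formula as involving $d(x_{i},y_{j})^{p}$, consistent with the first two formulas; as literally printed, with $d(x_{i},y_{j})$, it is only correct for $p=1$.
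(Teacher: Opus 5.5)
Your proposal is correct and follows essentially the same route as the paper: the decomposition $\theta = \sum_{i}\alpha_{i}\,\delta_{x_{i}}\otimes\nu_{i}$ of couplings, compactness from Lemma~\ref{le:compactness of a set of probability measures} for attainment of the minimum, the Radon--Nikod\'ym densities redefined on the exceptional $\nu$-null set, and the identification $A_{i,j} = f_{i}(y_{j})$. You also make explicit two points the paper leaves implicit, namely the continuity of the cost functional via \cite[Theorem~7.12]{Vil03} and the extension of a feasible matrix to an admissible $f$; your reading of the last formula with $d(x_{i},y_{j})^{p}$ is right, since the paper's own proof computes the cost with exponent $p$.
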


\begin{proof}
By Lemma~\ref{le:compactness of a set of probability measures}, the first identity follows once we have shown that $\mathcal{P}(\mu,\nu)$ does not only contain but also consists of all $\theta\in\mathcal{P}_{0}(E\times E)$ of the form $\theta = \sum_{i=1}^{n}\alpha_{i}\nu_{i}\otimes\delta_{x_{i}}$ for some $\nu_{1},\dots,\nu_{n}\in\mathcal{P}_{p}(E)$ with $\sum_{i=1}^{n}\alpha_{i}\nu_{i} = \nu$.

However, for any $\theta\in\mathcal{P}(\mu,\nu)$ and $B,C\in\mathcal{B}(E)$ we have $\theta(B\times C) = 0$ if $x_{1},\dots,x_{n}\in C^{c}$. Otherwise, we may let $k$ be the largest number in $\{1,\dots,n\}$ for which there are pairwise distinct $i_{1},\dots,i_{k}\in\{1,\dots,n\}$ such that $x_{i_{1}},\dots,x_{i_{k}}\in C$, in which case
\begin{equation*}
\theta(B\times C) = \theta(B\times C\cap\{x_{1},\dots,x_{n}\}) = \theta(B\times\{x_{i_{1}},\dots,x_{i_{k}}\}) = \sum_{j=1}^{k}\alpha_{i_{j}}\nu_{i_{j}}(B),
\end{equation*}
where $\nu_{i}\in\mathcal{P}_{p}(E)$ is defined by $\nu_{i}(B) := \alpha_{i}^{-1}\theta(B\times\{x_{i}\})$ for every $i\in\{1,\dots,n\}$. Thus, we have $\theta(B\times C) = \sum_{i=1}^{n}\alpha_{i}\nu_{i}(B)\delta_{x_{i}}(C)$ in either case, as required.

For the second identity it suffices to show that the set of all $(\nu_{1},\dots,\nu_{n})\in\prod_{i=1}^{n}\mathcal{P}_{p}(E)$ satisfying $\sum_{i=1}^{n}\alpha_{i}\nu_{i} = \nu$ does not only contain but also consists of all $n$-tuples $(\nu_{1},\dots,\nu_{n})$ of $\nu_{1},\dots,\nu_{n}\in\mathcal{P}_{0}(E)$ that admit the representations
\begin{equation}\label{eq:representation involving a density function}
\nu_{i}(B) = \int_{B}f_{i}(x)\,\nu(\mathrm{d}x)\quad\text{for all $i\in\{1,\dots,n\}$ and $B\in\mathcal{B}(E)$}
\end{equation}
and some Borel measurable map $f:E\rightarrow\Re_{+}^{m}$ satisfying $\sum_{i=1}^{n}\alpha_{i}f_{i} = 1$.  But for any $\nu_{1},\dots,\nu_{n}\in\mathcal{P}_{0}(E)$ that satisfy $\sum_{i=1}^{n}\alpha_{i}\nu_{i} = \nu$, the Radon-Nikod\'ym Theorem yields a Borel measurable map $g:E\rightarrow [0,\infty]^{n}$ that is unique, up to a $\nu$-null set, such that~\eqref{eq:representation involving a density function} holds when $f_{i}$ is replaced by $g_{i}$.

Since $\nu(B) = \sum_{i=1}^{n}\alpha_{i}\nu_{i}(B) = \int_{B}\sum_{i=1}^{n}\alpha_{i}g_{i}(x)\,\nu(\mathrm{d}x)$ for all $B\in\mathcal{B}(E)$, it holds that $1 = \sum_{i=1}^{n}\alpha_{i}g_{i}$ $\nu$-a.s. So, the map $f := g\mathbbm{1}_{\{\sum_{i=1}^{n}\alpha_{i}g_{i} = 1\}} + \mathbf{1}_{n}\mathbbm{1}_{\{\sum_{i=1}^{n}\alpha_{i}g_{i} = 1\}^{c}}$ is $\Re_{+}^{n}$-valued, Borel measurable and satisfies~\eqref{eq:representation involving a density function} and $\sum_{i=1}^{n}\alpha_{i}f_{i} = 1$.

For the second assertion we readily note that for every map $f\in B(E,\Re_{+}^{n})$ there is a unique matrix $A\in\Re_{+}^{n\times m}$ such that $f_{i}(y_{j}) = A_{i,j}$ for all $i\in\{1,\dots,n\}$ and $j\in\{1,\dots,m\}$, in which case $\int_{E}\sum_{i=1}^{n}\alpha_{i}d(x,x_{i})^{p}f_{i}(x)\nu(\mathrm{d}x) = \sum_{i=1}^{n}\sum_{j=1}^{m}\alpha_{i}A_{i,j}\beta_{j}d(x_{i},y_{j})^{p}$.

Hence, $\sum_{i=1}^{n}\alpha_{i}f_{i}(y_{j}) = 1$ is equivalent to $\sum_{i=1}^{n}\alpha_{i}A_{i,j} = 1$ for all $j\in\{1,\dots,m\}$ and we have $\int_{E}f_{i}\,\mathrm{d}\nu = 1$ $\Leftrightarrow$ $\sum_{j=1}^{m}A_{i,j}\beta_{j} = 1$ for any $i\in\{1,\dots,n\}$, which concludes the proof.
\end{proof}

We recall that an $E$-valued random vector $Y$ is $p$-fold integrable if $\E[d(Y,x_{0})^{p}] < \infty$ for some $x_{0}\in E$, which holds if and only if $\mathcal{L}(Y)\in\mathcal{P}_{p}(E)$. Then  Corollary~\ref{co:approximation of product measurable processes} and Proposition~\ref{pr:representations of the Wasserstein metric} entail a general measurability result.

\begin{Proposition}\label{pr:Borel measurability of the law map}
Let $E$ be a star convex set in a linear space with center $x_{0}\in E$ and $d$ be complete such that the convexity condition~\eqref{eq:convexity condition on the metric} holds. Then for any product measurable $p$-fold integrable process $X:I\times\Omega\rightarrow E$ the distribution map 
\begin{equation*}
I\rightarrow\mathcal{P}_{p}(E),\quad t\mapsto\mathcal{L}(X_{t})
\end{equation*}
is Borel measurable.
\end{Proposition}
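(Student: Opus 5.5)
The plan is to approximate $X$ by the simple processes of Corollary~\ref{co:approximation of product measurable processes}, prove measurability of the law map for each approximation, and pass to the limit. Put $X_{t}^{(n)} := \varphi_{n}(X_{t})$. By Corollary~\ref{co:approximation of product measurable processes}(i), for each $n$ we have
\begin{equation*}
\mathcal{L}(X_{t}^{(n)}) = \sum_{i=1}^{N_{n}} p_{i,n}(t)\delta_{x_{i,n}}\quad\text{with}\quad p_{i,n}(t) := \P(X_{t}\in B_{i,n}).
\end{equation*}
Each $p_{i,n}$ is $\mathcal{I}$-measurable by Fubini's theorem, because $\{(t,\omega)\,|\,X_{t}(\omega)\in B_{i,n}\}$ is product measurable. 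Moreover, $d(X_{t}^{(n)},x_{0})\leq d(X_{t},x_{0})$, so $\mathcal{L}(X_{t}^{(n)})\in\mathcal{P}_{p}(E)$.

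For convergence, property (ii) of Corollary~\ref{co:approximation of product measurable processes} gives $X_{t}^{(n)}\to X_{t}$ pointwise. In addition, $d(X_{t}^{(n)},X_{t})^{p}\leq d(X_{t}^{(1)},X_{t})^{p}\leq 2^{p}d(X_{t},x_{0})^{p}$, which is integrable. Dominated convergence then gives
\begin{equation*}
\mathcal{W}_{p}(\mathcal{L}(X_{t}^{(n)}),\mathcal{L}(X_{t}))^{p}\leq\E[d(X_{t}^{(n)},X_{t})^{p}]\to 0
\end{equation*}
for every $t\in I$. A pointwise limit of measurable maps into a metric space is measurable, so it suffices to show that $t\mapsto\mathcal{L}(X_{t}^{(n)})$ is Borel measurable for each fixed $n$.

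To do this, I will use the fact that $\mathcal{P}_{p}(E)$ is separable metrisable, so its Borel $\sigma$-field is generated by open balls. Hence it suffices to show that $t\mapsto\mathcal{W}_{p}(\mathcal{L}(X_{t}^{(n)}),\nu)$ is $\mathcal{I}$-measurable for every fixed $\nu\in\mathcal{P}_{p}(E)$. The map $t\mapsto p_{n}(t)=(p_{1,n}(t),\dots,p_{N_{n},n}(t))$ into the simplex $\Delta_{N_{n}}$ is measurable. It therefore suffices to show that
\begin{equation*}
\Phi:\Delta_{N_{n}}\to\Re_{+},\quad \alpha\mapsto\mathcal{W}_{p}\Big(\sum_{i}\alpha_{i}\delta_{x_{i,n}},\nu\Big)
\end{equation*}
is Borel, and I expect it is in fact continuous.

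The main obstacle is this continuity of $\Phi$ in $\alpha$. A direct route is the Wasserstein continuity of $\alpha\mapsto\sum_{i}\alpha_{i}\delta_{x_{i}}$. If $\alpha,\alpha'$ lie in the simplex, couple the two measures by keeping the common mass $\min(\alpha_{i},\alpha'_{i})$ at $x_{i}$ and transporting the remaining mass, of total size $\frac12|\alpha-\alpha'|_{1}$, between the finitely many points. This gives
\begin{equation*}
\mathcal{W}_{p}^{p}\leq \tfrac12|\alpha-\alpha'|_{1}\max_{i,j}d(x_{i},x_{j})^{p}.
\end{equation*}
By the triangle inequality for $\mathcal{W}_{p}$, $\Phi$ is then continuous, zero entries of $\alpha$ included.

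Alternatively, continuity follows from Proposition~\ref{pr:representations of the Wasserstein metric}. After discarding indices with $\alpha_{i}=0$, it expresses $\Phi(\alpha)^{p}$ as a minimum over $\nu_{1},\dots,\nu_{N}$ with $\sum_{i}\alpha_{i}\nu_{i}=\nu$. Lemma~\ref{le:compactness of a set of probability measures} gives compactness of the feasible set, and upper and lower semicontinuity in $\alpha$ can then be argued directly. I will use the elementary coupling bound as the main argument and keep this representation as a consistency check. Combining the three steps proves the claim.
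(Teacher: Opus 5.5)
Your proof is correct, and its skeleton matches the paper's. You use the simple approximations $\varphi_n(X_t)$ from Corollary~\ref{co:approximation of product measurable processes}, Fubini for the weights $\P(X_t\in B_{i,n})$, and dominated convergence together with $\mathcal{W}_p(\mathcal{L}(X_t^{(n)}),\mathcal{L}(X_t))\le\E[d(X_t^{(n)},X_t)^p]^{1/p}$ to pass to the limit. You take the limit for the measure-valued maps, the paper for the real functions $t\mapsto\mathcal{W}_p(\cdot,\nu)$; this difference is immaterial.

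Where you genuinely differ is the key step, the measurability of the law map of each simple process.

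The paper first restricts to centres $\nu$ from a countable dense family of finitely supported measures with rational weights. It then obtains Borel measurability of $(\alpha,x)\mapsto\mathcal{W}_p(\sum_i\alpha_i\delta_{x_i},\nu)$ from the linear-programming representation in Proposition~\ref{pr:representations of the Wasserstein metric}, which is itself proved via Lemma~\ref{le:compactness of a set of probability measures}. This route showcases the paper's representation result and gives joint measurability in weights and atoms.

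Your route uses the elementary coupling bound
\begin{equation*}
\mathcal{W}_p\Big(\sum_i\alpha_i\delta_{x_i},\sum_i\alpha_i'\delta_{x_i}\Big)^p\le\tfrac12|\alpha-\alpha'|_1\max_{i,j}d(x_i,x_j)^p
\end{equation*}
together with the fact that the atoms $x_{i,n}$ do not depend on $t$. This gives several advantages:
\begin{enumerate}[(i)]
\item it avoids the representation proposition and the compactness lemma altogether;
\item it yields an explicit modulus of continuity;
\item it works for every $\nu\in\mathcal{P}_p(E)$;
\item it handles vanishing weights automatically, whereas Proposition~\ref{pr:representations of the Wasserstein metric} is stated for strictly positive weights.
\end{enumerate}

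Your bound in fact shows that $\alpha\mapsto\sum_i\alpha_i\delta_{x_{i,n}}$ is continuous from the simplex into $\mathcal{P}_p(E)$. You could therefore compose it directly with the measurable map $t\mapsto(\P(X_t\in B_{1,n}),\dots,\P(X_t\in B_{N_n,n}))$ and skip the reduction to open balls entirely.
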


\begin{proof}
As $\mathcal{P}_{p}(E)$ equipped with $\mathcal{W}_{p}$ is a Polish space, for any countable dense set $\mathcal{C}$ in $\mathcal{P}_{p}(E)$ the system of all open balls $\{\mu\in\mathcal{P}_{p}(E)\,|\,\mathcal{W}_{p}(\mu,\nu) < \varepsilon\}$ with center $\nu\in\mathcal{C}$ and rational radius $\varepsilon > 0$ is a countable basis for the topology of $\mathcal{P}_{p}(E)$.

If $D$ is a countable dense set in $E$, then for $\mathcal{C}$ we may take the set of all convex combinations of Dirac measures of the form $\sum_{i=1}^{n}\alpha_{i}\delta_{x_{i}}$ with $n\in\N$, $x_{1},\dots,x_{n}\in D$ and $\alpha\in [0,1]^{n}\cap\mathbb{Q}^{n}$ such that $\sum_{i=1}^{n}\alpha_{i} = 1$. See~\cite[Theorem~14.12]{AliBor99}, for example. Therefore, it suffices to show the Borel measurability of the function
\begin{equation}\label{eq:auxiliary Borel measurable function}
I\rightarrow\Re_{+},\quad t\mapsto\mathcal{W}_{p}(\mathcal{L}(X_{t}),\nu)
\end{equation}
when $\nu\in\mathcal{P}_{p}(E)$ is of the form $\nu = \sum_{j=1}^{m}\beta_{j}\delta_{y_{j}}$ for some $m\in\N$, pairwise distinct $y_{1},\dots,y_{m}\in E$ and $\beta\in ]0,1]^{m}$ with $\sum_{j=1}^{m}\beta_{j} = 1$. To this end, we apply Corollary~\ref{co:approximation of product measurable processes} when $x_{0}\in E$ is fixed.

Then the sequence $(X^{(n)})_{n\in\N}$ of $E$-valued product measurable processes on $I\times\Omega$ that appears there satisfies $\mathcal{L}(X_{t}^{(n)}) = \sum_{i=1}^{N_{n}} \P(X_{t}\in B_{i,n}) \delta_{x_{i,n}}$ for all $n\in\N$ and $t\in I$. For any $n\in\N$ let $\Delta_{n}$ be the Borel set of all $(x_{1},\dots,x_{n})\in E^{n}$ such that $x_{1},\dots,x_{n}$ are pairwise distinct. Then $f_{n}:\{\alpha\in [0,1]^{n}\,|\,\sum_{i=1}^{n}\alpha_{i} = 1\}\times\Delta_{n}\rightarrow\Re_{+}$ given by
\begin{equation*}
f_{n}(\alpha,x) := \mathcal{W}_{p}(\alpha_{1}\delta_{x_{1}} + \cdots + \alpha_{n}\delta_{x_{n}},\nu)
\end{equation*}
is Borel measurable, by the third representation of $\mathcal{W}_{p}(\cdot,\nu)$ in Proposition~\ref{pr:representations of the Wasserstein metric} and linear programming. Furthermore, from Fubini's theorem we immediately infer that the map $g_{n}:I\rightarrow\{\alpha\in [0,1]^{N_{n}}\,|\,\sum_{i=1}^{N_{n}}\alpha_{i} = 1\}\times \Delta_{N_{n}}$ defined by
\begin{equation*}
g_{n}(t) := \big(\P(X_{t}\in B_{1,n}),\dots,\P(X_{t}\in B_{N_{n},n}), x_{1,n},\dots,x_{N_{n},n}\big)
\end{equation*} 
is Borel measurable. Consequently, as composition of $f_{N_{n}}$ and $g_{n}$ the function $I\rightarrow\Re_{+}$, $t\mapsto \mathcal{W}_{p}(\mathcal{L}(X_{t}^{(n)}),\nu)$ is also Borel measurable and the triangle inequality yields that
\begin{equation*}
|\mathcal{W}_{p}(\mathcal{L}(X_{t}^{(n)}),\nu) - \mathcal{W}_{p}(\mathcal{L}(X_{t}),\nu)| \leq \mathcal{W}_{p}\big(\mathcal{L}(X_{t}^{(n)}),\mathcal{L}(X_{t})\big) \leq \E\big[d(X_{t}^{(n)},X_{t})^{p}\big]^{\frac{1}{p}}
\end{equation*}
for any $n\in\N$ and $t\in I$. Since $\lim_{n\uparrow\infty} \E\big[d(X_{t}^{(n)},X_{t})^{p}\big] = 0$ follows from dominated convergence, the function~\eqref{eq:auxiliary Borel measurable function} is Borel measurable as pointwise limit of Borel measurable functions.
\end{proof}

\subsection{Sharp moment and integral inequalities}\label{se:3.3}

As in the introduction, let $I$ be a non-degenerate interval in $\Re_{+}$ with $0\in I$, and we take a $\sigma$-finite Borel measure $\mu$ on $I$ and a set $J$ in $I$ of full $\mu$-measure. For $p\geq 1$, $N\in\N$ and $\beta_{1},\dots,\beta_{N}\in ]0,p]$ we set $\beta := \max_{j=1,\dots,N} \beta_{j}$.

We let $Y$ be an $[0,\infty]$-valued product measurable process and for non-negative kernels $k_{1},\dots,k_{N}$ on $I$ we define a measurable function $v_{j}:I\rightarrow [0,\infty]$ by
\begin{equation*}
v_{j}(t) := \bigg(\int_{[0,t]}k_{j}(t,s)^{\beta_{j}}\,\mu(\mathrm{d}s)\bigg)^{\frac{1}{\beta_{j}}}\quad\text{for any $j\in\{1,\dots,N\}$}
\end{equation*}
and let $v:I\rightarrow [0,\infty]$ be given by $v(t) := \E[Y_{t}^{p}]^{1/p} + v_{1}(t) + \cdots + v_{N}(t)$. Further, for non-negative kernels $l_{1},\dots,l_{N}$ on $I$ we define a non-negative kernel $\hat{l}_{j}$ on $I$ by
\begin{equation*}
\hat{l}_{j}(t,s) := \min\bigg\{l_{j}(t,s)^{\beta_{j}}\bigg(\int_{[0,t]}l_{j}(t,\tilde{s})^{\beta_{j}}\,\mu(\mathrm{d}\tilde{s})\bigg)^{\frac{\beta}{\beta_{j}} - 1},\bigg(\int_{[0,t]}l_{j}(t,\tilde{s})^{\frac{\beta_{j}\beta}{\beta - \beta_{j}}}\,\mu(\mathrm{d}\tilde{s})\bigg)^{\frac{\beta}{\beta_{j}} - 1}\bigg\}^{\frac{1}{\beta}},
\end{equation*}
if $\beta_{j} < \beta$, and $\hat{l}_{j}(t,s) := l_{j}(t,s)$, if $\beta_{j} = \beta$, for each $j\in\{1,\dots,N\}$. This yields the non-negative kernel on $I$ defined via
\begin{equation*}
l := N\max_{j=1,\dots,N}\hat{l}_{j}.
\end{equation*}
Finally, suppose that $\beta_{j}\geq 1$ for any $j\in\{1,\dots,N\}$ with $k_{j}\neq 0$ and $\beta\geq 1$. In this setting, we deduce a \emph{sharp moment inequality for sequences of processes} from~\cite{Kal24}.

\begin{Proposition}\label{pr:resolvent sequence inequality for processes}
Let $(X^{(n)})_{n\in\N_{0}}$ be a sequence of product measurable processes with values in $[0,\infty]$ for which there are sequences $(K^{(n,1)})_{n\in\N},\dots,(K^{(n,N)})_{n\in\N}$ of $[0,\infty]$-valued product measurable functions on $I\times I\times\Omega$ such that
\begin{equation}\label{eq:condition 1 for the resolvent sequence inequality}
\E\big[\big(X_{t}^{(n)}\big)^{p}\big]^{\frac{1}{p}} \leq \E\big[Y_{t}^{p}\big]^{\frac{1}{p}} + \sum_{j=1}^{N}\E\bigg[\bigg(\int_{[0,t]}\big(K_{t,s}^{(n,j)}\big)^{\beta_{j}}\,\mu(\mathrm{d}s)\bigg)^{\frac{p}{\beta_{j}}}\bigg]^{\frac{1}{p}}
\end{equation}
for any $n\in\N$ and $t\in J$ and
\begin{equation}\label{eq:condition 2 for the resolvent sequence inequality}
\E\big[\big(K_{t,s}^{(n,j)}\big)^{p}\big]^{\frac{1}{p}} \leq k_{j}(t,s) + l_{j}(t,s)\E\big[\big(X_{s}^{(n-1)}\big)^{p}\big]^{\frac{1}{p}}
\end{equation}
for all $n\in\N$, $j=1,\dots,N$ and $s,t\in J$ with $s\leq t$. Then
\begin{equation}\label{eq:resolvent sequence inequality for processes}
\begin{split}
\E\big[\big(X_{t}^{(n)}\big)^{p}\big]^{\frac{1}{p}} &\leq v(t) + \sum_{i=1}^{n-1}\bigg(\int_{[0,t]}\R_{l^{\beta},\mu,i}(t,s)v(s)^{\beta}\,\mu(\mathrm{d}s)\bigg)^{\frac{1}{\beta}}\\
&\quad + \bigg(\int_{[0,t]}\R_{l^{\beta},\mu,n}(t,s)\E\big[\big(X_{s}^{(0)}\big)^{p}\big]^{\frac{\beta}{p}}\,\mu(\mathrm{d}s)\bigg)^{\frac{1}{\beta}}
\end{split}
\end{equation}
for any $n\in\N$ and $t\in J$, and equality holds if $N = \beta = p = 1$ and the inequalities~\eqref{eq:condition 1 for the resolvent sequence inequality} and~\eqref{eq:condition 2 for the resolvent sequence inequality} are equations.
\end{Proposition}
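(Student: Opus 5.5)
The plan is to reduce the statement to a deterministic recursive inequality for the moment functions $x_{n}(t) := \E[(X_{t}^{(n)})^{p}]^{1/p}$ and then apply the resolvent sequence inequality from~\cite{Kal24}, which is the deterministic counterpart of~\eqref{eq:resolvent sequence inequality for processes}. The first step removes the expectation inside~\eqref{eq:condition 1 for the resolvent sequence inequality}. Since $p/\beta_{j}\geq 1$, Minkowski's integral inequality in $L^{p/\beta_{j}}(\P)$ gives
\begin{equation*}
\E\bigg[\bigg(\int_{[0,t]}\big(K_{t,s}^{(n,j)}\big)^{\beta_{j}}\,\mu(\mathrm{d}s)\bigg)^{\frac{p}{\beta_{j}}}\bigg]^{\frac{1}{p}} \leq \bigg(\int_{[0,t]}\E\big[\big(K_{t,s}^{(n,j)}\big)^{p}\big]^{\frac{\beta_{j}}{p}}\,\mu(\mathrm{d}s)\bigg)^{\frac{1}{\beta_{j}}}.
\end{equation*}
Product measurability of $K^{(n,j)}$ together with Fubini's theorem makes the integrand measurable. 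We then insert~\eqref{eq:condition 2 for the resolvent sequence inequality}, which is valid for $\mu$-a.e.\ $s\in[0,t]$ because $J$ has full measure. Minkowski's inequality in $L^{\beta_{j}}(\mu)$ (using $\beta_{j}\geq 1$ whenever $k_{j}\neq 0$; if $k_{j}=0$ no splitting is needed) separates the $k_{j}$-part, which is bounded by $v_{j}(t)$, from the $l_{j}$-part. The result is
\begin{equation*}
x_{n}(t)\leq \E[Y_{t}^{p}]^{\frac{1}{p}} + \sum_{j=1}^{N}v_{j}(t) + \sum_{j=1}^{N}\bigg(\int_{[0,t]}l_{j}(t,s)^{\beta_{j}}x_{n-1}(s)^{\beta_{j}}\,\mu(\mathrm{d}s)\bigg)^{\frac{1}{\beta_{j}}}.
\end{equation*}

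The second step is to bring every $l_{j}$-term to the common exponent $\beta$. For $\beta_{j}<\beta$, Hölder's inequality can be applied in two ways. In the first, we write $l_{j}^{\beta_{j}}x^{\beta_{j}} = (l_{j}^{\beta_{j}})^{1-\beta_{j}/\beta}(l_{j}^{\beta_{j}}x^{\beta})^{\beta_{j}/\beta}$, which yields the first entry of the minimum defining $\hat{l}_{j}$. In the second, we use the exponents $\beta/(\beta-\beta_{j})$ and $\beta/\beta_{j}$ directly on $l_{j}^{\beta_{j}}\cdot x^{\beta_{j}}$, which yields the second entry. Either way the $j$-th term is at most $(\int_{[0,t]}\hat{l}_{j}(t,s)^{\beta}x_{n-1}(s)^{\beta}\,\mu(\mathrm{d}s))^{1/\beta}$. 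Because the minimum is taken pointwise in $s$ while the two bounds are global, I expect this to be the main obstacle. To handle it, I would first try to show that the minimum of two admissible kernels still dominates, by splitting the integration set according to which entry is smaller. If that fails, I would take the correct statement to be the one in~\cite{Kal24} and invoke it there. Summing over $j$ and bounding each $\hat{l}_{j}$ by $\max_{j}\hat{l}_{j}$ gives $N\max_{j}\hat{l}_{j} = l$. Hence
\begin{equation*}
x_{n}(t)\leq v(t) + \bigg(\int_{[0,t]}l(t,s)^{\beta}x_{n-1}(s)^{\beta}\,\mu(\mathrm{d}s)\bigg)^{\frac{1}{\beta}}\quad\text{for }t\in J.
\end{equation*}

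The third step iterates this inequality $n$ times. At each stage we apply Minkowski's inequality in $L^{\beta}(\mu)$ with $\beta\geq1$, and Fubini's theorem together with the recursion~\eqref{eq:resolvent sequence} collapses the nested integrals into iterated kernels. More precisely, we prove by induction on $i$ that
\begin{equation*}
\bigg(\int_{[0,t]} l^{\beta}(t,s)\bigg(\int_{[0,s]}\R_{l^{\beta},\mu,i}(s,r)f(r)^{\beta}\,\mu(\mathrm{d}r)\bigg)\,\mu(\mathrm{d}s)\bigg)^{\frac{1}{\beta}} = \bigg(\int_{[0,t]}\R_{l^{\beta},\mu,i+1}(t,r)f(r)^{\beta}\,\mu(\mathrm{d}r)\bigg)^{\frac{1}{\beta}}.
\end{equation*}
This produces the terms with $v$ for $i=1,\dots,n-1$ and the final term with $x_{0}$, which is exactly~\eqref{eq:resolvent sequence inequality for processes}. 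The null sets arising from $J$ are harmless because all integrals are taken with respect to $\mu$.

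For the equality case $N=\beta=p=1$, every Minkowski and Hölder step becomes an identity: with exponent one, expectation commutes with the $\mu$-integral by Tonelli's theorem, and sums split linearly. Moreover $l=\hat{l}_{1}=l_{1}$, so if~\eqref{eq:condition 1 for the resolvent sequence inequality} and~\eqref{eq:condition 2 for the resolvent sequence inequality} hold with equality, then every inequality in the chain above is an equation.
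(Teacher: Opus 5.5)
Your route is the paper's: Minkowski's integral inequality, then Jensen/Hölder to bring every $l_j$-term to the exponent $\beta$, then summation into $l=N\max_j\hat l_j$, then the deterministic iteration. The paper imports that last step from \cite[Proposition~1.4]{Kal24}; you carry it out correctly by hand, via Minkowski in $L^\beta$, Fubini and the recursion for $\R_{l^\beta,\mu,i}$. The equality case is also fine, since there $\beta_1=\beta$ and no minimum occurs.

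The gap is exactly the step you flagged, and neither of your remedies closes it. Each of your two Hölder bounds holds with its own kernel. However, the kernel obtained by taking their minimum pointwise in $s$ does not dominate. Splitting $[0,t]$ into the set $A$ where the first entry is smaller and its complement does give $(\int_A l_j^{\beta_j}x^{\beta_j}\,\mathrm{d}\mu)^{1/\beta_j}\le(\int_A\hat l_j^{\beta}x^{\beta}\,\mathrm{d}\mu)^{1/\beta}$, and the analogous bound on $A^c$. But recombining the two pieces costs a factor $2^{1/\beta_j-1/\beta}>1$. Deferring to \cite{Kal24} does not help either, since that result only enters after the reduction to the kernel $l$.

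In fact no argument can close this step as written, and the proposition fails. Take:
\begin{itemize}
\item $p=\beta=2$, $N=2$, $\beta_1=1$, $\beta_2=2$, $Y=0$, $k_1=k_2=0$, $J=I$;
\item $\mu=\delta_{s_1}+\delta_{s_2}$ with $s_1<s_2\le t$;
\item $l_1(\cdot,s_1)=2$ and $l_1=1$ elsewhere, $x(s_1)=2$ and $x=1$ elsewhere.
\end{itemize}
Then $\hat l_1(t,s_1)^2=\min\{6,5\}=5$ and $\hat l_1(t,s_2)^2=\min\{3,5\}=3$. Hence $(\int l_1(t,\cdot)x\,\mathrm{d}\mu)^2=25>23=\int\hat l_1(t,\cdot)^2x^2\,\mathrm{d}\mu$.

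To turn this into a counterexample to the proposition, put $l_2:=\hat l_1$, so that $l=2\hat l_1$. Take the deterministic data $X^{(0)}=x$, $K^{(1,j)}_{r,s}:=l_j(r,s)x(s)$, $X^{(1)}_r:=\sum_{j=1}^{2}(\int_{[0,r]}(K^{(1,j)}_{r,s})^{\beta_j}\,\mu(\mathrm{d}s))^{1/\beta_j}$ for $r\in I$, and $X^{(n)}=K^{(n,j)}=0$ for $n\ge2$. Both hypotheses hold, with equality for $n=1$. Yet $X^{(1)}_t=5+\sqrt{23}$, whereas the claimed bound for $n=1$ is $(\int_{[0,t]}l(t,s)^2x(s)^2\,\mu(\mathrm{d}s))^{1/2}=2\sqrt{23}<5+\sqrt{23}$.

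The paper's proof passes over this point with ``the inequalities of Jensen and Hölder'' and has the same defect, so you were right to be suspicious. What is true is the statement with a corrected $\hat l_j$, and your proof then establishes it verbatim. Either define $\hat l_j$ by one entry of the minimum only (each Hölder bound holds on its own), or keep the minimum and enlarge $\hat l_j$ by the factor $2^{1/\beta_j-1/\beta}$ that your splitting argument produces.
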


\begin{proof}
We first apply Minkowski's integral inequality and then we use the inequalities of Jensen and Hölder to estimate that
\begin{align*}
\E\bigg[\bigg(\int_{[0,t]}\big(K_{t,s}^{(n,j)}\big)^{\beta_{j}}\,\mu(\mathrm{d}s)\bigg)^{\frac{p}{\beta_{j}}}\bigg]^{\frac{1}{p}} &\leq \bigg(\int_{[0,t]}\E\big[\big(K_{t,s}^{(n,j)}\big)^{p}\big]^{\frac{\beta_{j}}{p}}\,\mu(\mathrm{d}s)\bigg)^{\frac{1}{\beta_{j}}}\\\nonumber
&\leq v_{j}(t) + \bigg(\int_{[0,t]}\hat{l}_{j}(t,s)^{\beta}\E\big[\big(X_{s}^{(n-1)}\big)^{p}\big]^{\frac{\beta}{p}}\,\mu(\mathrm{d}s)\bigg)^{\frac{1}{\beta}}
\end{align*}
for any $n\in\N$, $j=1,\dots,N$ and $t\in J$, since $\beta_{j}\geq 1$ whenever $k_{j}\neq 0$. Hence, we obtain that
\begin{equation}\label{eq:implied condition 1 for the resolvent sequence inequality}
\E\big[\big(X_{t}^{(n)}\big)^{p}\big]^{\frac{1}{p}} \leq v(t) +  \bigg(\int_{[0,t]}l(t,s)^{\beta}\E\big[\big(X_{s}^{(n-1)}\big)^{p}\big]^{\frac{\beta}{p}}\,\mu(\mathrm{d}s)\bigg)^{\frac{1}{\beta}}
\end{equation}
for all $n\in\N$ and $t\in J$. Thus, the asserted inequality and statement follow directly from the $L^{\beta}$-resolvent sequence inequality in~\cite[Proposition~1.4]{Kal24}.
\end{proof}

\begin{Remark}\label{re:specific resolvent sequence inequality}
If for given $n\in\N$, $j=1,\dots,N$ and $t\in J$ we have $\beta_{j} = p$, $k_{j} = 0$ and $\E[(K_{t,s}^{(n,j)})^{p}] = l_{j}(t,s)^{p}\E[(X_{s}^{(n-1)})^{p}]$ for all $s\in J$ with $s\leq t$, then Fubini's theorem gives
\begin{equation*}
\E\bigg[\bigg(\int_{[0,t]}\big(K_{t,s}^{(n,j)}\big)^{\beta_{j}}\,\mu(\mathrm{d}s)\bigg)^{\frac{p}{\beta_{j}}}\bigg]^{\frac{1}{p}} = \bigg(\int_{[0,t]}l_{j}(t,s)^{\beta_{j}}\E\big[\big(X_{s}^{(n-1)}\big)^{p}\big]^{\frac{\beta_{j}}{p}}\,\mu(\mathrm{d}s)\bigg)^{\frac{1}{\beta_{j}}}.
\end{equation*}
\end{Remark}

Under the hypotheses of Proposition~\ref{pr:resolvent sequence inequality for processes}, we observe that if the initial process $X^{(0)}$ satisfies the condition
\begin{equation}\label{eq:condition 2 for the resolvent inequality}
\lim_{n\uparrow\infty} \int_{[0,t]}\R_{l^{\beta},\mu,n}(t,s)\E\big[\big(X_{s}^{(0)}\big)^{p}\big]^{\frac{\beta}{p}}\,\mu(\mathrm{d}s) = 0
\end{equation}
for any given $t\in J$, then it immediately follows from~\eqref{eq:resolvent sequence inequality for processes} that
\begin{equation*}
\limsup_{n\uparrow\infty} \E\big[\big(X_{t}^{(n)}\big)^{p}\big]^{\frac{1}{p}} \leq v(t) + \sum_{n=1}^{\infty}\bigg(\int_{[0,t]}\R_{l^{\beta},\mu,n}(t,s)v(s)^{\beta}\,\mu(\mathrm{d}s)\bigg)^{\frac{1}{\beta}}.
\end{equation*}
In particular, for $\beta = 1$ the series is of the form $\int_{[0,t]}\R_{l,\mu}(t,s)v(s)\,\mu(\mathrm{d}s)$, by monotone convergence. This entails the following type of \emph{moment resolvent inequality}.

\begin{Corollary}\label{co:resolvent inequality for processes}
Let $X$ and $X^{(0)}$ be product measurable processes and $K^{(1)},\dots,K^{(N)}$ be product measurable functions on $I\times I\times\Omega$, each taking all its values in $[0,\infty]$, such that $\E[(X_{t}^{(0)})^{p}] \leq \E[X_{t}^{p}]$,
\begin{equation}\label{eq:condition 1 for the resolvent inequality}
\E\big[X_{t}^{p}\big]^{\frac{1}{p}} \leq \E\big[Y_{t}^{p}\big]^{\frac{1}{p}} + \sum_{j=1}^{N}\E\bigg[\bigg(\int_{[0,t]}\big(K_{t,s}^{(j)}\big)^{\beta_{j}}\,\mu(\mathrm{d}s)\bigg)^{\frac{p}{\beta_{j}}}\bigg]^{\frac{1}{p}} 
\end{equation}
and~\eqref{eq:condition 2 for the resolvent inequality} holds for any $t\in J$. Further, assume that
\begin{equation}\label{eq:condition 3 for the resolvent inequality}
\E\big[\big(K_{t,s}^{(j)}\big)^{p}\big]^{\frac{1}{p}} \leq k_{j}(t,s) + l_{j}(t,s)\E\big[\big(X_{s}^{(0)}\big)^{p}\big]^{\frac{1}{p}}
\end{equation}
for all $j=1,\dots,N$ and $s,t\in J$ with $s\leq t$. Then 
\begin{align*}
\E\big[X_{t}^{p}\big]^{\frac{1}{p}} & \leq v(t) + \sum_{n=1}^{\infty}\bigg(\int_{[0,t]}\R_{l^{\beta},\mu,n}(t,s)v(s)^{\beta}\,\mu(\mathrm{d}s)\bigg)^{\frac{1}{\beta}}
\end{align*}
for any $t\in J$, and equality holds if $N = \beta = p = 1$ and the estimates~\eqref{eq:condition 1 for the resolvent inequality} and~\eqref{eq:condition 3 for the resolvent inequality} are identities.
\end{Corollary}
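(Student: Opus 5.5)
The plan is to reduce Corollary~\ref{co:resolvent inequality for processes} to Proposition~\ref{pr:resolvent sequence inequality for processes} by taking a constant sequence. Set $X^{(n)} := X$ for all $n\in\N$ and keep the given $X^{(0)}$. Set $K^{(n,j)} := K^{(j)}$ for all $n\in\N$ and $j\in\{1,\dots,N\}$.

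Condition~\eqref{eq:condition 1 for the resolvent sequence inequality} is then exactly~\eqref{eq:condition 1 for the resolvent inequality}. For~\eqref{eq:condition 2 for the resolvent sequence inequality} there are two cases.
\begin{itemize}
\item For $n=1$ it is precisely~\eqref{eq:condition 3 for the resolvent inequality}.
\item For $n\geq 2$ the right-hand side involves $\E[(X_{s}^{(n-1)})^{p}] = \E[X_{s}^{p}]$. It therefore follows from~\eqref{eq:condition 3 for the resolvent inequality} together with $\E[(X_{s}^{(0)})^{p}]\leq\E[X_{s}^{p}]$ and $l_{j}\geq 0$.
\end{itemize}
This is the only place where the hypothesis comparing $X^{(0)}$ with $X$ is used.

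Proposition~\ref{pr:resolvent sequence inequality for processes} then gives, for every $n\in\N$ and $t\in J$,
\begin{equation*}
\E\big[X_{t}^{p}\big]^{\frac{1}{p}} \leq v(t) + \sum_{i=1}^{n-1}\bigg(\int_{[0,t]}\R_{l^{\beta},\mu,i}(t,s)v(s)^{\beta}\,\mu(\mathrm{d}s)\bigg)^{\frac{1}{\beta}} + \bigg(\int_{[0,t]}\R_{l^{\beta},\mu,n}(t,s)\E\big[\big(X_{s}^{(0)}\big)^{p}\big]^{\frac{\beta}{p}}\,\mu(\mathrm{d}s)\bigg)^{\frac{1}{\beta}}.
\end{equation*}
Letting $n\uparrow\infty$, the last term vanishes by~\eqref{eq:condition 2 for the resolvent inequality}. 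The partial sums are non-decreasing and bounded by the full series, which yields the asserted bound. This is the observation already made before the corollary.

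The equality case for $N=\beta=p=1$ is the delicate step. When~\eqref{eq:condition 1 for the resolvent inequality} and~\eqref{eq:condition 3 for the resolvent inequality} are identities, the comparison used for $n\geq 2$ above is no longer an identity. I would therefore avoid the sequence proposition and argue directly.
\begin{itemize}
\item With $N=p=\beta=1$ and Fubini (as in Remark~\ref{re:specific resolvent sequence inequality}), the two identities give $m(t) = v(t) + \int_{[0,t]}l(t,s)m_{0}(s)\,\mu(\mathrm{d}s)$ for $m := \E[X]$ and $m_{0} := \E[X^{(0)}]$.
\item Here $l = l_{1}$ and $v = \E[Y] + \int k_{1}(t,s)\,\mu(\mathrm{d}s)$.
\end{itemize}
Equality then reduces to $m = v + \int \R_{l,\mu}(t,s)v(s)\,\mu(\mathrm{d}s)$, which in turn reduces to $\int l\,m_{0}\,\mathrm{d}\mu = \int \R_{l,\mu}v\,\mathrm{d}\mu$. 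That is only consistent if $m_{0}$ itself satisfies the resolvent relation.

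So for the identity I would invoke the equality statement of Proposition~\ref{pr:resolvent sequence inequality for processes}, or the underlying $L^{1}$ resolvent identity of~\cite[Proposition~1.4]{Kal24}, by iterating the linear equation. Iterating $m = v + \int l\,m\,\mathrm{d}\mu$ $n$ times gives equality in the displayed bound with remainder $\int \R_{l,\mu,n}(t,s)\,m(s)\,\mu(\mathrm{d}s)$. This iteration requires the relation to hold with $m$ on both sides. I expect to need to read the equality claim with $X^{(0)}=X$, or under equality in $\E[(X^{(0)})^{p}]\leq\E[X^{p}]$, so that~\eqref{eq:condition 2 for the resolvent inequality} kills the remainder. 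Pinning down this interpretation is the main obstacle; the inequality part itself is routine.
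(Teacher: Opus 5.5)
Your proof of the inequality is the paper's own second argument. You take $X^{(n)}:=X$ for $n\in\N$ in Proposition~\ref{pr:resolvent sequence inequality for processes}. You use $\E[(X_{s}^{(0)})^{p}]\leq\E[X_{s}^{p}]$ to obtain~\eqref{eq:condition 2 for the resolvent sequence inequality} for $n\geq 2$. You then let $n\uparrow\infty$, using~\eqref{eq:condition 2 for the resolvent inequality} and the monotonicity of the partial sums. The paper's first route, a direct appeal to \cite[Corollary~1.5]{Kal24}, amounts to the same thing.

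Your doubt about the equality clause is justified, and the paper's proof does not resolve it. Invoking the equality part of Proposition~\ref{pr:resolvent sequence inequality for processes} (or of \cite[Corollary~1.5]{Kal24}) with $X^{(n)}=X$ requires~\eqref{eq:condition 2 for the resolvent sequence inequality} to be an identity also for $n\geq 2$, i.e.\ $l_{1}(t,s)\E[X_{s}^{(0)}]=l_{1}(t,s)\E[X_{s}]$. The hypotheses do not provide this.

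As literally stated, the clause is false. Take $I=J=[0,1]$, $\mu$ Lebesgue, $N=\beta=p=1$, $Y=0$, $X^{(0)}=0$, $X_{t}=t$ and $K^{(1)}=k_{1}=l_{1}=1$. Then~\eqref{eq:condition 1 for the resolvent inequality} and~\eqref{eq:condition 3 for the resolvent inequality} are identities, and all other hypotheses hold. One has $v(t)=t$ and $\R_{l,\mu}(t,s)=e^{t-s}$, so the right-hand side equals $t+\int_{0}^{t}e^{t-s}s\,\mathrm{d}s=e^{t}-1$, which exceeds $\E[X_{t}]=t$ for $t>0$.

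So the equality statement needs the extra assumption $\E[X_{s}^{(0)}]=\E[X_{s}]$ for $s\in J$, for instance $X^{(0)}=X$. Under it you do not need your separate iteration. All conditions of Proposition~\ref{pr:resolvent sequence inequality for processes} become identities, so its equality clause gives equality for each $n$. Letting $n\uparrow\infty$ as in the inequality part then yields the claimed identity.
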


\begin{proof}
By Minkowski's integral inequality and the inequalities of Jensen and Hölder, the estimate~\eqref{eq:implied condition 1 for the resolvent sequence inequality} holds in the case that $X^{(n)} = X$ for each $n\in\N$. Hence, the assertions are consequences of the $L^{\beta}$-resolvent inequality in~\cite[Corollary~1.5]{Kal24}.

Alternatively, we may apply Proposition~\ref{pr:resolvent sequence inequality for processes} when $X^{(n)} = X$ for all $n\in\N$ and note that the sequence $(\sum_{i=1}^{n-1}(\int_{[0,t]}\R_{l^{\beta},\mu,i}(t,s)v(s)^{\beta}\,\mu(\mathrm{d}s))^{1/\beta})_{n\in\N}$ converges to its supremum for all $t\in I$, which is the same procedure as in the proof of Corollary~1.5 in~\cite{Kal24}.
\end{proof}

For the subsequent \emph{sharp integral estimates} we let $N = 1$ and set $\beta := \beta_{1}$, $k := k_{1}$ and $l : = l_{1}$. In addition, for each $n\in\N$ we define a non-negative kernel $l_{\mu,n,\beta,p}$ on $I$ by
\begin{equation}\label{eq:transformed iterated kernel}
l_{\mu,n,\beta,p}(t,s) :=  \int_{[s,t]}\bigg(\int_{[0,\tilde{s}]}\R_{l^{\beta},\mu,n}(\tilde{s},r)\,\mu(\mathrm{d}r)\bigg)^{\frac{p}{\beta} - 1}\R_{l^{\beta},\mu,n}(\tilde{s},s)\,\mu(\mathrm{d}\tilde{s}),
\end{equation}
if $\beta < p$, and $l_{\mu,n,\beta,p}(t,s) :=  \int_{[s,t]}\R_{l^{\beta},\mu,n}(\tilde{s},s)\,\mu(\mathrm{d}\tilde{s})$, if $\beta = p$.

\begin{Corollary}\label{co:integral resolvent sequence inequality for processes}
Suppose that $(X^{(n)})_{n\in\N_{0}}$ is a sequence of product measurable processes with values in $[0,\infty]$ such that
\begin{equation}\label{eq:condition for the integral resolvent sequence inequality}
\E\big[\big(X_{t}^{(n)}\big)^{p}\big]^{\frac{1}{p}} \leq v(t) + \bigg(\int_{[0,t]}l(t,s)^{\beta}\E\big[\big(X_{s}^{(n-1)}\big)^{p}\big]^{\frac{\beta}{p}}\,\mu(\mathrm{d}s)\bigg)^{\frac{1}{\beta}}
\end{equation}
for all $n\in\N$ and $t\in J$. Then
\begin{align*}
\bigg(\int_{[0,t]}\E\big[\big(X_{s}^{(n)}\big)^{p}\big]\,\mu(\mathrm{d}s)\bigg)^{\frac{1}{p}} &\leq \bigg(\int_{[0,t]}v(s)^{p}\,\mu(\mathrm{d}s)\bigg)^{\frac{1}{p}} + \sum_{i=1}^{n-1}\bigg(\int_{[0,t]}l_{\mu,i,\beta,p}(t,s)v(s)^{p}\,\mu(\mathrm{d}s)\bigg)^{\frac{1}{p}}\\
&\quad + \bigg(\int_{[0,t]}l_{\mu,n,\beta,p}(t,s)\E\big[\big(X_{s}^{(0)}\big)^{p}\big]\,\mu(\mathrm{d}s)\bigg)^{\frac{1}{p}}
\end{align*}
for any $n\in\N$ and $t\in J$, and equality holds if $\beta = p = 1$ and the inequality~\eqref{eq:condition for the integral resolvent sequence inequality} is an equation.
\end{Corollary}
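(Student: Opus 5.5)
Set $a_{n}(t) := \E[(X_{t}^{(n)})^{p}]^{1/p}$. The plan is to first use Proposition~\ref{pr:resolvent sequence inequality for processes} pointwise in $t$, and then integrate the resulting bound in $t$, using Minkowski's inequality in $L^{p}(\mu)$ together with a Hölder/Jensen step. That step should turn each $\R_{l^{\beta},\mu,i}$-term into the kernel $l_{\mu,i,\beta,p}$.

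\emph{Step 1.} Condition~\eqref{eq:condition for the integral resolvent sequence inequality} is exactly the estimate~\eqref{eq:implied condition 1 for the resolvent sequence inequality} with $N=1$. So the $L^{\beta}$-resolvent sequence inequality of~\cite[Proposition~1.4]{Kal24}, as used at the end of the proof of Proposition~\ref{pr:resolvent sequence inequality for processes}, gives
\begin{equation*}
a_{n}(\tilde{s}) \leq v(\tilde{s}) + \sum_{i=1}^{n-1}\bigg(\int_{[0,\tilde{s}]}\R_{l^{\beta},\mu,i}(\tilde{s},r)v(r)^{\beta}\,\mu(\mathrm{d}r)\bigg)^{\frac{1}{\beta}} + \bigg(\int_{[0,\tilde{s}]}\R_{l^{\beta},\mu,n}(\tilde{s},r)a_{0}(r)^{\beta}\,\mu(\mathrm{d}r)\bigg)^{\frac{1}{\beta}}
\end{equation*}
for all $\tilde{s}\in J$. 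Because $J$ has full $\mu$-measure, we may take the $L^{p}(\mu|_{[0,t]})$-norm in $\tilde{s}$ and apply Minkowski's inequality. This splits the bound into the norm of $v$ and the norms of the individual iterated terms.

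\emph{Step 2.} This is the core computation. For a fixed kernel $R := \R_{l^{\beta},\mu,i}$ and a function $g\geq 0$ (either $v$ or $a_{0}$), we need
\begin{equation*}
\int_{[0,t]}\bigg(\int_{[0,\tilde{s}]}R(\tilde{s},r)g(r)^{\beta}\,\mu(\mathrm{d}r)\bigg)^{\frac{p}{\beta}}\mu(\mathrm{d}\tilde{s}) \leq \int_{[0,t]}l_{\mu,i,\beta,p}(t,r)g(r)^{p}\,\mu(\mathrm{d}r).
\end{equation*}
If $\beta=p$, this is an equality by Fubini's theorem. If $\beta<p$, set $q:=p/\beta>1$ and apply Jensen's (equivalently Hölder's) inequality with respect to the finite measure $R(\tilde{s},r)\,\mu(\mathrm{d}r)$ on $[0,\tilde{s}]$, whose total mass is $m(\tilde{s}):=\int_{[0,\tilde{s}]}R(\tilde{s},r)\,\mu(\mathrm{d}r)$. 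This gives
\begin{equation*}
\Big(\int R(\tilde{s},r)g(r)^{\beta}\,\mu(\mathrm{d}r)\Big)^{q} \leq m(\tilde{s})^{q-1}\int R(\tilde{s},r)g(r)^{p}\,\mu(\mathrm{d}r).
\end{equation*}
Integrating over $\tilde{s}\in[0,t]$ and exchanging the order of integration by Fubini's theorem then yields exactly $l_{\mu,i,\beta,p}(t,r)$ as defined in~\eqref{eq:transformed iterated kernel}. The cases $m(\tilde{s})\in\{0,\infty\}$ need the usual conventions, since the inequality is trivial when the right-hand side is infinite.

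\emph{Step 3.} For the equality case $\beta=p=1$ with~\eqref{eq:condition for the integral resolvent sequence inequality} an equation, the equality statement of Proposition~\ref{pr:resolvent sequence inequality for processes} (or of~\cite{Kal24}) turns Step 1 into an equality. Minkowski's inequality in $L^{1}$ for nonnegative functions is additive, and Step 2 is pure Fubini since $\beta=p$. Hence every inequality in the chain is an identity.

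I expect the main obstacle to be the bookkeeping in Step 2 rather than any conceptual difficulty: handling measurability of $m$ and of the iterated kernels, possibly infinite values, and ensuring that the exceptional $\mu$-null sets where Step 1 fails do not matter. The last point is covered because $J$ has full measure.
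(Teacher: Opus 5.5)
Your proposal is correct and follows essentially the same route as the paper's proof. The paper also applies the pointwise resolvent sequence inequality, then Minkowski's inequality in $L^{p}(\mu)$, and then Jensen's inequality with Fubini's theorem to turn the $\R_{l^{\beta},\mu,i}$-terms into $l_{\mu,i,\beta,p}$, observing that for $\beta = p = 1$ the Jensen step is unnecessary, so equality carries through.
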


\begin{proof}
Proposition~\ref{pr:resolvent sequence inequality for processes} and Minkowski's inequality yield that
\begin{align*}
\bigg(\int_{[0,t]}\E\big[\big(X_{s}^{(n)}\big)^{p}\big]\,\mu(\mathrm{d}s)\bigg)^{\frac{1}{p}} &\leq \bigg(\int_{[0,t]}v(s)^{p}\,\mu(\mathrm{d}s)\bigg)^{\frac{1}{p}}\\
&\quad + \sum_{i=1}^{n-1}\bigg(\int_{[0,t]}\bigg(\int_{[0,s]}\R_{l^{\beta},\mu,i}(s,r)v(r)^{\beta}\,\mu(\mathrm{d}r)\bigg)^{\frac{p}{\beta}}\,\mu(\mathrm{d}s)\bigg)^{\frac{1}{p}}\\
&\quad + \bigg(\int_{[0,t]}\bigg(\int_{0}^{s}\R_{l^{\beta},\mu,n}(s,r)\E\big[\big(X_{r}^{(0)}\big)^{p}\big]^{\frac{\beta}{p}}\,\mu(\mathrm{d}r)\bigg)^{\frac{p}{\beta}}\,\mu(\mathrm{d}s)\bigg)^{\frac{1}{p}}
\end{align*}
for all $n\in\N$ and $t\in J$, and equality holds if $\beta = p = 1$ and the estimate~\eqref{eq:condition for the integral resolvent sequence inequality} is an identity. Further, each measurable function $u:I\rightarrow [0,\infty]$ satisfies
\begin{equation}\label{eq:an application of the Jensen inequality}
\int_{[0,t]}\bigg(\int_{[0,s]}\R_{l^{\beta},\mu,i}(s,r)u(r)^{\beta}\,\mu(\mathrm{d}r)\bigg)^{\frac{p}{\beta}}\,\mu(\mathrm{d}s) \leq \int_{[0,t]}l_{\mu,i,\beta,p}(t,s)u(s)^{p}\,\mu(\mathrm{d}s)
\end{equation}
for any $i\in\N$ and $t\in I$, by Jensen's inequality and Fubini's theorem. As for $\beta = p = 1$ Jensen's inequality does not need to be applied, the whole claim follows.
\end{proof}

\begin{Corollary}\label{co:integral resolvent inequality for processes}
Assume that $X$ and $X^{(0)}$ are $[0,\infty]$-valued product measurable processes satisfying $\E[(X_{t}^{(0)})^{p}] \leq \E[X_{t}^{p}]$,
\begin{equation}\label{eq:condition for the integral resolvent inequality}
\E\big[X_{t}^{p}\big]^{\frac{1}{p}} \leq v(t) + \bigg(\int_{[0,t]}l(t,s)^{\beta}\E\big[\big(X_{s}^{(0)}\big)^{p}\big]^{\frac{\beta}{p}}\,\mu(\mathrm{d}s)\bigg)^{\frac{1}{\beta}}
\end{equation}
and $\lim_{n\uparrow\infty}\int_{[0,t]}l_{\mu,n,\beta,p}(t,s)\E[(X_{s}^{(0)})^{p}]\,\mu(\mathrm{d}s) = 0$ for any $t\in J$. Then
\begin{align*}
\bigg(\int_{[0,t]}\E\big[X_{s}^{p}\big]\,\mu(\mathrm{d}s)\bigg)^{\frac{1}{p}} &\leq \bigg(\int_{[0,t]}v(s)^{p}\,\mu(\mathrm{d}s)\bigg)^{\frac{1}{p}} + \sum_{n=1}^{\infty}\bigg(\int_{[0,t]}l_{\mu,n,\beta,p}(t,s)v(s)^{p}\,\mu(\mathrm{d}s)\bigg)^{\frac{1}{p}}
\end{align*}
for every $t\in J$, and equality holds if $\beta = p = 1$ and the inequality symbol in~\eqref{eq:condition for the integral resolvent inequality} is replaced by an equal sign.
\end{Corollary}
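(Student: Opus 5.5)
The plan is to mimic the proof of Corollary~\ref{co:resolvent inequality for processes}, now using Corollary~\ref{co:integral resolvent sequence inequality for processes} as the basic tool. Set $X^{(n)} := X$ for every $n\in\N$ and keep the given $X^{(0)}$. Then the hypothesis~\eqref{eq:condition for the integral resolvent sequence inequality} must hold for all $n\in\N$ and $t\in J$.

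For $n\geq 2$ the right-hand side of~\eqref{eq:condition for the integral resolvent sequence inequality} contains $\E[(X_{s}^{(n-1)})^{p}] = \E[X_{s}^{p}]$. This dominates $\E[(X_{s}^{(0)})^{p}]$ by assumption, so the bound~\eqref{eq:condition for the integral resolvent inequality} can only be enlarged when $X^{(0)}$ is replaced by $X$. For $n = 1$ it is exactly~\eqref{eq:condition for the integral resolvent inequality}. Thus Corollary~\ref{co:integral resolvent sequence inequality for processes} applies and gives, for all $n\in\N$ and $t\in J$,
\begin{equation*}
\bigg(\int_{[0,t]}\E\big[X_{s}^{p}\big]\,\mu(\mathrm{d}s)\bigg)^{\frac{1}{p}} \leq \bigg(\int_{[0,t]}v(s)^{p}\,\mu(\mathrm{d}s)\bigg)^{\frac{1}{p}} + \sum_{i=1}^{n-1}\bigg(\int_{[0,t]}l_{\mu,i,\beta,p}(t,s)v(s)^{p}\,\mu(\mathrm{d}s)\bigg)^{\frac{1}{p}} + \bigg(\int_{[0,t]}l_{\mu,n,\beta,p}(t,s)\E\big[\big(X_{s}^{(0)}\big)^{p}\big]\,\mu(\mathrm{d}s)\bigg)^{\frac{1}{p}}.
\end{equation*}

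Next let $n\uparrow\infty$. The last term vanishes by the assumed limit condition. The partial sums of the series increase to their supremum, so the right-hand side converges to the claimed bound. The left-hand side does not depend on $n$, which gives the inequality.

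For the equality case, take $\beta = p = 1$ and~\eqref{eq:condition for the integral resolvent inequality} an identity. The only inequality used was the replacement of $\E[(X_{s}^{(0)})^{p}]$ by $\E[X_{s}^{p}]$ for $n\geq 2$, which is not an identity in general, so the reduction above does not give equality directly. I would instead iterate the exact equation, in the spirit of the first route in the proof of Corollary~\ref{co:resolvent inequality for processes}. For $\beta = p = 1$ the identity reads $\E[X_{t}] = v(t) + \int_{[0,t]}l(t,s)\E[X_{s}^{(0)}]\,\mu(\mathrm{d}s)$. One then needs the iterated kernels to reproduce the equality chain, as in the equality statement of~\cite[Corollary~1.5]{Kal24}, and to integrate in $t$. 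By Fubini, $\int_{[0,t]}\int_{[0,s]}\R_{l,\mu,i}(s,r)u(r)\,\mu(\mathrm{d}r)\,\mu(\mathrm{d}s) = \int_{[0,t]}l_{\mu,i,1,1}(t,s)u(s)\,\mu(\mathrm{d}s)$ holds with equality, as already noted for~\eqref{eq:an application of the Jensen inequality}.

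The main obstacle is this equality case. I expect it to require reading the condition so that the iteration is exact, in particular identifying $X^{(0)}$ appropriately. Apart from that, the argument is routine monotone convergence.
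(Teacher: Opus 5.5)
Your proof of the inequality is the paper's proof. You set $X^{(n)}:=X$ for $n\in\N$, verify \eqref{eq:condition for the integral resolvent sequence inequality}, apply Corollary~\ref{co:integral resolvent sequence inequality for processes}, and let $n\uparrow\infty$ using the vanishing remainder and the monotone partial sums. For $n\geq 2$ the verification uses $\E[(X_{s}^{(0)})^{p}]\leq\E[X_{s}^{p}]$, which is only needed for $\mu$-a.e.\ $s$ and so holds because $J$ has full measure.

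\textbf{The equality clause.} Your diagnosis is right, and the step you leave open cannot be closed, because the clause is false as stated.

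\emph{Where the paper's argument breaks.} The paper's one-line proof tacitly relies on the equality part of Corollary~\ref{co:integral resolvent sequence inequality for processes}. That part requires \eqref{eq:condition for the integral resolvent sequence inequality} to be an identity for every $n$. With $X^{(n)}=X$ and $\beta=p=1$, this needs $\int_{[0,t]}l(t,s)\E[X_{s}^{(0)}]\,\mu(\mathrm{d}s)=\int_{[0,t]}l(t,s)\E[X_{s}]\,\mu(\mathrm{d}s)$, and the hypotheses do not give it. Nor can the exact equation be iterated directly: it expresses $\E[X]$ through $\E[X^{(0)}]$, about which only an upper bound is known.

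\emph{Counterexample.} Take the following data:
\begin{itemize}
\item $I=J=[0,1]$, $\mu$ the Lebesgue measure, $\beta=p=1$;
\item $k=0$ and $Y\equiv 1$, so $v\equiv 1$;
\item $l\equiv 1$, $X\equiv 1$ and $X^{(0)}\equiv 0$.
\end{itemize}
All hypotheses hold, and \eqref{eq:condition for the integral resolvent inequality} becomes the identity $1=1+0$. However, $\R_{l,\mu,n}(t,s)=(t-s)^{n-1}/(n-1)!$ gives $l_{\mu,n,1,1}(t,s)=(t-s)^{n}/n!$. The asserted equality would then read $t=t+\sum_{n=1}^{\infty}t^{n+1}/(n+1)!=e^{t}-1$, which fails for every $t\in\,]0,1]$.

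The same data with $K^{(1)}=0$ show that the equality clause of Corollary~\ref{co:resolvent inequality for processes} is defective in the same way. So following that proof's route does not help either.

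\emph{What is true.} The clause holds under the extra hypothesis $\E[X_{s}^{(0)}]=\E[X_{s}]$ for $\mu$-a.e.\ $s\in[0,t]$, for instance $X^{(0)}=X$, and your reduction then suffices. Condition \eqref{eq:condition for the integral resolvent sequence inequality} becomes an identity for every $n$. The equality part of Corollary~\ref{co:integral resolvent sequence inequality for processes} gives equality for each $n$, and letting $n\uparrow\infty$ yields the claim.
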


\begin{proof}
We invoke Corollary~\ref{co:integral resolvent sequence inequality for processes} in the case that $X^{(n)} = X$ for each $n\in\N$ and utilise that the sequence $(\sum_{i=1}^{n-1}(\int_{[0,t]}l_{\mu,i,\beta,p}(t,s)v(s)^{p}\,\mu(\mathrm{d}s))^{1/p})_{n\in\N}$ converges to its supremum for any $t\in I$.
\end{proof}

\subsection{Sharp moment estimates for stochastic Volterra processes}\label{se:3.4}

In this section, we suppose that $\B$ and $\Sigma$ are two $\mathcal{B}(I)\otimes\mathcal{A}$-measurable maps on $I\times I\times\Omega$ with respective values in $E$ and $\mathcal{L}_{2}(\ell^{2},E)$. Then $N_{t}:=\{\int_{0}^{t}|\B_{t,s}| + |\Sigma_{t,s}|_{2}^{2}\,\mathrm{d}s = \infty\}$ is an event in $\mathcal{F}_{t}$ for any $t\in I$, due to Proposition~\ref{pr:progressively measurable integral processes} and Corollary~\ref{co:approximation of product measurable processes}.

If $N_{t}$ is null for a.e.~$t\in I$, then by a \emph{stochastic Volterra process} with coefficients $\xi$, $\B$ and $\Sigma$ we shall mean an $E$-valued $\mathbb{F}$-progressively measurable process $X$ such that
\begin{equation}\label{eq:Volterra process}
X_{t} = \xi_{t} + \int_{0}^{t}\B_{t,s}\,\mathrm{d}s + \int_{0}^{t}\Sigma_{t,s}\,\mathrm{d}W_{s}\quad\text{a.s.}
\end{equation}
for a.e.~$t\in I$. In this case, the set $I_{X}$ of all $t\in I$ satisfying $\P(N_{t}) = 0$ and~\eqref{eq:Volterra process} is Borel and has full measure. Further, from Proposition~\ref{pr:resolvent sequence inequality for processes} and Corollary~\ref{co:resolvent inequality for processes} we obtain \emph{$L^{p}$-estimates}.

\begin{Proposition}\label{pr:moment estimates for stochastic Volterra processes}
Let $X$ be an $E$-valued product measurable process and $k_{1},k_{2},l_{1},l_{2}$ be non-negative kernels on $I$ such that
\begin{equation}\label{eq:abstract affine growth condition}
\E\big[|\B_{t,s}|^{p}\big]^{\frac{1}{p}}\mathbbm{1}_{\{1\}}(i) + \E\big[|\Sigma_{t,s}|_{2}^{p}\big]^{\frac{1}{p}}\mathbbm{1}_{\{2\}}(i) \leq k_{i}(t,s) + l_{i}(t,s)\E\big[|X_{s}|^{p}\big]^{\frac{1}{p}}
\end{equation}
for all $i\in\{1,2\}$ and $s,t\in I$ with $s < t$, and define $k_{0}:I\rightarrow [0,\infty]$ and a non-negative kernel $l$ on $I$ by~\eqref{eq:special integral function} and~\eqref{eq:special transformed kernel}. Then the following two assertions hold:
\begin{enumerate}[(i)]
\item For any $t\in I$ such that $k_{0}(t) < \infty$ and $l(t,\cdot)\E[|X|^{p}]^{1/p}$ is square-integrable, the event $N_{t}$ is null and 
\begin{align*}
\E\bigg[\bigg|\int_{0}^{t}\B_{t,s}\,\mathrm{d}s + \int_{0}^{t}\Sigma_{t,s}\,\mathrm{d}W_{s}\bigg|^{p}\bigg]^{\frac{1}{p}} & \leq \E\bigg[\bigg(\int_{0}^{t}|\B_{t,s}|\,\mathrm{d}s\bigg)^{p}\bigg]^{\frac{1}{p}} + w_{p}\E\bigg[\bigg(\int_{0}^{t}|\Sigma_{t,s}|_{2}^{2}\,\mathrm{d}s\bigg)^{\frac{p}{2}}\bigg]^{\frac{1}{p}}\\
&\leq k_{0}(t) + \bigg(\int_{0}^{t}l(t,s)^{2}\E\big[|X_{s}|^{p}\big]^{\frac{2}{p}}\,\mathrm{d}s\bigg)^{\frac{1}{2}}.
\end{align*}

\item If $N_{t}$ is null for a.e.~$t\in I$ and $X$ is a Volterra process with coefficients $\xi$, $\B$ and $\Sigma$, then~\eqref{eq:growth estimate for solutions 1} holds for a.e.~$t\in I$ as soon as
\begin{equation}\label{eq:condition for a moment estimate for stochastic Volterra processes}
\lim_{n\uparrow\infty}\int_{0}^{t}\R_{l^{2},n}(t,s)\E\big[|X_{s} - \xi_{s}|^{p}\big]^{\frac{2}{p}}\,\mathrm{d}s = 0\quad\text{for a.e.~$t\in I$.}
\end{equation}
\end{enumerate}
\end{Proposition}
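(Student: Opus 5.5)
For~(i) I would fix $t\in I$ with $k_{0}(t)<\infty$ and $l(t,\cdot)\E[|X|^{p}]^{1/p}\in L^{2}([0,t])$. The first inequality comes from Minkowski's inequality in $L^{p}(\Omega)$ together with the Burkholder--Davis--Gundy estimate~\eqref{eq:stochastic integral estimate}. I would apply~\eqref{eq:stochastic integral estimate} to the progressively measurable integrand $s\mapsto\Sigma_{t,s}\mathbbm{1}_{[0,t]}(s)$ with $t$ frozen; the map $(s,\omega)\mapsto\Sigma_{t,s}(\omega)$ is progressive by Remark~\ref{re:admissible maps}. Since the integrand might only be locally square-integrable on part of $\Omega$, I would first localise with stopping times $\tau_{m}:=\inf\{r:\int_{0}^{r}|\Sigma_{t,s}|_{2}^{2}\,\mathrm{d}s\ge m\}$ and pass to the limit with Fatou. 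This localisation is valid once $N_{t}$ is known to be null.

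For the second inequality I would use Proposition~\ref{pr:resolvent sequence inequality for processes}, or more simply its first step. Set $N=2$, $\mu$ equal to Lebesgue measure, $\beta_{1}=1$, $\beta_{2}=2$ (so $\beta=2$), $K^{(1)}_{t,s}=|\B_{t,s}|$, $K^{(2)}_{t,s}=w_{p}|\Sigma_{t,s}|_{2}$, with kernels $k_{1}$, $w_{p}k_{2}$, $l_{1}$, $w_{p}l_{2}$. Minkowski's integral inequality gives
\[
\E\bigg[\bigg(\int_{0}^{t}|\B_{t,s}|\,\mathrm{d}s\bigg)^{p}\bigg]^{\frac{1}{p}}\le\int_{0}^{t}k_{1}(t,s)\,\mathrm{d}s+\int_{0}^{t}l_{1}(t,s)\E\big[|X_{s}|^{p}\big]^{\frac{1}{p}}\,\mathrm{d}s .
\]
The last integral is bounded by $\big(\int_{0}^{t}\hat l_{1}(t,s)^{2}\E[|X_{s}|^{p}]^{2/p}\,\mathrm{d}s\big)^{1/2}$ through either of the two Cauchy--Schwarz splittings defining $\hat l_{1}$. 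The first splitting is $l_{1}=l_{1}^{1/2}l_{1}^{1/2}$; the second uses $\int l_{1}^{2}$. Taking the better of the two gives the minimum in~\eqref{eq:special transformed kernel}. The diffusion term is handled the same way with $p/2\ge1$. Summing the two contributions and using $a+b\le\sqrt2\,(a^{2}+b^{2})^{1/2}\le 2\max$ produces $k_{0}(t)+\big(\int_{0}^{t}l(t,s)^{2}\E[|X_{s}|^{p}]^{2/p}\,\mathrm{d}s\big)^{1/2}$, because $l=2\max\{\hat l_{1},w_{p}l_{2}\}$.

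The nullity of $N_{t}$ comes from the same bounds. The right-hand sides are finite, so $\int_{0}^{t}|\B_{t,s}|\,\mathrm{d}s$ and $\int_{0}^{t}|\Sigma_{t,s}|_{2}^{2}\,\mathrm{d}s$ are finite a.s. This must be established before the BDG step, so I would prove the moment bounds on these two integrals first and only then invoke~\eqref{eq:stochastic integral estimate}.

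For~(ii) I would apply Corollary~\ref{co:resolvent inequality for processes} with $N=1$, $\beta=p=2$ formally replaced by moments of order $p$, $J:=I_{X}$, $Y:=0$, and $X^{(0)}$ and $X$ there both equal to $|X-\xi|$. Write $X=(X-\xi)+\xi$ and $\E[|X_{s}|^{p}]^{1/p}\le\E[|X_{s}-\xi_{s}|^{p}]^{1/p}+\E[|\xi_{s}|^{p}]^{1/p}$. By~(i), for $t\in I_{X}$,
\[
\E\big[|X_{t}-\xi_{t}|^{p}\big]^{\frac1p}\le k_{0}(t)+\bigg(\int_{0}^{t}l(t,s)^{2}\E[|\xi_{s}|^{p}]^{\frac2p}\,\mathrm{d}s\bigg)^{\frac12}+\bigg(\int_{0}^{t}l(t,s)^{2}\E[|X_{s}-\xi_{s}|^{p}]^{\frac2p}\,\mathrm{d}s\bigg)^{\frac12}.
\]
This estimate needs some care when the integrals are infinite, but in that case the desired inequality is trivial. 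It has the form~\eqref{eq:implied condition 1 for the resolvent sequence inequality} with $v(t)=k_{0}(t)+\big(\int_{0}^{t}l^{2}\E[|\xi|^{p}]^{2/p}\big)^{1/2}$ and kernel $l$. Iterating via \cite[Proposition~1.4]{Kal24}, with the remainder vanishing by~\eqref{eq:condition for a moment estimate for stochastic Volterra processes}, gives the series bound.

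The remaining step is to split the series in $v$ into the two sums in~\eqref{eq:growth estimate for solutions 1}. Here I use $\big(\int\R_{l^{2},n}(a+b)^{2}\big)^{1/2}\le\big(\int\R_{l^{2},n}a^{2}\big)^{1/2}+\big(\int\R_{l^{2},n}b^{2}\big)^{1/2}$ together with the identity $\int_{s}^{t}\R_{l^{2},n}(t,r)\,l(r,s)^{2}\,\mathrm{d}r=\R_{l^{2},n+1}(t,s)$, a Fubini computation. The $\xi$-contribution $\big(\int l^{2}\E|\xi|^{2}\big)^{1/2}$ together with its iterates reindexes into $\sum_{n\ge1}\big(\int\R_{l^{2},n}\E[|\xi|^{p}]^{2/p}\big)^{1/2}$.

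I expect the main obstacle to be this bookkeeping. Both the Fubini reindexing and the order of proving nullity of $N_{t}$ before applying BDG in~(i) need to be handled carefully.
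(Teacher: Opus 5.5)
Your strategy is the paper's: Minkowski's integral inequality with the two Cauchy--Schwarz splittings for the drift, Minkowski in $L^{p/2}$ for the diffusion, finiteness of moments before \eqref{eq:stochastic integral estimate}, and for (ii) the one-step bound from (i), the $L^{2}$-resolvent iteration and the reindexing \eqref{eq:special property of the resolvent sequence}. However, the step ``taking the better of the two gives the minimum in \eqref{eq:special transformed kernel}'' is false.

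Put $a:=l_{1}(t,\cdot)$, $f:=\E[|X|^{p}]^{1/p}$, $h_{1}:=a\int_{0}^{t}a\,\mathrm{d}\tilde s$ and $h_{2}:=\int_{0}^{t}a^{2}\,\mathrm{d}\tilde s$. The two splittings only give $\int_{0}^{t}af\,\mathrm{d}s\le\min_{i=1,2}(\int_{0}^{t}h_{i}f^{2}\,\mathrm{d}s)^{1/2}$. But $\hat l_{1}^{2}=\min\{h_{1},h_{2}\}$ is a \emph{pointwise} minimum, and $\int_{0}^{t}\min\{h_{1},h_{2}\}f^{2}\,\mathrm{d}s$ is in general strictly smaller than $\min_{i}\int_{0}^{t}h_{i}f^{2}\,\mathrm{d}s$. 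For $f=a$ the $h_{2}$-bound is an equality, so $\int_{0}^{t}af\,\mathrm{d}s\le(\int_{0}^{t}\hat l_{1}^{2}f^{2}\,\mathrm{d}s)^{1/2}$ fails whenever $a$ is square-integrable and not a.e.\ constant on $\{a>0\}$.

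The factor $2$ in $l$ does not absorb this. Take $t=1$, $k_{1}=k_{2}=0$, $l_{1}(t,s):=f(s)$ with $f:=2\mathbbm{1}_{[0,1/2]}+\mathbbm{1}_{]1/2,\infty[}$, and $l_{2}:=\hat l_{1}/w_{p}$. Use the deterministic choices $X_{s}:=f(s)e$, $\B_{t,s}:=l_{1}(t,s)f(s)e$ and $\Sigma_{t,s}:=l_{2}(t,s)f(s)L_{0}$ with $|e|=|L_{0}|_{2}=1$, so that \eqref{eq:abstract affine growth condition} holds with equality and the hypotheses of (i) hold at $t=1$. Then $\hat l_{1}(1,\cdot)^{2}$ equals $5/2$ on $[0,1/2]$ and $3/2$ on $]1/2,1]$, and $l(1,\cdot)=2\hat l_{1}(1,\cdot)$. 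The middle term in (i) equals $5/2+\sqrt{23}/2$, whereas the right-hand side equals $\sqrt{23}$, which is smaller since $\sqrt{23}<5$.

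So the second inequality of (i) does not hold with $l$ as in \eqref{eq:special transformed kernel}. The paper's proof has the same slip, hidden in the appeal to ``the inequalities of Jensen and H\"older'' in the proof of Proposition~\ref{pr:resolvent sequence inequality for processes}. The defect lies in the constant rather than in your overall strategy, but you state the invalid inference explicitly, so it must be replaced.

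Here is the repair:
\begin{itemize}
\item Split $[0,t]$ into $S:=\{h_{1}\le h_{2}\}$ and $S^{c}$.
\item Apply the first splitting on $S$, using $\int_{S}a\le\int_{0}^{t}a$, and the second on $S^{c}$, using $\int_{S^{c}}a^{2}\le\int_{0}^{t}a^{2}$.
\item Add the two pieces with $x+y\le\sqrt{2}(x^{2}+y^{2})^{1/2}$ to get $\int_{0}^{t}af\,\mathrm{d}s\le\sqrt{2}\,(\int_{0}^{t}\hat l_{1}^{2}f^{2}\,\mathrm{d}s)^{1/2}$.
\end{itemize}
Hence (i) holds once the factor $2$ in \eqref{eq:special transformed kernel} is replaced by $1+\sqrt{2}$. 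With this $l$ the rest of your proposal, including (ii), goes through as written. Since $l$ only changes by the factor $(1+\sqrt{2})/2$, every cone-membership property of $l$ used later is preserved.

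Separately, your stopping-time localisation before \eqref{eq:stochastic integral estimate} is harmless but unnecessary. That estimate is already stated for all integrands with a.s.\ finite $\int_{0}^{\cdot}|U_{s}|_{2}^{2}\,\mathrm{d}s$.
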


\begin{proof}
(i) The second estimate follows from Proposition~\ref{pr:resolvent sequence inequality for processes} for the choice $N = 2$, $\beta_{1} = 1$ and $\beta_{2} = 2$. Namely, there we may take $Y = 0$, $X^{(0)} = |X|$ and $X_{t}^{(1)} = \E[(\int_{0}^{t}|\B_{t,s}|\,\mathrm{d}s)^{p}]^{1/p}$ $ +\, w_{p}\E[(\int_{0}^{t}|\Sigma_{t,s}|_{2}^{2}\,\mathrm{d}s)^{p/2}]^{1/p}$ for any $t\in I$.

Hence, the $p$th moment of the $[0,\infty]$-valued random variable $\int_{0}^{t}|\B_{t,s}| + |\Sigma_{t,s}|_{2}^{2}\,\mathrm{d}s$ is finite, which ensures that $\P(N_{t}) = 0$. Now the first estimate is implied by the moment estimate~\eqref{eq:stochastic integral estimate}.

(ii) From~(i) and Minkowski's inequality we directly infer that the measurable function $k_{0,l,\xi}:I\rightarrow [0,\infty]$ given by $k_{0,l,\xi}(t) := k_{0}(t) + (\int_{0}^{t}l(t,s)^{2}\E[|\xi_{s}|^{p}]^{2/p}\,\mathrm{d}s)^{1/2}$ satisfies
\begin{equation*}
\E\big[|X_{t} - \xi_{t}|^{p}\big]^{\frac{1}{p}} \leq k_{0,l,\xi}(t) + \bigg(\int_{0}^{t}l(t,s)^{2}\E\big[|X_{s} - \xi_{s}|^{p}\big]^{\frac{2}{p}}\,\mathrm{d}s\bigg)^{\frac{1}{2}}
\end{equation*}
for any $t\in I_{X}$. So, an application of Corollary~\ref{co:resolvent inequality for processes} for the choice $N = 2$, $\beta_{1} = 1$ and $\beta_{2} = 2$ in combination with Minkowski's inequality yield the claim, since
\begin{equation}\label{eq:special property of the resolvent sequence}
\int_{0}^{t}\R_{l^{2},i}(t,s)\int_{0}^{s}l(s,r)^{2}\E[|\xi_{r}|^{p}]^{\frac{2}{p}}\,\mathrm{d}r\,\mathrm{d}s = \int_{0}^{t}\R_{l^{2},i+1}(t,s)\E[|\xi_{s}|^{p}]^{\frac{2}{p}}\,\mathrm{d}s
\end{equation}
for any $i\in\N$ and $t\in I$, by Fubini's theorem.
\end{proof}

Based on the preceding proposition, we can give an \emph{$L^{p}$-estimate for the increments} of a Volterra process.

\begin{Lemma}\label{le:regularity of stochastic Volterra processes}
Let $X$ be an $E$-valued product measurable process for which there are $[0,\infty]$-valued measurable functions $f_{1},f_{2},g_{1},g_{2}$ on $I\times I\times I$ such that
\begin{equation*}
\E\big[|\B_{t,r} - \B_{s,r}|^{p}\big]^{\frac{1}{p}}\mathbbm{1}_{\{1\}}(i) + \E\big[|\Sigma_{t,r} - \Sigma_{s,r}|_{2}^{p}\big]^{\frac{1}{p}}\mathbbm{1}_{\{2\}}(i) \leq f_{i}(t,s,r) + g_{i}(t,s,r)\E\big[|X_{r}|^{p}\big]^{\frac{1}{p}}
\end{equation*}
for all $i\in\{1,2\}$ and $r,s,t\in I$ with $r < s < t$, and define the non-negative kernel $f$ on $I$ and $g:I\times I\times I\rightarrow [0,\infty]$ by $f(t,s) := \int_{0}^{s}f_{1}(t,s,r)\,\mathrm{d}r + w_{p}(\int_{0}^{s}f_{2}(t,s,r)^{2}\,\mathrm{d}r)^{1/2}$ and
\begin{equation*}
g(t,s,r) := 2\max\bigg\{\min\bigg\{g_{1}(t,s,r)\int_{0}^{s}g_{1}(t,s,\tilde{r})\,\mathrm{d}\tilde{r},\int_{0}^{s}g_{1}(t,s,\tilde{r})^{2}\,\mathrm{d}\tilde{r}\bigg\}^{\frac{1}{2}},w_{p}g_{2}(t,s,r)\bigg\}.
\end{equation*}
Then for all $s,t\in I$ with $s\leq t$ we have
\begin{align*}
\E\bigg[\bigg(\int_{0}^{s}|\B_{t,r} - \B_{s,r}|\,\mathrm{d}r\bigg)^{p}\bigg]^{\frac{1}{p}} &+ w_{p}\E\bigg[\bigg(\int_{0}^{s}|\Sigma_{t,r} - \Sigma_{s,r}|_{2}^{2}\,\mathrm{d}r\bigg)^{\frac{p}{2}}\bigg]^{\frac{1}{p}}\\
&\leq f(t,s) + \bigg(\int_{0}^{s}g(t,s,r)^{2}\E\big[|X_{r}|^{p}\big]^{\frac{2}{p}}\,\mathrm{d}r\bigg)^{\frac{1}{2}}.
\end{align*}
Further, if $k_{1},k_{2},l_{1},l_{2}$ are non-negative kernels on $I$ satisfying the estimate~\eqref{eq:abstract affine growth condition}, $N_{t}$ is null for a.e.~$t\in I$ and $X$ is a Volterra process with coefficients $\xi$, $\B$ and $\Sigma$, then
\begin{align*}
\E\big[|X_{s} - X_{t}|^{p}\big]^{\frac{1}{p}} &\leq \E\big[|\xi_{s} - \xi_{t}|^{p}\big]^{\frac{1}{p}} + \int_{s}^{t}k_{1}(t,\tilde{s})\,\mathrm{d}\tilde{s} + w_{p}\bigg(\int_{s}^{t}k_{2}(t,\tilde{s})^{2}\,\mathrm{d}\tilde{s}\bigg)^{\frac{1}{2}} + f(t,s)\\
&\quad + \bigg(\int_{s}^{t}l(t,\tilde{s})^{2}\E\big[|X_{\tilde{s}}|^{p}\big]^{\frac{2}{p}}\,\mathrm{d}\tilde{s}\bigg)^{\frac{1}{2}} + \bigg(\int_{0}^{s}g(t,s,r)^{2}\E\big[|X_{r}|^{p}\big]^{\frac{2}{p}}\,\mathrm{d}r\bigg)^{\frac{1}{2}}
\end{align*}
for any $s,t\in I_{X}$ with $s\leq t$ and the non-negative kernel $l$ on $I$ given by~\eqref{eq:special transformed kernel}.
\end{Lemma}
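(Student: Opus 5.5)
The first estimate will follow from Proposition~\ref{pr:resolvent sequence inequality for processes} in exactly the way part~(i) of Proposition~\ref{pr:moment estimates for stochastic Volterra processes} was derived. Fix $s,t\in I$ with $s\leq t$ and regard $t$ and $s$ as parameters. We work on the interval $[0,s]$ with $\mu$ the Lebesgue measure. We choose $N=2$, $\beta_{1}=1$, $\beta_{2}=2$ (so $\beta=2$), $Y=0$, $X^{(0)}=|X|$, and the product measurable functions $K^{(1)}_{s,r}:=|\B_{t,r}-\B_{s,r}|$ and $K^{(2)}_{s,r}:=w_{p}|\Sigma_{t,r}-\Sigma_{s,r}|_{2}$. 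The kernels are $k_{1}(s,r):=f_{1}(t,s,r)$, $k_{2}(s,r):=w_{p}f_{2}(t,s,r)$, $l_{1}(s,r):=g_{1}(t,s,r)$ and $l_{2}(s,r):=w_{p}g_{2}(t,s,r)$. Condition~\eqref{eq:condition 2 for the resolvent sequence inequality} is then precisely the hypothesis of the lemma. Taking $X^{(1)}_{s}$ to be the left-hand side of the claimed estimate, we get~\eqref{eq:condition 1 for the resolvent sequence inequality} trivially with equality. The intermediate bound~\eqref{eq:implied condition 1 for the resolvent sequence inequality} produced in that proof, via Minkowski's integral inequality and then Jensen and Hölder, reads $v(s)+(\int_{0}^{s}\hat l(s,r)^{2}\E[|X_{r}|^{p}]^{2/p}\,\mathrm{d}r)^{1/2}$. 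Here $v(s)=\int_0^s f_1\,\mathrm{d}r+w_p(\int_0^s f_2^2\,\mathrm{d}r)^{1/2}=f(t,s)$, and $\hat l=2\max\{\hat l_{1},\hat l_{2}\}$ coincides with $g(t,s,\cdot)$ by the definition of $\hat l_{1}$ for $\beta_{1}=1<\beta=2$. I will only need this case $n=1$, so no resolvent series appears. The first estimate follows.

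For the increment estimate, take $s,t\in I_{X}$ with $s\leq t$. The representation~\eqref{eq:Volterra process} at both times, together with the linearity of the stochastic integral, gives a.s.
\[
X_{t}-X_{s}=\xi_{t}-\xi_{s}+\int_{s}^{t}\B_{t,r}\,\mathrm{d}r+\int_{s}^{t}\Sigma_{t,r}\,\mathrm{d}W_{r}+\int_{0}^{s}(\B_{t,r}-\B_{s,r})\,\mathrm{d}r+\int_{0}^{s}(\Sigma_{t,r}-\Sigma_{s,r})\,\mathrm{d}W_{r}.
\]
The integral over $[s,t]$ is understood as $\int_{0}^{t}\mathbbm{1}_{]s,t]}(r)\Sigma_{t,r}\,\mathrm{d}W_{r}$. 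Integrability holds a.s.\ since $N_{t}$ and $N_{s}$ are null for $s,t\in I_{X}$. We then apply Minkowski's inequality in $L^{p}$ to the three groups: $\xi_{t}-\xi_{s}$, the $[s,t]$ terms, and the $[0,s]$ difference terms.

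For the $[0,s]$ difference terms, we use the moment estimate~\eqref{eq:stochastic integral estimate} on the stochastic part and the triangle inequality on the Bochner part. This reduces them to the first estimate, which yields $f(t,s)+(\int_{0}^{s}g^{2}\E[|X_r|^p]^{2/p}\,\mathrm{d}r)^{1/2}$.

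For the $[s,t]$ terms, we similarly bound by $\E[(\int_{s}^{t}|\B_{t,r}|\,\mathrm{d}r)^{p}]^{1/p}+w_{p}\E[(\int_{s}^{t}|\Sigma_{t,r}|_{2}^{2}\,\mathrm{d}r)^{p/2}]^{1/p}$. We then rerun the same Minkowski/Jensen/Hölder step, i.e.\ Proposition~\ref{pr:resolvent sequence inequality for processes} with $\mu$ restricted to $[s,t]$, equivalently with the kernels multiplied by $\mathbbm{1}_{]s,t]}$, using~\eqref{eq:abstract affine growth condition}. This gives $\int_{s}^{t}k_{1}(t,r)\,\mathrm{d}r+w_{p}(\int_{s}^{t}k_{2}(t,r)^{2}\,\mathrm{d}r)^{1/2}+(\int_{s}^{t}\tilde l(t,r)^{2}\E[|X_{r}|^{p}]^{2/p}\,\mathrm{d}r)^{1/2}$.

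The main obstacle is this last bound. The kernel $\tilde l$ arising from restriction is built like~\eqref{eq:special transformed kernel}, but with $\int_{s}^{t}$ in place of $\int_{0}^{t}$ inside the minimum. I must check that it is dominated by $l$, and this holds because both arguments of the inner minimum only increase when the integration range is enlarged to $[0,t]$. With that monotonicity observed, summing the three pieces gives the asserted inequality.
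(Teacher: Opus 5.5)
Your proposal is correct and follows the paper's proof. The first estimate comes from Proposition~\ref{pr:resolvent sequence inequality for processes} with $N=2$, $\beta_1=1$, $\beta_2=2$, $Y=0$, $X^{(0)}=|X|$ and $X^{(1)}$ equal to the left-hand side (the paper's ``$N=1$'' there is evidently a typo). The increment estimate comes from the same decomposition of $X_t-X_s$, combined with the first estimate and the moment bound of Proposition~\ref{pr:moment estimates for stochastic Volterra processes} applied to the coefficients restricted to $]s,t]$; your check that the resulting restricted kernel is dominated by $l$ is exactly the detail the paper leaves implicit.
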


\begin{proof}
The first estimate is a special case of Proposition~\ref{pr:resolvent sequence inequality for processes} for $N = 1$, $\beta_{1} = 1$ and $\beta_{2} = 2$, because for any given $t\in I$ we may choose $Y_{s} = 0$, $X_{s}^{(0)} = |X_{s}|$ and $X_{s}^{(1)}$ $=\E[(\int_{0}^{s}|\B_{t,r} - \B_{s,r}|\,\mathrm{d}r)^{p}]^{1/p} + w_{p}\E[(\int_{0}^{s}|\Sigma_{t,r} - \Sigma_{s,r}|_{2}^{2}\,\mathrm{d}r)^{p/2}]^{1/p}$ for all $s\in [0,t]$.

Regarding the second claim, we note that $X_{t} - X_{s} = \xi_{t} - \xi_{s} + \int_{s}^{t}\B_{t,\tilde{s}}\,\mathrm{d}\tilde{s} + \int_{s}^{t}\Sigma_{t,\tilde{s}}\,\mathrm{d}W_{\tilde{s}}$ $ +\, \int_{0}^{s}\B_{t,r} - \B_{s,r}\,\mathrm{d}r + \int_{0}^{s}\Sigma_{t,r} - \Sigma_{s,r}\,\mathrm{d}W_{r}$ a.s. Thus, the first estimate and Proposition~\ref{pr:moment estimates for stochastic Volterra processes} imply the second estimate.
\end{proof}

\section{Supplementary analytic results}\label{se:4}

\subsection{Integral estimates for iterated kernels}\label{se:4.1}

We deduce \emph{two types of integral estimates for the iterated kernels of a non-negative kernel} $k$ on $I$ relative to a $\sigma$-finite Borel measure $\mu$ on $I$ that are recursively given by~\eqref{eq:resolvent sequence} when $I$ is only a non-degenerate interval in $\Re$.

\begin{Proposition}\label{pr:integral estimate for iterated kernels of first type}
Suppose that $c_{0} := \sup_{t\in I}\int_{I(t)}k(t,s)^{p}\,\mu(\mathrm{d}s)$ and
\begin{equation*}
\varepsilon_{0} := \lim_{\delta\downarrow 0}\sup_{\substack{r,t\in I:\\ r \leq t \leq r + \delta}}\int_{[r,t]}k(t,s)^{p}\,\mu(\mathrm{d}s)
\end{equation*}
are finite for some $p > 0$, where $I(t) := \{s\in I\,|\,s\leq t\}$ for all $t\in I$. Then for every $\varepsilon > \varepsilon_{0}$ there is $\delta > 0$ such that
\begin{equation}\label{eq:integral estimate for iterated kernels of first type}
\sup_{\substack{r,t\in I:\\
r \leq t \leq r + m\delta}}\int_{[r,t]}\R_{k^{p},\mu,n}(t,s)\,\mu(\mathrm{d}s) \leq (nc_{\varepsilon})^{m-1}\varepsilon^{n}
\end{equation}
for all $m,n\in\N$ with $c_{\varepsilon} := \max\{1,\frac{c_{0}}{\varepsilon}\}$. Moreover, the inequality remains valid if in the definitions of $c_{0}$ and $\varepsilon_{0}$ and in~\eqref{eq:integral estimate for iterated kernels of first type} the suprema are replaced by essential suprema.
\end{Proposition}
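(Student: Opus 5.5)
Write $K := k^{p}$ and $\R_n := \R_{K,\mu,n}$. Given $\varepsilon > \varepsilon_{0}$, we choose $\delta > 0$ such that
\[
\sup_{r\leq t\leq r+\delta}\int_{[r,t]}K(t,s)\,\mu(\mathrm{d}s) \leq \varepsilon .
\]
This is possible by the definition of $\varepsilon_{0}$, since the supremum is monotone in $\delta$. The proof is a double induction: first on $n$ in the case $m = 1$, and then on $m$ for arbitrary $n$. Throughout, the basic tool is Fubini's theorem applied to the recursion~\eqref{eq:resolvent sequence}:
\[
\int_{[r,t]}\R_{n+1}(t,s)\,\mu(\mathrm{d}s) = \int_{[r,t]}K(t,\tilde s)\int_{[r,\tilde s]}\R_{n}(\tilde s,s)\,\mu(\mathrm{d}s)\,\mu(\mathrm{d}\tilde s).
\]
This holds because $s\leq\tilde s\leq t$ and $s\geq r$ together force $\tilde s\in[r,t]$ and $s\in[r,\tilde s]$.

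\emph{Case $m=1$.} Suppose $r\leq t\leq r+\delta$. Every $\tilde s\in[r,t]$ then also satisfies $r\leq \tilde s\leq r+\delta$. By the induction hypothesis the inner integral is at most $\varepsilon^{n}$, and the outer integral of $K(t,\cdot)$ over $[r,t]$ is at most $\varepsilon$. This gives $\varepsilon^{n+1}$, which is the claim, since $(nc_\varepsilon)^{0}=1$.

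\emph{Case $m \geq 2$.} Define
\[
a_{m,n} := \sup_{r\leq t\leq r+m\delta}\int_{[r,t]}\R_{n}(t,s)\,\mu(\mathrm{d}s).
\]
For $r\leq t\leq r+m\delta$ we split $[r,t]=[r,\tau)\cup[\tau,t]$ with $\tau := \max\{r, t-\delta\}$, so that $t - \tau\leq\delta$ and $\tau - r\leq(m-1)\delta$. Writing $\R_{n+1}(t,s) = \int K(t,\tilde s)\R_n(\tilde s,s)\,\mu(\mathrm{d}\tilde s)$ and using Fubini, we divide the $\tilde s$-integration into $\tilde s\in[\tau,t]$ and $\tilde s\in[r,\tau)$. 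The contribution from $\tilde s$ near $t$ is bounded by $\varepsilon\cdot\sup_{\tilde s}\int_{[r,\tilde s]}\R_n(\tilde s,s)\,\mu(\mathrm{d}s)$. This is not quite a recursion closed within window $m$, because $\tilde s$ ranges up to $r+m\delta$.

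A cleaner route is to iterate the kernel $n+1$ times. We expand $\R_{n}(t,s)$ as an $(n-1)$-fold integral over chains $t\geq s_{1}\geq\dots\geq s_{n-1}\geq s\geq r$ of products $K(t,s_1)K(s_1,s_2)\cdots K(s_{n-1},s)$. Each factor $K(s_{j-1},s_{j})$ is integrated in $s_j$ either over a "short" step (length $\leq\delta$, contributing $\leq\varepsilon$) or over the full range (contributing $\leq c_0$). The chain runs from $t$ down to $r$, a distance of at most $m\delta$. Partition the range $[r,t]$ into $m$ consecutive blocks of length at most $\delta$. A step can be long, meaning it jumps across a block boundary, at most $m-1$ times in total, since the chain is monotone. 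We then bound the integral by summing over the possible positions of these at most $m-1$ boundary-crossing steps among the $n$ factors: there are at most $\binom{n}{m-1}\le n^{m-1}$ choices. Each such configuration is bounded by $c_0^{j}\varepsilon^{n-j}$ with $j\leq m-1$, which is at most $c_\varepsilon^{m-1}\varepsilon^{n}$. Summing over configurations gives $(nc_\varepsilon)^{m-1}\varepsilon^{n}$.

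\emph{Main obstacle.} The step I expect to be the main obstacle is making this combinatorial decomposition rigorous. Concretely, one must write $\mathbbm{1}_{\{s_{j}\geq s_{j-1}-\ldots\}}$ via indicators recording in which block each $s_j$ lies, and then integrate successively from the innermost variable outwards. A variable staying in the same block as its predecessor uses the $\varepsilon$-bound; note that the interval over which it is integrated then has length at most $\delta$ and lies below its predecessor, which is exactly the $\varepsilon_0$ condition. A variable changing block uses the $c_0$-bound. Alternatively, I would organise this as an induction on $m$, conditioning on the last index at which the chain crosses the block boundary $r+(m-1)\delta$. This yields
\[
a_{m,n}\leq\sum_{j}\varepsilon^{j}\,c_0\,a_{m-1,n-j-1}+\varepsilon^{n},
\]
and we then verify the bound $(nc_\varepsilon)^{m-1}\varepsilon^{n}$ inductively using $\sum_{j<n}((n-j)c_\varepsilon)^{m-2}\leq n^{m-1}c_\varepsilon^{m-2}$.

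\emph{Essential suprema.} The version with essential suprema is proved by the same argument. All bounds on $\int K(t,\cdot)$ are then used only for $t$ outside a $\mu$-null (or Lebesgue-null) set, and Fubini guarantees that the exceptional sets do not matter inside the iterated integrals.
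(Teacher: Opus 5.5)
Your case $m=1$ and the Fubini identity coincide with the paper's. The case $m\ge 2$, however, as you wrote it, does not deliver the stated constant.

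\textbf{The counting step fails.} In the chain expansion, suppose you sum over the actual sets $C$ of block-crossing steps. There are $\sum_{j=0}^{m-1}\binom{n}{j}$ such sets, not $\binom{n}{m-1}$, and the resulting bound is $\varepsilon^{n}\sum_{j\le m-1}\binom{n}{j}(c_0/\varepsilon)^{j}$. For $m=2$ and $c_0\ge\varepsilon$ this equals $(1+nc_\varepsilon)\varepsilon^{n}>nc_\varepsilon\varepsilon^{n}$. For $n<m-1$ your count $\binom{n}{m-1}$ is even $0$.

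\textbf{Your fallback recursion fails for the same reason.} The bound $a_{m,n}\le\varepsilon^{n}+\sum_{j}\varepsilon^{j}c_0a_{m-1,n-j-1}$ gives $\varepsilon^{n}+nc_0\varepsilon^{n-1}$ at $m=2$. The inequality $\sum_{j<n}(n-j)^{m-2}\le n^{m-1}$ is an equality at $m=2$, which leaves no room for the extra $\varepsilon^{n}$. So the induction on $m$ breaks at its first step.

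\textbf{How to repair the chain route.} For $n\ge m-1$, use $1\le\sum_{|S|=m-1}\mathbbm{1}\{C\subseteq S\}$. For each $S$, impose the same-block constraint only on the steps outside $S$, each contributing at most $\varepsilon$. Drop all constraints on the steps in $S$, each contributing at most $c_0$. This gives $\binom{n}{m-1}c_0^{m-1}\varepsilon^{n-m+1}\le(nc_\varepsilon)^{m-1}\varepsilon^{n}$. For $n<m-1$, the trivial bound $c_0^{n}\le c_\varepsilon^{n}\varepsilon^{n}$ suffices. Repaired this way, the path expansion is a valid alternative, and it even gives the slightly better factor $\binom{n}{m-1}$ in place of $n^{m-1}$.

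\textbf{The split you discarded already closes.} It is precisely the paper's proof. Take $\tau=\max\{r,t-\delta\}$.
\begin{itemize}
\item The part $\tilde s\in[\tau,t]$ is at most $\varepsilon a_{m,n}$.
\item The part $\tilde s\in[r,\tau)$ is at most $c_0a_{m-1,n}$, since $\tilde s<t-\delta\le r+(m-1)\delta$.
\end{itemize}
Hence $a_{m,n+1}\le\varepsilon a_{m,n}+c_0a_{m-1,n}$.

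That $\tilde s$ reaches $r+m\delta$ is harmless: the window-$m$ term returns with the smaller index $n$, so an induction on $n$ inside the induction on $m$ handles it. Start at $a_{m,1}\le c_0\le c_\varepsilon\varepsilon$. The step then yields
\[
a_{m,n+1}\le c_\varepsilon^{m-1}\varepsilon^{n+1}(n^{m-1}+n^{m-2})\le((n+1)c_\varepsilon)^{m-1}\varepsilon^{n+1}.
\]
This avoids all of the $n$-fold bookkeeping of the path expansion.
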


\begin{proof}
We may assume that $p = 1$, since along with $k$ the $p$th power of $k$ is a non-negative kernel on $I$ that satisfies the assumptions of the proposition in the case $p = 1$.

By hypothesis, there is $\delta > 0$ such that $\int_{[r,t]}k(t,s)\,\mu(\mathrm{d}s) \leq \varepsilon$ for all $r,t\in I$ with $r \leq t \leq r + \delta$. Thus, if~\eqref{eq:integral estimate for iterated kernels of first type} holds for $m = 1$ and some $n\in\N$, then Fubini's theorem yields that
\begin{equation}\label{eq:application of the Fubini theorem}
\int_{[r,t]}\R_{k,\mu,n + 1}(t,s)\,\mu(\mathrm{d}s) = \int_{[r,t]}k(t,\tilde{s})\int_{[r,\tilde{s}]}\R_{k,\mu,n}(\tilde{s},s)\,\mu(\mathrm{d}s)\,\mu(\mathrm{d}\tilde{s}) \leq \varepsilon^{n+1}
\end{equation}
for any $r,t\in I$ with $r\leq t\leq r + \delta$. So,~\eqref{eq:integral estimate for iterated kernels of first type} is valid in the case $m = 1$ for each $n\in\N$. Next, suppose that~\eqref{eq:integral estimate for iterated kernels of first type} holds for some $m\in\N$ and any $n\in\N$.

As $\int_{[r,t]}k(t,s)\,\mu(\mathrm{d}s)$ $\leq (nc_{\varepsilon})^{m}\varepsilon$ for any $r,t\in I$ with $r\leq t$, we may also assume that~\eqref{eq:integral estimate for iterated kernels of first type} holds for $m + 1$ instead of $m$ and some fixed $n\in\N$. Then the identity in~\eqref{eq:application of the Fubini theorem} gives
\begin{align*}
\int_{[r,t]}\R_{k,\mu,n+1}(t,s)\,\mu(\mathrm{d}s) &\leq c_{\varepsilon}\varepsilon\sup_{\tilde{s}\in [r,t_{0}]}\int_{[r,\tilde{s}]}\R_{k,\mu,n}(\tilde{s},s)\,\mu(\mathrm{d}s) + \varepsilon\sup_{\tilde{s}\in [r,t]}\int_{[r,\tilde{s}]}\R_{k,\mu,n}(\tilde{s},s)\,\mu(\mathrm{d}s)\\
&\leq (c_{\varepsilon}\varepsilon)(nc_{\varepsilon})^{m-1}\varepsilon^{n} + \varepsilon(nc_{\varepsilon})^{m}\varepsilon^{n} \leq \big((n+1)c_{\varepsilon}\big)^{m}\varepsilon^{n+1}
\end{align*}
for all $r,t\in I$ with $r\leq t \leq r + (m + 1)\delta$, where $t_{0}\in [r,t]$ satisfies $t_{0}\leq r + m\delta$ and $t\leq t_{0} + \delta$. This completes the nested induction proofs and the second claim follows from a short review.
\end{proof}

\begin{Remark}\label{re:integral estimate for iterated kernels of first type}
If $I$ is bounded, then monotone convergence implies that
\begin{equation*}
\sup_{t\in I}\int_{I(t)}\R_{k^{p},\mu,n}(t,s)\,\mu(\mathrm{d}s) \leq (nc_{\varepsilon})^{m-1}\varepsilon^{n}
\end{equation*}
for all $n\in\N$ with $m\in\N$ satisfying $\mathrm{diam}(I) \leq m\delta$. Moreover, if $\varepsilon < 1$, which is only possible in the case that $\varepsilon_{0} < 1$, then
\begin{equation}\label{eq:regularity of the sum of iterated kernels}
\sum_{n=1}^{\infty}\sup_{\substack{r,t\in I:\\
r \leq t \leq r + m\delta}}\bigg(\int_{[r,t]}\R_{k^{p},\mu,n}(t,s)\,\mu(\mathrm{d}s)\bigg)^{q} \leq c_{\varepsilon}^{q(m-1)}\sum_{n=1}^{\infty} n^{q(m-1)}\varepsilon^{q n}
\end{equation}
for any $m\in\N$ and $q > 0$ and the series on the right-hand side converges, by the ratio test.
\end{Remark}

We obtain a similar integral estimate by using the fact that
\begin{equation}\label{eq:property of the iterated kernels}
\R_{k,\mu,m+n}(t,s) = \int_{[s,t]}\R_{k,\mu,m}(t,\tilde{s})\R_{k,\mu,n}(\tilde{s},s)\,\mu(\mathrm{d}\tilde{s})
\end{equation}
for any $m,n\in\N$ and $s,t\in I$ with $s\leq t$, which follows inductively inductively from the recursive definition~\eqref{eq:resolvent sequence}.

\begin{Proposition}\label{pr:integral estimate for iterated kernels of second type}
Suppose that $c_{0} := \sup_{s\in I}\int_{I(s)'}k(t,s)^{p}\,\mu(\mathrm{d}t)$ and
\begin{equation*}
\varepsilon_{0} := \lim_{\delta\downarrow 0}\sup_{\substack{r,t\in I:\\ r\leq t\leq r + \delta}}\int_{[r,t]}k(s,r)^{p}\,\mu(\mathrm{d}s) 
\end{equation*}
are finite for some $p > 0$, where $I(s)' := \{t\in I\,|\,t\geq s\}$ for all $s\in I$. Then for each $\varepsilon > \varepsilon_{0}$ there is $\delta > 0$ such that
\begin{equation}\label{eq:integral estimate for iterated kernels of second type}
\sup_{\substack{r,t\in I:\\ r \leq t \leq r + m\delta}}\int_{[r,t]}\R_{k^{p},\mu,n}(s,r)\,\mu(\mathrm{d}s) \leq (nc_{\varepsilon})^{m-1}\varepsilon^{n}
\end{equation}
for any $m,n\in\N$ with $c_{\varepsilon} := \max\{1,\frac{c_{0}}{\varepsilon}\}$. Further, the estimate remains valid if in the definitions of $c_{0}$ and $\varepsilon_{0}$ and in~\eqref{eq:integral estimate for iterated kernels of second type} the suprema are replaced by essential suprema.
\end{Proposition}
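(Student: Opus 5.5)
We mirror the proof of Proposition~\ref{pr:integral estimate for iterated kernels of first type}, exchanging the roles of the two variables of the kernel. As there, we may assume $p = 1$, since $k^{p}$ is again a non-negative kernel on $I$ satisfying the hypotheses with exponent one. By the definition of $\varepsilon_{0}$, for given $\varepsilon > \varepsilon_{0}$ we choose $\delta > 0$ such that $\int_{[r,t]}k(s,r)\,\mu(\mathrm{d}s)\leq\varepsilon$ for all $r,t\in I$ with $r\leq t\leq r+\delta$. We also note that $\int_{[r,t]}k(s,r)\,\mu(\mathrm{d}s)\leq c_{0}\leq c_{\varepsilon}\varepsilon$ for all $r\leq t$.

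The key tool replacing the recursion~\eqref{eq:resolvent sequence} is the identity~\eqref{eq:property of the iterated kernels} with the roles split as $\R_{k,\mu,n+1}(s,r) = \int_{[r,s]}\R_{k,\mu,n}(s,\tilde{s})k(\tilde{s},r)\,\mu(\mathrm{d}\tilde{s})$, i.e.\ we peel off the kernel at the \emph{right} end. Fubini's theorem then gives
\begin{equation*}
\int_{[r,t]}\R_{k,\mu,n+1}(s,r)\,\mu(\mathrm{d}s) = \int_{[r,t]}k(\tilde{s},r)\int_{[\tilde{s},t]}\R_{k,\mu,n}(s,\tilde{s})\,\mu(\mathrm{d}s)\,\mu(\mathrm{d}\tilde{s}).
\end{equation*}
The inner integral is of the form on the left of~\eqref{eq:integral estimate for iterated kernels of second type} with $r$ replaced by $\tilde{s}\geq r$, so the length $t-\tilde{s}\leq t-r$ and the inductive bound applies. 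For $m=1$, induction on $n$ gives $\varepsilon^{n+1}$ directly.

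For the step from $m$ to $m+1$, we do a nested induction on $n$ exactly as before. Fix $r\leq t\leq r+(m+1)\delta$ and pick $t_{0}\in[r,t]$ with $t_{0}\leq r+\delta$ and $t\leq t_{0}+m\delta$ (here we cut near $r$ rather than near $t$). We split the outer integral over $\tilde{s}\in[r,t_{0}]$ and $\tilde{s}\in\,]t_{0},t]$. On $[r,t_{0}]$ the outer kernel mass is at most $\varepsilon$ and the inner integral is over an interval of length at most $(m+1)\delta$, so the inner induction hypothesis in $n$ for $m+1$ gives $\varepsilon\,((n)c_{\varepsilon})^{m}\varepsilon^{n}$. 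On $]t_{0},t]$ the outer mass is at most $c_{\varepsilon}\varepsilon$, and since $t-\tilde{s}\leq m\delta$ the inner integral is bounded by $(nc_{\varepsilon})^{m-1}\varepsilon^{n}$. Summing yields at most $\varepsilon^{n+1}c_{\varepsilon}^{m}(n^{m}+n^{m-1})\leq((n+1)c_{\varepsilon})^{m}\varepsilon^{n+1}$. The base case $n=1$ for $m+1$ holds because $\int_{[r,t]}k(s,r)\,\mu(\mathrm{d}s)\leq c_{\varepsilon}\varepsilon\leq c_{\varepsilon}^{m}\varepsilon$.

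The main obstacle is checking that the splitting point is placed on the correct side, so that the inner integrals have the right lengths. Using~\eqref{eq:property of the iterated kernels} with the factor $k$ adjacent to $r$ is what makes the outer integral depend only on $k(\cdot,r)$, matching the hypothesis on $\varepsilon_{0}$ and $c_{0}$. For the essential-supremum version, the same argument goes through with all suprema replaced by essential suprema, exactly as in Proposition~\ref{pr:integral estimate for iterated kernels of first type}: the bounds are used only for a.e.\ $\tilde{s}$ under the integral, which is harmless.
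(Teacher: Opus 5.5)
Your proposal is correct and follows the paper's proof essentially step for step. Both arguments reduce to $p=1$, run the nested induction in $m$ and $n$, use \eqref{eq:property of the iterated kernels} to place the factor $k(\cdot,r)$ in the outer integral, and split at $t_{0}\le r+\delta$ with $t\le t_{0}+m\delta$ to obtain $\varepsilon(nc_{\varepsilon})^{m}\varepsilon^{n}+c_{\varepsilon}\varepsilon(nc_{\varepsilon})^{m-1}\varepsilon^{n}\le((n+1)c_{\varepsilon})^{m}\varepsilon^{n+1}$. The only cosmetic difference is in the case $m=1$: there the paper uses the defining recursion \eqref{eq:resolvent sequence}, with $k$ at the left end, instead of your right-end peeling, and both work equally well.
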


\begin{proof}
By the same reasoning as in Proposition~\ref{pr:integral estimate for iterated kernels of first type}, we may assume that $p  = 1$. Further, we take $\delta > 0$ such that $\int_{[r,t]}k(s,r)\,\mu(\mathrm{d}s)\leq \varepsilon$ for any $r,t\in I$ with $r\leq t\leq r + \delta$ and suppose that~\eqref{eq:integral estimate for iterated kernels of second type} holds for $m = 1$ and some $n\in\N$. Then
\begin{align*}
\int_{[r,t]}\R_{k,\mu,n + 1}(s,r)\,\mu(\mathrm{d}s) &= \int_{[r,t]}\int_{[\tilde{r},t]}k(s,\tilde{r})\,\mu(\mathrm{d}s)\,\R_{k,\mu,n}(\tilde{r},r)\,\mu(\mathrm{d}\tilde{r}) \leq \varepsilon^{n+1}
\end{align*}
for any $r,t\in I$ with $r\leq t \leq r + \delta$, by Fubini's theorem. This shows~\eqref{eq:integral estimate for iterated kernels of second type} for $m = 1$ and any $n\in\N$ and we may assume that~\eqref{eq:integral estimate for iterated kernels of second type} holds for some $m\in\N$ and each $n\in\N$. 

In addition, we may suppose that~\eqref{eq:integral estimate for iterated kernels of second type} is valid for $m + 1$ instead of $m$ and some fixed $n\in\N$. Then from the identity~\eqref{eq:property of the iterated kernels} and Fubini's theorem we infer that
\begin{align*}
\int_{[r,t]}\R_{k,\mu,n+1}(s,r)\,\mu(\mathrm{d}s) &\leq \varepsilon\sup_{\tilde{r}\in [r,t_{0}]}\int_{[\tilde{r},t]}\R_{k,\mu,n}(s,\tilde{r})\,\mu(\mathrm{d}s) + c_{\varepsilon}\varepsilon\sup_{\tilde{r}\in [t_{0},t]}\int_{[\tilde{r},t]}\R_{k,\mu,n}(s,\tilde{r})\,\mu(\mathrm{d}s)\\
&\leq \varepsilon(nc_{\varepsilon})^{m}\varepsilon^{n} + (c_{\varepsilon}\varepsilon)(nc_{\varepsilon})^{m-1}\varepsilon^{n} \leq ((n+1)c_{\varepsilon})^{m}\varepsilon^{n+1}
\end{align*}
for any $r,t\in I$ with $r\leq t\leq r + (m + 1)\delta$ and $t_{0}\in [r,t]$ satisfying $t_{0}\leq r + \delta$ and $t\leq t_{0} + m\delta$. So, the nested induction proofs are complete, and we obtain the second assertion by reviewing what we have shown.
\end{proof}

\begin{Remark}\label{re:integral estimate for iterated kernels of second type}
Similarly as before, if $I$ is bounded, then from monotone convergence we obtain that
\begin{equation*}
\sup_{s\in I}\int_{I(s)'}\mathrm{R}_{k^{p},\mu,n}(t,s)^{p}\,\mu(\mathrm{d}t) \leq (nc_{\varepsilon})^{m-1}\varepsilon^{n}
\end{equation*}
for all $n\in\N$ with $m := \lceil \mathrm{diam}(I)/\delta\rceil$. Further, if $\varepsilon\wedge \varepsilon_{0} < 1$, then the finite term appearing on the right-hand side in~\eqref{eq:regularity of the sum of iterated kernels} bounds the series
\begin{equation*}
\sum_{n=1}^{\infty}\sup_{\substack{r,t\in I:\\ r \leq t \leq r + m\delta}}\bigg(\int_{[r,t]}\R_{k^{p},\mu,n}(s,r)\,\mu(\mathrm{d}s)\bigg)^{q}\quad\text{for all $m\in\N$ and $q > 0$.}
\end{equation*}
\end{Remark}

\subsection{A metrical decomposition principle}\label{se:4.2}

Let $\ell_{+}$ denote the convex cone of all sequences $a = (a_{i})_{i\in\N}$ in $\Re_{+}$, $e_{i}$ be the $i$th canonical basis vector in $\ell_{+}$ for each $i\in\N$, $\mathbf{0} := (0)_{i\in\N}$ and $S$ be a non-empty set. For a sequence $(d_{i})_{i\in\N}$ of $\Re_{+}$-valued functions on $S\times S$ we define a map $d_{0}:S\times S\rightarrow\ell_{+}$ by
\begin{equation*}
d_{0}(x,y) := (d_{i}(x,y))_{i\in\N}
\end{equation*}
and call a functional $F$ on $\ell_{+}$ \emph{increasing} if it satisfies $F(a) \leq F(b)$ for any $a,b\in\ell_{+}$ with $a_{i}\leq b_{i}$ for each $i\in\N$. Then a \emph{metric of functional type} can be constructed as follows.

\begin{Proposition}\label{pr:functional metric}
Let $F:\ell_{+}\rightarrow\Re_{+}$ be increasing and subadditive and vanish only at $\mathbf{0}$. Then a sequence $(d_{i})_{i\in\N}$ of pseudometrics on $S$ induces another pseudometric $d$ by
\begin{equation}\label{eq:functional metric}
d(x,y) := F(d_{0}(x,y)),
\end{equation}
which is a metric if and only if any $x,y\in S$ with $d_{0}(x,y) = 0$ agree, and the topology induced by $d$ is finer than that of $d_{i}$ for each $i\in\N$. Moreover, if
\begin{equation}\label{eq:functional metric condition 1}
\lim_{v\rightarrow 0}F\bigg(\sum_{i=1}^{j}v_{i}e_{i}\bigg) = 0\quad\text{for all $j\in\N$}\quad\text{and}\quad \lim_{j\uparrow\infty} \sup_{x,y\in S}F\bigg(\sum_{i=j}^{\infty}d_{i}(x,y)e_{i}\bigg) = 0,
\end{equation}
then the following four assertions hold:
\begin{enumerate}[(i)]
\item A sequence in $S$ converges relative to $d$ if and only if there is a point in $S$ to which it converges relative to $d_{i}$ for any $i\in\N$.

\item A sequence in $S$ is Cauchy relative to $d$ if and only if it is Cauchy with respect to $d_{i}$ for all $i\in\N$.

\item If for any sequence $(x_{n})_{n\in\N}$ in $S$ converging relative to $d_{i}$ for each $i\in\N$ there is $x\in S$ such that $\lim_{n\uparrow\infty} d_{i}(x_{n},x) = 0$ for all $i\in\N$, then the completeness of $d_{i}$ for each $i\in\N$ implies that of $d$.

\item If $d$ turns $S$ into a separable space, then so does $d_{i}$ for each $i\in\N$ and there is a countable set that is dense with respect to $d_{i}$ for any $i\in\N$. Conversely, if
\begin{equation}\label{eq:functional metric condition 3}
\text{the topology induced by $d_{i+1}$ is finer than that of $d_{i}$ for each $i\in\N$,}
\end{equation}
\end{enumerate}
then $S$ equipped with $d$ is separable as soon as it is separable if endowed with $d_{i}$ for any $i\in\N$. 
\end{Proposition}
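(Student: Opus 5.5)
First I would verify the pseudometric axioms for $d := F\circ d_{0}$. Symmetry and $d(x,x) = F(\mathbf{0}) = 0$ are immediate. For the triangle inequality, note that $d_{0}(x,z)\leq d_{0}(x,y) + d_{0}(y,z)$ holds componentwise; monotonicity of $F$ and then subadditivity give $d(x,z)\leq F(d_{0}(x,y)+d_{0}(y,z))\leq d(x,y)+d(y,z)$. Because $F$ vanishes only at $\mathbf{0}$, we have $d(x,y)=0$ exactly when $d_{0}(x,y)=0$, which is the metric characterisation. To see that the topology of $d$ is finer than that of each $d_{i}$, I would show that a $d_{i}$-ball around $x$ contains a $d$-ball around $x$. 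Suppose $d(x,y)<F(\varepsilon e_{i})$. If $d_{i}(x,y)\geq\varepsilon$, then monotonicity gives $F(\varepsilon e_{i})\leq F(d_{0}(x,y))$, a contradiction. Moreover $F(\varepsilon e_{i})>0$ since $\varepsilon e_{i}\neq\mathbf{0}$.

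Under \eqref{eq:functional metric condition 1}, I would prove the key two-sided estimate
\begin{equation*}
d(x,y)\leq F\bigg(\sum_{i=1}^{j-1}d_{i}(x,y)e_{i}\bigg)+\sup_{u,v\in S}F\bigg(\sum_{i=j}^{\infty}d_{i}(u,v)e_{i}\bigg),
\end{equation*}
which follows from subadditivity. For (i) and (ii), the "only if" directions follow from the estimate $d_{i}<\varepsilon$ whenever $d<F(\varepsilon e_{i})$. For the converse, given $\eta>0$, I would choose $j$ so that the tail supremum is below $\eta/2$. Convergence (respectively the Cauchy property) in $d_{1},\dots,d_{j-1}$ makes the vector $(d_{i}(x_{n},x))_{i<j}$ small, and the first part of \eqref{eq:functional metric condition 1} then makes the finite head below $\eta/2$. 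For (iii), take a $d$-Cauchy sequence. By (ii) it is $d_{i}$-Cauchy for every $i$, hence $d_{i}$-convergent by completeness. The hypothesis then yields a common limit $x$, and by (i) the sequence converges to $x$ in $d$.

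For (iv), the forward direction is the main point requiring care. A $d$-dense countable set $C$ is $d_{i}$-dense for every $i$, because the topology of $d$ is finer: each nonempty $d_{i}$-open set is $d$-open and therefore meets $C$. For the converse under \eqref{eq:functional metric condition 3}, I would take for each $i$ a countable $d_{i}$-dense set $C_{i}$ and put $C:=\bigcup_{i}C_{i}$. Given $x$ and $\eta>0$, I would choose $j$ as above with tail below $\eta/2$. I would then use that $d_{j}$-density of $C_{j}$ gives points $y\in C_{j}$ with $d_{j}(x,y)$ arbitrarily small. By \eqref{eq:functional metric condition 3}, the topology of $d_{j}$ is finer than that of $d_{i}$ for $i<j$, so the $d_{i}$-ball around $x$ of a prescribed radius contains some $d_{j}$-ball around $x$ for each $i<j$. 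Choosing $y$ in the smallest of these finitely many $d_{j}$-balls makes $d_{1}(x,y),\dots,d_{j-1}(x,y)$ simultaneously small. The head term is then below $\eta/2$ by the first limit condition, so $d(x,y)<\eta$.

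I expect this last step to be the only subtle one. One has to make the finitely many coordinates small simultaneously using a single dense set, and that is exactly what the nested-topology hypothesis provides.
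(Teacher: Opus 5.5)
Your proposal is correct and follows essentially the same route as the paper. You use monotonicity and subadditivity for the pseudometric axioms, and the head--tail splitting $d(x,y)\leq F(\sum_{i<j}d_{i}(x,y)e_{i})+F(\sum_{i\geq j}d_{i}(x,y)e_{i})$ for (i)--(iii). For the converse in (iv) you use the union $\bigcup_{i}C_{i}$ together with the nested topologies, exactly as the paper does. The only cosmetic difference is how the topologies are compared. You use the ball inclusion $\{y: d(x,y)<F(\varepsilon e_{i})\}\subset\{y: d_{i}(x,y)<\varepsilon\}$, whereas the paper argues with sequences and then reuses (i) for the forward direction of (iv).
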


\begin{proof}
The triangle inequality holds for $d$, as $F$ increasing and subadditive, and we have $d(x,y) = 0$ $\Leftrightarrow$ $d_{0}(x,y) = 0$ for any $x,y\in S$, since $F$ vanishes only at $\mathbf{0}$. This explains the first claim, and in what follows, we take a sequence $(x_{n})_{n\in\N}$ and a point $\hat{x}$ in $S$.

If $\lim_{n\uparrow\infty} d(x_{n},\hat{x}) = 0$, then $\lim_{n\uparrow\infty} F(d_{i}(x_{n},\hat{x})e_{i}) = 0$ for any fixed $i\in\N$ and there cannot exist $\varepsilon > 0$ such that $d_{i}(x_{n},\hat{x})\geq \varepsilon$ for infinitely many $n\in\N$, as $F(\varepsilon e_{i}) > 0$. So, $\lim_{n\uparrow\infty} d_{i}(x_{n},\hat{x}) = 0$ in this case, which shows the second claim.

(i) The only if-statement follows from what we have just shown. For the if-direction we suppose that $\lim_{n\uparrow\infty} d_{i}(x_{n},\hat{x}) = 0$ for every $i\in\N$ and note that
\begin{align}\label{eq:functional metric 1}
d(x,y) \leq F\bigg(\sum_{i=1}^{j}d_{i}(x,y)e_{i}\bigg) + F\bigg(\sum_{i=j+1}^{\infty} d_{i}(x,y)e_{i}\bigg)
\end{align}
for any $j\in\N$ and $x,y\in S$, by the subadditivity of $F$. In combination with the two conditions in~\eqref{eq:functional metric condition 1}, this implies that $(x_{n})_{n\in\N}$ converges to $\hat{x}$ with respect to $d$.

(ii) If $(x_{n})_{n\in\N}$ is Cauchy relative to $d$, then $\lim_{n\uparrow\infty} \sup_{m\in\N:\, m\geq n} F(d_{i}(x_{n},x_{m})e_{i}) = 0$ for each fixed $i\in\N$ and there is no $\varepsilon > 0$ satisfying $d_{i}(x_{n},x_{m}) > \varepsilon$ for infinitely many $m,n\in\N$. Thus, $(x_{n})_{n\in\N}$ must be Cauchy with respect to $d_{i}$ in this case.

Conversely, let $(x_{n})_{n\in\N}$ be Cauchy relative to $d_{i}$ for each $i\in\N$. Then for $\varepsilon > 0$ the second condition in~\eqref{eq:functional metric condition 1} gives $j_{0}\in\N$ such that
\begin{equation}\label{eq:functional metric 2}
F\bigg(\sum_{i=j+1}^{\infty}d_{i}(x,y)e_{i}\bigg) < \frac{\varepsilon}{2}\quad\text{for all $j\in\N$ with $j\geq j_{0}$}
\end{equation}
and $x,y\in S$. By the first condition in~\eqref{eq:functional metric condition 1}, there is $n_{0}\in\N$ such that $F(\sum_{i=1}^{j_{0}} d_{i}(x_{m},x_{n})e_{i})$ $< \varepsilon/2$ for all $m,n\in\N$ with $m\wedge n\geq n_{0}$. Then~\eqref{eq:functional metric 1} gives $d(x_{m},x_{n})$ $< \varepsilon$ for all such $m,n\in\N$.

(iii) Let $(x_{n})_{n\in\N}$ be Cauchy relative to $d$. By (ii), there is $x\in S$ to which $(x_{n})_{n\in\N}$ converges relative to $d_{i}$ for all $i\in\N$ and it remains to apply (i).

(iv) Let $R$ be a countable set in $S$ that is dense with respect to $d$. Then for any $x\in S$ there is a sequence $(y_{n})_{n\in\N}$ in $R$ such that $\lim_{n\uparrow\infty} d(y_{n},x) = 0$. So, from (i) we obtain that $\lim_{n\uparrow\infty} d_{i}(y_{n},x) = 0$ for each $i\in\N$ and the first claim is proven.

For the last claim let $R_{i}$ be a countable set in $S$ that is dense relative to $d_{i}$ for any $i\in\N$. For $x\in S$ and $\varepsilon > 0$, there are $j_{0}\in\N$ and a sequence $(x_{n})_{n\in\N}$ in $R_{j_{0}}$ such that~\eqref{eq:functional metric 2} holds for all $y\in S$ and $\lim_{n\uparrow\infty} d_{j_{0}}(x_{n},x) = 0$. 

By combining the first condition in~\eqref{eq:functional metric condition 1} with~\eqref{eq:functional metric condition 3}, we obtain $n_{0}\in\N$ such that $F(\sum_{i=1}^{j_{0}}d_{i}(x_{n},x)e_{i}) < \varepsilon/2$ for all $n\in\N$ with $n\geq n_{0}$. So, $d(x_{n},x) < \varepsilon$ for any such $n\in\N$, which shows that $R:=\bigcup_{i\in\N} R_{i}$ is dense relative to $d$.
\end{proof}

\begin{Remark}\label{re:functional metric}
The second condition in~\eqref{eq:functional metric condition 1} holds if $\lim_{j\uparrow\infty}\sup_{a\in\ell_{+}}F(\sum_{i = j}^{\infty}a_{i}e_{i}) = 0$,  and~\eqref{eq:functional metric condition 3} is satisfied if $d_{i} \leq c_{i}d_{i+1}$ for any $i\in\N$ and some sequence $(c_{i})_{i\in\N}$ in $\Re_{+}$.
\end{Remark}

\begin{Example}\label{ex:functional metric}
Let $(f_{n})_{n\in\N}$ denote a sequence of $\Re_{+}$-valued increasing and subadditive functions on $\Re_{+}$ that vanish only at $0$ such that $\sum_{n=1}^{\infty}f_{n}(a_{n}) < \infty$ and $F(a) = \sum_{n=1}^{\infty}f_{n}(a_{n})$ for all $a\in\ell_{+}$. If
\begin{equation*}
f_{n}\leq b_{n}\quad\text{for all $n\in\N$}
\end{equation*}
and some summable sequence $b\in\ell_{+}$, then the second condition in~\eqref{eq:functional metric condition 1} is valid, by Remark~\ref{re:functional metric}. If in addition $f_{n}$ is continuous at $0$ for each $n\in\N$, then~\eqref{eq:functional metric condition 1} holds. For instance,
\begin{equation*}
f_{n}(x) = q^{n-1}\min\{1,x\}
\end{equation*}
for all $n\in\N$ and $x\geq 0$ with $q\in ]0,1[$ is such a feasible choice.
\end{Example}

Now we suppose that $E$ is merely a topological space whose topology is induced by a metric $d$ and $I$ is just a non-empty set for which there is an increasing sequence $(I_{n})_{n\in\N}$ of non-empty sets in $I$ such that $\bigcup_{n\in\N} I_{n} = I$.

Further, let $S$ be a set in $E^{I}$ such that each $x\in E^{I}$ lies in $S$ if there is a sequence $(x_{n})_{n\in\N}$ in $S$ such that $x = x_{n}$ on $I_{n}$ for each $n\in\N$. Then we obtain the following \emph{metrical decomposition principle}.

\begin{Corollary}\label{co:metrical decomposition}
Let $F:\ell_{+}\rightarrow\Re_{+}$ be increasing and subadditive and vanish only at $\mathbf{0}$. Further, let $(d_{i})_{i\in\N}$ be a sequence of pseudometrics on $S$ such that~\eqref{eq:functional metric condition 1} and~\eqref{eq:functional metric condition 3} hold and for any $i\in\N$ and $x,y\in S$ we have
\begin{equation*}
d_{i}(x,y) = 0 \quad\Leftrightarrow\quad x = y\quad\text{on $I_{i}$}.
\end{equation*}
Then the metric $d$ given by~\eqref{eq:functional metric} has the properties (i) and (ii) of Proposition~\ref{pr:functional metric}. Further, $d$ is complete if $d_{i}$ is for any $i\in\N$, and $d$ turns $S$ into a separable space if $d_{i}$ does for each $i\in\N$.
\end{Corollary}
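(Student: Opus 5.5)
The plan is to deduce everything from Proposition~\ref{pr:functional metric}, applied to the set $S$ and the given pseudometrics $(d_{i})_{i\in\N}$. First I show that $d$ is a genuine metric. By Proposition~\ref{pr:functional metric}, this holds if and only if any $x,y\in S$ with $d_{0}(x,y)=0$ coincide. If $d_{i}(x,y)=0$ for all $i$, then by hypothesis $x=y$ on $I_{i}$ for every $i$. Since $\bigcup_{i}I_{i}=I$, this gives $x=y$. The same proposition, together with~\eqref{eq:functional metric condition 1}, then yields properties~(i) and~(ii) verbatim.

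For completeness I verify the hypothesis of Proposition~\ref{pr:functional metric}(iii). Let $(x_{n})_{n\in\N}$ be a sequence in $S$ that converges relative to $d_{i}$ for each $i$. By the assumed completeness, for each $i$ there is $y_{i}\in S$ with $\lim_{n}d_{i}(x_{n},y_{i})=0$. I use~\eqref{eq:functional metric condition 3}: convergence in $d_{i+1}$ implies convergence in $d_{i}$. Hence $d_{i}(x_{n},y_{i+1})\to0$, and the triangle inequality gives $d_{i}(y_{i},y_{i+1})=0$, so $y_{i}=y_{i+1}$ on $I_{i}$. More generally $y_{j}=y_{i}$ on $I_{i}$ for $j\geq i$, because the $I_{i}$ increase and $d_{i}(y_{i},y_{j})=0$ by the same argument.

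Next I define $x\in E^{I}$ by $x:=y_{i}$ on $I_{i}\setminus I_{i-1}$, with $I_{0}:=\emptyset$. This is consistent and gives $x=y_{i}$ on $I_{i}$ for every $i$. The closedness assumption on $S$ then guarantees $x\in S$. Since $x=y_{i}$ on $I_{i}$, the characterisation of the zero set gives $d_{i}(x,y_{i})=0$, and hence $d_{i}(x_{n},x)\leq d_{i}(x_{n},y_{i})+0\to0$ for every $i$. Proposition~\ref{pr:functional metric}(iii) now yields completeness of $d$.

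Separability follows directly from the converse part of Proposition~\ref{pr:functional metric}(iv), since~\eqref{eq:functional metric condition 3} is assumed.

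The main subtlety is the consistency step, namely that the limits $y_{i}$ glue together. This needs~\eqref{eq:functional metric condition 3}, which lets a $d_{i+1}$-limit also serve as a $d_{i}$-limit, combined with the exact characterisation of $\{d_{i}=0\}$, which lets me pass from pseudometric identifications to pointwise equality on $I_{i}$.
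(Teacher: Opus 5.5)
Your proposal is correct and follows essentially the same route as the paper: show that $d_{0}(x,y)=0$ forces $x=y$, reduce completeness to the hypothesis of Proposition~\ref{pr:functional metric}(iii), and use \eqref{eq:functional metric condition 3} together with the zero-set characterisation to see that the $d_{i}$-limits agree on $I_{i}$. You then glue these limits into an element of $S$ and obtain separability from part~(iv). One minor point: your appeal to completeness when choosing the $y_{i}$ is superfluous, since convergence relative to $d_{i}$ already supplies a limit, but it does no harm.
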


\begin{proof}
Since $\bigcup_{n\in\N} I_{n} = I$, any $x,y\in S$ with $d_{0}(x,y) = 0$ coincide. By Proposition~\ref{pr:functional metric}, it suffices to check that for any sequence $(x_{n})_{n\in\N}$ in $S$ that converges to some $\hat{x}_{i}\in S$ relative to $d_{i}$ for any $i\in\N$ there is $\hat{x}\in S$ such that $\lim_{n\uparrow\infty} d_{i}(x_{n},\hat{x}) = 0$ for all $i\in\N$.

But according to~\eqref{eq:functional metric condition 3} we must have $\hat{x}_{i+1} = \hat{x}_{i}$ on $I_{i}$ for any $i\in\N$. Thus, $\hat{x}\in E^{I}$ given by $\hat{x}(t) := \hat{x}_{i}(t)$ with $i\in\N$ such that $t\in I_{i}$ is well-defined and lies in $S$. In addition, $d_{i}(x_{n},\hat{x})$ $\leq d_{i}(x_{n},\hat{x}_{i})$ for any $i,n\in\N$, which yields the desired result.
\end{proof}

\section{Proofs of the preliminary and main results}\label{se:5}

\subsection{Proofs for stochastic Volterra integrals in Banach spaces}\label{se:5.1}

\begin{proof}[Proof of Proposition~\ref{pr:progressively measurable integral processes}]
It suffices to prove the claim when $U$ is of the form $U = \mathbbm{1}_{C}$ for some $C\in\mathcal{B}(I)\otimes\mathcal{A}$. In the general case, Corollary~\ref{co:approximation of product measurable processes} yields a sequence $(U^{(n)})_{n\in\N}$ of $E$-valued $\mathcal{B}(I)\otimes\mathcal{A}$-measurable maps on $I\times I\times\Omega$, each taking finitely many values, such that $|U_{t,s}^{(k)}| \leq |U_{t,s}|$ and $\lim_{n\uparrow\infty} U_{t,s}^{(n)} = U_{t,s}$ for all $k\in\N$ and $s,t\in I$.

Then it follows from dominated convergence that the sequence $(X^{(n)})_{n\in\N}$ of $E$-valued $\mathcal{A}$-measurable processes given by $X_{t}^{(n)}(\omega) := \int_{[0,t]}U_{t,s}^{(n)}(\omega)\,\mu(\mathrm{d}s)$ for all $n\in\N$ converges pointwise to $X$. So, $X$ is also $\mathcal{A}$-measurable and the assumption on $U$ is justified.

Next, we note that the set $\mathcal{C}$ of all $C\in\mathcal{B}(I)\otimes\mathcal{A}$ for which the process $I\times\Omega\rightarrow\Re_{+}$, $(t,\omega)\mapsto\int_{[0,t]}\mathbbm{1}_{C}(t,s,\omega)\,\mu(\mathrm{d}s)$ is $\mathcal{A}$-measurable is a $d$-system. As Fubini's theorem entails that the continuous process $I\times\Omega\rightarrow\Re_{+}$, $(t,\omega)\mapsto\int_{[0,t]}\mathbbm{1}_{A}(s,\omega)\,\mu(\mathrm{d}s)$ is $\mathbb{F}$-adapted for any $A\in\mathcal{A}$, we have $\mathcal{B}(I)\times\mathcal{A}\subset \mathcal{C}$. Thus, we obtain $\mathcal{C} = \mathcal{B}(I)\otimes\mathcal{A}$ from Dynkin's lemma.
\end{proof}

Before we turn to the proofs of Propositions~\ref{pr:approximation of stochastic Volterra integrals} and~\ref{pr:progressively measurable stochastic Volterra integrals} that involve stochastic integrals with values in $E$, we convince ourselves that $E$ is $2$-smooth if and only if the inequality~\eqref{eq:2-smoothness condition} is satisfied by a norm $\|\cdot\|$ that is equivalent to $|\cdot|$ and some $\hat{c}\geq 2$.

For this purpose, we shall assume that $E$ is just a normed space satisfying $E\neq\{0\}$. Then the \emph{modulus of smoothness} with respect to a norm $\|\cdot\|$ on $E$ is the modulus of continuity $\rho_{\|\cdot\|}:\Re_{+}\rightarrow [0,\infty]$ defined by
\begin{equation*}
\rho_{\|\cdot\|}(t) := \sup\bigg\{ \frac{\|x + ty\| + \|x- ty\|}{2} - 1\,\bigg|\,x,y\in E:\,\|x\| = \|y\| = 1\bigg\}
\end{equation*}
that satisfies $\rho_{\|\cdot\|}(t)\leq t$ for all $t\geq 0$, by the triangle inequality. Based on~\cite{Pis75}, the normed space $E$ is called $q$-smooth for $q\in ]1,2]$ if there are a norm $\|\cdot\|$ that is equivalent to the underlying norm $|\cdot|$ on $E$ such that
\begin{equation}\label{eq:q-smoothness condition 1}
\rho_{\|\cdot\|}(t) \leq ct^{q}\quad\text{for all $t\geq 0$}
\end{equation}
and some $c > 0$. To see that this estimate holds if the inequality~\eqref{eq:q-smoothness condition 2} below is valid for some $\hat{c}\geq 2$, let us recall two basic inequalities. Namely, if $p\geq 1$ and $y\geq 0$, then
\begin{equation}\label{eq:basic inequalities}
2^{1-p}y^{p} - x^{p} \leq (y - x)^{p} \leq y^{p} - x^{p}
\end{equation}
for all $x\in [0,y]$. For $ p = 1$ or $y = 0$ equality holds in~\eqref{eq:basic inequalities}. Otherwise, $\varphi:[0,y]\rightarrow\Re$ given by $\varphi(\tilde{x}) := y - \tilde{x}^{p} - (y - \tilde{x})^{p}$ is continuously differentiable and the only zero $\frac{1}{2}y$ of $\varphi'$ is a global maximum point of $\varphi$, as $\varphi'(x) > 0$ $\Leftrightarrow$ $x < \frac{1}{2}y$. So, $\varphi(0) \leq \varphi(x) \leq \varphi(\frac{1}{2}y)$, which is equivalent to~\eqref{eq:basic inequalities}.
 
\begin{Lemma}\label{le:smoothness of a normed space}
The normed space $E$ is $q$-smooth if there are a norm $\|\cdot\|$ that is equivalent to $|\cdot|$ and $\hat{c}\geq 2$ such that
\begin{equation}\label{eq:q-smoothness condition 2}
\|x + y\|^{q} + \|x-y\|^{q} \leq 2\|x\|^{q} + \hat{c}\|y\|^{q}\quad\text{for all $x,y\in E$,}
\end{equation}
in which case~\eqref{eq:q-smoothness condition 1} is valid for $c = \frac{1}{2}\hat{c}$. The converse holds if $E$ is complete.
\end{Lemma}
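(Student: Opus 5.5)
For the forward direction, fix $x,y\in E$ with $\|x\| = \|y\| = 1$ and $t\geq 0$, and apply~\eqref{eq:q-smoothness condition 2} to the pair $(x,ty)$. This gives
\begin{equation*}
\|x + ty\|^{q} + \|x - ty\|^{q} \leq 2 + \hat{c}t^{q}.
\end{equation*}
Set $a := \|x+ty\|$ and $b := \|x-ty\|$. By the power mean inequality (concavity of $u\mapsto u^{1/q}$), we obtain
\begin{equation*}
\frac{a+b}{2} \leq \Big(\frac{a^{q}+b^{q}}{2}\Big)^{\frac{1}{q}} \leq \Big(1 + \frac{\hat{c}}{2}t^{q}\Big)^{\frac{1}{q}}.
\end{equation*}
Since $q\geq 1$, Bernoulli's inequality yields $(1+u)^{1/q}\leq 1 + u/q\leq 1+u$ for $u\geq 0$. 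Hence $\frac{a+b}{2} - 1\leq \frac{\hat{c}}{2}t^{q}$. Taking the supremum over all such $x,y$ gives~\eqref{eq:q-smoothness condition 1} with $c = \frac{1}{2}\hat{c}$, and $E$ is $q$-smooth by definition.

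For the converse, assume $E$ is complete and that~\eqref{eq:q-smoothness condition 1} holds for some equivalent norm $\|\cdot\|$ and some $c>0$. We must produce a possibly different equivalent norm, together with $\hat{c}\geq 2$, satisfying~\eqref{eq:q-smoothness condition 2}. A direct route from the modulus estimate would work only for small $\|y\|/\|x\|$. Write $y = sy_0$ and $x$ with $\|x\| = 1$. For $s\leq 1$ we have $\|x\pm sy_0\|\leq 1 + \delta_\pm$, where $\delta_+ + \delta_-\leq 2cs^{q}$. However, summing the $q$th powers introduces first-order terms $q\delta_\pm$ whose sum is controlled, but whose squares are not, so the estimate does not close.

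The standard fix, which I would use, is Pisier's renorming theorem from~\cite{Pis75}, combined with the characterisation of $q$-smoothness via martingale type. A complete $q$-smooth space is of martingale type $q$, and Pisier shows that it then admits an equivalent norm $\|\cdot\|'$ for which the function $x\mapsto\|x\|'^{q}$ has a $(q-1)$-Hölder continuous derivative. Equivalently, there is a constant $C$ with
\begin{equation*}
\|x+y\|'^{q} + \|x-y\|'^{q} \leq 2\|x\|'^{q} + C\|y\|'^{q}.
\end{equation*}
This inequality is exactly the defining condition of ``$q$-uniformly smooth in the power-type inequality form'' in that paper. Setting $\hat{c} := \max\{C,2\}$ then gives~\eqref{eq:q-smoothness condition 2}.

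Completeness enters precisely here, because the martingale-type argument and the renorming rely on convergence of martingales in $E$. I therefore plan to cite~\cite{Pis75} for this step rather than reprove it.

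The main obstacle is this converse direction. The plan is to reduce it to the cited renorming theorem and, at most, to verify that the conclusion of that theorem can be stated in the form~\eqref{eq:q-smoothness condition 2}.
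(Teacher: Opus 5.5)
Your proof is correct and essentially the paper's. In the forward direction both arguments apply the hypothesis to $(x,ty)$ with $\|x\|=\|y\|=1$ and then pass from $\|x+ty\|^{q}+\|x-ty\|^{q}\leq 2+\hat{c}t^{q}$ to $\|x+ty\|+\|x-ty\|\leq 2+\hat{c}t^{q}$ by a scalar inequality: you use the power-mean and Bernoulli inequalities, which even give $c=\hat{c}/(2q)$, while the paper uses \eqref{eq:basic inequalities} with $2+\hat{c}t^{q}\geq 2$. For the converse the paper likewise just invokes Pisier's renorming theorem \cite[Theorem~3.1]{Pis75}.

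One correction to your aside, which does not affect the argument: the direct route from the modulus does close. Since $\bigl|\|x\pm sy_{0}\|-1\bigr|\leq s$, the second-order Taylor terms of $u\mapsto(1+u)^{q}$ are bounded by a multiple of $s^{2}\leq s^{q}$ for $s\leq\frac{1}{2}$, and the case $s\geq\frac{1}{2}$ is trivial. So the two-point inequality in fact holds for the very norm in the definition of $q$-smoothness, without completeness.
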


\begin{proof}
To check the first implication, let $x,y\in E$ with $\|x\| = \|y\| = 1$ and $t\geq 0$. Then it follows from~\eqref{eq:basic inequalities} that $\|x + ty\| \leq (2 + \hat{c}t^{q} - \|x - ty\|^{q})^{1/q} \leq 2 + \hat{c}t^{q} - \|x - ty\|$, since $\hat{y} := 2 + \hat{c}t^{q}$ satisfies $\hat{y}\leq 2^{1 - q}\hat{y}^{q}$. So~\eqref{eq:q-smoothness condition 1} is valid for $c = \frac{1}{2}\hat{c}$.

For the converse direction let $E$ be complete. Then Theorem~3.1 in~\cite{Pis75} yields a norm $\|\cdot\|$ that is equivalent to $|\cdot|$ and $c \geq 1$ such that $c^{-1}|x| \leq \|x\| \leq |x|$ and $\|x + y\|^{q} + \|x - y\|^{q}$ $\leq 2\|x\|^{q} + 2|y|^{q}$ for any $x,y\in E$. This shows~\eqref{eq:q-smoothness condition 2} for $\hat{c} = 2c^{q}$.
\end{proof}

\begin{proof}[Proof of Proposition~\ref{pr:approximation of stochastic Volterra integrals}]
We may assume that $U = 0$, as otherwise we replace $U^{(n)}$ by $U^{(n)} - U$ and use that $\int_{0}^{\cdot}U_{T,s}^{(n)} - U_{s}\,\mathrm{d}W_{s} = \int_{0}^{\cdot}U_{T,s}^{(n)}\,\mathrm{d}W_{s}$ $ -\, \int_{0}^{\cdot}U_{T,s}\,\mathrm{d}W_{s}$ a.s.~for each $n\in\N$. Then
\begin{equation*}
\P\bigg(\sup_{t\in [0,T]}\bigg|\int_{0}^{t\wedge\tau}U_{T,s}^{(n)}\,\mathrm{d}W_{s}\bigg|\geq\varepsilon\bigg) \leq \frac{w_{2}}{\varepsilon}\E\bigg[\int_{0}^{T\wedge\tau}\big|U_{T,s}^{(n)}\big|_{2}^{2}\,\mathrm{d}s\bigg]^{\frac{1}{2}}
\end{equation*}
for any $n\in\N$ and each $(\mathcal{F}_{t})_{t\in [0,T]}$-stopping time $\tau$, by the inequalities of Markov and Hölder and the moment estimate~\eqref{eq:stochastic integral estimate}. Consequently, we fix $\eta > 0$ and obtain that
\begin{equation*}
\P\bigg(\sup_{t\in [0,T]}\bigg|\int_{0}^{t}U_{T,s}^{(n)}\,\mathrm{d}W_{s}\bigg|\geq\varepsilon\bigg) \leq \P\bigg(\sup_{t\in [0,T]}\bigg|\int_{0}^{t\wedge\tau_{n}}U_{T,s}^{(n)}\,\mathrm{d}W_{s}\bigg|\geq\varepsilon\bigg) + \P(\tau_{n}\leq T)
\end{equation*}
for any $n\in\N$ and the $(\mathcal{F}_{t})_{t\in [0,T]}$-stopping time $\tau_{n} := \inf\{t\in [0,T]\,|\,\int_{0}^{t}\mathbbm{1}_{N_{n}^{c}}|U_{T,s}^{(n)}|_{2}^{2}\,\mathrm{d}s\geq \eta\}$, where $N_{n}:=\{\int_{0}^{T}|U_{T,s}^{(n)}|_{2}^{2}\,\mathrm{d}s = \infty\}$ lies in $\mathcal{F}_{0}$, as $\P(N_{n}) = 0$. Hence, from Theorem~21.4 in~\cite{Bau01} we infer that
\begin{equation*}
\limsup_{n\uparrow\infty}\P\bigg(\sup_{t\in [0,T]}\bigg|\int_{0}^{t}U_{T,s}^{(n)}\,\mathrm{d}W_{s}\bigg|\geq\varepsilon\bigg) \leq \frac{w_{2}}{\varepsilon}\lim_{n\uparrow\infty}\E\bigg[\int_{0}^{T\wedge\tau_{n}}\big|U_{T,s}^{(n)}\big|_{2}^{2}\,\mathrm{d}s\bigg]^{\frac{1}{2}} = 0,
\end{equation*}
since $[0,T]\times\Omega\rightarrow\Re_{+}$, $(t,\omega)\mapsto\int_{0}^{t}\mathbbm{1}_{N_{n}^{c}}(\omega)|U_{T,s}(\omega)|_{2}^{2}\,\mathrm{d}s$ is an $(\mathcal{F}_{t})_{t\in [0,T]}$-adapted, increasing and continuous process starting at $0$, which ensures that $\int_{0}^{T\wedge\tau_{n}}|U_{T,s}^{(n)}|_{2}^{2}\,\mathrm{d}s \leq \eta$ on $N_{n}^{c}$ and $\{\tau_{n} \leq T\}$ $= \{\int_{0}^{T}|U_{T,s}^{(n)}|_{2}^{2}\,\mathrm{d}s \geq \eta\}\cap N_{n}^{c}$ for each $n\in\N$.
\end{proof}

\begin{proof}[Proof of Proposition~\ref{pr:progressively measurable stochastic Volterra integrals}]
It suffices to show that there is a sequence $(X^{(n)})_{n\in\N}$ of $E$-valued $\mathbb{F}$-progressively measurable processes such that $\lim_{n\uparrow\infty}\P(|X_{t}^{(n)} - \int_{0}^{t}U_{t,s}\,\mathrm{d}W_{s}|\geq\varepsilon) = 0$ for a.e.~$t\in I$ and each $\varepsilon > 0$.

Indeed, in this case $\lim_{n\uparrow\infty}\sup_{m\in\N:\,m\geq n}\int_{0}^{T}\P(|X_{t}^{(n)} - X_{t}^{(m)}|\geq\varepsilon)\,\mathrm{d}t = 0$ for any $T\in I$, by dominated convergence. As $E$ is complete and $\mathcal{A}$ is a $\sigma$-field, there exists an $E$-valued $\mathbb{F}$-progressively measurable process $X$ such that $
\lim_{n\uparrow\infty}\int_{0}^{T}\P\big(|X_{t}^{(n)} - X_{t}| \geq \varepsilon\big)\,\mathrm{d}t = 0$ for all $T\in I$ and $\varepsilon > 0$.

By Theorem~20.7 in~\cite{Bau01}, which extends to Borel measurable maps with values in a separable Banach space, a subsequence of $(X_{t}^{(n)})_{n\in\N}$ converges in probability to $X_{t}$ for a.e.~$t\in I$. From this~\eqref{eq:weak modification of a stochastic Volterra integral} follows.

So, let us first prove the stronger statement under the additional assumption that $\lim_{t\downarrow s}\P(\int_{0}^{s}|U_{t,r} - U_{s,r}|_{2}^{2}\,\mathrm{d}r \geq \varepsilon) = 0$ for any $s\in I$ with $s < \sup I$ and $\varepsilon > 0$. If $\sup I\notin I$, then we take a strictly increasing sequence $(T_{n})_{n\in\N}$ in $I$ such that $\sup_{n\in\N} T_{n} = \sup I$. Otherwise, we set $T_{n} := \max I$ for all $n\in\N$.

For each $n\in\N$ let $\mathbb{T}_{n}$ be a partition of $[0,T_{n}]$ of the form $\mathbb{T}_{n} = \{t_{0,n},\dots,t_{N_{n},n}\}$ with $N_{n}\in\N$ and $t_{0,n},\dots,t_{N_{n},n}\in [0,T_{n}]$ satisfying $0 = t_{0,n} < \cdots < t_{N_{n},n} = T_{n}$ and whose mesh $\max_{i=0,\dots,N_{n} - 1} t_{i+1,n} - t_{i,n}$ is denoted by $|\mathbb{T}_{n}|$.

Then $U^{(n)}:I\times I\times\Omega\rightarrow\mathcal{L}_{2}(\ell^{2},E)$ given by $U_{t,s}^{(n)} := \sum_{j=0}^{N_{n} - 1} U_{t_{j+1,n},s}\mathbbm{1}_{ ]t_{j,n},t_{j+1,n}]}(t)$ is $\mathcal{B}(I)\otimes\mathcal{A}$-measurable and $\int_{0}^{t}U_{t,s}^{(n)}\,\mathrm{d}W_{s} = \sum_{j=0}^{N_{n} - 1}\mathbbm{1}_{]t_{j,n},t_{j+1,n}]}(t)\int_{0}^{t\wedge t_{j+1,n}}U_{t_{j+1,n},s}\,\mathrm{d}W_{s}$ a.s. for any $t\in I$. Hence, the process 
\begin{equation}\label{eq:specific stochastic integral}
I\times\Omega\rightarrow E,\quad (t,\omega)\mapsto\int_{0}^{t}U_{t,s}^{(n)}\,\mathrm{d}W_{s}(\omega)
\end{equation}
has an $\mathbb{F}$-progressively measurable modification. Now we may certainly suppose that $\lim_{n\uparrow\infty}|\mathbb{T}_{n}| = 0$. Then condition~\eqref{eq:condition for the approximation of stochastic Volterra integrals} holds and Proposition~\ref{pr:approximation of stochastic Volterra integrals} gives~\eqref{eq:approximation of stochastic Volterra integrals} for any $T\in I$ and $\varepsilon > 0$, which is more than required.

Secondly, we show the stronger statement when $U$ is bounded. So, let $(a_{n})_{n\in\N}$ be a strictly decreasing zero sequence in $]0,\infty[$ and for any $n\in\N$ define a $\mathcal{B}(I)\otimes\mathcal{A}$-measurable map $U^{(n)}:I\times I\times\Omega\rightarrow\mathcal{L}_{2}(\ell^{2},E)$ by $U_{t,s}^{(n)} := \frac{1}{a_{n}}\int_{(t - a_{n})^{+}}^{t}U_{\tilde{s},s}\,\mathrm{d}\tilde{s}$.

Since $\lim_{t\downarrow s}\int_{0}^{s}|U_{t,r}^{(n)} - U_{s,r}^{(n)}|_{2}^{2}\,\mathrm{d}r = 0$ for each $s\in I$ with $s < \sup I$, the process~\eqref{eq:specific stochastic integral} has an $\mathbb{F}$-progressively measurable weak modification, by the preceding part of the proof, and $|U_{t,s}^{(n)} - U_{t,s}|_{2} \leq \frac{1}{a_{n}}\int_{t-a_{n}}^{t}|U_{\tilde{s},s} - U_{t,s}|_{2}\,\mathrm{d}\tilde{s}$ for all $s,t\in I$ with $t \geq a_{n}$. The set $C$ of all $(t,s,\omega)\in I\times I\times\Omega$ with
\begin{equation*}
\limsup_{n\uparrow\infty}\frac{1}{a_{n}}\int_{(t-a_{n})^{+}}^{t}|U_{\tilde{s},s}(\omega) - U_{t,s}(\omega)|_{2}\,\mathrm{d}\tilde{s} > 0
\end{equation*}
lies in $\mathcal{B}(I)\otimes\mathcal{A}$ and $\int_{I}\int_{I}\E[\mathbbm{1}_{C}(t,s,\cdot)]\,\mathrm{d}s\,\mathrm{d}t = 0$, by Fubini's theorem and Lebesgue's differentiation theorem. So, $\lim_{n\uparrow\infty} U_{t,s}^{(n)} = U_{t,s}$ a.s.~for a.e.~$(t,s)\in I\times I$, and dominated convergence yields $\lim_{n\uparrow\infty}\int_{0}^{t}|U_{t,s}^{(n)} - U_{t,s}|_{2}^{2}\,\mathrm{d}s = 0$ a.s.~for a.e.~$t\in I$. By Proposition~\ref{pr:approximation of stochastic Volterra integrals}, $(\int_{0}^{t}U_{t,s}^{(n)}\,\mathrm{d}W_{s})_{n\in\N}$ converges in probability to $\int_{0}^{t}U_{t,s}\,\mathrm{d}W_{s}$ for a.e.~$t\in I$.

Finally, we drop all the preceding hypotheses and note for each $n\in\N$ that the process $I\times\Omega\rightarrow E$, $(t,\omega)\mapsto\int_{0}^{t}U_{t,s}\mathbbm{1}_{\{|U_{t,s}|_{2}\leq n\}}\,\mathrm{d}W_{s}(\omega)$ admits an $\mathbb{F}$-progressively measurable weak modification, by what we have just shown.

Because dominated convergence gives $\lim_{n\uparrow\infty}\int_{0}^{t}|U_{t,s}|_{2}^{2}\mathbbm{1}_{\{|U_{t,s}|_{2} > n\}}\,\mathrm{d}s = 0$ a.s.~for each $t\in I$, Proposition~\ref{pr:approximation of stochastic Volterra integrals} entails that $(\int_{0}^{t}U_{t,s}\mathbbm{1}_{\{|U_{t,s}|_{2}\leq n\}}\,\mathrm{d}W_{s})_{n\in\N}$ converges in probability to $\int_{0}^{t}U_{t,s}\,\mathrm{d}W_{s}$ for each $t\in I$, which completes the proof.
\end{proof}

\subsection{Proofs for admissible coefficients}\label{se:5.2}

\begin{proof}[Proof of Proposition~\ref{pr:admissible maps 1}]
For any $D$-valued $\mathbb{F}$-progressively measurable process $X$ such that $X_{s}\in\mathcal{D}$ for all $s\in I$ the $\mathcal{L}(X_{s})$-integrability of $G_{t,s}(X_{s}(\omega),\cdot)(\omega)$ is ensured for any $s,t\in I$ and $\omega\in\Omega$. So, the measure transformation formula yields that
\begin{equation}\label{eq:integral representation}
F_{t,s}(X_{s})(\omega) = \int_{\Omega}G_{t,s}(X_{s}(\omega),X_{s}(\omega'))(\omega)\,\mathbb{P}(\mathrm{d}\omega').
\end{equation}
This representation entails the $\mathcal{B}(I)\otimes\mathcal{A}$-measurability of the map~\eqref{eq:measurable map}, as we now show by using Dynkin's lemma, Corollary~\ref{co:approximation of product measurable processes} and dominated convergence.

First, the system of all sets $C\in\mathcal{B}(I)\otimes\mathcal{A}\otimes\mathcal{B}(D)\otimes\mathcal{B}(D)$ for which the function $I\times I\times\Omega\rightarrow [0,1]$, $(t,s,\omega)\mapsto\int_{\Omega}\mathbbm{1}_{C}(t,s,\omega,X_{s}(\omega),X_{s}(\omega'))\,\mathbb{P}(\mathrm{d}\omega')$ is $\mathcal{B}(I)\otimes\mathcal{A}$-measurable is a $d$-system. Further,
\begin{equation*}
\int_{\Omega}\mathbbm{1}_{C}(t,s,\cdot,X_{s},X_{s}(\omega'))\,\mathbb{P}(\mathrm{d}\omega') = \mathbbm{1}_{B_{1}}(t)\mathbbm{1}_{A}(s,\cdot)\mathbbm{1}_{B_{2}}(X_{s})\P(X_{s}\in B_{3})
\end{equation*}
for all $s,t\in I$ whenever $C$ is a set in $I\times I\times\Omega\times D\times D$ of the form $C = B_{1}\times A\times B_{2}\times B_{3}$ for some $B_{1}\in\mathcal{B}(I)$, $A\in\mathcal{A}$ and $B_{2},B_{3}\in\mathcal{B}(D)$. Here, the function $I\times\Omega\rightarrow [0,1]$, $(s,\omega)\mapsto\P(X_{s}\in B_{3})$ is $\mathcal{B}(I)\otimes\mathcal{F}_{0}$-measurable, by Fubini's theorem.

Hence, as $X^{-1}(B_{2})\in\mathcal{A}$ and $\mathcal{B}(I)\otimes\mathcal{F}_{0}\subset\mathcal{A}$, Dynkin's lemma and the linearity of the Bochner integral imply that for every map $H:I\times I\times\Omega\times D\times D\rightarrow\tilde{E}$ that is $\mathcal{B}(I)\otimes\mathcal{A}\otimes\mathcal{B}(D)\otimes\mathcal{B}(D)$-measurable and takes finitely many values, the map
\begin{equation*}
I\times I\times\Omega\rightarrow\tilde{E},\quad (t,s,\omega)\mapsto\int_{\Omega}H_{t,s}(X_{s}(\omega),X_{s}(\omega'))(\omega)\,\mathbb{P}(\mathrm{d}\omega')
\end{equation*}
is $\mathcal{B}(I)\otimes\mathcal{A}$-measurable. Next, Corollary~\ref{co:approximation of product measurable processes} yields a sequence $(G^{(n)})_{n\in\N}$ of $\tilde{E}$-valued maps on $I\times I\times\Omega\times D\times D$ that are measurable relative to the same $\sigma$-field as $H$ and take finitely many values such that $|G^{(k)}| \leq |G|$ for all $k\in\N$ and $(G^{(n)})_{n\in\N}$ converges pointwise to $G$. By dominated convergence,
\begin{equation*}
\lim_{n\uparrow\infty}\int_{\Omega}G_{t,s}^{(n)}(X_{s}(\omega),X_{s}(\omega'))(\omega)\,\mathbb{P}(\mathrm{d}\omega') = \int_{\Omega}G_{t,s}(X_{s}(\omega),X_{s}(\omega'))(\omega)\,\mathbb{P}(\mathrm{d}\omega')
\end{equation*}
for any $s,t\in I$ and $\omega\in\Omega$. Consequently, the map~\eqref{eq:measurable map} is product measurable as pointwise limit of a sequence of product measurable maps when $I\times\Omega$ is equipped with $\mathcal{A}$.

Finally, for each $t\in I$ any $\mathbb{F}$-progressively measurable weak modification $\tilde{X}$ of $X$ satisfies $\tilde{X}_{s}\in\mathcal{D}$ and $F_{t,s}(X_{s}) = \int_{\Omega}G_{t,s}(X_{s},\tilde{X}_{s}(\omega'))\,\mathbb{P}(\mathrm{d}\omega') = F_{t,s}(\tilde{X}_{s})$ a.s.~for a.e.~$s\in I$, by the representation~\eqref{eq:integral representation}.
\end{proof}

\begin{proof}[Proof of Proposition~\ref{pr:admissible maps 2}]
For every $D$-valued $\mathbb{F}$-progressively measurable process $X$ that satisfies $\mathcal{L}(X_{s},\alpha_{s})\in\mathcal{P}$ for all $s\in I$ the map
\begin{equation*}
H_{X}:I\times I\times\Omega\rightarrow I\times I\times\Omega\times D\times\mathcal{P},\quad (t,s,\omega)\mapsto \big(t,s,\omega,X_{s}(\omega),\mathcal{L}(X_{s},\alpha_{s})\big)
\end{equation*}
is product measurable once we equip $I\times\Omega$ with the progressive $\sigma$-field $\mathcal{A}$. In fact, for any $B_{1}\in\mathcal{B}(I)$, $A\in\mathcal{A}$, $B_{2}\in\mathcal{B}(D)$ and $B_{3}\in\mathcal{B}(\mathcal{P})$, the preimage $C$ of $B_{1}\times A\times B_{2}\times B_{3}$ under $H_{X}$ is of the form
\begin{equation*}
C = B_{1}\times A\cap X^{-1}(B_{2})\cap\big(\{s\in I\,|\,\mathcal{L}(X_{s},\alpha_{s})\in B_{3}\}\times\Omega\big),
\end{equation*}
which entails that $C\in\mathcal{B}(I)\otimes\mathcal{A}$, as $X^{-1}(B_{2})\in\mathcal{A}$ and the map $I\rightarrow\mathcal{P}$, $s\mapsto\mathcal{L}(X_{s},\alpha_{s})$ is Borel measurable. Thus, let $X_{s}\in\mathcal{D}$ for all $s\in I$, in which case $\mathcal{L}(X_{s},\alpha_{s})\in\mathcal{P}$ holds for any $s\in I$. Then the map~\eqref{eq:measurable map} is $\mathcal{B}(I)\otimes\mathcal{A}$-measurable as composition of $G$ and $H_{X}$.

Eventually, for each $t\in I$ and every $\mathbb{F}$-progressively measurable weak modification $\tilde{X}$ of $X$ we have $\tilde{X}_{s}\in\mathcal{D}$ and $F_{t,s}(X_{s}) = G_{t,s}(X_{s},\mathcal{L}(\tilde{X}_{s},\alpha_{s})) = F_{t,s}(\tilde{X}_{s})$ a.s.~for a.e.~$s\in I$, as required.
\end{proof}

\subsection{Derivation of the first type of solutions}\label{se:5.3}

First, we consider the space in which the solutions to~\eqref{eq:stochastic Volterra equation} will lie in. In this regard, $E$ does not need to be $2$-smooth and $p\in [1,2[$ is possible.

\begin{Lemma}\label{le:completely metrisable space}
The linear space $\mathcal{L}_{loc}^{\infty,p}(I\times\Omega,E)$, endowed with the topology of convergence relative to the seminorm~\eqref{eq:seminorm for processes} for each $T\in I$, is completely pseudometrisable.
\end{Lemma}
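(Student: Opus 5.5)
We prove the lemma with the metrical decomposition principle of Corollary~\ref{co:metrical decomposition}, after first settling completeness on a single bounded interval. If $\sup I\in I$, we set $T_{i}:=\max I$ for all $i\in\N$; otherwise we choose a strictly increasing sequence $(T_{i})_{i\in\N}$ in $I$ with $\sup_{i}T_{i}=\sup I$. For $i\in\N$ and $X,Y\in\mathcal{L}_{loc}^{\infty,p}(I\times\Omega,E)$ define the pseudometric
\begin{equation*}
d_{i}(X,Y):=\esssup_{t\in[0,T_{i}]}\E\big[|X_{t}-Y_{t}|^{p}\big]^{\frac{1}{p}}.
\end{equation*}
Since the seminorms~\eqref{eq:seminorm for processes} are increasing in $T$, the topology in question is the one generated by $(d_{i})_{i\in\N}$, and $d_{i}\leq d_{i+1}$, so~\eqref{eq:functional metric condition 3} holds by Remark~\ref{re:functional metric}. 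With $F(a):=\sum_{n=1}^{\infty}2^{-n}\min\{1,a_{n}\}$ from Example~\ref{ex:functional metric}, Proposition~\ref{pr:functional metric} shows that $d:=F\circ d_{0}$ is a pseudometric inducing exactly this convergence, by part~(i) of that proposition. We cannot invoke Corollary~\ref{co:metrical decomposition} verbatim, because $d_{i}(X,Y)=0$ does not force $X=Y$ on $[0,T_{i}]$. Instead we will verify the hypothesis of Proposition~\ref{pr:functional metric}(iii) directly, gluing the limits as in the proof of that corollary.

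\emph{Completeness of each $d_{i}$.} Let $(X^{(n)})_{n\in\N}$ be $d_{i}$-Cauchy. Pick $n_{1}<n_{2}<\cdots$ with $d_{i}(X^{(n_{k})},X^{(n_{k+1})})\leq 2^{-k}$. For a.e.\ $t\in[0,T_{i}]$, Minkowski's inequality gives
\begin{equation*}
\E\Big[\Big(\sum_{k}|X_{t}^{(n_{k+1})}-X_{t}^{(n_{k})}|\Big)^{p}\Big]^{\frac{1}{p}}\leq 1,
\end{equation*}
so the series converges a.s. We then define
\begin{equation*}
X_{t}:=\lim_{k\uparrow\infty}X_{t}^{(n_{k})}\quad\text{on the set}\quad\Big\{(t,\omega)\,\Big|\,\lim_{k\uparrow\infty}X_{t}^{(n_{k})}(\omega)\text{ exists}\Big\}
\end{equation*}
and $X_{t}:=0$ off it. That set lies in $\mathcal{A}$ because $E$ is complete and separable and each $X^{(n_{k})}$ is $\mathcal{A}$-measurable, so $X$ is $\mathbb{F}$-progressively measurable. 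Fatou's lemma then yields $\E[|X_{t}^{(n)}-X_{t}|^{p}]^{1/p}\leq\liminf_{k}\E[|X_{t}^{(n)}-X_{t}^{(n_{k})}|^{p}]^{1/p}$ for a.e.\ $t\in[0,T_{i}]$. Taking the essential supremum gives $d_{i}(X^{(n)},X)\to 0$ and local essential boundedness on $[0,T_{i}]$. The main technical point is this last step: the exceptional null sets of times depend on the pairs $(n,k)$. There are only countably many such pairs, so their union is still a Lebesgue null set, and the essential supremum ignores it.

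\emph{Gluing.} Suppose $(X^{(n)})_{n\in\N}$ is Cauchy for every $d_{i}$, and let $\hat{X}^{(i)}$ be the limit obtained on $[0,T_{i}]$. Define $\hat{X}_{t}:=\hat{X}^{(i)}_{t}$ for $t\in\,]T_{i-1},T_{i}]$, with $T_{0}:=-1$. This process is progressively measurable, because each piece is the restriction of a progressive process to a Borel time set. Since $d_{i}(\hat{X}^{(i)},\hat{X}^{(j)})=0$ for $j\geq i$, we get $d_{i}(X^{(n)},\hat{X})\leq d_{i}(X^{(n)},\hat{X}^{(i)})+d_{i}(\hat{X}^{(i)},\hat{X})$. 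The second term is a maximum over finitely many pieces $]T_{j-1},T_{j}]\subset[0,T_{i}]$, $j\leq i$, of $\esssup\E[|\hat{X}^{(i)}_{t}-\hat{X}^{(j)}_{t}|^{p}]^{1/p}$, and each of these vanishes, so the second term is $0$. Hence $\hat{X}\in\mathcal{L}_{loc}^{\infty,p}(I\times\Omega,E)$ and $d_{i}(X^{(n)},\hat{X})\to0$ for all $i$. Proposition~\ref{pr:functional metric}(ii),(iii) then give completeness of $d$, which proves that the space is completely pseudometrisable.
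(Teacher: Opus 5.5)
Your argument is correct. It reaches the conclusion by a somewhat different route from the paper.

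\textbf{The paper's route.} The paper reduces to compact $I$ simply by citing Corollary~\ref{co:metrical decomposition}. It then applies the Riesz--Fischer theorem in $\mathcal{L}^{p}(\Omega,E)$ separately at each time $t$ outside a Lebesgue null set. This yields limits $\tilde{X}_{t}$ with no joint measurability in $(t,\omega)$. A progressively measurable limit $X$ is recovered only through a second completeness result, namely that of $\mathcal{L}_{loc}^{p}(I\times\Omega,E)$, an $L^{p}$-space over the finite measure $\mathrm{d}t\otimes\P$ on $\mathcal{A}$. Finally, $X$ is identified as a weak modification of $\tilde{X}$.

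\textbf{Your route.} You run the Riesz--Fischer subsequence argument directly on $I\times\Omega$. Along a fast subsequence you get a.s.\ convergence for a.e.\ $t$, and the convergence set lies in $\mathcal{A}$. So your limit is progressive from the outset, and neither the auxiliary space nor the identification step is needed. The price is the bookkeeping of countably many null time sets, which you handle correctly.

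\textbf{The reduction step.} Your reduction is also more careful than the paper's. As you observe, Corollary~\ref{co:metrical decomposition} literally requires $d_{i}(x,y)=0\Leftrightarrow x=y$ on $I_{i}$ and produces a metric. Both fail for these pseudometrics on processes. Your manual gluing over $]T_{i-1},T_{i}]$ is exactly the proof of that corollary carried out in the pseudometric setting, and it repairs the gap.

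\textbf{Two cosmetic points.}
\begin{itemize}
\item In the final step you really use parts~(i) and~(ii) of Proposition~\ref{pr:functional metric} together with the glued limit $\hat{X}$, rather than part~(iii).
\item If you want each $d_{i}$ to be complete on $\mathcal{L}_{loc}^{\infty,p}(I\times\Omega,E)$ itself, replace the single-interval limit by $\mathbbm{1}_{[0,T_{i}]}(t)X_{t}$. Beyond $T_{i}$ your $X$ need not have locally essentially bounded $p$th moments, and the cut-off guarantees it lies in the space.
\end{itemize}
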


\begin{proof}
By Corollary~\ref{co:metrical decomposition}, we may assume that $I$ is compact and take a Cauchy sequence $(X^{(n)})_{n\in\N}$ in $\mathcal{L}_{loc}^{\infty,p}(I\times\Omega,E)$. Then for any $k\in\N$ there is $n_{k}\in\N$ such that for any $m,n\in\N$ with $m\wedge n\geq n_{k}$ there is a Lebesgue null set $N_{k,m,n}\in\mathcal{B}(I)$ such that $\E[|X_{t}^{(n)} - X_{t}^{(m)}|^{p}]^{1/p}$ $\leq \frac{1}{k}$ for all $t\in N_{k,m,n}^{c}$.

Thus, $N:=\bigcup_{\substack{k,m,n\in\N:\,m\wedge n\geq n_{k}}} N_{k,m,n}$ has Lebesgue measure zero and $(X_{t}^{(n)})_{n\in\N}$ is a Cauchy sequence in $\mathcal{L}^{p}(\Omega,E)$ for any $t\in N^{c}$. By the Riesz-Fischer Theorem, there is $\tilde{X}_{t}\in\mathcal{L}^{p}(\Omega,E)$ to which  $(X_{t}^{(n)})_{n\in\N}$ converges in $p$th mean, and $\E[|X_{t}^{(n)} - \tilde{X}_{t}|^{p}]^{1/p} \leq \frac{1}{k}$ for all $k,n\in\N$ with $n\geq n_{k}$.

Hence, $\esssup_{t\in I}\E[|\tilde{X}_{t}|^{p}] < \infty$ and $\lim_{n\uparrow\infty}\esssup_{t\in I}\E[|X_{t}^{(n)} - \tilde{X}_{t}|^{p}] = 0$. As $(X^{(n)})_{n\in\N}$ is also Cauchy sequence in $\mathcal{L}_{loc}^{p}(I\times\Omega,E)$, we have $\lim_{n\uparrow\infty}\int_{I}\E[|X_{t}^{(n)} - X_{t}|^{p}]\,\mathrm{d}t = 0$ for some $X\in\mathcal{L}_{loc}^{p}(I\times \Omega,E)$. Since $X$ must be a weak modification of $\tilde{X}$, we conclude that $X\in\mathcal{L}_{loc}^{\infty,p}(I\times\Omega,E)$ and $\lim_{n\uparrow\infty}\esssup_{t\in I}\E[|X_{t}^{(n)} - X_{t}|^{p}] = 0$.
\end{proof}

By Propositions~\ref{pr:progressively measurable integral processes} and~\ref{pr:progressively measurable stochastic Volterra integrals}, for each $X\in\mathcal{L}_{loc}^{\infty,p}(I\times\Omega,E)$ that satisfies $X_{s}\in\mathcal{D}$ for a.e.~$s\in I$ and for which the event~\eqref{eq:null event} is null for a.e.~$t\in I$, we may choose an $E$-valued $\mathbb{F}$-progressively measurable process $\Psi(X)$ such that
\begin{equation}\label{eq:stochastic Volterra integral operator}
\Psi_{t}(X) = \int_{0}^{t}\B_{t,s}(X_{s})\,\mathrm{d}s + \int_{0}^{t}\Sigma_{t,s}(X_{s})\,\mathrm{d}W_{s}\quad\text{a.s.}\quad\text{for a.e.~$t\in I$.}
\end{equation}

\begin{Lemma}\label{le:stochastic Volterra integral operator 1}
Let~\eqref{co:1} hold and $X\in\mathcal{L}_{loc}^{\infty,p} (I\times\Omega,E)$ satisfy $X_{s}\in\mathcal{D}$ for a.e.~$s\in I$. Then the event~\eqref{eq:null event} is null for a.e.~$t\in I$ and $\Psi(X)\in\mathcal{L}_{loc}^{\infty,p}(I\times\Omega,E)$.
\end{Lemma}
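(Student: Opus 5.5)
The plan is to reduce everything to Proposition~\ref{pr:moment estimates for stochastic Volterra processes}(i), applied to the $\mathcal{B}(I)\otimes\mathcal{A}$-measurable maps $(t,s,\omega)\mapsto\B_{t,s}(X_{s})(\omega)$ and $(t,s,\omega)\mapsto\Sigma_{t,s}(X_{s})(\omega)$. Their measurability comes from the admissibility of $\B$ and $\Sigma$ (Definition~\ref{de:admissible maps}). One caveat: admissibility is formulated for processes with $X_{s}\in\mathcal{D}$ for \emph{all} $s$. I will therefore first replace $X$ by the progressively measurable weak modification $X\mathbbm{1}_{I\setminus N}+\hat{Y}\mathbbm{1}_{N}$. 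Here $N$ is the Lebesgue null Borel set of those $s$ with $X_{s}\notin\mathcal{D}$, or, if that set is not Borel, a Borel null superset of it. The element $\hat{Y}\in\mathcal{D}$ is fixed and chosen $\mathcal{F}_{0}$-measurable, if necessary after modifying it on a null set, so that progressive measurability is preserved. Changing $X_{s}$ only for $s$ in a null set does not affect any of the $\mathrm{d}s$-integrals below, so this reduction is harmless.

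Next, I verify the growth hypothesis~\eqref{eq:abstract affine growth condition}. Condition~\eqref{co:1} gives $\E[|\B_{t,s}(X_{s})|^{p}]^{1/p}\le k_{1}(t,s)+l_{1}(t,s)\E[|X_{s}|^{p}]^{1/p}$, together with the analogous bound for $\Sigma$ with $k_{2}$ and $l_{2}$, for $s<t$ with $X_{s}\in\mathcal{D}$. This covers a.e.\ $s$, which is all that is needed inside the integrals. Let $k_{0}$ and $l$ be as in~\eqref{eq:special integral function} and~\eqref{eq:special transformed kernel}. Since $k_{1}\in\mathcal{K}_{\infty}^{1}$ and $k_{2}\in\mathcal{K}_{\infty}^{2}$, the function $k_{0}$ is locally essentially bounded. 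Since $l\in\mathcal{K}^{2}\subset\mathcal{K}_{\infty}^{2}$ and $c_{T}:=\esssup_{s\le T}\E[|X_{s}|^{p}]<\infty$, we get
\begin{equation*}
\int_{0}^{t}l(t,s)^{2}\E[|X_{s}|^{p}]^{2/p}\,\mathrm{d}s\le c_{T}^{2/p}\int_{0}^{t}l(t,s)^{2}\,\mathrm{d}s<\infty
\end{equation*}
for a.e.\ $t\le T$. Applying Proposition~\ref{pr:moment estimates for stochastic Volterra processes}(i) at every such $t$ then shows that the event~\eqref{eq:null event} is null for a.e.\ $t\in I$.

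The same proposition also yields
\begin{equation*}
\E\bigg[\bigg|\int_{0}^{t}\B_{t,s}(X_{s})\,\mathrm{d}s+\int_{0}^{t}\Sigma_{t,s}(X_{s})\,\mathrm{d}W_{s}\bigg|^{p}\bigg]^{\frac1p}\le k_{0}(t)+c_{T}^{1/p}\bigg(\int_{0}^{t}l(t,s)^{2}\,\mathrm{d}s\bigg)^{\frac12}.
\end{equation*}
Because the nullity of~\eqref{eq:null event} holds for a.e.\ $t$, the process $\Psi(X)$ from~\eqref{eq:stochastic Volterra integral operator} is well defined and progressively measurable, by Propositions~\ref{pr:progressively measurable integral processes} and~\ref{pr:progressively measurable stochastic Volterra integrals}. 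It agrees a.s.\ with the left-hand side for a.e.\ $t$, so $\esssup_{t\le T}\E[|\Psi_{t}(X)|^{p}]$ is bounded by $\esssup_{t\le T}\big(k_{0}(t)+c_{T}^{1/p}(\int_{0}^{t}l(t,s)^{2}\,\mathrm{d}s)^{1/2}\big)^{p}<\infty$. Hence $\Psi(X)\in\mathcal{L}_{loc}^{\infty,p}(I\times\Omega,E)$.

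I expect the only delicate point to be the measurability bookkeeping rather than the estimates themselves. Concretely, this means making sure that the ``for a.e.\ $t$'' sets are Borel, which Fubini supplies as noted after~\eqref{eq:null event}. It also means checking that $l\in\mathcal{K}^{2}$, which holds by the cone properties stated after Definition~\ref{de:convex cones of non-negative kernels}, since $\int_{0}^{t}l_{1}(t,\tilde s)\,\mathrm{d}\tilde s$ is locally essentially bounded.
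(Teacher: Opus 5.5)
Your argument is the paper's: local essential boundedness of $k_{0}$ and $l\in\mathcal{K}^{2}$ give the hypotheses of Proposition~\ref{pr:moment estimates for stochastic Volterra processes}(i) for a.e.~$t$, and the resulting bound $k_{0}(t)+c_{T}^{1/p}(\int_{0}^{t}l(t,s)^{2}\,\mathrm{d}s)^{1/2}$ gives $\Psi(X)\in\mathcal{L}_{loc}^{\infty,p}(I\times\Omega,E)$. One point in your preliminary reduction, which the paper does not make, needs correcting: a general $\hat{Y}\in\mathcal{D}$ cannot be made $\mathcal{F}_{0}$-measurable by changing it on a null set (if $\mathcal{F}_{0}$ is the augmented trivial $\sigma$-field, only a.s.~constant vectors qualify), so the step requires $\mathcal{D}$ to contain such an element, e.g.~$\hat{Y}=0$ when $\mathcal{L}^{p}(\Omega,E)\subset\mathcal{D}$, as in all later applications; otherwise you are in the same position as the paper, which tacitly ignores the time null set where $X_{s}\notin\mathcal{D}$.
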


\begin{proof}
Both claims follow from Proposition~\ref{pr:moment estimates for stochastic Volterra processes}. Namely, $k_{0}$ is locally essentially bounded and $l$ lies in $\mathcal{K}^{2}$. Hence, $k_{0}(t) < \infty$ and $l(t,\cdot)\E[|X|^{p}]^{1/p}$ is square-integrable for a.e.~$t\in I$, which entails the first assertion.

Moreover, $\esssup_{t\in [0,T]} \E\big[|\Psi_{t}(X)|^{p}\big]^{1/p} \leq c_{0} + c_{1}\esssup_{t\in [0,T]} \E\big[|X_{t}|^{p}\big]^{1/p}$ for any $T\in I$ whenever $c_{0},c_{1}\geq 0$ satisfy $k_{0}(t) \leq c_{0}$ and $\int_{0}^{t}l(t,s)^{2}\,\mathrm{d}s \leq c_{1}^{2}$ for a.e.~$t\in [0,T]$.
\end{proof}

\begin{proof}[Proof of Proposition~\ref{pr:growth estimates for solutions and Picard iterations 1}]
We notice that the required condition~\eqref{eq:condition for a moment estimate for stochastic Volterra processes} in Proposition~\ref{pr:moment estimates for stochastic Volterra processes} for the bound~\eqref{eq:growth estimate for solutions 1} to be valid holds, since $\lim_{n\uparrow\infty}\int_{0}^{t}\R_{l^{2},n}(t,s)\,\mathrm{d}s = 0$ for a.e.~$t\in I$, by the local essential boundedness of the function series $\mathrm{I}_{l}$ given by~\eqref{eq:function series}.

Now we assume that $\mathcal{L}^{p}(\Omega,E)\subset\mathcal{D}$. Then it follows inductively from Lemma~\ref{le:stochastic Volterra integral operator 1} and Fubini's theorem that $I_{n}$ is Borel and has full measure and $X^{(n)}$ is well-defined and belongs to $\mathcal{L}_{loc}^{\infty,p}(I\times\Omega,E)$ for any $n\in\N$. From Proposition~\ref{pr:moment estimates for stochastic Volterra processes} and Minkowski's inequality we obtain that
\begin{equation}\label{eq:auxiliary moment estimate for Picard iterations}
\E\big[|X_{t}^{(n)} - \xi_{t}|^{p}\big]^{\frac{1}{p}} \leq k_{0,l,\xi}(t) + \bigg(\int_{0}^{t}l(t,s)^{2}\E\big[|X_{s}^{(n-1)} - \xi_{s}|^{p}\big]^{\frac{2}{p}}\,\mathrm{d}s\bigg)^{\frac{1}{2}}
\end{equation}
for all $n\in\N$ and $t\in I_{n}$, where the measurable function $k_{0,l,\xi}:I\rightarrow [0,\infty]$ is given by $k_{0,l,\xi}(t) := k_{0}(t) + (\int_{0}^{t}l(t,s)^{2}\E[|\xi_{s}|^{p}]^{2/p}\,\mathrm{d}s)^{1/2}$. Hence, by recalling the identity~\eqref{eq:special property of the resolvent sequence}, we see that Proposition~\ref{pr:resolvent sequence inequality for processes} and Minkowski's inequality yield the estimate~\eqref{eq:growth estimate for Picard iterations 1}.
\end{proof}

\begin{proof}[Proof of Proposition~\ref{pr:comparison of solutions 1}]
The second claim follows from the first, which entails that $X$ and $\tilde{X}$ are weak modifications of each other as soon as $\xi$ and $\tilde{\xi}$ are. To show the first assertion, we use Proposition~\ref{pr:moment estimates for stochastic Volterra processes}, which yields that
\begin{equation}\label{eq:auxiliary comparison moment estimate}
\E\big[|X_{t} - \tilde{X}_{t}|^{p}\big]^{\frac{1}{p}} \leq \E\big[|\xi_{t} - \tilde{\xi}_{t}|^{p}\big]^{\frac{1}{p}} + \bigg(\int_{0}^{t}\lambda(t,s)^{2}\E\big[|X_{s} - \tilde{X}_{s}|^{p}\big]^{\frac{2}{p}}\,\mathrm{d}s\bigg)^{\frac{1}{2}}
\end{equation}
for a.e.~$t\in I$, because $X - \tilde{X}$ is a Volterra process with coefficients $\xi - \tilde{\xi}$, $\B(X) - \B(\tilde{X})$ and $\Sigma(X) - \Sigma(\tilde{X})$. So, from an application of Corollary~\ref{co:resolvent inequality for processes} in the case $N = 2$, $\beta_{1} = 1$ and $\beta_{2} = 2$ we obtain the claimed estimate~\eqref{eq:comparison estimate for solutions 1}.
\end{proof}

\begin{Lemma}\label{le:regularity of the stochastic Volterra integral operator}
Let~\eqref{co:3} hold and $X\in\mathcal{L}_{loc}^{\infty,p}(I\times\Omega,E)$ satisfy $X_{s}\in\mathcal{D}$ for a.e.~$s\in I$. Then the event~\eqref{eq:null event} is null for all $t\in I$ and there is an $E$-valued $\mathbb{F}$-adapted process $\tilde{X}$ whose paths are $\beta$-Hölder continuous on $[0,t_{i}]$ for all $i\in\N$ and $\beta\in ]0,\beta_{i} - \frac{1}{p}[$ such that
\begin{equation*}
\tilde{X}_{t} = \xi_{t} + \int_{0}^{t}\B_{t,s}(X_{s})\,\mathrm{d}s + \int_{0}^{t}\Sigma_{t,s}(X_{s})\,\mathrm{d}W_{s}\quad\text{a.s.}
\end{equation*}
for any $t\in I$. Further, $\tilde{X}$ is a weak modification of $\xi + \Psi(X)$ and $\E[|\tilde{X} - \xi|^{p}]$ is locally bounded.
\end{Lemma}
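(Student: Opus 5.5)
The plan is to apply the Kolmogorov–Chentsov continuity theorem to the process $Z_t := \xi_t + \int_0^t \B_{t,s}(X_s)\,\mathrm{d}s + \int_0^t \Sigma_{t,s}(X_s)\,\mathrm{d}W_s$, which is defined for every $t\in I$. The required moment bounds on increments come from Lemma~\ref{le:regularity of stochastic Volterra processes}.

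\emph{Step 1: $Z_t$ is well defined for every $t$, with a uniform moment bound.} Fix $i\in\N$ and set $T=t_i$. By~\eqref{co:3} with $s=0$, the estimate~\eqref{eq:co.4} gives $\int_0^t (k_1\vee l_1)(t,\tilde s)\,\mathrm{d}\tilde s + (\int_0^t (k_2\vee l_2)(t,\tilde s)^2\,\mathrm{d}\tilde s)^{1/2}\le \hat c_i t^{\beta_i}$ for every $t\in[0,t_i]$, not merely for a.e.~$t$. Hence $k_0(t)<\infty$, and $\int_0^t l(t,s)^2\,\mathrm{d}s$ is bounded by a constant times $\hat c_i^2$ for all $t\le t_i$. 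To see this, note that $\min\{l_1(t,s)\int_0^t l_1,\int_0^t l_1^2\}\le l_1(t,s)\int_0^t l_1$, and integrating in $s$ gives $(\int_0^t l_1)^2$. Since $\esssup_{s\le t_i}\E[|X_s|^p]<\infty$ and $X_s\in\mathcal D$ for a.e.~$s$, we may modify $X$ on a null set of times; by Remark~\ref{re:admissible maps} this does not change the integrals. Proposition~\ref{pr:moment estimates for stochastic Volterra processes}(i) then shows that the event~\eqref{eq:null event} is null for every $t\in[0,t_i]$. It also gives $\sup_{t\le t_i}\E[|Z_t-\xi_t|^p]^{1/p}\le k_0(t)+(\int_0^t l^2\,\E[|X|^p]^{2/p})^{1/2}$, which is bounded. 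This yields local boundedness of $\E[|\tilde X-\xi|^p]$ once $\tilde X$ is a modification of $Z$.

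\emph{Step 2: increment bound.} For $s\le t$ in $[0,t_i]$, decompose $Z_t-Z_s$ as in the proof of Lemma~\ref{le:regularity of stochastic Volterra processes}. This identity holds a.s.~for every pair $s,t$, not only for $s,t\in I_X$, because all the integrals are now defined for every time. The first estimate of that lemma, with $f_j(t,s,r)$, $g_j:=f_j$ and the bound $\E|Y|^p$ from~\eqref{eq:regularity condition}, together with Proposition~\ref{pr:moment estimates for stochastic Volterra processes}(i) applied to the kernels restricted to $[s,t]$, gives
\[
\E[|Z_t-Z_s|^p]^{1/p}\le C_i\,(t-s)^{\beta_i},
\]
where $C_i$ depends on $\hat c_i$, $w_p$ and $\esssup_{r\le t_i}\E[|X_r|^p]^{1/p}$. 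The kernel terms $g$ and $l$ are handled by the same Cauchy–Schwarz bound as in Step 1, using the $\int_s^t$ and $\int_0^s f_j$ parts of~\eqref{eq:co.4}. Hence $\E[|Z_t-Z_s|^p]\le C_i^p|t-s|^{1+(p\beta_i-1)}$ with $p\beta_i>1$.

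\emph{Step 3: continuity and gluing.} By the Kolmogorov–Chentsov theorem for processes with values in the Banach space $E$, $Z$ restricted to $[0,t_i]$ has a modification that is $\beta$-Hölder for every $\beta<\beta_i-1/p$. Two such modifications on $[0,t_i]$ and $[0,t_{i+1}]$ are indistinguishable on $[0,t_i]$ by continuity (Proposition~\ref{pr:weakly modified processes}). They can therefore be glued along the increasing sequence $t_i\uparrow\sup I$, setting the process to zero on the exceptional null set. The result $\tilde X$ is a modification of $Z$ on all of $I$. It is $\mathbb F$-adapted because $Z_t$ is $\mathcal F_t$-measurable and the filtration satisfies the usual conditions. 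Finally, $\Psi(X)$ agrees with $Z-\xi$ a.s.~for a.e.~$t$ by~\eqref{eq:stochastic Volterra integral operator}, so $\tilde X$ is a weak modification of $\xi+\Psi(X)$.

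The main obstacle is Step 2: verifying that the kernel $g$ built from $f_1,f_2$ and the transformed kernel $l$ contribute only $O((t-s)^{\beta_i})$. The plan is to reduce this, through the minimum in the definitions of $l$ and $g$, to the integrals appearing in~\eqref{eq:co.4}.
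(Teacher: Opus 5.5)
Your proposal is correct and follows the paper's own route. It uses Proposition~\ref{pr:moment estimates for stochastic Volterra processes}(i) for the null events and the local moment bound, Lemma~\ref{le:regularity of stochastic Volterra processes} for the increment estimate $\E[|Z_t-Z_s|^p]^{1/p}\le C_i(t-s)^{\beta_i}$, and the Kolmogorov--Chentsov theorem together with the completeness of $\mathcal{F}_0$ to obtain an adapted H\"older modification.

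Your choice to apply the proposition to the coefficients restricted to $[s,t]$ is what actually delivers the full rate $(t-s)^{\beta_i}$, since it builds the transformed kernel from $\int_s^t l_1$ rather than $\int_0^t l_1$. With the kernel $l$ as literally written in the second estimate of that lemma, \eqref{eq:co.4} alone would only give order $(t-s)^{\beta_i/2}$ from the $l_1$-part. Your version therefore makes precise a step the paper passes over.
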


\begin{proof}
Proposition~\ref{pr:moment estimates for stochastic Volterra processes} ensures that the event~\eqref{eq:null event} is null for any $t\in I$ and that for each $E$-valued $\mathbb{F}$-adapted process $Z$ satisfying $Z_{t} = \int_{0}^{t}\B_{t,s}(X_{s})\,\mathrm{d}s$ $ +\, \int_{0}^{t}\Sigma_{t,s}(X_{s})\,\mathrm{d}W_{s}$ a.s.~for all $t\in I$, the $p$th moment function $\E[|Z|^{p}]$ is locally bounded.

Thus, from Lemma~\ref{le:stochastic Volterra integral operator 1} and the definition~\eqref{eq:stochastic Volterra integral operator} of $\Psi(X)$ we obtain the second claim. Moreover, Lemma~\ref{le:regularity of stochastic Volterra processes} shows that for each $i\in\N$ there is $c_{i}' > 0$ such that $\E[|Z_{s} - Z_{t}|^{p}]^{1/p}$ $\leq c_{i}'(t-s)^{\beta_{i}}$ for all $s,t\in [0,t_{i}]$ with $s\leq t$.

As any subset of a null event lies in $\mathcal{F}_{0}$, every modification of $Z$ is $\mathbb{F}$-adapted. For this reason, the first assertion follows from the Kolmogorov-Chentsov Theorem. For instance, see~\cite[Proposition~12]{ConKal20} for a quantitative version.
\end{proof}

\begin{proof}[Proof of Theorem~\ref{th:existence of unique solutions 1}]
It suffices to prove the existence and convergence assertions, as the uniqueness claim is a consequence of Proposition~\ref{pr:comparison of solutions 1} and all the assertions under~\eqref{co:3} are implied by Lemma~\ref{le:regularity of the stochastic Volterra integral operator}, Remark~\ref{re:notion of a solution} and Proposition~\ref{pr:growth estimates for solutions and Picard iterations 1}.

As mentioned in the proof of Proposition~\ref{pr:growth estimates for solutions and Picard iterations 1}, it follows inductively from Lemma~\ref{le:stochastic Volterra integral operator 1} that $X^{(n)}$ is well-defined and belongs to $\mathcal{L}_{loc}^{\infty,p}(I\times\Omega,E)$. Furthermore, the Borel set $I_{\infty}$, which equals $\bigcap_{n\in\N} I_{n}$, has full measure and we have
\begin{equation}\label{eq:auxiliary Picard moment estimate}
\E\big[|X_{t}^{(n)} - X_{t}^{(n+1)}|^{p}\big]^{\frac{1}{p}} \leq \bigg(\int_{0}^{t}\lambda(t,s)^{2}\E\big[|X_{s}^{(n-1)} - X_{s}^{(n)}|^{p}\big]^{\frac{2}{p}}\,\mathrm{d}s\bigg)^{\frac{1}{2}}
\end{equation}
for any $n\in\N$ and $t\in I_{\infty}$, as Proposition~\ref{pr:moment estimates for stochastic Volterra processes} entails. Next, may apply Proposition~\ref{pr:resolvent sequence inequality for processes} in the case $N = 1$ and $\beta_{1} = 2$, due to Remark~\ref{re:specific resolvent sequence inequality}. Then we obtain that
\begin{equation}\label{eq:preliminary error moment estimate 1}
\E\big[|X_{t}^{(n)} - X_{t}^{(m)}|^{p}\big]^{\frac{1}{p}} \leq \sum_{i=n}^{m-1}\bigg(\int_{0}^{t}\R_{\lambda^{2},i}(t,s)\Delta(s)^{2}\,\mathrm{d}s\bigg)^{\frac{1}{2}}
\end{equation}
for all $m,n\in\N$ with $m > n$ and $t\in I_{\infty}$, by the triangle inequality in the $L^{p}$-norm. Thus, $(X^{(n)})_{n\in\N}$ is a Cauchy sequence in $\mathcal{L}_{loc}^{\infty,p}(I\times\Omega,E)$. Namely, as~\eqref{eq:convergence of the error coefficients 1} follows directly from~\eqref{eq:error series estimate}, we have
\begin{equation*}
\lim_{n\uparrow\infty}\sup_{m\in\N:\,m\geq n}\esssup_{t\in [0,T]}\E\big[|X_{t}^{(n)} - X_{t}^{(m)}|^{p}\big] = 0\quad\text{for any $T\in I$.}
\end{equation*}
By Lemma~\ref{le:completely metrisable space}, there is $X^{\xi}\in\mathcal{L}_{loc}^{\infty,p}(I\times\Omega,E)$ that is unique, up to a weak modification, such that~\eqref{eq:convergence in pth moment, locally essentially uniformly in time} holds when $X$ is replaced by $X^{\xi}$. Hence, the error estimate~\eqref{eq:error moment estimate 1} is a direct implication of the inequality~\eqref{eq:preliminary error moment estimate 1}.

Finally, the sequential continuity of $\Psi$ on $\mathcal{L}_{loc}^{\infty,p}(I\times\Omega,E)$, which follows from another application of Proposition~\ref{pr:moment estimates for stochastic Volterra processes}, entails that $\lim_{n\uparrow\infty} \esssup_{t\in [0,T]}\E[|X_{t}^{(n+1)} - \xi_{t} - \Psi_{t}(X^{\xi})|^{p}]$ $= 0$ for all $T\in I$. Thus, as weak modification of $\xi + \Psi(X^{\xi})$ the process $X^{\xi}$ is a solution to~\eqref{eq:stochastic Volterra equation}, by Remark~\ref{re:notion of a solution}.
\end{proof}

\subsection{Derivation of the second type of solutions}\label{se:5.4}

By using the progressive $\sigma$-field $\mathcal{A}$ on $I\times\Omega$, let us first of all verify that the linear space $\mathcal{L}_{loc}^{p}(I\times\Omega,E)$, equipped with the seminorm~\eqref{eq:seminorm for processes 2} for every $T\in I$, is in fact completely pseudometrisable, even if $E$ fails to be $2$-smooth or $p\in [1,2[$.

In the case that $I$ is compact we recover the linear space of all $E$-valued $\mathcal{A}$-measurable maps on $I\times\Omega$ that are $p$-fold integrable with respect to the finite measure
\begin{equation*}
\mu:\mathcal{A}\rightarrow\Re_{+}, \quad A\mapsto\int_{0}^{T}\E\big[\mathbbm{1}_{A}(t,\cdot)\big]\,\mathrm{d}t,
\end{equation*}
endowed wit the seminorm $\mathcal{L}_{loc}^{p}(I\times\Omega,E)\rightarrow\Re_{+}$, $X\mapsto (\int_{I\times\Omega} |X|^{p}\,\mathrm{d}\mu)^{1/p}$. So, if $I$ is compact, then $\mathcal{L}_{loc}^{p}(I\times\Omega,E)$ is complete, by the Riesz-Fischer Theorem. Consequently, the general case follows from Corollary~\ref{co:metrical decomposition}.

Next, we notice that for any $X\in\mathcal{L}_{loc}^{p}(I\times\Omega,E)$ satisfying $X_{s}\in\mathcal{D}$ for a.e.~$s\in I$ and for which the event~\eqref{eq:null event} is null for a.e.~$t\in I$, we may still define the $E$-valued $\mathbb{F}$-progressively measurable process $\Psi(X)$ by the requirement~\eqref{eq:stochastic Volterra integral operator}.

\begin{Lemma}\label{le:stochastic Volterra integral operator 2}
Let~\eqref{co:4} be valid and $X\in\mathcal{L}_{loc}^{p}(I\times\Omega,E)$ satisfy $X_{s}\in\mathcal{D}$ for a.e.~$s\in I$. Then the event~\eqref{eq:null event} is null for a.e.~$t\in I$ and $\Psi(X)\in\mathcal{L}_{loc}^{p}(I\times\Omega,E)$.
\end{Lemma}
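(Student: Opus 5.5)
Following the proof of Lemma~\ref{le:stochastic Volterra integral operator 1}, I would derive both claims from Proposition~\ref{pr:moment estimates for stochastic Volterra processes}(i). The difference is that I would use integrated moment bounds, not essential suprema.

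Under~\eqref{co:4}, the map $s\mapsto \E[|X_{s}|^{p}]$ is only locally integrable. Set $\B_{t,s}:=\B_{t,s}(X_{s})$ and $\Sigma_{t,s}:=\Sigma_{t,s}(X_{s})$; admissibility makes these $\mathcal{B}(I)\otimes\mathcal{A}$-measurable. Since $X_{s}\in\mathcal{D}$ for a.e.\ $s$, the growth bound~\eqref{eq:abstract affine growth condition} holds for a.e.\ $s<t$, which is all that the integrals require. Proposition~\ref{pr:moment estimates for stochastic Volterra processes}(i) then gives, for every $t$ with $k_{0}(t)<\infty$ and $\int_{0}^{t}l(t,s)^{2}\E[|X_{s}|^{p}]^{2/p}\,\mathrm{d}s<\infty$, that the event~\eqref{eq:null event} is null and
\begin{equation*}
\E\big[|\Psi_{t}(X)|^{p}\big]^{\frac{1}{p}} \leq k_{0}(t) + \bigg(\int_{0}^{t}l(t,s)^{2}\E\big[|X_{s}|^{p}\big]^{\frac{2}{p}}\,\mathrm{d}s\bigg)^{\frac{1}{2}}\quad\text{for a.e.~$t$.}
\end{equation*}
So it remains to show that the right-hand side is finite for a.e.\ $t$ and $p$-fold integrable on $[0,T]$.

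For $k_{0}$, local $p$-fold integrability is exactly~\eqref{eq:integrability condition}, so $k_{0}<\infty$ a.e. For the second term I would split into the cases $p=2$ and $p>2$.

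\emph{Case $p=2$.} Fubini gives $\int_{0}^{T}\int_{0}^{t}l(t,s)^{2}\E[|X_{s}|^{2}]\,\mathrm{d}s\,\mathrm{d}t=\int_{0}^{T}\big(\int_{s}^{T}l(t,s)^{2}\,\mathrm{d}t\big)\E[|X_{s}|^{2}]\,\mathrm{d}s$. This is finite because $l\in\hat{\mathcal{K}}^{2}$, so the inner integral is essentially bounded in $s$.

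\emph{Case $p>2$.} Here $l\in\mathcal{K}^{2}$, so $c:=\esssup_{t\le T}\int_{0}^{t}l(t,s)^{2}\,\mathrm{d}s<\infty$. Applying Jensen to the probability measure $l(t,s)^{2}\,\mathrm{d}s/\int_{0}^{t}l(t,\cdot)^{2}$ with the convex map $x\mapsto x^{p/2}$ gives
\begin{equation*}
\bigg(\int_{0}^{t}l(t,s)^{2}\E\big[|X_{s}|^{p}\big]^{\frac{2}{p}}\,\mathrm{d}s\bigg)^{\frac{p}{2}}\le c^{\frac{p}{2}-1}\int_{0}^{t}l(t,s)^{2}\E\big[|X_{s}|^{p}\big]\,\mathrm{d}s.
\end{equation*}
The $\mathrm{d}t$-integral over $[0,T]$ is then finite by Fubini and $l\in\hat{\mathcal{K}}^{2}$, exactly as in the case $p=2$. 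This is the same Jensen step as in~\eqref{eq:an application of the Jensen inequality}.

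In both cases the second term is a.e.\ finite and lies in $L^{p}(0,T)$. Minkowski's inequality in $L^{p}(0,T)$ then yields $\int_{0}^{T}\E[|\Psi_{t}(X)|^{p}]\,\mathrm{d}t<\infty$. Together with the progressive measurability of $\Psi(X)$, this places $\Psi(X)$ in $\mathcal{L}_{loc}^{p}(I\times\Omega,E)$.

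The main obstacle is this second term, specifically the case $p>2$. It is the reason~\eqref{co:4} additionally requires $l_{1}\in\mathcal{K}^{1}$ and $l_{2}\in\mathcal{K}^{2}$ in that case, so that the constant $c$ above exists; the rest is routine once this Jensen/Fubini step is in place.
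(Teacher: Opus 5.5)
Your proposal is correct and follows essentially the same route as the paper. The paper also deduces both claims from Proposition~\ref{pr:moment estimates for stochastic Volterra processes}(i), after showing that $u(t) := k_{0}(t) + (\int_{0}^{t}l(t,s)^{2}\E[|X_{s}|^{p}]^{2/p}\,\mathrm{d}s)^{1/2}$ is locally $p$-fold integrable and hence a.e.\ finite. It obtains the bound $(\int_{0}^{t}u(s)^{p}\,\mathrm{d}s)^{1/p} \leq (\int_{0}^{t}k_{0}(s)^{p}\,\mathrm{d}s)^{1/p} + (\int_{0}^{t}l_{1,p}(t,s)\E[|X_{s}|^{p}]\,\mathrm{d}s)^{1/p}$ by citing Corollary~\ref{co:integral resolvent sequence inequality for processes}, which is exactly your Minkowski--Jensen--Fubini computation packaged with the kernel $l_{1,p}$ from~\eqref{eq:special transformed iterated kernel}.
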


\begin{proof}
According to Corollary~\ref{co:integral resolvent sequence inequality for processes}, for the measurable function $u:I\rightarrow [0,\infty]$ defined by $u(t) := k_{0}(t) + (\int_{0}^{t}l(t,s)^{2}\E[|X_{s}|^{p}]^{2/p}\,\mathrm{d}s)^{1/2}$ we have
\begin{equation*}
\bigg(\int_{0}^{t}u(s)^{p}\,\mathrm{d}s\bigg)^{\frac{1}{p}} \leq \bigg(\int_{0}^{t}k_{0}(s)^{p}\,\mathrm{d}s\bigg)^{\frac{1}{p}} + \bigg(\int_{0}^{t}l_{1,p}(t,s)\E\big[|X_{s}|^{p}\big]\,\mathrm{d}s\bigg)^{\frac{1}{p}} < \infty
\end{equation*}
for each $t\in I$. In particular, $u(t) < \infty$ for a.e.~$t\in I$. Hence, both claims are implied by Proposition~\ref{pr:moment estimates for stochastic Volterra processes}.
\end{proof}

\begin{proof}[Proof of Proposition~\ref{pr:growth estimates for solutions and Picard iterations 2}]
The moment estimate~\eqref{eq:growth estimate for solutions 2} is implied by Proposition~\ref{pr:moment estimates for stochastic Volterra processes}, Corollary~\ref{co:integral resolvent inequality for processes} and Minkowski's inequality. Indeed, the condition in the corollary that $\lim_{n\uparrow\infty}\int_{0}^{t}l_{n,p}(t,s)\E[|X_{s}|^{p}]\,\mathrm{d}s = 0$ for any $t\in I$ holds, as $c_{l,p}$ is finite.

Next, we suppose that $\mathcal{L}^{p}(\Omega,E)\subset\mathcal{D}$. By Lemma~\ref{le:stochastic Volterra integral operator 2}, Fubini's theorem and induction, $I_{n}$ is Borel and has full measure and $X^{(n)}$ is well-defined and lies in $\mathcal{L}_{loc}^{p}(I\times\Omega,E)$ for each $n\in\N$. Hence, Proposition~\ref{pr:moment estimates for stochastic Volterra processes} yields the inequality~\eqref{eq:auxiliary moment estimate for Picard iterations}. For this reason, the moment bound~\eqref{eq:growth estimate for Picard iterations 2} follows from an application of Corollary~\ref{co:integral resolvent sequence inequality for processes} and Minkowski's inequality.
\end{proof}

\begin{proof}[Proof of Proposition~\ref{pr:comparison of solutions 2}]
It suffices to show the claimed estimate, which entails that if $\xi$ and $\tilde{\xi}$ are weak modifications, then so are $X$ and $\tilde{X}$. We recall that Proposition~\ref{pr:moment estimates for stochastic Volterra processes} implies the inequality~\eqref{eq:auxiliary comparison moment estimate}, because $X - \tilde{X}$ is a Volterra process with coefficients $\xi - \tilde{\xi}$, $\B(X) - \B(\tilde{X})$ and $\Sigma(X) - \Sigma(\tilde{X})$.

For this reason, Corollary~\ref{co:integral resolvent inequality for processes} gives the desired estimate~\eqref{eq:comparison estimate for solutions 2}. In fact, the required condition that $\lim_{n\uparrow\infty}\int_{0}^{t}\lambda_{n,p}(t,s)\E[|X_{s} - \tilde{X}_{s}|^{p}]\,\mathrm{d}s = 0$ for any $t\in I$ is satisfied, since $c_{\lambda,p}$ is finite.
\end{proof}

\begin{proof}[Proof of Theorem~\ref{th:existence of unique solutions 2}]
The uniqueness claim is shown in Proposition~\ref{pr:comparison of solutions 2}. So, it remains to establish the assertions on existence and convergence. By Lemma~\ref{le:stochastic Volterra integral operator 2} and induction, $X^{(n)}$ is well-defined and lies in $\mathcal{L}_{loc}^{p}(I\times\Omega,E)$ for any $n\in\N$. Since Proposition~\ref{pr:moment estimates for stochastic Volterra processes} entails the estimate~\eqref{eq:auxiliary Picard moment estimate}, we have
\begin{equation}\label{eq:preliminary error moment estimate 2}
\bigg(\int_{0}^{t}\E\big[|X_{s}^{(n)} - X_{s}^{(m)}|^{p}\big]\,\mathrm{d}s\bigg)^{\frac{1}{p}} \leq \sum_{i=n}^{m-1}\bigg(\int_{0}^{t}\lambda_{i,p}(t,s)\Delta(s)^{p}\,\mathrm{d}s\bigg)^{\frac{1}{p}}
\end{equation}
for all $m,n\in\N$ with $m > n$ and $t\in I$, due to Corollary~\ref{co:integral resolvent sequence inequality for processes} and Minkowski's inequality. Hence, $(X^{(n)})_{n\in\N}$ is a Cauchy sequence in $\mathcal{L}_{loc}^{p}(I\times\Omega,E)$. In fact, as the function $c_{\lambda,p}$ is finite, the limit~\eqref{eq:convergence of the error coefficients 2} holds. So,
\begin{equation*}
\lim_{n\uparrow\infty}\sup_{m\in\N:\,m\geq n}\int_{0}^{t}\E\big[|X_{s}^{(n)} - X_{s}^{(m)}|^{p}\big]\,\mathrm{d}s = 0\quad\text{for any $t\in I$}
\end{equation*}
and this implies that there is $X^{\xi}\in\mathcal{L}_{loc}^{p}(I\times\Omega,E)$ that is unique, up to a weak modification, such that $\lim_{n\uparrow\infty}\int_{0}^{t}\E[|X_{s}^{(n)} - X_{s}^{\xi}|^{p}]\,\mathrm{d}s = 0$ for all $t\in I$.  Now the error estimate~\eqref{eq:error moment estimate 2} follows by taking the limit $m\uparrow\infty$ in~\eqref{eq:preliminary error moment estimate 2}.

As Proposition~\ref{pr:moment estimates for stochastic Volterra processes} and Corollary~\ref{co:integral resolvent sequence inequality for processes} entail that $\Psi$ is sequentially continuous on $\mathcal{L}_{loc}^{p}(I\times\Omega,E)$, we have  $\lim_{n\uparrow\infty}\int_{0}^{t}\E[|X_{s}^{(n+1)} - \xi_{s} - \Psi_{s}(X^{\xi})|^{p}]\,\mathrm{d}s = 0$ for any $t\in I$. So, $X^{\xi}$ is a weak modification of $\xi + \Psi(X^{\xi})$, which shows us that it solves~\eqref{eq:stochastic Volterra equation}, by Remark~\ref{re:notion of a solution}.
\end{proof}


\begin{thebibliography}{10}

\bibitem{Abi21}
E.~Abi~Jaber.
\newblock Weak existence and uniqueness for affine stochastic {V}olterra
  equations with {$L^1$}-kernels.
\newblock {\em Bernoulli}, 27(3):1583--1615, 2021.

\bibitem{AbiCucLar21}
E.~Abi~Jaber, C.~Cuchiero, M.~Larsson, and S.~Pulido.
\newblock A weak solution theory for stochastic {V}olterra equations of
  convolution type.
\newblock {\em Ann. Appl. Probab.}, 31(6):2924--2952, 2021.

\bibitem{AbiLarPul19}
E.~Abi~Jaber, M.~Larsson, and S.~Pulido.
\newblock Affine {V}olterra processes.
\newblock {\em Ann. Appl. Probab.}, 29(5):3155--3200, 2019.

\bibitem{AbiNeu25}
E.~Abi~Jaber and E.~Neuman.
\newblock Optimal liquidation with signals: the general propagator case.
\newblock {\em Math. Finance}, 35(4):841--866, 2025.

\bibitem{AccBacCar19}
B.~Acciaio, J.~Backhoff-Veraguas, and R.~Carmona.
\newblock Extended mean field control problems: stochastic maximum principle
  and transport perspective.
\newblock {\em SIAM J. Control Optim.}, 57(6):3666--3693, 2019.

\bibitem{AliBor99}
C.~D. Aliprantis and K.~C. Border.
\newblock {\em Infinite-dimensional analysis}.
\newblock Springer-Verlag, Berlin, second edition, 1999.
\newblock A hitchhiker's guide.

\bibitem{AloNua97}
E.~Al\`os and D.~Nualart.
\newblock Anticipating stochastic {V}olterra equations.
\newblock {\em Stochastic Process. Appl.}, 72(1):73--95, 1997.

\bibitem{Bau01}
H.~Bauer.
\newblock {\em Measure and integration theory}, volume~26 of {\em De Gruyter
  Studies in Mathematics}.
\newblock Walter de Gruyter \& Co., Berlin, 2001.
\newblock Translated from the German by Robert B.\ Burckel.

\bibitem{BerMiz80}
M.~A. Berger and V.~J. Mizel.
\newblock Volterra equations with {I}t\^o{} integrals. {I}.
\newblock {\em J. Integral Equations}, 2(3):187--245, 1980.

\bibitem{Bermiz80-2}
M.~A. Berger and V.~J. Mizel.
\newblock Volterra equations with {I}t\^o{} integrals. {II}.
\newblock {\em J. Integral Equations}, 2(4):319--337, 1980.

\bibitem{BonPulSco24}
A.~Bondi, S.~Pulido, and S.~Scotti.
\newblock The rough {H}awkes {H}eston stochastic volatility model.
\newblock {\em Math. Finance}, 34(4):1197--1241, 2024.

\bibitem{CocLeePot95}
W.~G. Cochran, J.-S. Lee, and J.~Potthoff.
\newblock Stochastic {V}olterra equations with singular kernels.
\newblock {\em Stochastic Process. Appl.}, 56(2):337--349, 1995.

\bibitem{ConKal20}
R.~Cont and A.~Kalinin.
\newblock On the support of solutions to stochastic differential equations with
  path-dependent coefficients.
\newblock {\em Stochastic Process. Appl.}, 130(5):2639--2674, 2020.

\bibitem{DaiXia20}
X.~Dai and A.~Xiao.
\newblock L\'evy-driven stochastic {V}olterra integral equations with doubly
  singular kernels: existence, uniqueness, and a fast {EM} method.
\newblock {\em Adv. Comput. Math.}, 46(2):Paper No. 29, 23, 2020.

\bibitem{NunGio24}
G.~di~Nunno and M.~Giordano.
\newblock Stochastic {V}olterra equations with time-changed {L}\'evy noise and
  maximum principles.
\newblock {\em Ann. Oper. Res.}, 336(1-2):1265--1287, 2024.

\bibitem{DueGeeVerWel10}
L.~D\"umbgen, S.~A. van~de Geer, M.~C. Veraar, and J.~A. Wellner.
\newblock Nemirovski's inequalities revisited.
\newblock {\em Amer. Math. Monthly}, 117(2):138--160, 2010.

\bibitem{EleRos18}
O.~El~Euch and M.~Rosenbaum.
\newblock Perfect hedging in rough {H}eston models.
\newblock {\em Ann. Appl. Probab.}, 28(6):3813--3856, 2018.

\bibitem{EleRos19}
O.~El~Euch and M.~Rosenbaum.
\newblock The characteristic function of rough {H}eston models.
\newblock {\em Math. Finance}, 29(1):3--38, 2019.

\bibitem{GatJaiRos18}
J.~Gatheral, T.~Jaisson, and M.~Rosenbaum.
\newblock Volatility is rough.
\newblock {\em Quant. Finance}, 18(6):933--949, 2018.

\bibitem{Gri80}
G.~Gripenberg.
\newblock On the resolvents of nonconvolution {V}olterra kernels.
\newblock {\em Funkcial. Ekvac.}, 23(1):83--95, 1980.

\bibitem{JieLuoZha24}
L.~Jie, L.~Luo, and H.~Zhang.
\newblock One-dimensional {M}c{K}ean-{V}lasov stochastic {V}olterra equations
  with {H}\"older diffusion coefficients.
\newblock {\em Statist. Probab. Lett.}, 205:Paper No. 109970, 11, 2024.

\bibitem{Kal21}
A.~Kalinin.
\newblock Support characterization for regular path-dependent stochastic
  {V}olterra integral equations.
\newblock {\em Electron. J. Probab.}, 26:Paper No. 29, 29, 2021.

\bibitem{Kal24}
A.~Kalinin.
\newblock Resolvent and {G}ronwall inequalities and fixed points of evolution
  operators.
\newblock {\em arXiv preprint arXiv:2412.20764}, 2024.

\bibitem{KalMeyPro24}
A.~Kalinin, T.~Meyer-Brandis, and F.~Proske.
\newblock Stability, uniqueness and existence of solutions to
  {M}c{K}ean-{V}lasov {SDE}s: a multidimensional {Y}amada-{W}atanabe approach.
\newblock {\em Stoch. Dyn.}, 24(5):Paper No. 2450039, 49, 2024.

\bibitem{Ond04}
M.~Ondrej\'at.
\newblock Uniqueness for stochastic evolution equations in {B}anach spaces.
\newblock {\em Dissertationes Math. (Rozprawy Mat.)}, 426:63, 2004.

\bibitem{ParPro90}
E.~Pardoux and P.~Protter.
\newblock Stochastic {V}olterra equations with anticipating coefficients.
\newblock {\em Ann. Probab.}, 18(4):1635--1655, 1990.

\bibitem{Pin94}
I.~Pinelis.
\newblock Optimum bounds for the distributions of martingales in {B}anach
  spaces.
\newblock {\em Ann. Probab.}, 22(4):1679--1706, 1994.

\bibitem{Pis75}
G.~Pisier.
\newblock Martingales with values in uniformly convex spaces.
\newblock {\em Israel J. Math.}, 20(3-4):326--350, 1975.

\bibitem{ProSch23}
D.~J. Pr\"omel and D.~Scheffels.
\newblock On the existence of weak solutions to stochastic {V}olterra
  equations.
\newblock {\em Electron. Commun. Probab.}, 28:Paper No. 52, 12, 2023.

\bibitem{ProSch23-2}
D.~J. Pr\"omel and D.~Scheffels.
\newblock Stochastic {V}olterra equations with {H}\"older diffusion
  coefficients.
\newblock {\em Stochastic Process. Appl.}, 161:291--315, 2023.

\bibitem{ProSch25}
D.~J. Pr{\"o}mel and D.~Scheffels.
\newblock Mean-field stochastic {V}olterra equations.
\newblock {\em arXiv preprint arXiv:2307.13775v4}, 2025.

\bibitem{Pro85}
P.~Protter.
\newblock Volterra equations driven by semimartingales.
\newblock {\em Ann. Probab.}, 13(2):519--530, 1985.

\bibitem{NeeVerWei07}
J.~M. A.~M. van Neerven, M.~C. Veraar, and L.~Weis.
\newblock Stochastic integration in {UMD} {B}anach spaces.
\newblock {\em Ann. Probab.}, 35(4):1438--1478, 2007.

\bibitem{Vil03}
C.~Villani.
\newblock {\em Topics in optimal transportation}, volume~58 of {\em Graduate
  Studies in Mathematics}.
\newblock American Mathematical Society, Providence, RI, 2003.

\bibitem{Wan22}
T.~Wang.
\newblock Backward stochastic {V}olterra integro-differential equations and
  applications in optimal control problems.
\newblock {\em SIAM J. Control Optim.}, 60(4):2393--2419, 2022.

\bibitem{Wan08}
Z.~Wang.
\newblock Existence and uniqueness of solutions to stochastic {V}olterra
  equations with singular kernels and non-{L}ipschitz coefficients.
\newblock {\em Statist. Probab. Lett.}, 78(9):1062--1071, 2008.

\bibitem{Zha10}
X.~Zhang.
\newblock Stochastic {V}olterra equations in {B}anach spaces and stochastic
  partial differential equation.
\newblock {\em J. Funct. Anal.}, 258(4):1361--1425, 2010.

\end{thebibliography}
\end{document}